\numberwithin{equation}{section}
\tikzset{block/.style={draw,rectangle, align=center, inner sep=5pt},
oval/.style={draw,ellipse, align = center, inner sep=10pt},
decision/.style={draw,diamond, aspect = 2, align=center,inner sep=2pt}}
\def\namedlabel#1#2{\begingroup
   \def\@currentlabel{#2}%
   \label{#1}\endgroup
}
\journal{}
\newcommand{\R}{\mathbb R}
\DeclareMathOperator{\Ls}{L}
\DeclareMathOperator{\LLs}{\mathbf L}
\DeclareMathOperator{\Hs}{H}
\DeclareMathOperator{\HHs}{\mathbf H}
\DeclareMathOperator{\Ws}{W}
\DeclareMathOperator{\Cs}{C}
\DeclareMathOperator{\CCs}{\mathbf C}
\DeclareMathOperator{\Ds}{D}
\DeclareMathOperator{\Es}{E}
\DeclareMathOperator{\Vs}{V}
\DeclareMathOperator{\Xs}{X}
\DeclareMathOperator{\Ys}{Y}
\newcommand{\brac}[1]{\left\lbrace{#1}\right\rbrace}
\newcommand{\norm}[1]{\left\lVert{#1}\right\rVert}
\newcommand{\abs}[1]{\left\vert{#1}\right\vert}
\newcommand{\inprod}[1]{\left\langle{#1}\right\rangle}
\newcommand{\vertiii}[1]{{\left\vert\kern-0.25ex\left\vert\kern-0.25ex\left\vert #1 
    \right\vert\kern-0.25ex\right\vert\kern-0.25ex\right\vert}}
\newcommand{\bm}[1]{\boldsymbol{#1}}
\newcommand{\Om}{\Omega}
\newcommand{\vphi}{\varphi}
\newcommand{\fa}{\forall}
\newcommand{\xb}{\bm{x}}
\newcommand{\vb}{\mathbf{v}}
\newcommand{\dx}{\,\mathrm{d}\xb}
\newcommand{\ds}{\,\mathrm{d}s}
\newcommand{\dt}{\,\mathrm{d}t}
\newcommand{\q}{\quad}
\newcommand{\qq}{\qquad}
\newcommand{\qqq}{\qquad\quad}
\newcommand{\qqqq}{\qquad\qquad}
\newtheorem{theorem}{Theorem}[section]
\newtheorem{lemma}{Lemma}[section]
\newtheorem{corollary}{Corollary}[section]
\newtheorem{definition}{Definition}[section]
\newtheorem{remark}{Remark}[section]
\newtheorem{assumption}{Assumption}[section]
\begin{document}

\begin{frontmatter}



\title{Error analysis of the space-time interface-fitted finite element method for \\
an inverse source problem for an advection-diffusion equation \\
with moving subdomains}


\author[1]{Thi Thanh Mai Ta}
\ead{mai.tathithanh@hust.edu.vn, tathithanhmai@gmail.com}

\author[1]{Quang Huy Nguyen}
\ead{huy.nguyenquang1@hust.edu.vn}

\author[2]{Dinh Nho H{\` a}o\corref{cor}}
\ead{hao@math.ac.vn}

\address[1]{Faculty of Mathematics and Informatics, Hanoi University of Science and Technology, 11657 Hanoi, Vietnam}

\address[2]{Institute of Mathematics, Vietnam Academy of Science and Technology, 11307 Hanoi, Vietnam}

\cortext[cor]{Corresponding author}

\begin{abstract}
    A space-time interface-fitted approximation of an inverse source problem for the advection-diffusion equation with moving subdomains is investigated. The problem is reformulated as an optimization problem using Tikhonov regularization. A space-time interface-fitted method is employed to discretize the advection-diffusion equation, where two second-order a priori error estimates are established with respect to the $\Ls^2$-norms. Additionally, the regularized source is discretized sequentially using the variational approach, the element-wise constant discretization, and finally, the post-processing strategy. Optimal error estimates are achieved for the first two methods, while superlinear convergence is obtained for the third. Furthermore, a priori choices for the regularization parameter are proposed, depending on the mesh size and noise level. These choices ensure that the discrete and post-processing solutions strongly converge to the exact source as the mesh size and noise level tend to zero.
\end{abstract}



\begin{keyword}
inverse source problem \sep space-time interface-fitted method \sep variational approach \sep element-wise constant discretization \sep post-processing strategy

\MSC[2020] 35R30 \sep 65M15 \sep 65M32 \sep 65M60
\end{keyword}

\end{frontmatter}


\section{Introduction}
\label{sec: introduction}

Let $\Omega$ be a bounded Lipschitz domain in $\mathbb{R}^d\ \left(d=1\text{ or } 2\right)$ with boundary $\partial\Omega$. The domain $\Omega$ is split into two time-dependent subdomains $\Omega_1(t)$ and $\Omega_2(t)$ by an interface $\Gamma(t)$, for all $t\in [0,T]$ with $T>0$. The interface $\Gamma(t)$ is transported by a velocity field $\vb = \vb\left(\xb, t\right)\in \Cs\left([0, T], \CCs^2(\Omega)\right)$. This velocity field satisfies the incompressibility condition $\nabla \cdot \vb(\xb, t) = 0$ for all $(\xb, t) \in \Om \times [0, T]$ \cite[Remark 2.2]{VR2018}. We denote by $Q_T:= \Omega\times (0,T)$ the space-time domain and 
$$
Q_i := \bigcup_{t\in (0,T)} \Omega_i(t)\times \left\{t\right\} \qqqq (i=1,2)
$$
two space-time subdomains separated by the space-time interface $\Gamma^\ast := \bigcup_{t\in (0,T)} \Gamma(t)\times \left\{t\right\}$. Assume that $\Gamma^\ast$ is a $\Cs^2$-regular hypersurface in $\mathbb{R}^{d+1}$, and that $\Gamma(t)\cap \partial\Omega = \varnothing$ for all $t\in [0,T]$. Consider the following problem
\begin{equation}
    \label{eq: state equation}
    \left\{\begin{array}{ll}
         \partial_t U + \vb \cdot \nabla U - \nabla \cdot\left(\kappa \nabla U\right) = F & \text{in } Q_T, \\
         \left[U\right] = 0 & \text{on } \Gamma^\ast,\\
         \left[\kappa \nabla U \cdot \mathbf{n}\right] = 0 & \text{on } \Gamma^\ast,\\
         U = 0 & \text{on } \partial\Omega \times (0,T), \\
         U\left(\cdot, 0\right) = U_0 & \text{in } \Omega,
    \end{array}\right.
\end{equation}
where $F$ refers to the source term, $U_0$ denotes the initial value, and $\mathbf{n}$ is the unit normal on $\Gamma(t)$, pointing from $\Omega_1(t)$ into $\Omega_2(t)$. The notation $\left[U\right] := U_{1 \, \mid\, \Gamma(t)} - U_{2 \, \mid\, \Gamma(t)}$ denotes the jump of $U$ across $\Gamma(t)$, where $U_{i \, \mid\,  \Gamma(t)}$ is the limiting value from $\Om_i(t)$ of $U\ (i=1,2)$. For simplicity, the diffusion coefficient $\kappa$ is assumed to be a positive constant within each subdomain
$$
\kappa := 
\begin{cases}
    \kappa_1 > 0 & \text{in}\q Q_1, \\
    \kappa_2 > 0 & \text{in}\q Q_2.
\end{cases}
$$ 

In many practical scenarios, the objective is to determine the source term $F$ on the right-hand side of \eqref{eq: state equation}, using additional observations of $U$. This is referred to as the inverse source problem. While numerous studies have addressed inverse source problems for parabolic equations (see the classical books \cite{Isakov1990, Hao1998}, the recent articles \cite{HHOT2017, HQS2021}, and the references therein), very few papers have examined such problems for advection-diffusion equations with moving subdomains. Related to our problem setting, Bellassoued and Yamamoto \cite{BY2006} investigated an inverse source problem for a parabolic transmission equation. Assuming that $F$ takes the form $F\left(\xb, t\right) = \ell\left(\xb, t\right)f\left(\xb\right)$ for all $\left(\xb,t\right)\in Q_T$, they established a conditional stability result for determining the spatial component $f\left(\xb\right)$ from a single measurement of $U$ on a fixed subdomain, where $\ell$ is a given smooth function. More recently, Chen et al. \cite{CJW2022} utilized a partial interior observation of $U$ to simultaneously reconstruct the initial value $U\left(\cdot, 0\right)$ and the spatial component $f\left(\xb\right)$. They derived a conditional stability result and proposed an iterative thresholding algorithm to solve the problem. In another related study, Zhang et al. \cite{ZLW2020} examined a distributed optimal control problem for a parabolic interface system. They conducted an error analysis of the finite element approximation for the problem and derived optimal error estimates for the control, state, and adjoint. However, it is important to note that in all these studies, the interface was assumed to be fixed, which is a special case of our problem setting.

Moreover, physical requirements often impose further constraints on the source term, the observations, or both. For example, in engineering applications and physical processes such as mass transport \cite{GR2011}, heat transfer \cite{Slodicka2021}, and electromagnetics \cite{LSV2021a}, it is evident that the source term is nonnegative. Let $\omega\subset \Omega_2(t)$ for all $t\in \left[0,T\right]$ be a nonempty fixed subdomain, and define $\omega_T := \omega \times \left(0,T\right)$ (see Figure \ref{fig: model}). Given two functions, $\ell\in \Ls^\infty\left(Q_T\right)$ and $g\in \Ls^2\left(Q_T\right)$, that satisfy $\ell\left(\xb,t\right)\ge \Ls >0$ and $g\left(\xb,t\right)\ge 0$ for almost every $\left(\xb,t\right) \in Q_T$, respectively, along with the partial interior data $U_d\in \Ls^2\left(\omega_T\right)$. This paper aims to present the numerical analysis of the following inverse source problem (see Subsection \ref{subsec: the inverse source problem} for details): 

\begin{description}
    \item[Inverse problem:] In Problem \eqref{eq: state equation}, assume that the source $F$ has the form $F\left(\xb,t\right) = \ell\left(\xb,t\right) f\left(\xb,t\right) + g\left(\xb,t\right)$ for all $\left(\xb,t\right)\in Q_T$, with the initial value $U_0 = 0$. Determine the component $f\in \Ls^2\left(Q_T\right)$ such that $U_{\mid\, \omega_T} = U_d$ and additionally, $f\left(\xb,t\right)\ge 0$ for almost every $\left(\xb,t\right)\in Q_T$.
\end{description}

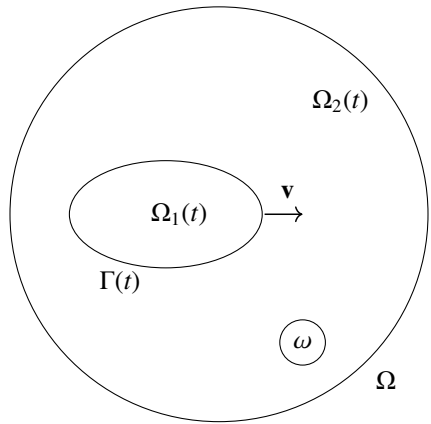
\begin{figure}[http]
    \centering
    \begin{tikzpicture}
        \draw (0.5,0) circle (2.75cm);
        \draw[->, line width=0.2mm] (1.1,0) -- (1.6,0);
        \node at (1.4, 0.3) {$\vb$};
        \node at (-0.8, -0.9) {$\Gamma(t)$};
        \node at (2.1, 1.5) {$\Omega_2(t)$}; 
        \node at (2.7, -2.2) {$\Omega$};
        \draw (1.6,-1.7) circle (0.3cm);
        \node at (1.6,-1.7) {$\omega$};
        \node[oval] at (-0.2, 0) {$\quad\Omega_1(t)$};
    \end{tikzpicture}
    \caption{The interface $\Gamma(t)$, which envolves by a velocity $\vb$, devides the domain $\Omega$ into two subdomains $\Omega_1(t)$ and $\Omega_2(t)$. The subdomain $\omega\subset\Omega_2(t)$ is fixed for all $t\in \left[0,T\right]$, considering $d=2$.}
    \label{fig: model}
\end{figure}

Since this problem is ill-posed, regularization is necessary to construct stable numerical approximations. In \cite{ZH2021}, Zhang and Hofmann proposed two iterative regularization methods to deal with an ill-posed linear operator equation under nonnegative constraints. Their problem is an abstract framework for our \textbf{Inverse Problem}. When the forward operator is injective, they establish a convergence result in terms of the discretization level and noise level. In contrast, we adopt Tikhonov regularization and reformulate the problem as a minimization problem. Subsequently, we analyze the existence, uniqueness, and optimality conditions of the minimizer, followed by its stability with respect to the noise.

On the other hand, discretizing Problem \eqref{eq: state equation} is essential. The primary challenge arises from a discontinuity of the gradient $\nabla U$ across the time-dependent interface $\Gamma(t)$, which results in suboptimal convergence rates for traditional finite element methods \cite{Babuka1970, CZ1998}. Various approaches have been proposed in the literature to address this difficulty, including the extended finite element method \cite{Zunino2013, LR2013}, the aggregation finite element method \cite{BDV2023}, and the arbitrary Lagrangian-Eulerian method \cite{BMF+2018, LRS2020}. Notably, all these studies employed classical time-stepping or time-discontinuous Galerkin schemes in combination with finite element methods. An alternative is the space-time method, which treats the time variable as an additional spatial variable. This approach discretizes the space-time domain using unstructured meshes and solves the problem by a space-time formulation, as demonstrated in \cite{LMN2016, LMS2019, GGS2024}, and more recently in \cite{NLPT2024}. In recent years, space-time methods have gained increasing prominence, particularly with advancements in parallel computing capabilities.

In this paper, we discretize the state and adjoint problems using the space-time interface-fitted method \cite{NLPT2024}. This approach facilitates efficient interface approximation without requiring expensive re-meshing algorithms or the computation of interaction integrals over the time-dependent interface. Furthermore, a priori error estimates for this method are provided when the solution exhibits globally low but locally high regularity, differing from the space-time methods presented in \cite{LST+2021, LST+2021b, LSY2024}, which require a global $\Hs^2$-smoothness condition to derive a priori error estimates. Specifically, we utilize element-wise linear functions to discretize the advection-diffusion equation. Notably, this low-order finite element approximation is well-suited to our problem, as the noise data typically exhibit low smoothness, which limits the adjoint regularity and, consequently, the finite element order. Furthermore, we extend the numerical analysis from \cite{NLPT2024} by deriving two second-order a priori error estimates with respect to the $\Ls^2$-norms under appropriate assumptions. 

Next, we focus on discretizing the regularized source. Among the numerous methods available in the literature, our focus is on the element-wise constant discretization, the post-processing strategy, and the variational approach. The element-wise constant discretization is the most classical method, first studied for elliptic optimal control problems \cite{Falk1973, CT2002}. This method is straightforward to implement and well-suited to a wide range of problems. However, it suffers from its slow convergence, being only of first order with respect to the $\Ls^2$-norm. The post-processing strategy constructs an improved approximation by projecting a term involving the discrete adjoint onto an admissible set. This approach achieves quadratic convergence in the $\Ls^2$-norm. Meyer and R{\" o}sch \cite{MR2004} originally introduced this core concept for elliptic optimal control problems, and it was subsequently applied in \cite{MV2008, TH2018} to parabolic optimal control problems. The third method, the variational approach, was developed by Hinze \cite{Hinze2005} for elliptic optimal control problems and also achieves second-order convergence. Recently, Zhang et al. \cite{ZLW2020} employed this strategy to discretize an optimal control problem for a parabolic interface system, resulting in optimal error estimates for the control, state, and adjoint. Unlike the two previous methods, the variational approach implicitly discretizes the control using a projection formula. 

In this work, we sequentially combine the space-time interface-fitted method with the variational approach, the element-wise constant discretization, and the post-processing strategy, to discretize the \textbf{Inverse problem}. The error analyses of these combinations are presented, yielding optimal error estimates for the first two methods and superlinear convergence for the third with respect to the $\Ls^2$-norm. Finally, we derive the overall error from regularization and discretization in terms of the regularization parameter, mesh size, and noise level. Following our approach in \cite{HLO2024}, we suggest a priori choices for the regularization parameter to ensure strong convergence of both the discrete and post-processing solutions to the exact source. Furthermore, we establish the corresponding convergence rates. To the best of our knowledge, such convergence rates have not been previously reported in the literature on inverse source problems for advection-diffusion equations with moving subdomains.

The remainder of this paper is organized as follows. In Section \ref{sec: optimization}, we introduce the function spaces and formulate our \textbf{Inverse problem}. Since this problem is ill-posed, we apply Tikhonov regularization to address this issue in Section \ref{sec: tikhonov regularization}. In Section \ref{sec: discretization}, we discretize the regularized problem. Under appropriate conditions, in Section \ref{subsec: auxiliary results}, we derive optimal error estimates for the state and adjoint in various norms. Section \ref{sec: error estimates} presents a numerical analysis of the three source discretization methods. The main results are summarized in Theorem \ref{theo: overall convergence rate} and Corollary \ref{coro: a priori choice for lambda}. Finally, we provide perspectives and remarks about future work.

\section{Problem setting}
\label{sec: optimization} 

In this section, we introduce the function spaces and the variational formulation of Problem \eqref{eq: state equation}. After that, we formulate our \textbf{Inverse problem}, discuss the concept of its solutions, and address its ill-posedness.

\subsection{Function spaces}
\label{subsec: functional setting}

For $s\ge 0, q\in [1, \infty]$ and a Lipschitz domain $\Omega$, the notation $\Ws^{s,q}\left(\Omega\right)$ refers to the classical Sobolev space, where $\Ws^{0,q}\left(\Omega\right)$ corresponds to the Lebesgue space $\Ls^q\left(\Omega\right)$ \cite[Section 2.2]{Ern2021}. The scalar product in $\Ls^2\left(\Omega\right)$ is denoted by $\left(\cdot,\cdot\right)_{\Ls^2\left(\Omega\right)}$, and the norm in $\Ls^q\left(\Omega\right)$ is denoted by $\norm{\cdot}_{\Ls^q\left(\Omega\right)}$. Among Sobolev spaces, only $\Ws^{s,2}\left(\Omega\right)$ forms a Hilbert space, which we specifically denote by $\Hs^s\left(\Omega\right)$. For the norm in $\Hs^s\left(\Omega\right)$, we use the notation $\norm{\cdot}_{\Hs^s(\Om)}$.

In this paper, $C > 0$ is a generic constant independent of the parameter $\lambda$, the mesh size $h$, the noise level $\varepsilon$, the data $U_d$, the function $g$, and the exact source $f_+$. However, it may depend on the space-time domain $Q_T$, the position of the space-time interface $\Gamma^\ast$, the norm $\norm{\vb}_{\LLs^\infty\left(Q_T\right)}$, the function $\ell$, and the coefficient $\kappa$. Additionally, let $\tau\ge 0$ denote an exponent of the parameter $\lambda$. Different values of $C$ and $\tau$ in different contexts are allowed. 

For $l,k\in \mathbb{N}$ and a space-time domain $Q_T$, we introduce an anisotropic Sobolev space
$$
\Hs^{l, k}\left(Q_T\right) = \left\{v\in \Ls^2\left(Q_T\right)\mid \partial_{\xb}^{\boldsymbol{\alpha}} \partial_t^r v \in \Ls^2\left(Q_T\right)\text{ for all } 0\le \abs{{\boldsymbol{\alpha}}}\le l,\ r=0,1,\ldots,k\right\},
$$
endowed with the norm $\norm{\cdot}_{\Hs^{l, k}\left(Q_T\right)}$ \cite[Section 1.4.1]{WYW2006}. When $l=k$, we obtain the standard Sobolev space $\Hs^k\left(Q_T\right)$. We denote by $\Hs^{1,0}_0\left(Q_T\right)$ the closure of $\Cs_0^1\left(Q_T\right)$ with respect to the norm $\norm{\cdot}_{\Hs^{1,0}\left(Q_T\right)}$. For convenience, we use the compact notation $\Ys := \Hs^{1,0}_0\left(Q_T\right)$, furnished with an equivalent norm
$$
\norm{v}_{\Ys}^2 := \int\limits_0^T \int\limits_\Om \kappa \abs{\nabla v}^2 \dx \dt \qqqq \forall v\in \Ys.
$$
The equivalence results from the Poincar{\' e}–Steklov inequality
\begin{equation}
    \label{eq: compare L2 and the Y norm}
    \norm{v}_{\Ls^2\left(Q_T\right)}\le C_P\norm{\nabla v}_{\LLs^2\left(Q_T\right)}\qqqq \forall v\in \Ys,
\end{equation}
where the constant $C_P>0$ depends only on the space-time domain $Q_T$ \cite[Lemma 3.27]{Ern2021}. The dual space of $\Ys$ is denoted by $\Ys^\prime$, and the duality pairing between $\Ys^\prime$ and $\Ys$ is denoted by $\inprod{\cdot, \cdot}$. Let us introduce the spaces
$$
\Xs := \brac{v \in \Ys \mid \partial_t v \in \Ys^\prime},\qqq \Xs_0 := \brac{v \in \Xs \mid v\left(\cdot, 0\right) = 0},\qqq \Xs_T := \brac{v \in \Xs \mid v\left(\cdot, T\right) = 0},
$$
equipped with the norm 
$$
\norm{v}^2_{\Xs} := \norm{v}_{\Ys}^2 + \norm{\partial_t v}_{\Ys^\prime}^2 \qqqq \forall v\in \Xs.
$$
Recall from \cite[Lemma 64.40]{Ern2021c} that in $\Xs$, the trace operator $v\in \Xs\rightarrow v\left(\cdot,t\right)\in \Ls^2\left(\Omega\right)$ is bounded for almost every $t\in \left[0,T\right]$. Specifically, the following inequality holds
\begin{equation}
    \label{eq: time trace inequality}
    \sup_{t\in \left[0,T\right]} \norm{v\left(\cdot,t\right)}_{\Ls^2\left(\Omega\right)}\le C\norm{v}_{\Xs}\qqqq \forall v\in \Xs.
\end{equation}

Besides these spaces, the space $\Hs^{1,0}\left(Q_1\cup Q_2\right)$ is also essential, as the trace operators $\gamma_i: \Hs^{1,0}\left(Q_i\right)\rightarrow \Ls^2\left(\Gamma^\ast\right)\ (i=1,2)$ are well-defined under mild assumptions on $\Gamma^\ast$ \cite[Theorem 2.1]{Lions1957}. In the subsequence, we will frequently use the space $\Ws := \Hs^{1}\left(Q_T\right)\cap\Hs^s\left(Q_1\cup Q_2\right)$, where $s>\frac{d+3}{2}$ is given. This space is equipped with the norm 
$$
\norm{v}^2_{\Ws}:= \norm{v}^2_{\Hs^1\left(Q_T\right)} + \norm{v}^2_{\Hs^s\left(Q_1\cup Q_2\right)} \qqqq \forall v\in \Ws.
$$

Let us further present the Stein extension operators \cite[Section VI.3.1]{Stein1971}, which are crucial for handling functions with global low but local high regularity. For any fixed $s\ge 0$ and a function $v \in \Hs^s\left(Q_1 \cup Q_2\right)$, denote by $v_i:=v_{\, \mid\, Q_i} \in \Hs^s\left(Q_i\right)$ the restriction of $v$ to the subdomain $Q_i\ \left(i=1,2\right)$. Assume that $\Gamma^\ast$ is a Lipschitz continuous hypersurface in $\mathbb{R}^{d+1}$, then there exist smooth extensions $\Es_i: \Hs^s\left(Q_i\right)\to \Hs^s\left(Q_T\right)$ such that
\begin{equation}
    \label{eq: extension operator}
    \Es_i v=v_i \qq \text{in } Q_i, \qqqq \norm{\Es_i v}_{\Hs^s\left(Q_T\right)} \leq C\norm{v_i}_{\Hs^s\left(Q_i\right)}\qq (i=1,2).
\end{equation}

\subsection{The advection-diffusion problem}
\label{subsec: the weak advection-diffusion problem}

Let $F\in \Ls^2\left(Q_T\right)$ and $U_0 \in \Hs^1_0\left(\Omega\right)$. Denote by $u_0 \in \Xs$ an extension of $U_0 \in \Hs^1_0\left(\Omega\right)$. Following \cite{NLPT2024}, the solution to Problem \eqref{eq: state equation} is defined as $U = \overline{u} + u_0 \in \Xs$, where $\overline{u}\in \Xs_0$ satisfies
$$
a\left(\overline{u}, \vphi\right) = \left(F, \vphi\right)_{\Ls^2\left(Q_T\right)} - a\left(u_0, \vphi\right)\qqqq \forall\vphi \in \Ys,
$$
with the bilinear form $a : \Xs\times \Ys \to \R$ given by
$$
a\left(u, \vphi\right) := \inprod{\partial_t u, \vphi}+\int\limits_0^T\int\limits_\Omega  \left(\vb\cdot\nabla u\right)\vphi + \kappa \nabla u \cdot \nabla \vphi \dx \dt.
$$
By the Banach-Ne{\v c}as-Babu{\v s}ka theorem \cite[Theorem 25.9]{Ern2021b}, this problem admits a unique solution $\overline{u}\in \Xs_0$ satisfying
\begin{equation}
    \label{eq: priori estimate}
    \norm{\overline{u}}_{\Xs} \leq C \left(\norm{F}_{\Ls^2\left(Q_T\right)} + \norm{u_0}_{\Xs}\right),
\end{equation}
where the constant $C>0$ is independent of $F$ and $u_0$. Consequently, for any choice of $u_0\in \Xs$, the following problem is well-posed
\begin{equation}
    \label{eq: general weak problem}
    a\left(U, \vphi\right) = \left(F, \vphi\right)_{\Ls^2\left(Q_T\right)}\qqqq \forall\vphi \in \Ys.
\end{equation}

\subsection{The inverse source problem}
\label{subsec: the inverse source problem}

Given $\ell\in \Ls^\infty\left(Q_T\right)$ and $g\in \Ls^2\left(Q_T\right)$ such that $\ell \ge \Ls >0$ and $g \ge 0$ at almost everywhere in $Q_T$, respectively. Let $F=\ell f+g$ with $f\in \Ls^2\left(Q_T\right)$ and $U_0 = 0$ in \eqref{eq: general weak problem}. In this case, $u_0=0$ and hence $U=\overline{u}\in \Xs_0$. Note that $\overline{u}\in \Xs_0$ can be separated as $\overline{u} = u + u^{\ast}$, where $u\in X_0$ solves the problem
\begin{equation}
    \label{eq: weak state equation}
    a\left(u, \vphi\right) = \left(\ell f, \vphi\right)_{\Ls^2\left(Q_T\right)}\qqqq \forall\vphi \in \Ys,
\end{equation}
and $u^{\ast}\in \Xs_0$ solves the problem
\begin{equation}
    \label{eq: weak state equation with g}
    a\left(u^{\ast}, \vphi\right) = \left(g, \vphi\right)_{\Ls^2\left(Q_T\right)}\qqqq \forall\vphi \in \Ys.
\end{equation}
Since $u^{\ast}$ is uniquely determined, our \textbf{Inverse problem} reduces to finding $f\in F_+$ in \eqref{eq: weak state equation} from the data $U_d\in \Ls^2\left(\omega_T\right)$ in the subdomain $\omega_T$, where the admissible set $F_+$ is defined as
\begin{equation}
    \label{eq: admissible set}
    F_{+} = \left\{f\in \Ls^2\left(Q_T\right)\mid f\ge 0 \text{ at almost everywhere in } Q_T\right\}.
\end{equation}
This set is nonempty, closed, and convex. Mathematically, we solve an operator equation with a priori information
\begin{equation}
    \label{eq: the inverse problem}
    Af = z_d, \qqqq f\in F_{+},
\end{equation}
where $A$ is a bounded linear operator, defined via \eqref{eq: weak state equation}
$$
\begin{aligned}
    A: \Ls^2\left(Q_T\right) &\to \Ls^2\left(\omega_T\right),\\
    f &\mapsto u\left(f\right)_{\, \mid \, \omega_T}.
\end{aligned}
$$
Here, we use the notation $u\left(f\right)$ to emphasize the dependence of $u$ in \eqref{eq: weak state equation} on $f$. The notation $z_d:=U_d - u^{\ast}_{\, \mid \, \omega_T}\in \Ls^2\left(\omega_T\right)$ represents the exact data. To avoid ambiguity, we interpret $z_d \equiv 0$ in $Q_T\setminus \overline{\omega_T}$ so that it is well-defined in $\Ls^2\left(Q_T\right)$.

Let us discuss the concept of solutions to Problem \eqref{eq: the inverse problem}. Firstly, this problem may not have solutions, since $z_d$ can be outside the restricted range $A\left(F_{+}\right)$. On the other hand, we can construct examples where two solutions of Problem \eqref{eq: weak state equation} coincide in $Q_2$, and hence in $\omega_T$, but exhibit different behavior in $Q_1$. Therefore, the operator $A$ is generally not injective, and Problem \eqref{eq: the inverse problem} may have many solutions. Consequently, it is essential to recall from \cite[Section 5.4]{EHN1996} the following definition:

\begin{definition}
    \label{defi: minimum norm solution}
    Let $F_{+}$ be the admissible set in \eqref{eq: admissible set}. An element $f_+ \in F_{+}$ is called the exact source if among all $f\in F_{+}$ that solve Problem \eqref{eq: the inverse problem}, it has the minimal $\Ls^2\left(Q_T\right)$-norm, i.e., 
    $$
    \norm{f_+ }_{\Ls^2\left(Q_T\right)} \le \norm{f}_{\Ls^2\left(Q_T\right)}.
    $$
\end{definition}

Clearly, $f_+ \in F_{+}$ is uniquely determined. Regarding the ill-posedness of Problem \eqref{eq: the inverse problem}, note that despite $A$ being a linear operator, this problem is nonlinear, due to the presence of the inequality constraint. Therefore, the ill-posedness criterion for linear problems (please refer to, for example, \cite[Section 1.5]{IVT2002}) does not apply. Instead, we adopt the local ill-posedness concepts in \cite[Definition 3]{HP2018} for nonlinear problems. The compact embedding $\Hs^{1,0}\left(Q_T\right)\hookrightarrow \Ls^2\left(Q_T\right)$ \cite[Theorem 2.35]{Ern2021} implies that $A$ is a compact operator. Thus, combined with the arguments in \cite[Section 1]{ZH2021}, we conclude that Problem \eqref{eq: the inverse problem} is locally ill-posed at every point in $F_{+}$.

\section{Tikhonov regularization}
\label{sec: tikhonov regularization}

The ill-posedness of Problem \eqref{eq: the inverse problem} indicates that its approximated solution does not depend continuously on the data. Therefore, regularization is required to overcome this challenge and derive a stable solution. In this work, we employ Tikhonov regularization: We approximate Problem \eqref{eq: the inverse problem} by the following problem
\begin{equation}
    \label{eq: problem formulation}
    \begin{aligned}
        & \min_{f\in F_{+}} J^{\varepsilon}_{\lambda}\left(f\right):= \dfrac{1}{2}\norm{u\left(f\right) - z_d^{\varepsilon}}^2_{\Ls^2\left(\omega_T\right)} + \dfrac{\lambda}{2}\norm{f}^2_{\Ls^2\left(Q_T\right)}, \\
        & \text{ subject to \eqref{eq: weak state equation}}.
    \end{aligned}
\end{equation}
Given a noise level $\varepsilon>0$, we denote by $U_d^{\varepsilon}\in \Ls^2\left(\omega_T\right)$ the imprecise observation of $U_d\in \Ls^2\left(\omega_T\right)$ that satisfies
\begin{equation}
    \label{eq: noise level}
    \norm{U_d^{\varepsilon} - U_d}_{\Ls^2\left(\omega_T\right)}\le \varepsilon,
\end{equation}
and $z_d^{\varepsilon}:= U_d^{\varepsilon} - u^{\ast}_{\, \mid \, \omega_T} \in \Ls^2\left(\omega_T\right)$ is the noisy data. As with Problem \eqref{eq: the inverse problem}, the data $z_d^{\varepsilon}$ is well-defined in $\Ls^2\left(Q_T\right)$ by interpreting $z^\varepsilon_d \equiv 0$ out side of $\omega_T$. Finally, $\lambda >0$ denotes the regularization parameter.

\subsection{Existence of solutions and optimality conditions}

To demonstrate the effectiveness of Tikhonov regularization, we first prove that Problem \eqref{eq: problem formulation} admits a unique solution. While the following theorem is based on classical results, we revisit it here for clarity and completeness within the context of regularizing an inverse problem with moving subdomains.

\begin{theorem}
    \label{theo: existence of the solution}
    For any fixed $\lambda>0$, Problem \eqref{eq: problem formulation} has a unique solution $f_{\lambda}^{\varepsilon}\in F_{+}$.
\end{theorem}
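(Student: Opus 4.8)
The plan is to prove existence and uniqueness by the direct method of the calculus of variations, exploiting the quadratic structure of $J^\veps_\lambda$ and the strict convexity coming from the Tikhonov term $\tfrac{\lambda}{2}\norm{f}^2_{\Ls^2(Q_T)}$. First I would record that the admissible set $F_+$ in \eqref{eq: admissible set} is nonempty, closed, and convex (as already noted after \eqref{eq: admissible set}), and that the solution map $f\mapsto u(f)$ defined by \eqref{eq: weak state equation} is linear and bounded from $\Ls^2(Q_T)$ into $\Xs$, hence by the time-trace inequality \eqref{eq: time trace inequality} and the embedding $\Xs\emb\Ls^2(Q_T)$ also bounded into $\Ls^2(\om_T)$; consequently the operator $A$ is bounded and linear, so $f\mapsto \norm{u(f)-z_d^\veps}^2_{\Ls^2(\om_T)}$ is convex and continuous. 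Adding the strictly convex, coercive term $\tfrac{\lambda}{2}\norm{f}^2_{\Ls^2(Q_T)}$ shows $J^\veps_\lambda$ is strictly convex, continuous, and coercive on $F_+$.

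Next I would run the standard minimizing-sequence argument. Since $J^\veps_\lambda\ge 0$, the infimum $m:=\inf_{f\in F_+}J^\veps_\lambda(f)$ is finite; pick a minimizing sequence $(f_n)\sst F_+$. Coercivity, i.e.\ $J^\veps_\lambda(f)\ge \tfrac{\lambda}{2}\norm{f}^2_{\Ls^2(Q_T)}$, gives a uniform bound $\norm{f_n}_{\Ls^2(Q_T)}\le C$, so after passing to a subsequence $f_n\wconv f_\lambda^\veps$ weakly in $\Ls^2(Q_T)$. Because $F_+$ is convex and closed it is weakly closed, so $f_\lambda^\veps\in F_+$. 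Linearity and boundedness of $A$ imply $Af_n\wconv Af_\lambda^\veps$ in $\Ls^2(\om_T)$ (one may equivalently test the weak formulation \eqref{eq: weak state equation} against a fixed $\vphi\in\Ys$ and pass to the limit, then use compactness of the embedding $\Hs^{1,0}(Q_T)\cpemb\Ls^2(Q_T)$ to upgrade to strong $\Ls^2$-convergence of $u(f_n)$ if desired). Weak lower semicontinuity of the convex continuous functional $J^\veps_\lambda$ (equivalently, of each squared norm under weak convergence) then yields $J^\veps_\lambda(f_\lambda^\veps)\le\liminf_n J^\veps_\lambda(f_n)=m$, so $f_\lambda^\veps$ is a minimizer. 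Uniqueness follows from strict convexity: if $f_1\ne f_2$ were two minimizers, then $\tfrac12(f_1+f_2)\in F_+$ would give $J^\veps_\lambda\big(\tfrac12(f_1+f_2)\big)<\tfrac12 J^\veps_\lambda(f_1)+\tfrac12 J^\veps_\lambda(f_2)=m$, a contradiction; here strictness is guaranteed by the $\lambda>0$ term even though $f\mapsto\norm{u(f)-z_d^\veps}^2_{\Ls^2(\om_T)}$ need only be convex.

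The only genuinely non-routine point is the weak-to-weak continuity of the control-to-observation map, i.e.\ that $f_n\wconv f_\lambda^\veps$ in $\Ls^2(Q_T)$ forces $u(f_n)_{\mid\om_T}\wconv u(f_\lambda^\veps)_{\mid\om_T}$ in $\Ls^2(\om_T)$. This is where the a priori estimate \eqref{eq: priori estimate} (specialized to $F=\ell f$, $u_0=0$) does the work: it gives $\norm{u(f_n)}_\Xs\le C\norm{\ell f_n}_{\Ls^2(Q_T)}\le C\norm{f_n}_{\Ls^2(Q_T)}\le C$, so $(u(f_n))$ is bounded in $\Xs$; extracting a further weakly convergent subsequence $u(f_n)\wconv w$ in $\Xs$ and passing to the limit in the linear identity $a(u(f_n),\vphi)=(\ell f_n,\vphi)_{\Ls^2(Q_T)}$ for each fixed $\vphi\in\Ys$ identifies $w$ as the unique solution $u(f_\lambda^\veps)$, and a subsequence-uniqueness argument shows the whole sequence converges. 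Restricting to $\om_T$ via the continuous trace $\Xs\to\Ls^2(\om_T)$ (from \eqref{eq: time trace inequality} together with $\Xs\emb\Ls^2(Q_T)$) then delivers the required convergence of the observations. Everything else is the textbook direct method; I would keep the write-up short and cite \cite[Theorem 25.9]{Ern2021b} and \eqref{eq: priori estimate} for the well-posedness of \eqref{eq: weak state equation} and the uniform bound.
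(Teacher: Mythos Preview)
Your proposal is correct and follows essentially the same direct-method approach as the paper: minimizing sequence, coercivity from the Tikhonov term, weak compactness, weak lower semicontinuity of the squared norms, and uniqueness via strict convexity. The only notable difference is that the paper, after extracting $u(f_n)\wconv u^\veps_\lambda$ in $\Xs$, spends a paragraph verifying explicitly via integration by parts that $u^\veps_\lambda(\cdot,0)=0$, i.e.\ that the weak limit lies in $\Xs_0$; your route either bypasses this entirely (by invoking directly that the bounded linear operator $A:\Ls^2(Q_T)\to\Ls^2(\om_T)$ is weak-to-weak continuous, which is the cleanest argument) or implicitly uses that $\Xs_0$ is the kernel of the bounded trace $v\mapsto v(\cdot,0)$ and hence weakly closed in $\Xs$. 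One small wording issue: the map $\Xs\to\Ls^2(\om_T)$ you need is just the restriction coming from $\Xs\sst\Ys\emb\Ls^2(Q_T)$ via \eqref{eq: compare L2 and the Y norm}, not the time-trace inequality \eqref{eq: time trace inequality}.
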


\begin{proof}
    Recall that the set $F_{+}$ is nonempty. Together with $J^{\varepsilon}_{\lambda}\left(f\right)\ge 0$ on $F_{+}$, we deduce that $\inf\limits_{f\in F_{+}} J^{\varepsilon}_{\lambda}\left(f\right)$ is finite. Hence, there exists a sequence $\left\{f_n\right\}_{n \in \mathbb{N}}\subset F_{+}$ such that 
    \begin{equation}
        \label{eq: existence of the solution 3}
        \lim_{n \rightarrow \infty} J^{\varepsilon}_{\lambda}\left(f_n\right) = \inf\limits_{f\in F_{+}} J^{\varepsilon}_{\lambda}\left(f\right).
    \end{equation}
    The inequality $\norm{f_n}^2_{\Ls^2\left(Q_T\right)} \le \frac{2}{\lambda}J^{\varepsilon}_{\lambda}\left(f_n\right)$ for all $n\in \mathbb{N}$ implies that the sequence $\left\{f_n\right\}_{n \in \mathbb{N}}$ is bounded in $\Ls^2\left(Q_T\right)$, which allows us to extract a (not relabeled) weakly convergent subsequence $\left\{f_n\right\}_{n \in \mathbb{N}}$ such that $f_n \rightharpoonup f_{\lambda}^{\varepsilon}$ in $\Ls^2\left(Q_T\right)$, with $f_{\lambda}^{\varepsilon} \in \Ls^2\left(Q_T\right)$. Moreover, there exists a sufficiently large $\rho > 0$ such that
    $$
    \left\{f_n\right\}_{n \in \mathbb{N}}\subset F_{+} \cap \overline{B}\left(\rho\right),
    $$
    where $\overline{B}\left(\rho\right)$ denotes a closed ball with radius $\rho >0$ in $\Ls^2\left(Q_T\right)$. Since $F_{+} \cap \overline{B}\left(\rho\right)$ is a closed, bounded, and convex subset of $\Ls^2\left(Q_T\right)$, it is weakly sequentially compact \cite[Theorem 2.10]{Troltzsch2010}. This gives us $f^\varepsilon_\lambda\in F_{+}$. 
    
    Consider the variational problem: Find $u_n := u\left(f_n\right)\in \Xs_0$ that satisfies
    \begin{equation}
        \label{eq: existence of the solution 1}
        a\left(u_n, \vphi\right) = \left(\ell f_n, \vphi\right)_{\Ls^2\left(Q_T\right)}\qqqq \forall\vphi \in \Ys.
    \end{equation}
    This problem is well-posed. From \eqref{eq: priori estimate}, we know that 
    $$
    \norm{u_n}_{\Xs} \le C \norm{f_n}_{\Ls^2\left(Q_T\right)}\qqqq \forall n \in \mathbb{N},
    $$
    which implies that the sequence $\left\{u_n\right\}_{n \in \mathbb{N}}$ is bounded in $\Xs$. Therefore, there exists $u_{\lambda}^{\varepsilon}\in \Xs$ and a (not relabeled) weakly convergent subsequence $\left\{u_n\right\}_{n \in \mathbb{N}}$ such that $u_n \rightharpoonup u_{\lambda}^{\varepsilon}$ in $\Xs$. Thus, for all $\vphi\in \Ys$, we have
    $$
    \lim_{n\to \infty} \left[\inprod{\partial_t u_n, \vphi}+ \int\limits_0^T\int\limits_\Omega \left(\vb\cdot \nabla u_n\right) \vphi + \kappa\nabla u_n \cdot \nabla \vphi \dx\dt\right] = \inprod{\partial_t u^\varepsilon_\lambda, \vphi} + \int\limits_0^T\int\limits_\Omega \left(\vb\cdot \nabla u^\varepsilon_\lambda\right) \vphi + \kappa\nabla u^\varepsilon_\lambda\cdot \nabla \vphi \dx\dt.
    $$
    Taking the limit in \eqref{eq: existence of the solution 1}, we obtain
    \begin{equation}
        \label{eq: existence of the solution 2}
        a\left(u_{\lambda}^{\varepsilon}, \vphi\right) = \left(\ell f_{\lambda}^{\varepsilon}, \vphi\right)_{\Ls^2\left(Q_T\right)}\qqqq \forall\vphi \in \Ys.
    \end{equation}
    Next, we will prove that $u^\varepsilon_\lambda \in \Xs_0$. To do this, we choose $\vphi\in \Xs_T$ in \eqref{eq: existence of the solution 1} and apply the integration by parts formula with $u_n\in \Xs_0$ and $\vphi\in \Xs_T$. One obtains
    $$
    -\inprod{\partial_t u_n, \vphi} + \int\limits_0^T\int\limits_\Omega \left(\vb\cdot\nabla u_n\right)\vphi + \kappa\nabla u_n \cdot\nabla \vphi\dx\dt = \left(\ell f_n, \vphi\right)_{\Ls^2\left(Q_T\right)}.
    $$
    We take $n\to \infty$ to get
    $$
    -\inprod{\partial_t u^\varepsilon_\lambda, \vphi} + \int\limits_0^T\int\limits_\Omega \left(\vb\cdot\nabla u^\varepsilon_\lambda\right)\vphi + \kappa\nabla u^\varepsilon_\lambda \cdot\nabla \vphi\dx\dt = \left(\ell f^\varepsilon_\lambda, \vphi\right)_{\Ls^2\left(Q_T\right)}.
    $$
    On the other hand, by integrating \eqref{eq: existence of the solution 2} by parts with $u^\varepsilon_\lambda\in \Xs$ and $\vphi\in \Xs_T$, we deduce that
    $$
    -\inprod{\partial_t u^\varepsilon_\lambda, \vphi} + \int\limits_0^T\int\limits_\Omega \left(\vb\cdot\nabla u^\varepsilon_\lambda\right)\vphi + \kappa\nabla u^\varepsilon_\lambda \cdot\nabla \vphi\dx\dt = \left(\ell f^\varepsilon_\lambda, \vphi\right)_{\Ls^2\left(Q_T\right)} + \int\limits_\Omega u^\varepsilon_\lambda\left(\xb, 0\right)\vphi\left(\xb, 0\right)\dx.
    $$
    From the last two identities, we infer that $u^\varepsilon_\lambda\left(\cdot,0\right)=0$, and hence $u^\varepsilon_\lambda\in \Xs_0$. Combining this with \eqref{eq: existence of the solution 2}, we conclude that $u^\varepsilon_\lambda = u\left(f^\varepsilon_\lambda\right)$. 
    
    Finally, we apply \eqref{eq: existence of the solution 3} and the lower semi-continuity of the $\Ls^2$-norms to get
    $$
    \begin{aligned}
        \inf\limits_{f\in F_{+}} J^{\varepsilon}_{\lambda}\left(f\right)=\liminf_{n \rightarrow \infty} J_{\lambda}^{\varepsilon}\left(f_n\right)&= \liminf_{n \rightarrow \infty} \dfrac{1}{2} \norm{u\left(f_n\right)-z^\varepsilon_d}^2_{\Ls^2\left(\omega_T\right)} + \liminf_{n \rightarrow \infty} \dfrac{\lambda}{2} \norm{f_n}^2_{\Ls^2\left(Q_T\right)}\\
        &\ge \dfrac{1}{2} \norm{u_{\lambda}^{\varepsilon}-z^\varepsilon_d}^2_{\Ls^2\left(\omega_T\right)} + \dfrac{\lambda}{2} \norm{f_{\lambda}^{\varepsilon}}^2_{\Ls^2\left(Q_T\right)}\\
        &= J_{\lambda}^{\varepsilon}\left(f_{\lambda}^{\varepsilon}\right),
    \end{aligned}
    $$
    which indicates that $f_{\lambda}^{\varepsilon}\in F_{+}$ is a minimizer. Uniqueness follows from the strict convexity of the functional $J_{\lambda}^{\varepsilon}$. The proof is complete.
\end{proof}

Next, we derive the optimality conditions for Problem \eqref{eq: problem formulation}. To do so, let us introduce the following adjoint problem: Find $p\left(f\right)\in \Xs_T$ such that
\begin{equation}
    \label{eq: weak adjoint equation}
    a^{\prime}\left(p\left(f\right),\phi\right) = \left(\chi_{\omega_T}\left(u\left(f\right)-z_d^{\varepsilon}\right), \phi\right)_{\Ls^2\left(Q_T\right)}\qqqq \forall \phi\in \Ys,
\end{equation}
where the bilinear form $a^\prime: \Xs \times \Ys \rightarrow \mathbb{R}$ is defined as
$$
a^{\prime}\left(p,\phi\right) := -\inprod{\partial_t p, \phi}+\int\limits_0^T\int\limits_\Omega  -\left(\vb\cdot\nabla p\right)\phi + \kappa \nabla p \cdot \nabla \phi \dx \dt,
$$
and $\chi_{\omega_T}$ is the characteristic function of the subdomain $\omega_T$. By changing the time and velocity field directions and applying \cite[Theorem 2.1]{NLPT2024}, we conclude the well-posedness of this problem.

\begin{theorem}
    \label{theo: optimality conditions}
    The unique solution $f^{\varepsilon}_{\lambda} \in F_{+}$ to Problem \eqref{eq: problem formulation}, together with the corresponding state $u^{\varepsilon}_{\lambda} \in \Xs_0$ and adjoint $p^{\varepsilon}_{\lambda} \in \Xs_T$, satisfies the following optimality conditions
    \begin{equation}
        \label{eq: weak state equation optimality conditions}
        a\left(u^{\varepsilon}_{\lambda}, \vphi\right) = \left(\ell f^{\varepsilon}_{\lambda}, \vphi\right)_{\Ls^2\left(Q_T\right)}\qqqq \forall\vphi \in \Ys,
    \end{equation}
    and 
    \begin{equation}
        \label{eq: weak adjoint equation optimality conditions}
        a^{\prime}\left(p^{\varepsilon}_{\lambda},\phi\right) = \left(\chi_{\omega_T}\left(u^{\varepsilon}_{\lambda}-z^{\varepsilon}_d\right),\phi\right)_{\Ls^2\left(Q_T\right)}\qqqq \forall \phi\in \Ys,
    \end{equation}
    and the variational inequality 
    \begin{equation}
        \label{eq: variational inequality}
        \left(\ell p^{\varepsilon}_{\lambda} + \lambda f^{\varepsilon}_{\lambda}, f- f^{\varepsilon}_{\lambda}\right)_{\Ls^2\left(Q_T\right)}\ge 0 \qqqq \forall f\in F_{+}.
    \end{equation}
    This inequality is equivalent to the projection formula
    \begin{equation}
        \label{eq: projection formula of f}
        f^{\varepsilon}_{\lambda}=\operatorname{Proj}_{F_{+}}\left(-\dfrac{1}{\lambda} \ell p^{\varepsilon}_{\lambda}\right),
    \end{equation}
    where the operator $\operatorname{Proj}_{F_{+}}: \Ls^2\left(Q_T\right)\rightarrow F_{+}$ is defined for all $v  \in \Ls^2\left(Q_T\right)$ as
    $$
    \operatorname{Proj}_{F_{+}}\left(v \right)\left(\xb, t\right) := \max \left\{0, v \left(\xb, t\right)\right\} \qqqq \text{for almost every } \left(\xb, t\right)\in Q_T.
    $$
\end{theorem}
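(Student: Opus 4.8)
The plan is to characterize the unique minimizer $f^\varepsilon_\lambda$ of the smooth, strictly convex functional $J^\varepsilon_\lambda$ over the nonempty, closed, convex set $F_+$ via a first-order variational inequality, and then to identify the gradient using the adjoint state. First I would show that the reduced functional $f\mapsto J^\varepsilon_\lambda(f)$ is Gâteaux (indeed Fréchet) differentiable on $\Ls^2(Q_T)$. Since $f\mapsto u(f)$ is the bounded linear solution operator of \eqref{eq: weak state equation} (well-posed by the estimate \eqref{eq: priori estimate}), the map $f\mapsto \tfrac12\norm{u(f)-z_d^\varepsilon}^2_{\Ls^2(\omega_T)}$ is a composition of a bounded linear map with a smooth quadratic, hence differentiable, and the Tikhonov term is obviously smooth. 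Then by the standard first-order optimality condition for convex minimization over a convex set \cite[Lemma 2.21]{Troltzsch2010}, $f^\varepsilon_\lambda$ is the minimizer if and only if
$$
\left(J^\varepsilon_\lambda\right)'(f^\varepsilon_\lambda)\left(f - f^\varepsilon_\lambda\right) \ge 0 \qqqq \forall f\in F_+.
$$

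Next I would compute this derivative explicitly. Writing $u^\varepsilon_\lambda := u(f^\varepsilon_\lambda)$, the directional derivative in direction $\delta f := f - f^\varepsilon_\lambda$ is
$$
\left(J^\varepsilon_\lambda\right)'(f^\varepsilon_\lambda)\delta f = \left(\chi_{\omega_T}(u^\varepsilon_\lambda - z_d^\varepsilon), u'(f^\varepsilon_\lambda)\delta f\right)_{\Ls^2(Q_T)} + \lambda\left(f^\varepsilon_\lambda, \delta f\right)_{\Ls^2(Q_T)},
$$
where $w := u'(f^\varepsilon_\lambda)\delta f \in \Xs_0$ solves $a(w,\vphi) = (\ell\,\delta f, \vphi)_{\Ls^2(Q_T)}$ for all $\vphi\in\Ys$ (this follows by linearity of \eqref{eq: weak state equation}). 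To eliminate $w$, introduce the adjoint $p^\varepsilon_\lambda := p(f^\varepsilon_\lambda)\in\Xs_T$ solving \eqref{eq: weak adjoint equation}, whose well-posedness is asserted in the excerpt via time/velocity reversal and \cite[Theorem 2.1]{NLPT2024}. Testing the adjoint equation with $\phi = w\in\Xs_0\subset\Ys$ gives $a'(p^\varepsilon_\lambda, w) = (\chi_{\omega_T}(u^\varepsilon_\lambda - z_d^\varepsilon), w)_{\Ls^2(Q_T)}$; testing the linearized state equation with $\vphi = p^\varepsilon_\lambda\in\Xs_T\subset\Ys$ gives $a(w, p^\varepsilon_\lambda) = (\ell\,\delta f, p^\varepsilon_\lambda)_{\Ls^2(Q_T)}$. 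The integration-by-parts identity relating $a$ and $a'$ — namely $a(w,p) = a'(p,w)$ for $w\in\Xs_0$, $p\in\Xs_T$, since the boundary terms $\int_\Omega w(\cdot,T)p(\cdot,T)\dx$ and $\int_\Omega w(\cdot,0)p(\cdot,0)\dx$ both vanish — then yields $(\chi_{\omega_T}(u^\varepsilon_\lambda - z_d^\varepsilon), w)_{\Ls^2(Q_T)} = (\ell p^\varepsilon_\lambda, \delta f)_{\Ls^2(Q_T)}$. Substituting back produces exactly
$$
\left(J^\varepsilon_\lambda\right)'(f^\varepsilon_\lambda)\delta f = \left(\ell p^\varepsilon_\lambda + \lambda f^\varepsilon_\lambda,\, f - f^\varepsilon_\lambda\right)_{\Ls^2(Q_T)} \ge 0 \qqqq \forall f\in F_+,
$$
which is \eqref{eq: variational inequality}, and \eqref{eq: weak state equation optimality conditions}, \eqref{eq: weak adjoint equation optimality conditions} are just the definitions of $u^\varepsilon_\lambda$ and $p^\varepsilon_\lambda$.

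Finally I would translate the variational inequality into the pointwise projection formula \eqref{eq: projection formula of f}. Setting $d := -\tfrac{1}{\lambda}\ell p^\varepsilon_\lambda\in\Ls^2(Q_T)$, the inequality reads $(f^\varepsilon_\lambda - d,\, f - f^\varepsilon_\lambda)_{\Ls^2(Q_T)}\ge 0$ for all $f\in F_+$, which is precisely the characterization of the metric projection of $d$ onto the closed convex set $F_+$ in the Hilbert space $\Ls^2(Q_T)$; hence $f^\varepsilon_\lambda = \operatorname{Proj}_{F_+}(d)$. Because $F_+$ consists of the a.e.-nonnegative functions, the projection decouples pointwise and equals the positive-part truncation $\max\{0,\cdot\}$, giving the stated formula. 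Conversely, any $f\in F_+$ satisfying the projection formula (equivalently the variational inequality) minimizes the convex $J^\varepsilon_\lambda$, so the conditions are both necessary and sufficient. The main obstacle is the bookkeeping in the integration-by-parts step: one must verify carefully that $w\in\Xs_0$ and $p^\varepsilon_\lambda\in\Xs_T$ have enough regularity for the trace terms at $t=0$ and $t=T$ to be well-defined and to vanish — this is exactly where the trace inequality \eqref{eq: time trace inequality} and the definitions of $\Xs_0$, $\Xs_T$ are used — and that $\Xs_0,\Xs_T\subset\Ys$ so the test functions are admissible; everything else is routine convex-analysis and linear-solution-operator manipulation.
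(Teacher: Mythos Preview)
Your proposal is correct and follows essentially the same adjoint-based route as the paper. One small point you gloss over: the identity $a(w,p)=a'(p,w)$ for $w\in\Xs_0$, $p\in\Xs_T$ requires not only the vanishing of the time-boundary traces you mention, but also the spatial integration-by-parts identity $\int_{Q_T}(\vb\cdot\nabla w)\,p\dx\dt = -\int_{Q_T}(\vb\cdot\nabla p)\,w\dx\dt$, which the paper verifies explicitly using the incompressibility $\nabla\cdot\vb=0$ and the homogeneous Dirichlet condition on $\partial\Omega\times(0,T)$; without this the advection contributions in $a$ and $a'$ do not match.
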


\begin{proof}
    Following the classical arguments in \cite[Lemma 2.21]{Troltzsch2010}, we prove that the functional $J^\varepsilon_\lambda$ defined in \eqref{eq: problem formulation} is Fr{\' e}chet differentiable and its gradient $\nabla J^\varepsilon_\lambda\left(f\right)$ at $f\in F_{+}$ is given by
    $$
    \nabla J^\varepsilon_\lambda\left(f\right)= \ell p\left(f\right) + \lambda f,
    $$
    where $p\left(f\right)\in \Xs_T$ is the solution to Problem \eqref{eq: weak adjoint equation}. \\
    To proceed, let us take a small variation $\delta f \in \Ls^2\left(Q_T\right)$ of $f\in F_{+}$; we have 
    $$
    \begin{aligned}
        &J^\varepsilon_\lambda\left(f + \delta f \right) - J^\varepsilon_\lambda\left(f \right)=\\
        &=\dfrac{1}{2}\norm{u\left(f + \delta f \right)-z^\varepsilon_d}^2_{\Ls^2\left(\omega_T\right)} - \dfrac{1}{2}\norm{u\left(f\right) -z^\varepsilon_d}^2_{\Ls^2\left(\omega_T\right)} + \dfrac{\lambda}{2}\norm{f+ \delta f}^2_{\Ls^2\left(Q_T\right)} - \dfrac{\lambda}{2}\norm{f}^2_{\Ls^2\left(Q_T\right)}\\
        &=\dfrac{1}{2}\norm{u\left(f + \delta f \right) - u\left(f\right)}^2_{\Ls^2\left(\omega_T\right)} + \left(u\left(f + \delta f \right) - u\left(f\right), u\left(f\right)-z^\varepsilon_d\right)_{\Ls^2\left(\omega_T\right)} + \dfrac{\lambda}{2}\norm{\delta f}^2_{\Ls^2\left(Q_T\right)} + \lambda \left(f, \delta f\right)_{\Ls^2\left(Q_T\right)}\\
        &=\dfrac{1}{2}\norm{u\left(\delta f\right)}^2_{\Ls^2\left(\omega_T\right)} + \left(u\left(\delta f\right), u\left(f\right)-z^\varepsilon_d\right)_{\Ls^2\left(\omega_T\right)} + \dfrac{\lambda}{2}\norm{\delta f}^2_{\Ls^2\left(Q_T\right)} + \lambda \left(f, \delta f\right)_{\Ls^2\left(Q_T\right)}.
    \end{aligned}
    $$
    Here, $u\left(\delta f\right) \in \Xs_0$ is the solution to the problem
    \begin{equation}
        \label{eq: state equation difference}
        a\left(u\left(\delta f\right), \vphi\right) = \left(\ell \delta f, \vphi\right)_{\Ls^2\left(Q_T\right)}\qqqq \forall\vphi \in \Ys.
    \end{equation}
    Thanks to \eqref{eq: compare L2 and the Y norm} and \eqref{eq: priori estimate}, we have $\norm{u\left(\delta f\right)}_{\Ls^2\left(\omega_T\right)} \le C \norm{u\left(\delta f\right)}_{\Xs} \le C \norm{\delta f}_{\Ls^2\left(Q_T\right)}$, resulting in
    $$
    \norm{u\left(\delta f\right)}^2_{\Ls^2\left(\omega_T\right)} = o\left(\norm{\delta f}_{\Ls^2\left(Q_T\right)}\right) \qqqq \text{as } \norm{\delta f}_{\Ls^2\left(Q_T\right)}\rightarrow 0.
    $$
    Thus, we imply
    \begin{equation}
        \label{eq: difference of J}
        J^\varepsilon_\lambda\left(f + \delta f \right) - J^\varepsilon_\lambda\left(f \right) = \left(u\left(\delta f\right), u\left(f\right)-z^\varepsilon_d\right)_{\Ls^2\left(\omega_T\right)} + \lambda \left(f, \delta f\right)_{\Ls^2\left(Q_T\right)} + o\left(\norm{\delta f}_{\Ls^2\left(Q_T\right)}\right).
    \end{equation}
    To derive the functional gradient, we rewrite the first term on the right-hand side of \eqref{eq: difference of J} as a scalar product in the solution space. Let $p\left(f\right)\in \Xs_T$ be the solution to Problem \eqref{eq: weak adjoint equation}. We choose $\phi = u\left(\delta f\right)\in \Xs_0$ in \eqref{eq: weak adjoint equation} to arrive at
    \begin{equation}
        \label{eq: optimality conditions 1}
        \begin{aligned}
            \left(u\left(\delta f\right), u\left(f\right)-z^\varepsilon_d\right)_{\Ls^2\left(\omega_T\right)} &= \left(u\left(\delta f\right), \chi_{\omega_T}\left(u\left(f\right)-z^\varepsilon_d\right)\right)_{\Ls^2\left(Q_T\right)} \\
            &= -\inprod{\partial_t p\left(f\right), u\left(\delta f\right)}+\int\limits_0^T\int\limits_\Omega  -\left[\vb\cdot\nabla p\left(f\right)\right]u\left(\delta f\right) + \kappa \nabla p\left(f\right) \cdot \nabla u\left(\delta f\right) \dx \dt.
        \end{aligned}
    \end{equation}
    By integrating by parts with $p\left(f\right)\in \Xs_T$ and $u\left(\delta f\right) \in \Xs_0$, we rewrite $\inprod{\partial_t p\left(f\right), u\left(\delta f\right)}$ as
    $$
    \begin{aligned}
        \inprod{\partial_t p\left(f\right), u\left(\delta f\right)} &= \int\limits_\Omega p\left(f\right)\left(\xb,T\right)u\left(\delta f\right)\left(\xb,T\right)\dx - \int\limits_\Omega p\left(f\right)\left(\xb,0\right)u\left(\delta f\right)\left(\xb,0\right)\dx - \inprod{\partial_t u\left(\delta f\right), p\left(f\right)} \\
        &= - \inprod{\partial_t u\left(\delta f\right), p\left(f\right)}.
    \end{aligned}
    $$
    To handle the advection part on the right-hand side of \eqref{eq: optimality conditions 1}, we invoke the divergence theorem, the incompressibility condition $\nabla\cdot \vb \left(\xb, t\right)= 0$ for all $\left(\xb, t\right)\in \Omega\times [0, T]$, and the homogeneous Dirichlet boundary condition. We obtain
    \begin{equation}
        \label{eq: optimality conditions 2}
        \begin{aligned}
            \int\limits_0^T\int\limits_\Omega \left[\vb\cdot\nabla p\left(f\right)\right] u\left(\delta f\right) \dx\dt &= \int\limits_0^T\int\limits_\Omega \nabla\cdot\left[p\left(f\right)u\left(\delta f\right) \vb\right] - \left[\vb\cdot\nabla u\left(\delta f\right)\right]p\left(f\right) - p\left(f\right)u\left(\delta f\right)\left(\nabla\cdot \vb\right)\dx\dt \\
            &= \int\limits_0^T\int\limits_{\partial\Omega} p\left(f\right)u\left(\delta f\right) \vb \cdot \mathbf{n}_{\Omega} \ds\dt - \int\limits_0^T\int\limits_\Omega \left[\vb\cdot\nabla u\left(\delta f\right)\right]p\left(f\right) \dx\dt\\
            &= - \int\limits_0^T\int\limits_\Omega \left[\vb\cdot\nabla u\left(\delta f\right)\right]p\left(f\right) \dx\dt,
        \end{aligned}
    \end{equation}
    where $\mathbf{n}_\Omega$ is the outward normal to $\partial\Omega$. Hence, \eqref{eq: optimality conditions 1} becomes
    $$
    \left(u\left(\delta f\right), u\left(f\right)-z^\varepsilon_d\right)_{\Ls^2\left(\omega_T\right)} = \inprod{\partial_t u\left(\delta f\right), p\left(f\right)} + \int\limits_0^T\int\limits_\Omega \left[\vb\cdot\nabla u\left(\delta f\right)\right]p\left(f\right) + \kappa \nabla u\left(\delta f\right) \cdot \nabla p\left(f\right) \dx\dt,
    $$
    which leads to
    \begin{equation}
        \label{eq: optimality conditions 3}
        \left(u\left(\delta f\right), u\left(f\right)-z^\varepsilon_d\right)_{\Ls^2\left(\omega_T\right)} = a\left(u\left(\delta f\right), p\left(f\right)\right) = \left(\ell p\left(f\right),\delta f\right)_{\Ls^2\left(Q_T\right)},
    \end{equation}
    where $\vphi =  p\left(f\right)\in \Xs_T$ was chosen in \eqref{eq: state equation difference} for the first equality. By substituting this into \eqref{eq: difference of J}, we conclude the Fr{\' e}chet differentiability of the functional $J^\varepsilon_\lambda$, together with its gradient.
\end{proof}

\subsection{Convergence}
\label{subsec: convergence}

This subsection investigates some convergence properties of Tikhonov regularization. We begin by showing that the solution $f^{\varepsilon}_{\lambda} \in F_{+}$ to Problem \eqref{eq: problem formulation} is stable with respect to the perturbations in the data $z^{\varepsilon}_d\in \Ls^2\left(\omega_T\right)$. The following theorem is a constrained variant of the results presented in \cite[Theorem 2.1]{EKN1989} and \cite[Theorem 2.2]{JLW2020}.

\begin{theorem}
    \label{theo: stable convergence}
    For any fixed $\lambda > 0$, let $\left\{z_n\right\}_{n\in \mathbb{N}}\subset \Ls^2\left(\omega_T\right)$ be a sequence that strongly converges to $z_d^{\varepsilon}$ in $\Ls^2\left(\omega_T\right)$, and let $\left\{f_n\right\}_{n\in \mathbb{N}}\subset F_{+}$ denote the sequence of solutions to the corresponding problems
    \begin{equation}
        \label{eq: stable convergence}
        \begin{aligned}
            & \min_{f\in F_{+}} \dfrac{1}{2}\norm{u\left(f\right) - z_n}^2_{\Ls^2\left(\omega_T\right)} + \dfrac{\lambda}{2}\norm{f}^2_{\Ls^2\left(Q_T\right)},\qqqq n\in \mathbb{N},\\
            & \text{ subject to \eqref{eq: weak state equation}}.
        \end{aligned}
    \end{equation}
    Then, $\left\{f_n\right\}_{n\in \mathbb{N}}$ strongly converges to the solution $f^{\varepsilon}_{\lambda} \in F_{+}$ of Problem \eqref{eq: problem formulation} in $\Ls^2\left(Q_T\right)$.
\end{theorem}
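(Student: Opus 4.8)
The plan is to run the direct method: extract a weak subsequential limit of $\{f_n\}$, identify it with $f^\varepsilon_\lambda$ through the minimality inequalities, and then upgrade weak convergence to strong convergence by proving that $\norm{f_n}_{\Ls^2\left(Q_T\right)} \to \norm{f^\varepsilon_\lambda}_{\Ls^2\left(Q_T\right)}$.

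First I would establish a uniform bound. Since $f_n$ minimizes the $n$-th functional $J_n$ in \eqref{eq: stable convergence} and $0 \in F_{+}$ with $u(0) = 0$ by \eqref{eq: weak state equation}, testing the minimality at $f = 0$ gives $\tfrac{\lambda}{2}\norm{f_n}^2_{\Ls^2\left(Q_T\right)} \le J_n(f_n) \le J_n(0) = \tfrac12\norm{z_n}^2_{\Ls^2\left(\omega_T\right)}$, and the right-hand side is bounded in $n$ because $z_n \to z_d^\varepsilon$ in $\Ls^2\left(\omega_T\right)$. Hence $\{f_n\}$ is bounded in $\Ls^2\left(Q_T\right)$, and I can extract a (not relabeled) subsequence with $f_n \rightharpoonup \bar f$ in $\Ls^2\left(Q_T\right)$; since $F_{+}$ is closed and convex, it is weakly sequentially closed, so $\bar f \in F_{+}$.

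Next I would identify $\bar f = f^\varepsilon_\lambda$. The operator $A : f \mapsto u(f)_{\mid \omega_T}$ is linear, bounded, and compact (Section \ref{subsec: the inverse source problem}), so $u(f_n)_{\mid \omega_T} = A f_n \to A\bar f = u(\bar f)_{\mid \omega_T}$ strongly in $\Ls^2\left(\omega_T\right)$. For an arbitrary $f \in F_{+}$, the minimality of $f_n$ gives $J_n(f_n) \le J_n(f)$; letting $n \to \infty$, the right-hand side tends to $J^\varepsilon_\lambda(f)$ by $z_n \to z_d^\varepsilon$, while $\liminf_n J_n(f_n) \ge J^\varepsilon_\lambda(\bar f)$ because the misfit term converges and the regularization term $\tfrac{\lambda}{2}\norm{\cdot}^2_{\Ls^2\left(Q_T\right)}$ is weakly lower semicontinuous. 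Thus $J^\varepsilon_\lambda(\bar f) \le J^\varepsilon_\lambda(f)$ for every $f \in F_{+}$, so $\bar f$ solves Problem \eqref{eq: problem formulation}, and by the uniqueness in Theorem \ref{theo: existence of the solution}, $\bar f = f^\varepsilon_\lambda$. As the limit is independent of the chosen subsequence, the whole sequence satisfies $f_n \rightharpoonup f^\varepsilon_\lambda$ in $\Ls^2\left(Q_T\right)$.

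The decisive step --- and the one I expect to be the main obstacle --- is upgrading this to strong convergence, since a weak limit alone does not close the argument. Taking $f = f^\varepsilon_\lambda$ in $J_n(f_n) \le J_n(f)$ gives $\limsup_n J_n(f_n) \le J^\varepsilon_\lambda(f^\varepsilon_\lambda)$, which combined with the lower bound $\liminf_n J_n(f_n) \ge J^\varepsilon_\lambda(f^\varepsilon_\lambda)$ from the previous step yields $J_n(f_n) \to J^\varepsilon_\lambda(f^\varepsilon_\lambda)$. Since $u(f_n)_{\mid\omega_T} \to u(f^\varepsilon_\lambda)_{\mid\omega_T}$ and $z_n \to z_d^\varepsilon$ strongly, the misfit term $\tfrac12\norm{u(f_n) - z_n}^2_{\Ls^2\left(\omega_T\right)}$ converges to $\tfrac12\norm{u(f^\varepsilon_\lambda) - z_d^\varepsilon}^2_{\Ls^2\left(\omega_T\right)}$; subtracting, $\tfrac{\lambda}{2}\norm{f_n}^2_{\Ls^2\left(Q_T\right)} \to \tfrac{\lambda}{2}\norm{f^\varepsilon_\lambda}^2_{\Ls^2\left(Q_T\right)}$, and since $\lambda > 0$ this is the desired convergence $\norm{f_n}_{\Ls^2\left(Q_T\right)} \to \norm{f^\varepsilon_\lambda}_{\Ls^2\left(Q_T\right)}$. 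In the Hilbert space $\Ls^2\left(Q_T\right)$, weak convergence together with convergence of norms implies strong convergence, as $\norm{f_n - f^\varepsilon_\lambda}^2_{\Ls^2\left(Q_T\right)} = \norm{f_n}^2_{\Ls^2\left(Q_T\right)} - 2\left(f_n, f^\varepsilon_\lambda\right)_{\Ls^2\left(Q_T\right)} + \norm{f^\varepsilon_\lambda}^2_{\Ls^2\left(Q_T\right)} \to 0$. If one prefers to avoid invoking the compactness of $A$, mere weak continuity suffices: from $a_n + b_n \to a + b$ with $\liminf_n a_n \ge a$ and $\liminf_n b_n \ge b$ --- taking $a_n = \tfrac12\norm{u(f_n) - z_n}^2_{\Ls^2\left(\omega_T\right)}$ and $b_n = \tfrac{\lambda}{2}\norm{f_n}^2_{\Ls^2\left(Q_T\right)}$ --- one still deduces $a_n \to a$ and $b_n \to b$, hence the same norm convergence.
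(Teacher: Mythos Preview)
Your proof is correct and follows the same overall strategy as the paper: bound $\{f_n\}$, extract a weak subsequential limit in $F_+$, identify it as $f^\varepsilon_\lambda$ via minimality and uniqueness, then upgrade to strong convergence through convergence of norms. The differences are purely in execution. You invoke the compactness of $A$ to get strong convergence of the misfit term and then subtract it from $J_n(f_n)\to J^\varepsilon_\lambda(f^\varepsilon_\lambda)$ to obtain $\norm{f_n}_{\Ls^2(Q_T)}\to\norm{f^\varepsilon_\lambda}_{\Ls^2(Q_T)}$ directly; the paper instead uses only weak convergence of $u(f_n)$ and argues the norm convergence by contradiction, assuming $\limsup_n\norm{f_n}_{\Ls^2(Q_T)}>\norm{f^\varepsilon_\lambda}_{\Ls^2(Q_T)}$ and deriving a violation of the weak lower semicontinuity of the misfit. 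Your alternative route (the ``$a_n+b_n\to a+b$ with $\liminf a_n\ge a$, $\liminf b_n\ge b$'' splitting) is actually closest in spirit to what the paper does, since it requires only weak continuity of $A$ rather than compactness. All three variants are standard; your direct argument is a bit cleaner, while the paper's avoids any appeal to compactness.
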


\begin{proof}
    By Theorem \ref{theo: existence of the solution}, for each $n\in \mathbb{N}$, there exists a unique solution $f_n \in F_{+}$ to Problem \eqref{eq: stable convergence}. For any $f\in F_{+}$, we have
    $$
    \dfrac{1}{2}\norm{u\left(f_n\right) - z_n}^2_{\Ls^2\left(\omega_T\right)} + \dfrac{\lambda}{2}\norm{f_n}^2_{\Ls^2\left(Q_T\right)}\le \dfrac{1}{2}\norm{u\left(f\right) - z_n}^2_{\Ls^2\left(\omega_T\right)} + \dfrac{\lambda}{2}\norm{f}^2_{\Ls^2\left(Q_T\right)} \qqqq \forall n \in \mathbb{N},
    $$
    which implies the boundedness of the sequence $\left\{f_n\right\}_{n \in \mathbb{N}}$ in $\Ls^2\left(Q_T\right)$. By applying the technique in Theorem \ref{theo: existence of the solution}, we conclude the existence of an element $f^\varepsilon_\lambda\in F_{+}$ and a (not relabed) weakly convergent subsequence $\left\{f_n\right\}_{n \in \mathbb{N}}$ such that $f_n \rightharpoonup f^\varepsilon_\lambda$ in $\Ls^2\left(Q_T\right)$ and $u\left(f_n\right) \rightharpoonup u\left(f^\varepsilon_\lambda\right)$ in $\Xs$. This, together with the strong convergence of the sequence $\left\{z_n\right\}_{n\in \mathbb{N}}$ to $z^\varepsilon_d$ in $\Ls^2\left(\omega_T\right)$, gives us $u\left(f_n\right) - z_n \rightharpoonup u\left(f^\varepsilon_\lambda\right) - z^\varepsilon_d$ in $\Ls^2\left(\omega_T\right)$, and hence
    \begin{equation}
        \label{eq: stable convergence 3}
        \liminf_{n\to \infty} \norm{u\left(f_n\right) - z_n}^2_{\Ls^2\left(\omega_T\right)} \ge \norm{u\left(f^\varepsilon_\lambda\right) - z^\varepsilon_d}^2_{\Ls^2\left(\omega_T\right)}.
    \end{equation}
    By invoking the lower semi-continuity of the $\Ls^2$-norms, we deduce that
    \begin{equation}
        \label{eq: stable convergence 1}
        \begin{aligned}
        J^{\varepsilon}_{\lambda}\left(f\right) = \dfrac{1}{2}\norm{u\left(f\right) - z_d^{\varepsilon}}^2_{\Ls^2\left(\omega_T\right)} + \dfrac{\lambda}{2}\norm{f}^2_{\Ls^2\left(Q_T\right)} &\ge \lim\limits_{n\to \infty}\left(\dfrac{1}{2}\norm{u\left(f\right) - z_n}^2_{\Ls^2\left(\omega_T\right)} + \dfrac{\lambda}{2}\norm{f}^2_{\Ls^2\left(Q_T\right)}\right)\\
        &\ge \liminf\limits_{n\to \infty}\left(\dfrac{1}{2}\norm{u\left(f_n\right) - z_n}^2_{\Ls^2\left(\omega_T\right)} + \dfrac{\lambda}{2}\norm{f_n}^2_{\Ls^2\left(Q_T\right)}\right)\\
        &\ge \dfrac{1}{2}\norm{u\left(f^\varepsilon_\lambda\right) - z^\varepsilon_d}^2_{\Ls^2\left(\omega_T\right)} + \dfrac{\lambda}{2}\norm{f^\varepsilon_\lambda}^2_{\Ls^2\left(Q_T\right)} = J^{\varepsilon}_{\lambda}\left(f^\varepsilon_\lambda\right),
    \end{aligned}
    \end{equation}
    for all $f\in F_{+}$. Therefore, $f^\varepsilon_\lambda \in F_{+}$ solves Problem \eqref{eq: problem formulation}.
    
    Next, we will prove that the sequence $\left\{f_n\right\}_{n\in \mathbb{N}}$ strongly converges to $f^{\varepsilon}_{\lambda}$ in $\Ls^2\left(Q_T\right)$. By contradiction, suppose that $\lim\limits_{n\to \infty} \norm{f_n}_{\Ls^2\left(Q_T\right)} \ne \norm{f^\varepsilon_\lambda}_{\Ls^2\left(Q_T\right)}$, then
    \begin{equation}
        \label{eq: stable convergence 2}
        \theta := \limsup\limits_{n\to \infty} \norm{f_n}_{\Ls^2\left(Q_T\right)} > \liminf\limits_{n\to \infty} \norm{f_n}_{\Ls^2\left(Q_T\right)} \ge \norm{f^\varepsilon_\lambda}_{\Ls^2\left(Q_T\right)}.
    \end{equation}
    Since there exists a (not relabeled) subsequence $\left\{f_{n}\right\}_{n\in \mathbb{N}}$ such that $\lim\limits_{n\to \infty}\norm{f_{n}}_{\Ls^2\left(Q_T\right)} = \theta$, we choose $f= f^\varepsilon_\lambda \in F_{+}$ in \eqref{eq: stable convergence 1} to have
    $$
    J^\varepsilon_\lambda \left(f^\varepsilon_\lambda\right) = \liminf\limits_{n \to \infty}\left(\dfrac{1}{2}\norm{u\left(f_{n}\right) - z_{n}}^2_{\Ls^2\left(\omega_T\right)} + \dfrac{\lambda}{2}\norm{f_{n}}^2_{\Ls^2\left(Q_T\right)}\right)= \liminf\limits_{n \to \infty}\dfrac{1}{2}\norm{u\left(f_{n}\right) - z_{n}}^2_{\Ls^2\left(\omega_T\right)} + \dfrac{\lambda}{2}\theta^2.
    $$
    Combine this with \eqref{eq: stable convergence 2}, we arrive at
    $$
    \dfrac{1}{2}\norm{u\left(f^\varepsilon_\lambda\right) - z^\varepsilon_d}^2_{\Ls^2\left(\omega_T\right)} = \liminf\limits_{n \to \infty}\dfrac{1}{2}\norm{u\left(f_{n}\right) - z_{n}}^2_{\Ls^2\left(\omega_T\right)} + \dfrac{\lambda}{2}\left(\theta^2 - \norm{f^\varepsilon_\lambda}^2_{\Ls^2\left(Q_T\right)}\right) > \liminf\limits_{n \to \infty}\dfrac{1}{2}\norm{u\left(f_{n}\right) - z_{n}}^2_{\Ls^2\left(\omega_T\right)},
    $$
    which contradicts \eqref{eq: stable convergence 3}. The proof is finished. 
\end{proof}

Regarding the error in regularizing the exact source using Tikhonov regularization, let us recall the following result from \cite[Theorems 5.16 and 5.19]{EHN1996} for general linear inverse problems with convex constraints:

\begin{lemma}
    \label{lem: convergence rate}
    Let $f_+ \in F_{+}$ be the exact source and $f^{\varepsilon}_{\lambda}\in F_{+}$ be the solution to Problem \eqref{eq: problem formulation}. Assume that there exists $\zeta\in \Ls^2\left(\omega_T\right)$ with the minimal $\Ls^2\left(\omega_T\right)$-norm such that $f_+ =\operatorname{Proj}_{F_{+}}\left(A^\ast\zeta\right)$, where $A^\ast: \Ls^2\left(\omega_T\right) \to \Ls^2\left(Q_T\right)$ denotes the adjoint of the operator $A$ in \eqref{eq: the inverse problem}. Then, the following inequality holds
    $$
    \norm{f_+ - f^{\varepsilon}_{\lambda}}_{\Ls^2\left(Q_T\right)}\le \sqrt{\lambda}\norm{\zeta}_{\Ls^2\left(\omega_T\right)} + \dfrac{\varepsilon}{\sqrt{\lambda}}.
    $$
\end{lemma}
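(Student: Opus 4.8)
The plan is to run the classical Tikhonov convergence-rate argument, playing the minimality of $f^{\varepsilon}_{\lambda}$ off against the source condition on $f_+$. First I would record the three facts that get used repeatedly: since $u\left(f\right)_{\,\mid\,\omega_T} = Af$, the objective reads $J^{\varepsilon}_{\lambda}\left(f\right) = \tfrac12\norm{Af - z^\varepsilon_d}^2_{\Ls^2\left(\omega_T\right)} + \tfrac\lambda2\norm{f}^2_{\Ls^2\left(Q_T\right)}$; by Definition \ref{defi: minimum norm solution} the exact source satisfies $Af_+ = z_d$; and $\norm{z_d - z^\varepsilon_d}_{\Ls^2\left(\omega_T\right)} = \norm{U_d - U^\varepsilon_d}_{\Ls^2\left(\omega_T\right)}\le \varepsilon$ by \eqref{eq: noise level}.

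Next I would exploit optimality. Since $f_+\in F_{+}$, the minimality of $f^{\varepsilon}_{\lambda}$ gives $J^{\varepsilon}_{\lambda}\left(f^{\varepsilon}_{\lambda}\right)\le J^{\varepsilon}_{\lambda}\left(f_+\right)$, which, using $Af_+ = z_d$, becomes
$$
\tfrac12\norm{Af^{\varepsilon}_{\lambda} - z^\varepsilon_d}^2_{\Ls^2\left(\omega_T\right)} + \tfrac\lambda2\norm{f^{\varepsilon}_{\lambda}}^2_{\Ls^2\left(Q_T\right)} \le \tfrac{\varepsilon^2}{2} + \tfrac\lambda2\norm{f_+}^2_{\Ls^2\left(Q_T\right)}.
$$
Then I would split $\norm{f^{\varepsilon}_{\lambda}}^2 - \norm{f_+}^2 = \norm{f^{\varepsilon}_{\lambda} - f_+}^2 + 2\left(f_+, f^{\varepsilon}_{\lambda} - f_+\right)_{\Ls^2\left(Q_T\right)}$ by polarization and bring in the source condition $f_+ = \operatorname{Proj}_{F_{+}}\left(A^\ast\zeta\right)$: the variational characterization of the metric projection onto the closed convex set $F_{+}$, tested against $f^{\varepsilon}_{\lambda}\in F_{+}$, gives $\left(A^\ast\zeta - f_+, f^{\varepsilon}_{\lambda} - f_+\right)_{\Ls^2\left(Q_T\right)}\le 0$, hence
$$
\left(f_+, f^{\varepsilon}_{\lambda} - f_+\right)_{\Ls^2\left(Q_T\right)} \ge \left(A^\ast\zeta, f^{\varepsilon}_{\lambda} - f_+\right)_{\Ls^2\left(Q_T\right)} = \left(\zeta, Af^{\varepsilon}_{\lambda} - z_d\right)_{\Ls^2\left(\omega_T\right)}.
$$
Writing $Af^{\varepsilon}_{\lambda} - z_d = \left(Af^{\varepsilon}_{\lambda} - z^\varepsilon_d\right) + \left(z^\varepsilon_d - z_d\right)$ and substituting back, the estimate turns into
$$
\tfrac12\norm{Af^{\varepsilon}_{\lambda} - z^\varepsilon_d}^2_{\Ls^2\left(\omega_T\right)} + \tfrac\lambda2\norm{f^{\varepsilon}_{\lambda} - f_+}^2_{\Ls^2\left(Q_T\right)} + \lambda\left(\zeta, Af^{\varepsilon}_{\lambda} - z^\varepsilon_d\right)_{\Ls^2\left(\omega_T\right)} \le \tfrac{\varepsilon^2}{2} + \lambda\left(\zeta, z_d - z^\varepsilon_d\right)_{\Ls^2\left(\omega_T\right)}.
$$

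Finally I would complete the square. With $r := Af^{\varepsilon}_{\lambda} - z^\varepsilon_d$ one has $\tfrac12\norm{r}^2 + \lambda\left(\zeta, r\right)_{\Ls^2\left(\omega_T\right)} = \tfrac12\norm{r + \lambda\zeta}^2_{\Ls^2\left(\omega_T\right)} - \tfrac{\lambda^2}{2}\norm{\zeta}^2_{\Ls^2\left(\omega_T\right)} \ge -\tfrac{\lambda^2}{2}\norm{\zeta}^2_{\Ls^2\left(\omega_T\right)}$, while $\lambda\left(\zeta, z_d - z^\varepsilon_d\right)_{\Ls^2\left(\omega_T\right)}\le \lambda\varepsilon\norm{\zeta}_{\Ls^2\left(\omega_T\right)}$ by Cauchy--Schwarz and \eqref{eq: noise level}. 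This leaves
$$
\tfrac\lambda2\norm{f^{\varepsilon}_{\lambda} - f_+}^2_{\Ls^2\left(Q_T\right)} \le \tfrac{\varepsilon^2}{2} + \tfrac{\lambda^2}{2}\norm{\zeta}^2_{\Ls^2\left(\omega_T\right)} + \lambda\varepsilon\norm{\zeta}_{\Ls^2\left(\omega_T\right)},
$$
and after multiplying by $2/\lambda$ the right-hand side is exactly $\left(\sqrt\lambda\,\norm{\zeta}_{\Ls^2\left(\omega_T\right)} + \varepsilon/\sqrt\lambda\right)^2$, which is the claim. The computation is elementary throughout; the only place where care is required is the middle step — converting the cross term $\left(f_+, f^{\varepsilon}_{\lambda} - f_+\right)$ into an inner product with $A^\ast\zeta$ via the projection inequality, and then tracking the noise splitting $Af^{\varepsilon}_{\lambda} - z_d = \left(Af^{\varepsilon}_{\lambda} - z^\varepsilon_d\right) + \left(z^\varepsilon_d - z_d\right)$ correctly. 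Once that is in place everything collapses into a completed square. I note that only the existence of $\zeta$ (and not its minimal norm) enters this particular bound; the minimality hypothesis is inherited from \cite{EHN1996}, where it is relevant to the accompanying convergence statements.
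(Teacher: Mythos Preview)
Your proof is correct; the paper itself does not supply a proof of this lemma but simply cites \cite[Theorems 5.16 and 5.19]{EHN1996}, and your argument is precisely the classical Tikhonov convergence-rate computation underlying that reference, specialized to the projected source condition on the convex set $F_{+}$. Your closing remark is also apt: only the existence of $\zeta$ is used in the bound, while the minimal-norm requirement matters for the accompanying convergence statements in \cite{EHN1996} rather than for this particular inequality.
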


\section{Discretization}
\label{sec: discretization}

Assume that $\Omega$ is a polyhedron in $\mathbb{R}^{d}\ \left(d=1\text{ or } 2\right)$. The space-time domain $Q_T = \Omega\times \left(0, T\right)$ is partitioned into shape-regular simplicial finite elements by a family of interface-fitted meshes $\left\{\mathcal{T}_h\right\}_{h\in \left(0,h_\ast\right)}$, where $h\in \left(0,h_\ast\right)$ denotes the mesh size, with $h_\ast>0$ be given. More precisely, each element $K\in \mathcal{T}_h$ is a triangle (when $d=1$) or a tetrahedron (when $d=2$), and it can be classified into one of the following cases:

\begin{enumerate}
    \item $K$ lies entirely within $\overline{Q_1}$;
    \item $K$ lies entirely within $\overline{Q_2}$;
    \item $K$ intersects $\Gamma^\ast$ at $d+1$ vertices.
\end{enumerate}
For further details, please refer to \cite{NLPT2024}. Additionally, assume that the mesh $\mathcal{T}_h$ is quasi-uniform. Let $\Gamma_h^\ast$ denote the linear approximation of $\Gamma^\ast$, consisting of all edges (or faces) with nodes lying on $\Gamma^\ast$. The discrete interface $\Gamma_h^\ast$ divides $Q_T$ into two subdomains $Q_{1,h}$ and $Q_{2,h}$, which are the approximated counterparts of $Q_1$ and $Q_2$, respectively.

\subsection{Space-time interface-fitted method for state and adjoint problems}
\label{subsec: space-time interface-fitted method}

This subsection recalls the space-time interface-fitted method from \cite{NLPT2024}: On the mesh $\mathcal{T}_h$, we define the following space-time finite element spaces
$$
\Vs_{h,0} := \left\{\vphi_h \in \Cs\left(\overline{Q_T}\right)\mid \vphi_{h\, \mid \, K}\in \mathbb{P}_1\left(K\right) \text{ for all } K\in \mathcal{T}_h \text{, and }\vphi_h=0 \text{ on } \partial\Omega\times(0,T) \text{ and }\Omega\times\left\{0\right\}\right\},
$$
and
$$
\Vs_{h,T} := \left\{\phi_h \in \Cs\left(\overline{Q_T}\right)\mid \phi_{h\, \mid \, K}\in \mathbb{P}_1\left(K\right) \text{ for all } K\in \mathcal{T}_h \text{, and }\phi_h=0 \text{ on } \partial\Omega\times(0,T) \text{ and }\Omega\times\left\{T\right\}\right\},
$$
where for $m\in \mathbb{N}$ and $K\in \mathcal{T}_h$, the space $\mathbb{P}_m\left(K\right)$ consists of all polynomials of degree $m$ on $K$. Clearly, $\Vs_{h,0}\subset \Xs_0$ and $\Vs_{h,T}\subset \Xs_T$. 

We first consider the discrete state problem: For $\ell\in \Ls^{\infty}\left(Q_T\right)$ and $f\in \Ls^{2}\left(Q_T\right)$, find $u_h\left(f\right)\in \Vs_{h,0}$ that satisfies
\begin{equation}
    \label{eq: discrete state equation}
    a_h\left(u_h\left(f\right), \vphi_h\right) = \left(\ell f,\vphi_h\right)_{\Ls^2\left(Q_T\right)}\qqqq \forall \vphi_h\in \Vs_{h,0}.
\end{equation}
The bilinear form $a_h: \Xs_0 \times \Ys \to \mathbb{R}$ is defined as
$$
a_h\left(u, \vphi\right) := \inprod{\partial_t u, \vphi} + \int\limits_0^T\int\limits_\Omega \left(\vb\cdot \nabla u\right)\vphi +  \kappa_h \nabla u \cdot \nabla \vphi \dx \dt,
$$
where $\kappa_h$ approximates $\kappa$ by means of
$$
\kappa_h := 
\begin{cases}
    \kappa_{1}>0 & \text{in} \q Q_{1,h}, \\ 
    \kappa_{2}>0 & \text{in} \q Q_{2,h}.
\end{cases}
$$
Regarding the numerical analysis, we introduce the seminorm
$$
\norm{v}_{h}^2 : = \sum_{i=1}^2\int\limits_{Q_{i,h}} \kappa_i \abs{\nabla v}^2 \dx \dt \qqqq \fa v \in \Hs^{1,0}\left(Q_{1,h}\cup Q_{2,h}\right).
$$
When $v\in\Ys \subset \Hs^{1,0}\left(Q_{1,h}\cup Q_{2,h}\right)$, the right-hand side of this expression simplifies to
$$
\sum_{i=1}^2\int\limits_{Q_{i,h}} \kappa_i \abs{\nabla v}^2 \dx \dt =  \int\limits_0^T\int\limits_\Omega \kappa_h \abs{\nabla v}^2 \dx \dt,
$$
which defines an equivalent norm in $\Ys$, also denoted by $\norm{v}_{h}$. This norm, incorporating the coefficient $\kappa_h$, provides advantages for analyzing discrete problems compared to the norm $\norm{v}_{\Ys}$. Moreover, for $v\in \Xs_0$, it follows that
$$
\begin{aligned}
    \inprod{\partial_t v, v} +\int\limits_0^T\int\limits_\Omega \left(\vb\cdot \nabla v\right) v + \kappa_h \nabla v \cdot \nabla v \dx \dt &=\dfrac{1}{2}\norm{v\left(\cdot, T\right)}^2_{\Ls^2\left(\Omega\right)} - \dfrac{1}{2}\norm{v\left(\cdot, 0\right)}^2_{\Ls^2\left(\Omega\right)} - \int\limits_0^T\int\limits_\Omega \left(\vb\cdot \nabla v\right) v \dx\dt + \norm{v}_{h}^2\\
    &=\dfrac{1}{2}\norm{v\left(\cdot, T\right)}^2_{\Ls^2\left(\Omega\right)} - \dfrac{1}{2}\norm{v\left(\cdot, 0\right)}^2_{\Ls^2\left(\Omega\right)} + \norm{v}_{h}^2\\
    &\ge \norm{v}_{h}^2,
\end{aligned}
$$
owing to the integration by parts formula and the technique in \eqref{eq: optimality conditions 2}. Therefore, we obtain
\begin{equation}
    \label{eq: coercivity of ah}
    a_h\left(v,v\right) \ge \norm{v}_{h}^2 \qqqq \forall v \in \Xs_0.
\end{equation}
This inequality will be used later. On the space $\Hs^{1}\left(Q_{1,h}\cup Q_{2,h}\right)$, we introduce the seminorm
$$
\norm{v}_{h,0}^2 := \norm{v}_{h}^2 + \norm{\xi_h\left(v\right)}_{h}^2\qqqq \forall v \in \Hs^{1}\left(Q_{1,h}\cup Q_{2,h}\right),
$$
where $\xi_h\left(v\right)\in \Vs_{h,0}$ is the unique solution to the problem
$$
\int\limits_0^T\int\limits_\Omega \kappa_h \nabla \xi_h\left(v\right) \cdot \nabla \psi_h \dx \dt = \sum_{i=1}^2\int\limits_{Q_{i,h}} (\partial_t v) \psi_h \dx\dt \qqqq \forall \psi_h\in \Vs_{h,0}.
$$
By choosing $\psi_h = \xi_h\left(v\right)$ and applying \eqref{eq: compare L2 and the Y norm}, we end up with
\begin{equation}
    \label{eq: compare L2 and the X norm}
    \norm{\xi_h\left(v\right)}_{h}\le C \norm{\partial_t v}_{\Ls^2\left(Q_{1,h}\cup Q_{2,h}\right)}\qqqq \forall v \in \Hs^{1}\left(Q_{1,h}\cup Q_{2,h}\right).
\end{equation}
The following discrete stability condition holds
\begin{equation}
    \label{eq: stability}
    \sup_{\vphi_h \in \Vs_{h,0}\setminus \{0\}} \dfrac{a_h\left(u_h\left(f\right), \vphi_h\right)}{\norm{\vphi_h}_{h}} \ge C_s \norm{u_h\left(f\right)}_{h,0} \qqqq \fa u_h\left(f\right) \in \Vs_{h,0},
\end{equation}
ensuring that Problem \eqref{eq: discrete state equation} is well-posed, see \cite[Lemma 3.1]{NLPT2024}. Here, $C_s>0$ is a constant that depends only on the space-time domain $Q_T$, the norm $\norm{\vb}_{\LLs^\infty\left(Q_T\right)}$, and the coefficient $\kappa$.

Next, we present the discretization of Problem \eqref{eq: weak adjoint equation}. Similarly, we determine $p_h\left(f\right)\in \Vs_{h,T}$ such that
\begin{equation}
    \label{eq: discrete adjoint equation}
    a_h^{\prime}\left(p_h\left(f\right), \phi_h\right) = \left(\chi_{\omega_T}\left(u\left(f\right)-z_{d}^{\varepsilon}\right),\phi_h\right)_{\Ls^2\left(Q_T\right)}\qqqq \forall \phi_h\in \Vs_{h,T},
\end{equation}
where the bilinear form $a_h^{\prime}: \Xs_T \times \Ys \to \mathbb{R}$ is given by
$$
a_h^{\prime}\left(p, \phi\right) := -\inprod{\partial_t p, \phi} + \int\limits_0^T\int\limits_\Omega -\left(\vb\cdot \nabla p\right)\phi +  \kappa_h \nabla p \cdot \nabla \phi \dx \dt.
$$
On the space $\Hs^1\left(Q_{1,h}\cup Q_{2,h}\right)$, we introduce another seminorm
$$
\norm{v}_{h,T}^2 := \norm{v}_{h}^2 + \norm{\xi_h^\prime\left(v\right)}_{h}^2\qqqq \forall v \in \Hs^{1}\left(Q_{1,h}\cup Q_{2,h}\right),
$$
where $\xi_h^\prime\left(v\right)\in \Vs_{h,T}$ denotes the unique solution to the problem
$$
\int\limits_0^T\int\limits_\Omega \kappa_h \nabla \xi_h^\prime\left(v\right) \cdot \nabla \psi_h^\prime \dx \dt = \sum_{i=1}^2\int\limits_{Q_{i,h}} (\partial_t v) \psi_h^\prime \dx\dt \qqqq \forall \psi_h^\prime\in \Vs_{h,T}.
$$
Employing the technique for \eqref{eq: stability}, we establish the following stability condition
\begin{equation}
    \label{eq: stability adjoint}
    \sup_{\phi_h \in \Vs_{h,T}\setminus \{0\}} \dfrac{a^{\prime}_h\left(p_h\left(f\right), \phi_h\right)}{\norm{\phi_h}_{h}} \ge C_s \norm{p_h\left(f\right)}_{h,T} \qqqq \fa p_h\left(f\right) \in \Vs_{h,T},
\end{equation}
and concludes the well-posedness of Problem \eqref{eq: discrete adjoint equation}. 

\subsection{Discretization of the regularized source}
\label{subsec: discretization of the source}

For a subspace $F^d \subseteq \Ls^2\left(Q_T\right)$, which may be either finite-dimensional or the entire space $\Ls^2\left(Q_T\right)$, we denote by $F_{+}^h := F^d \cap F_{+}$ the discrete admissible set. We call an element $f^{\varepsilon}_{\lambda, h}\in F^h_{+}$ the discrete source if, together with the corresponding state $u^{\varepsilon}_{\lambda, h} \in \Vs_{h,0}$ and adjoint $ p^{\varepsilon}_{\lambda, h}\in \Vs_{h, T}$, it solves the system
\begin{equation}
    \label{eq: discrete state optimality conditions}
    a_h\left(u^{\varepsilon}_{\lambda, h}, \vphi_h\right) = \left(\ell f^{\varepsilon}_{\lambda, h}, \vphi_h\right)_{\Ls^2\left(Q_T\right)} \qqqq \forall\vphi_h \in \Vs_{h,0},
\end{equation}
and
\begin{equation}
    \label{eq: discrete adjoint optimality conditions}
    a^{\prime}_h\left(p^{\varepsilon}_{\lambda, h},\phi_h\right) = \left(\chi_{\omega_T}\left(u^{\varepsilon}_{\lambda, h} - z_{d,h}^{\varepsilon}\right),\phi_h\right)_{\Ls^2\left(Q_T\right)} \qqqq \forall \phi_h\in \Vs_{h,T},
\end{equation}
and the variational inequality
\begin{equation}
    \label{eq: discrete variational inequality}
    \left(\ell p^{\varepsilon}_{\lambda, h} + \lambda f^{\varepsilon}_{\lambda, h}, f_h- f^{\varepsilon}_{\lambda, h}\right)_{\Ls^2\left(Q_T\right)} \ge 0 \qqqq \forall f_h\in F^h_{+},
\end{equation}
where $z^\varepsilon_{d,h}:= U^\varepsilon_d - u^\ast_{h\, \mid\, \omega_T}\in \Ls^2\left(\omega_T\right)$ denotes the discrete data. Similar to Problem \eqref{eq: the inverse problem}, $z^\varepsilon_{d,h}$ can be interpreted as an element of $\Ls^2\left(Q_T\right)$. Here, $u^\ast_h \in \Vs_{h,0}$ approximates $u^\ast \in \Xs_0$ in \eqref{eq: weak state equation with g}, also using the space-time interface-fitted method. It can be defined analogously to $u_h\left(f\right)\in \Vs_{h,0}$ in \eqref{eq: discrete state equation}. 

This paper will not discuss the existence and uniqueness of solutions to Problem \eqref{eq: discrete state optimality conditions}--\eqref{eq: discrete variational inequality}. For simplicity, we assume that the discrete source exists uniquely.

The way we choose $F^d$ leads to the corresponding discretization method. Our work focuses on the following three approaches:

\begin{enumerate}
    \item Variational approach: We choose $F^d$ to be the whole space $\Ls^2\left(Q_T\right)$, which means that the two sets $F_{+}$ and $F^h_{+}$ coincide \cite{Hinze2005}. By invoking a projection form of \eqref{eq: discrete variational inequality}, we convert the discretization of the regularized source into the discrete treatment for a term involving the adjoint. We will address the error of this method in Subsection \ref{subsec: error variational}.
    \item Element-wise constant discretization: For simplicity, assume that the mesh $\mathcal{T}_h$ is used for discretizing the regularized source. We choose $F^d$ to be the space of element-wise constant functions \cite{Falk1973}, i.e.,
    \begin{equation}
        \label{eq: step functions space}
        F^d=\left\{f_h \in \Ls^{\infty}\left(Q_T\right)\mid f_{h\, \mid\, K}\in \mathbb{P}_0\left(K\right) \text{ for all } K\in \mathcal{T}_h\right\}.
    \end{equation}
    The error of this approach will be estimated in Subsection \ref{subsec: error constant}.
    \item Postprocessing strategy: $F^d$ is chosen as in the element-wise constant discretization. After computing the element-wise constant solution $f^{\varepsilon}_{\lambda, h}$ to Problem \eqref{eq: discrete state optimality conditions}--\eqref{eq: discrete variational inequality}, we project the term $\ell p^{\varepsilon}_{\lambda, h}$ onto $F_{+}$ to obtain a better approximation $f_h^\dagger$ \cite{MR2004}. Here, $p^{\varepsilon}_{\lambda, h}$ is defined in \eqref{eq: discrete adjoint optimality conditions}, and $f_h^\dagger$ is called the post-processing solution
    \begin{equation}
        \label{eq: post-processing solution}
        f_h^\dagger :=\operatorname{Proj}_{F_{+}}\left(-\dfrac{1}{\lambda} \ell p^{\varepsilon}_{\lambda, h}\right).
    \end{equation}
    The error analysis of this strategy will be discussed in Subsection \ref{subsec: error postprocessing}.
\end{enumerate}

\section{A priori error estimates for advection-diffusion problems}
\label{subsec: auxiliary results}

In this section, we focus on discretization errors of the state and adjoint, along with the stability of the discrete state. First, we present an auxiliary result regarding the mismatch between each space-time subdomain $Q_i$ and its approximated counterpart $Q_{i,h}\ \left(i=1,2\right)$. Let us define
$$
S_h^1 := Q_{1,h}\setminus \overline{Q_1 }= Q_2\setminus \overline{Q_{2,h}}\qqq \text{and} \qqq S_h^2 := Q_{2,h}\setminus \overline{Q_2} = Q_1\setminus \overline{Q_{1,h}},
$$
and $S_h := S_h^1\cup S_h^2$ (see Figure \ref{fig: mismatch region}). We denote by $\mathcal{T}_h^\ast := \left\{K\in \mathcal{T}_h\mid K\cap \Gamma^\ast \ne \varnothing\right\}$ the set of all interface elements.

\begin{figure}[!htp]
    \centering
    \includegraphics[width=0.35\textwidth]{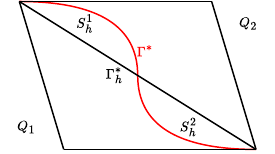}
    \caption{The region $S_h=S_h^1\cup S_h^2$ lies between the space-time interface $\Gamma^\ast$ (red) and the discrete one $\Gamma_h^\ast$ (black), considering $d=1$.}
    \label{fig: mismatch region}
\end{figure}

\begin{lemma}
    Assume that $\Gamma^\ast$ is a $\Cs^2$-continuous hypersurface in $\mathbb{R}^{d+1}\ \left(d=1\text{ or } 2\right)$. Then, for all $K\in \mathcal{T}_h^\ast$, we have 
    \begin{equation}
        \label{eq: discrepancy region}
        \abs{K \cap S_h} \le C h^{d+2}.
    \end{equation}
    There holds for the cardinality of $\mathcal{T}_h^\ast$ that
    \begin{equation}
        \label{eq: cardinality of Th-ast}
        \sum_{K\in \mathcal{T}_h^\ast} 1 \le C h^{-d}.
    \end{equation}
    The area (or volume) of the region formed by all interface elements is bounded by 
    \begin{equation}
        \label{eq: Th^ast area}
        \sum_{K\in \mathcal{T}_h^\ast}\abs{K}\le C h.
    \end{equation}
\end{lemma}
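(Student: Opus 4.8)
The three estimates concern the geometry of a $C^2$ hypersurface $\Gamma^\ast$ versus its piecewise-linear interpolant $\Gamma_h^\ast$, so the natural tool is the standard interpolation-error bound for $C^2$ surfaces together with the shape-regularity and quasi-uniformity of $\mathcal{T}_h$. I would first establish \eqref{eq: discrepancy region}. Fix $K\in \mathcal{T}_h^\ast$. Because $\Gamma^\ast$ is $C^2$, on the patch covered by $K$ it can be written locally as a graph $x_{d+1} = \phi(x_1,\dots,x_d)$ (after a rotation) with $\phi \in C^2$ and $\|D^2\phi\|_\infty$ controlled by the curvature bound of $\Gamma^\ast$, which is a fixed geometric quantity. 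The discrete interface $\Gamma_h^\ast$ restricted to $K$ is the affine interpolant of $\phi$ at the $d+1$ vertices of the interface face; classical interpolation theory on a shape-regular simplex of diameter $\le h$ gives $\|\phi - I_h\phi\|_{L^\infty} \le C h^2$ with $C$ depending only on $\|D^2\phi\|_\infty$ and the shape-regularity constant. The mismatch region $K\cap S_h$ is trapped between the two graphs over a base face of $d$-dimensional measure $\le C h^d$, hence $\abs{K\cap S_h} \le C h^d \cdot C h^2 = C h^{d+2}$. I expect the only mild technical care here is checking that the "graph over a base of size $h^d$" picture is uniform over all interface elements, which follows because $\Gamma^\ast$ is compact and $C^2$, so finitely many coordinate charts suffice and $h_\ast$ can be shrunk if needed.

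Next, \eqref{eq: cardinality of Th-ast} follows from a packing argument: every $K\in \mathcal{T}_h^\ast$ meets $\Gamma^\ast$, so $K$ lies within distance $Ch$ of $\Gamma^\ast$; the union of all such $K$ is contained in a tubular neighborhood of $\Gamma^\ast$ of width $Ch$, whose $(d+1)$-dimensional measure is $\le C h$ (here $C$ depends on the $d$-dimensional area of $\Gamma^\ast$, a fixed quantity). Since the mesh is quasi-uniform, $\abs{K} \ge c h^{d+1}$ for every element, and the interface elements have pairwise disjoint interiors, so their number is at most $(Ch)/(c h^{d+1}) = C h^{-d}$. Finally, \eqref{eq: Th^ast area} is immediate by combining the previous two facts together with $\abs{K}\le C h^{d+1}$ (quasi-uniformity again):
\[
\sum_{K\in \mathcal{T}_h^\ast}\abs{K} \le \paren{\sum_{K\in \mathcal{T}_h^\ast} 1}\cdot \max_{K\in \mathcal{T}_h^\ast}\abs{K} \le C h^{-d}\cdot C h^{d+1} = C h.
\]
Alternatively one can note directly that $\bigcup_{K\in\mathcal{T}_h^\ast} K$ is contained in the $Ch$-tube around $\Gamma^\ast$, whose measure is $\le Ch$, which gives the same bound without invoking \eqref{eq: cardinality of Th-ast}.

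The main obstacle, such as it is, is making the local-graph representation of $\Gamma^\ast$ genuinely uniform and quantitative: one needs a fixed radius $r_0>0$ and a fixed bound $M$ such that every point of $\Gamma^\ast$ has a neighborhood of radius $r_0$ in which $\Gamma^\ast$ is a $C^2$ graph with second derivatives bounded by $M$, and then one restricts to $h < h_\ast$ small enough that each interface element fits inside one such neighborhood. This is exactly where the hypotheses "$\Gamma^\ast$ is a $C^2$-regular compact hypersurface" and "$\Gamma(t)\cap\partial\Omega=\varnothing$" are used — compactness gives the finite atlas and the uniform curvature bound, and the separation from $\partial\Omega$ guarantees the interface elements are genuine interior simplices so that shape-regular interpolation estimates apply without boundary complications. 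Once this uniformity is in hand, every constant in the three displayed inequalities depends only on $\Gamma^\ast$ and on the shape-regularity/quasi-uniformity constants of $\{\mathcal{T}_h\}$, consistent with the convention on $C$ fixed at the start of the paper.
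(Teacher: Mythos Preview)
Your proposal is correct and follows essentially the same route as the paper, which proves the lemma largely by citation: it refers to Chen--Zou for \eqref{eq: discrepancy region}, to a lemma of Feistauer combined with \eqref{eq: discrepancy region} for \eqref{eq: cardinality of Th-ast}, and then derives \eqref{eq: Th^ast area} exactly as you do, from \eqref{eq: cardinality of Th-ast} together with $\abs{K}\le C h^{d+1}$. Your local-graph interpolation argument for \eqref{eq: discrepancy region} and your tubular-neighborhood packing argument for \eqref{eq: cardinality of Th-ast} are precisely the geometric content behind those citations, so you have simply unpacked what the paper leaves to the literature.
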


\begin{proof}
    When $d=1$, the proof of the first inequality can be found in \cite[Section 2]{CZ1998}. The second inequality follows by combining \eqref{eq: discrepancy region} with \cite[Lemma 3.3.4]{Fei1986}. All the arguments can be extended to the case $d=2$ without essential changes. As a consequence of \eqref{eq: cardinality of Th-ast} and the inequality $\abs{K}\le C h^{d+1}$ for all $K\in \mathcal{T}_h$, we obtain \eqref{eq: Th^ast area}.
\end{proof}

Let $v\in \Hs^1\left(Q_T\right)\cap \Hs^2\left(Q_1 \cup Q_2\right)$. By the Sobolev embedding theorem \cite[Theorem 2.35]{Ern2021}, if $v \in \Hs^2\left(Q_1 \cup Q_2\right)$, then $v\in \Cs\left(\overline{Q_1}\right)\cap \Cs\left(\overline{Q_2}\right)$. Additionally, if $v\in \Hs^1\left(Q_T\right)$, it follows from \cite[Theorem 18.8]{Ern2021} that $\gamma_1 v - \gamma_2 v = 0$, where $\gamma_1$ and $\gamma_2$ are the trace operators defined in Subsection \ref{subsec: functional setting}. Consequently, $v \in \Cs\left(\overline{Q_T}\right)$. 

We now study the approximability of the Lagrangian interpolant. Let $I_{h}: \Cs\left(\overline{Q_T}\right) \rightarrow \Vs_{h,0}$ be the nodal interpolation operator \cite[Section 19.3]{Ern2021}. For the case $Q_T \subset \mathbb{R}^2$, Chen and Zou established the interpolation estimate in \cite[Lemma 2.1]{CZ1998}. However, the order of convergence was only nearly optimal, up to a factor of $\sqrt{\abs{\log h}}$, where $h$ denotes the mesh size. 

This paper imposes an additional condition on $v$ and extends Chen and Zou's approach to recover an optimal error estimate for both cases $Q_T \subset \mathbb{R}^2$ and $Q_T \subset \mathbb{R}^3$. We obtain the following result:

\begin{lemma}
    \label{lem: interpolation error}
    For all $v\in \Ws$, the interpolation operator $I_h$ satisfies the following inequality
    $$
    \norm{v - I_h v}_{\Ls^2\left(Q_T\right)} + h \norm{\Ds\left(v - I_h v\right)}_{\LLs^2\left(Q_T\right)} \le C h^2 \norm{v}_{\Hs^s\left(Q_1\cup Q_2\right)},
    $$
    where $\Ds:=\left(\nabla, \partial_t \right)^\top$ denotes the space-time gradient operator.
\end{lemma}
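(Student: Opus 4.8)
The plan is to split the space-time domain into the interface elements $\mathcal{T}_h^\ast$ and the non-interface elements, and to estimate the interpolation error separately on each part, using the Stein extensions $\Es_i$ from \eqref{eq: extension operator} to obtain globally $\Hs^s$-regular functions on each subdomain.

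First I would handle the non-interface elements. If $K\in \mathcal{T}_h\setminus \mathcal{T}_h^\ast$, then $K$ lies entirely in $\overline{Q_1}$ or $\overline{Q_2}$, so on $K$ the function $v$ coincides with one of the restrictions $v_i \in \Hs^s\left(Q_i\right)$, and $v_i$ extends to $\Es_i v \in \Hs^s\left(Q_T\right)$. Since $s > \frac{d+3}{2} \ge 2$, the standard anisotropic polynomial interpolation estimate on a shape-regular simplex gives $\norm{v - I_h v}_{\Ls^2\left(K\right)} + h\norm{\Ds\left(v - I_h v\right)}_{\LLs^2\left(K\right)} \le C h^2 \norm{\Es_i v}_{\Hs^2\left(K\right)}$ (only second derivatives are needed, so $\Hs^s \subseteq \Hs^2$ suffices here). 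Summing over all such $K$ and using the extension bound $\norm{\Es_i v}_{\Hs^2\left(Q_T\right)} \le C\norm{v_i}_{\Hs^s\left(Q_i\right)}$ yields the desired bound over $\bigcup_{K\notin \mathcal{T}_h^\ast} K$.

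The main obstacle is the interface elements $K\in \mathcal{T}_h^\ast$, where $v$ is only piecewise $\Hs^s$ across $\Gamma^\ast\cap K$ and $v$ is merely $\Hs^1$ (hence continuous, as noted in the excerpt) globally, so the usual Bramble–Hilbert argument does not apply directly. Here I would follow Chen and Zou's device: since $v\in\Cs\left(\overline{Q_T}\right)$, the nodal interpolant $I_h v$ is well-defined, and on $K$ one compares $v$ with the extensions $\Es_1 v$ and $\Es_2 v$, which agree with $v$ on $K\cap \overline{Q_1}$ and $K\cap \overline{Q_2}$ respectively. Writing $I_h v$ as the $\Ls^\infty$-interpolant and using $v - \Es_i v \equiv 0$ on the corresponding sub-piece of $K$, the error on $K$ is controlled by $\norm{\Es_1 v - \Es_2 v}_{\Ls^\infty(K)}$ and $\norm{\Ds(\Es_1 v - \Es_2 v)}_{\Ls^\infty(K)}$ times the volume of the mismatch region $K\cap S_h$. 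The key improvement over \cite{CZ1998} is that, because $s > \frac{d+3}{2}$, the Sobolev embedding $\Hs^{s-2}(K) \hookrightarrow \Cs(\overline{K})$ holds for the second-derivative level (i.e.\ $\Hs^s(Q_i) \hookrightarrow \Cs^2(\overline{Q_i})$ after extension), so $\Es_i v$ has bounded second derivatives, eliminating the $\sqrt{\abs{\log h}}$ factor: one gets $\norm{v - I_h v}_{\Ls^2(K)}^2 \le C\abs{K\cap S_h}\,\norm{v}_{\Hs^s(Q_1\cup Q_2)}^2$ (plus the analogous $\Ls^2$-contributions of $v-\Es_i v$ on the two clean sub-pieces, handled as in the non-interface case), and similarly for the gradient with an extra $h^{-2}$ from the inverse estimate on $\mathbb{P}_1$.

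Finally I would assemble the pieces. Summing the interface-element contributions and invoking \eqref{eq: discrepancy region}, which gives $\abs{K\cap S_h}\le C h^{d+2}$, together with the cardinality bound \eqref{eq: cardinality of Th-ast}, namely $\sum_{K\in\mathcal{T}_h^\ast} 1 \le C h^{-d}$, produces
$$
\sum_{K\in \mathcal{T}_h^\ast}\norm{v - I_h v}_{\Ls^2(K)}^2 \le C h^{-d}\cdot h^{d+2}\cdot \norm{v}_{\Hs^s(Q_1\cup Q_2)}^2 = C h^{2}\norm{v}_{\Hs^s(Q_1\cup Q_2)}^2,
$$
and the gradient term is handled the same way, the $h^{-2}$ from the inverse estimate being absorbed into the extra powers of $h$ so that one loses only one power of $h$, consistent with the stated estimate. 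Adding the interface and non-interface contributions and taking square roots gives the claimed inequality. The one technical point requiring care is ensuring the nodal interpolant is well-defined and that the inverse estimate on the anisotropic (space-time) simplices is applied with the correct scaling in $h$ under the quasi-uniformity assumption on $\mathcal{T}_h$; both are standard but should be stated.
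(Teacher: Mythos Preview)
Your overall strategy (split into interface and non-interface elements, use the Stein extensions, and combine \eqref{eq: discrepancy region} with \eqref{eq: cardinality of Th-ast}) matches the paper, but the interface-element estimate as you state it is one order short. Your displayed sum yields
$$
\sum_{K\in\mathcal{T}_h^\ast}\norm{v-I_hv}_{\Ls^2(K)}^2\le Ch^{2}\norm{v}^2_{\Hs^s(Q_1\cup Q_2)},
$$
hence only $\norm{v-I_hv}_{\Ls^2(Q_T)}\le Ch$, whereas the lemma claims $Ch^2$. The gap is that on the mismatch region you bound $v-I_hv$ essentially by $\norm{\Es_1 v-\Es_2 v}_{\Ls^\infty(K)}$, which is $O(1)$ in $h$. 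The paper's device is different: on $K\cap S_h$ it writes the error as $\Es_1 v - I_h(\Es_1 v)$ and applies the $\Ls^\infty$ interpolation estimate $\norm{\Es_1 v - I_h(\Es_1 v)}_{\Ls^\infty(K)}\le Ch\,\norm{\Ds(\Es_1 v)}_{\LLs^\infty(K)}$, which is available precisely because $\Es_1 v\in\Hs^s(Q_T)\hookrightarrow\Ws^{1,\infty}(Q_T)$ for $s>\frac{d+3}{2}$. This gains an extra factor of $h$, so the per-element contribution is $Ch^{d+2}\cdot h^2=Ch^{d+4}$, and the sum over $\mathcal{T}_h^\ast$ is $Ch^4$ as required.

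Two smaller corrections. First, $s>\frac{d+3}{2}$ gives $\Hs^s\hookrightarrow\Ws^{1,\infty}$ but not $\Cs^2$; the latter would need $s>\frac{d+5}{2}$. Only the Lipschitz bound on $\Es_i v$ is used. Second, you cannot apply an inverse estimate to $\Ds(v-I_hv)$ because $v$ is not piecewise polynomial. The gradient bound is obtained by repeating the same splitting: on the mismatch part one uses the stability bound $\norm{\Ds(\Es_1 v-I_h(\Es_1 v))}_{\LLs^\infty(K)}\le C\norm{\Ds(\Es_1 v)}_{\LLs^\infty(K)}$ (no gain in $h$), and on the remainder the standard $\Hs^2$ estimate; summation then gives $\norm{\Ds(v-I_hv)}_{\LLs^2(Q_T)}\le Ch$.
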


\begin{proof}
    Let us present the estimate of $\norm{v - I_h v}_{\Ls^2\left(Q_T\right)}$. The result for $\norm{\Ds\left(v - I_h v\right)}_{\LLs^2\left(Q_T\right)}$ can be proved with the same technique. The idea is to first estimate the interpolation error on an arbitrary element $K\in\mathcal{T}_h$, then sum over all elements to obtain the desired result. 
    
    Under the regularity assumption, we have $v\in \Hs^2\left(K\right)$ on any $K\notin \mathcal{T}_h^\ast$. The classical interpolation theory \cite[Theorem 11.13]{Ern2021} gives us
    \begin{equation}
        \label{eq: interpolation error 1}
        \norm{v - I_h v}_{\Ls^2\left(K\right)} \le C h^2 \norm{v}_{\Hs^2\left(K\right)}.
    \end{equation}
    
    Next, consider an arbitrary element $K\in \mathcal{T}_h^\ast$. Without loss of generality, suppose that $K\cap S_h \subset Q_1$ and $K\setminus \overline{S_h} \subset Q_2$. For $v \in \Hs^s\left(Q_1 \cup Q_2\right)$ with $s>\frac{d+3}{2}$, note that $\Es_i v \in \Hs^s\left(Q_T\right)\subset \Ws^{1,\infty}\left(Q_T\right)\ (i=1,2)$ \cite[Theorem 2.31]{Ern2021}, where $\Es_1$ and $\Es_2$ are the extension operators in \eqref{eq: extension operator}. By applying \eqref{eq: discrepancy region} and the classical interpolation theory, we have
    $$
    \begin{aligned}
        \norm{v - I_h v}^2_{\Ls^2\left(K\right)} &= \norm{\Es_1 v - I_h\left(\Es_1 v\right)}^2_{\Ls^{2}\left(K\cap S_h\right)} + \norm{\Es_2 v - I_h\left(\Es_2 v\right)}^2_{\Ls^{2}\left(K\setminus \overline{S_h}\right)}\\
        &\le \abs{K\cap S_h}\norm{\Es_1 v - I_h\left(\Es_1 v\right)}^2_{\Ls^{\infty}\left(K\cap S_h\right)} + \norm{\Es_2 v - I_h\left(\Es_2 v\right)}^2_{\Ls^2\left(K\setminus \overline{S_h}\right)}\\
        &\le C h^{d+2}\norm{\Es_1 v - I_h\left(\Es_1 v\right)}^2_{\Ls^{\infty}\left(K\right)} + \norm{\Es_2 v - I_h \left(\Es_2 v\right)}^2_{\Ls^2\left(K\right)}\\
        &\le C h^{d+4}\norm{\Ds\left(\Es_1 v\right)}^2_{\LLs^{\infty}\left(K\right)} + C h^4\norm{\Ds^2\left(\Es_2 v\right)}^2_{\mathbb{L}^2\left(K\right)},
    \end{aligned}
    $$
    where $\Ds^2$ refers to the space-time Hessian operator. We then sum over all elements $K\in \mathcal{T}_h^\ast$. By using \eqref{eq: cardinality of Th-ast}, the Sobolev embedding $\HHs^{s-1}\left(Q_T\right)\hookrightarrow \LLs^{\infty}\left(Q_T\right)$ for $s>\frac{d+3}{2}$ \cite[Theorem 2.31]{Ern2021}, and \eqref{eq: extension operator} again, we get
    \begin{equation}
        \label{eq: interpolation error 2}
        \begin{aligned}
            &\sum_{K\in \mathcal{T}_h^\ast} \norm{v - I_h v}^2_{\Ls^2\left(K\right)} \le\\
            &\le C h^{d+4} \max{\left\{\norm{\Ds\left(\Es_1 v\right)}_{\LLs^{\infty}\left(Q_T\right)}^2, \norm{\Ds\left(\Es_2 v\right)}_{\LLs^{\infty}\left(Q_T\right)}^2\right\}} \left(\sum_{K\in \mathcal{T}_h^\ast} 1\right) + Ch^4\left(\sum_{i=1}^2\norm{\Ds^2\left(\Es_i v\right)}^2_{\mathbb{L}^2\left(Q_T\right)}\right)\\
            &\le C h^4 \max{\left\{\norm{\Ds\left(\Es_1 v\right)}_{\HHs^{s-1}\left(Q_T\right)}^2, \norm{\Ds\left(\Es_2 v\right)}_{\HHs^{s-1}\left(Q_T\right)}^2\right\}} + C h^4 \norm{v}^2_{\Hs^2\left(Q_1\cup Q_2\right)}\\
            &\le C h^4 \norm{v}^2_{\Hs^s\left(Q_1\cup Q_2\right)}.
        \end{aligned}
    \end{equation}
    
    From \eqref{eq: interpolation error 1} and \eqref{eq: interpolation error 2}, we obtain $\norm{v - I_h v}_{\Ls^2\left(Q_T\right)} \le C h^2 \norm{v}_{\Hs^s\left(Q_1\cup Q_2\right)}$. By following the same arguments, we can prove that $\norm{\Ds\left(v - I_h v\right)}_{\LLs^2\left(Q_T\right)} \le Ch\norm{v}_{\Hs^s\left(Q_1\cup Q_2\right)}$. The proof is complete.
\end{proof}

For ease of presentation, we will neglect the dependence of both the state and adjoint on $f$ in this section. For instance, the state is denoted by $u$ instead of $u\left(f\right)$. Note that the mismatch between $\mathcal{T}_h$ and $Q_T$ at the interface leads to the inconsistency of $a_h\left(\cdot,\cdot\right)$. In particular, we have the following lemma:

\begin{lemma}
    \label{lem: discrepancy between a and ah}
    Let $u\in \Xs_0$ and $u_h\in \Vs_{h,0}$ be the solutions to Problems \eqref{eq: weak state equation} and \eqref{eq: discrete state equation}, respectively. Then, for all $\vphi_h \in \Vs_{h,0}$, the following equality holds
    $$
    a_h\left(u-u_h,\vphi_h\right) = \int\limits_{S_h} \left(\kappa_h - \kappa\right) \nabla u \cdot\nabla \vphi_h \dx\dt.
    $$
\end{lemma}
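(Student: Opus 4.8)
The plan is to exploit the Galerkin orthogonality that holds on the \emph{continuous} domain and convert it into an identity on the \emph{discrete} domain, picking up exactly the mismatch-region integral as the defect. First I would write down the weak formulations: since $u\in\Xs_0$ solves \eqref{eq: weak state equation} and $\Vs_{h,0}\subset\Xs_0\subset\Ys$, testing with $\vphi_h\in\Vs_{h,0}$ gives
\[
a\left(u,\vphi_h\right) = \left(\ell f,\vphi_h\right)_{\Ls^2\left(Q_T\right)}.
\]
Likewise, $u_h\in\Vs_{h,0}$ solves \eqref{eq: discrete state equation}, so $a_h\left(u_h,\vphi_h\right) = \left(\ell f,\vphi_h\right)_{\Ls^2\left(Q_T\right)}$. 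Subtracting, the right-hand sides cancel and we are left with
\[
a_h\left(u_h,\vphi_h\right) = a\left(u,\vphi_h\right),
\qquad\text{hence}\qquad
a_h\left(u-u_h,\vphi_h\right) = a_h\left(u,\vphi_h\right) - a\left(u,\vphi_h\right).
\]
So the whole lemma reduces to computing the difference $a_h\left(u,\vphi_h\right) - a\left(u,\vphi_h\right)$ for a fixed $u\in\Xs_0$.

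Next I would compare the two bilinear forms term by term. Both $a$ and $a_h$ have the same time-derivative term $\inprod{\partial_t u,\vphi_h}$ and the same advection term $\int_0^T\int_\Omega (\vb\cdot\nabla u)\vphi_h\dx\dt$; these are defined on $Q_T$ in both cases and involve no interface-dependent coefficient, so they cancel exactly in the difference. The only discrepancy is the diffusion term: $a$ uses $\kappa$ while $a_h$ uses $\kappa_h$. Therefore
\[
a_h\left(u,\vphi_h\right) - a\left(u,\vphi_h\right)
= \int\limits_0^T\int\limits_\Omega \left(\kappa_h-\kappa\right)\nabla u\cdot\nabla\vphi_h \dx\dt.
\]
The final step is to observe that $\kappa_h-\kappa$ vanishes almost everywhere outside $S_h$: on $Q_{1,h}\cap Q_1$ both equal $\kappa_1$, on $Q_{2,h}\cap Q_2$ both equal $\kappa_2$, and the symmetric difference of the decompositions $\{Q_{1,h},Q_{2,h}\}$ and $\{Q_1,Q_2\}$ is precisely $S_h = S_h^1\cup S_h^2$ by definition. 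Hence the integral collapses to $\int_{S_h}\left(\kappa_h-\kappa\right)\nabla u\cdot\nabla\vphi_h\dx\dt$, which is the claimed identity.

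This argument is almost entirely bookkeeping; the only place warranting care is the cancellation of the time-derivative and advection terms, which requires that $u-u_h$ be an admissible argument for $a_h(\cdot,\vphi_h)$ — this is fine since $u\in\Xs_0$ and $\Vs_{h,0}\subset\Xs_0$, so $u-u_h\in\Xs_0$, which is exactly the domain of $a_h$. I expect the main (very mild) obstacle to be purely notational: making sure that $\nabla u$ is well-defined a.e.\ on $S_h$ and that the product $\nabla u\cdot\nabla\vphi_h$ is integrable there, which follows from $u\in\Ys$ and $\vphi_h\in\Vs_{h,0}\subset\Ys$ together with the boundedness of $\kappa_h-\kappa$. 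No regularity beyond $u\in\Xs_0$ is needed for this lemma.
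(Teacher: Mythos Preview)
Your proof is correct and follows essentially the same approach as the paper: both arguments use the weak formulations to reduce $a_h(u-u_h,\vphi_h)$ to the difference $a_h(u,\vphi_h)-a(u,\vphi_h)$, then observe that the time-derivative and advection terms cancel and that $\kappa_h-\kappa$ is supported on $S_h$. The only cosmetic difference is the order of presentation---the paper starts from $a_h(u,\vphi_h)$ and rewrites it via $a(u,\vphi_h)$, whereas you subtract the two formulations first---but the logic is identical.
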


\begin{proof}
    For any $\vphi_h \in \Vs_{h,0}$, we invoke \eqref{eq: weak state equation} and \eqref{eq: discrete state equation} to have
    $$
    \begin{aligned}
        a_h\left(u,\vphi_h\right) &= \inprod{\partial_t u, \vphi_h} + \int\limits_0^T\int\limits_\Omega \left(\vb\cdot \nabla u\right)\vphi_h +  \kappa_h \nabla u \cdot \nabla \vphi_h \dx \dt\\
        &= a\left(u,\vphi_h\right) + \int\limits_0^T\int\limits_\Omega \left(\kappa_h - \kappa\right) \nabla u \cdot\nabla \vphi_h \dx\dt\\
        &=\left(\ell f, \vphi_h \right)_{\Ls^2\left(Q_T\right)} + \int\limits_{S_h} \left(\kappa_h - \kappa\right) \nabla u \cdot\nabla \vphi_h \dx\dt\\
        &= a_h\left(u_h,\vphi_h\right) + \int\limits_{S_h} \left(\kappa_h - \kappa\right) \nabla u \cdot\nabla \vphi_h \dx\dt,
    \end{aligned}
    $$
    observed that $\kappa_h - \kappa$ vanishes everywhere outside of $S_h$.
\end{proof}

Next, we present a priori estimates for the state error in various norms. The following result estimates that error in the norm $\norm{\cdot}_{h,0}$ (please see \cite[Corollary 3.1]{NLPT2024} for more details). 

\begin{lemma}
    \label{lem: state error estimate discrete norm}
    Let $u\in \Xs_0$ and $u_h\in \Vs_{h,0}$ be the solutions to Problems \eqref{eq: weak state equation} and \eqref{eq: discrete state equation}, respectively. Assume that $u\in \Ws$, then we have the following estimate
    $$
    \norm{u-u_h}_{h,0} \le C h \norm{u}_{\Hs^s\left(Q_1\cup Q_2\right)}.
    $$
\end{lemma}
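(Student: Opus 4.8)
The plan is to follow the classical Céa-type strategy for nonconforming methods: use the discrete inf-sup stability \eqref{eq: stability} to reduce the error to a best-approximation term plus a consistency term, then control each piece. First I would split the error via a triangle inequality against the interpolant, writing $u - u_h = (u - I_h u) + (I_h u - u_h)$, where $I_h u \in \Vs_{h,0}$ is the nodal interpolant from Lemma \ref{lem: interpolation error}. The first summand is handled immediately: by definition of $\norm{\cdot}_{h,0}$ it contributes $\norm{u - I_h u}_h$ plus a term $\norm{\xi_h(u - I_h u)}_h$, and by \eqref{eq: compare L2 and the X norm} the latter is bounded by $C\norm{\partial_t(u - I_h u)}_{\Ls^2(Q_{1,h}\cup Q_{2,h})} \le C\norm{\Ds(u - I_h u)}_{\LLs^2(Q_T)} \le C h \norm{u}_{\Hs^s(Q_1\cup Q_2)}$ by Lemma \ref{lem: interpolation error}; the term $\norm{u - I_h u}_h$ is likewise $\le C h \norm{u}_{\Hs^s(Q_1\cup Q_2)}$.

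For the discrete part $w_h := I_h u - u_h \in \Vs_{h,0}$, I would invoke the stability condition \eqref{eq: stability}: there exists $\vphi_h \in \Vs_{h,0}\setminus\{0\}$ with $C_s \norm{w_h}_{h,0}\,\norm{\vphi_h}_h \le a_h(w_h, \vphi_h)$. Now decompose $a_h(w_h,\vphi_h) = a_h(I_h u - u, \vphi_h) + a_h(u - u_h, \vphi_h)$. The first term is bounded using continuity of $a_h$ against the interpolation error: each of the three pieces of $a_h$ (the $\partial_t$ pairing handled via $\xi_h$, the advection term bounded by $\norm{\vb}_{\LLs^\infty(Q_T)}$ and \eqref{eq: compare L2 and the Y norm}, the diffusion term directly) gives $\le C\,\norm{I_h u - u}_{h,0}\,\norm{\vphi_h}_h \le C h\norm{u}_{\Hs^s(Q_1\cup Q_2)}\,\norm{\vphi_h}_h$. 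The second term is exactly the consistency error quantified in Lemma \ref{lem: discrepancy between a and ah}: it equals $\int_{S_h} (\kappa_h - \kappa)\nabla u \cdot \nabla \vphi_h \dx\dt$.

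The main obstacle is bounding this consistency term $\int_{S_h}(\kappa_h - \kappa)\nabla u\cdot\nabla\vphi_h$ by $C h\norm{u}_{\Hs^s(Q_1\cup Q_2)}\,\norm{\vphi_h}_h$. Here I would use Cauchy–Schwarz to get $\le |\kappa_1 - \kappa_2|\,\norm{\nabla u}_{\LLs^2(S_h)}\,\norm{\nabla\vphi_h}_{\LLs^2(S_h)}$; the factor $\norm{\nabla\vphi_h}_{\LLs^2(S_h)} \le C\norm{\vphi_h}_h$ trivially. The delicate factor is $\norm{\nabla u}_{\LLs^2(S_h)}$: since $S_h$ is contained in the union of interface elements with $|S_h \cap K|\le C h^{d+2}$ per element, and $\nabla u = \nabla(\Es_i u)$ on the relevant portion with $\Es_i u \in \Hs^s(Q_T)\hookrightarrow \Ws^{1,\infty}(Q_T)$ (as $s > \tfrac{d+3}{2}$), I bound $\norm{\nabla u}_{\LLs^\infty}$ by $C\norm{u}_{\Hs^s(Q_1\cup Q_2)}$ via the Sobolev embedding $\HHs^{s-1}(Q_T)\hookrightarrow\LLs^\infty(Q_T)$ and \eqref{eq: extension operator}, then $\norm{\nabla u}_{\LLs^2(S_h)}^2 \le \norm{\nabla u}_{\LLs^\infty}^2 |S_h| \le C\norm{u}_{\Hs^s(Q_1\cup Q_2)}^2 \cdot h^{d+2}\cdot C h^{-d}$ by \eqref{eq: discrepancy region} and \eqref{eq: cardinality of Th-ast}, giving $\norm{\nabla u}_{\LLs^2(S_h)} \le C h\,\norm{u}_{\Hs^s(Q_1\cup Q_2)}$. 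Collecting, $C_s\norm{w_h}_{h,0} \le C h\norm{u}_{\Hs^s(Q_1\cup Q_2)}$, and combining with the interpolation estimate for $u - I_h u$ finishes the proof. One subtlety worth a line: $\nabla u$ on $S_h$ is interpreted via the appropriate Stein extension on each connected component $S_h^1, S_h^2$, so the $\LLs^\infty$ bound applies componentwise — no essential difficulty but it should be stated.
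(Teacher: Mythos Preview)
Your argument is correct. The paper itself does not supply a proof of this lemma; it simply cites \cite[Corollary 3.1]{NLPT2024} and moves on. What you have written is precisely the Strang-type argument one expects in that reference: split via $I_h u$, use the discrete inf--sup condition \eqref{eq: stability} on $w_h = I_h u - u_h$, control $a_h(I_h u - u,\vphi_h)$ by the continuity of $a_h$ in $\norm{\cdot}_{h,0}\times\norm{\cdot}_h$ (your handling of the time-derivative term via $\xi_h$ is exactly how the norm $\norm{\cdot}_{h,0}$ is designed to work), and reduce the consistency term to $\norm{\nabla u}_{\LLs^2(S_h)}$ via Lemma~\ref{lem: discrepancy between a and ah}. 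Your estimate $\norm{\nabla u}_{\LLs^2(S_h)} \le C h\,\norm{u}_{\Hs^s(Q_1\cup Q_2)}$, obtained from the Stein extensions, the embedding $\Hs^s(Q_T)\hookrightarrow\Ws^{1,\infty}(Q_T)$, and the measure bound $\abs{S_h}\le Ch^2$ from \eqref{eq: discrepancy region}--\eqref{eq: cardinality of Th-ast}, is the same bound the paper itself derives and reuses later in the proof of Lemma~\ref{lem: state error estimate L2 T norm}. Nothing is missing, and the subtlety you flag about using the correct extension on each component $S_h^1,S_h^2$ is handled the same way in the paper's own computations.
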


We proceed by estimating the state error in the $\Ls^2\left(\Omega\right)$-norm at $t=T$. In doing so, we apply Nitsche's trick \cite{Nitsche1968}. For any $\eta\in \Ls^2\left(Q_T\right)$ and any $y_T\in \Hs^1_0\left(\Omega\right)$, it follows from the arguments for Problem \eqref{eq: general weak problem} (after reversing the time and velocity field directions) that there exists a solution $y \in \Xs$ to the problem 
\begin{equation}
    \label{eq: Nitsche's trick equation}
    a^{\prime}\left(y, \varsigma\right) = \left(\eta, \varsigma\right)_{\Ls^2\left(Q_T\right)} \qqqq \forall \varsigma \in \Ys,
\end{equation}
along with the final condition $y\left(\cdot, T\right) = y_T$. In this context, Problem \eqref{eq: Nitsche's trick equation} is also called an adjoint problem. It should not be confused with Problem \eqref{eq: weak adjoint equation}. Let us now introduce the following assumption:

\begin{assumption}
    \label{assum: Nitsche's trick assumption}
    For any $\eta\in \Ls^2\left(Q_T\right)$ and any $y_T\in \Hs^1_0\left(\Omega\right)$, assume that the solution $y\in \Xs$ to Problem \eqref{eq: Nitsche's trick equation} exhibits the regularity $y\in \Ws$. Moreover, it satisfies the smoothing property
    $$
    \norm{y}_{\Ws} \le C\left(\norm{\eta}_{\Ls^2\left(Q_T\right)} + \norm{y_T}_{\Ls^2\left(\Omega\right)}\right),
    $$
    where the constant $C>0$ does not depend on $\eta$ and $y_T$.
\end{assumption}

Similar assumptions were also used in \cite[Theorem 3.15]{LR2013} and \cite[Theorem 2]{LR2017} to derive second-order error estimates at the final time. Likewise, we consider Assumption \ref{assum: Nitsche's trick assumption} to be reasonable; however, we are not aware of any literature explicitly establishing its validity.

\begin{lemma}
    \label{lem: state error estimate L2 T norm}
    Let $u\in \Xs_0$ and $u_h\in \Vs_{h,0}$ be the solutions to Problems \eqref{eq: weak state equation} and \eqref{eq: discrete state equation}, respectively. Assume that $u\in \Ws$ and Assumption \ref{assum: Nitsche's trick assumption} is satisfied. Then, there exists $h_\ast >0$ such that for all $h\in \left(0, h_\ast\right)$, we have
    $$
    \norm{\left(u-u_h\right)\left(\cdot, T\right)}_{\Ls^2\left(\Omega\right)} \le C h^2 \norm{u}_{\Hs^s\left(Q_1\cup Q_2\right)}.
    $$
\end{lemma}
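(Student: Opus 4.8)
The plan is to use a duality (Nitsche) argument. Set $e := u - u_h \in \Hs^1(Q_T)$ and $e_T := e(\cdot, T) \in \Hs^1_0(\Omega)$ (the trace is well-defined since $u \in \Ws$ and $u_h$ is piecewise linear; note $e\in\Xs$ so \eqref{eq: time trace inequality} applies). I would introduce the auxiliary adjoint problem \eqref{eq: Nitsche's trick equation} with the specific data $\eta := \chi_{\omega_T}\cdot 0$... — rather, choose $\eta = 0$ and $y_T = e_T$, so that $y \in \Xs$ solves $a'(y,\varsigma) = 0$ for all $\varsigma \in \Ys$ with final condition $y(\cdot,T) = e_T$. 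Under Assumption \ref{assum: Nitsche's trick assumption}, $y \in \Ws$ with $\norm{y}_{\Ws} \le C \norm{e_T}_{\Ls^2(\Omega)}$. First I would test \eqref{eq: Nitsche's trick equation} with $\varsigma = e \in \Xs_0 \subset \Ys$; integrating the time-derivative term by parts (using $e(\cdot,0)=0$ and $y(\cdot,T) = e_T$) and using the incompressibility trick as in \eqref{eq: optimality conditions 2} to eliminate the advection cross-term, this should yield
$$
\norm{e_T}^2_{\Ls^2(\Omega)} = a'(y, e) + \langle \partial_t y, e\rangle + (\ldots) = a(e, y) - \text{(boundary term)} = a(e,y),
$$
more precisely $\norm{e_T}^2_{\Ls^2(\Omega)} = a(e, y)$, where $a(\cdot,\cdot)$ is the original state bilinear form; the careful bookkeeping of the integration-by-parts identity $\langle \partial_t e, y\rangle + \langle \partial_t y, e\rangle = \int_\Omega e(\cdot,T) y(\cdot,T)\dx = \norm{e_T}^2_{\Ls^2(\Omega)}$ is the crux of this step.

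Next I would insert the discrete adjoint quasi-interpolant. Let $I_h y \in \Vs_{h,0}$ be the nodal interpolant of $y$. Using Galerkin-type orthogonality from Lemma \ref{lem: discrepancy between a and ah} (namely $a_h(u - u_h, \vphi_h) = \int_{S_h}(\kappa_h - \kappa)\nabla u \cdot \nabla \vphi_h\,\di(\xb,t)$ for $\vphi_h = I_h y$) together with the consistency relation $a(e, I_h y) = a_h(e, I_h y) - \int_{S_h}(\kappa_h - \kappa)\nabla u \cdot \nabla I_h y$ (compare the computation in Lemma \ref{lem: discrepancy between a and ah} applied to $u$), these two $S_h$-terms should cancel, leaving $a(e, I_h y) = 0$. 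Hence
$$
\norm{e_T}^2_{\Ls^2(\Omega)} = a(e, y) = a(e, y - I_h y).
$$
Then I would bound $a(e, y - I_h y)$ term-by-term: the diffusion term by $\norm{e}_{h}\,\norm{\Ds(y - I_h y)}_{\LLs^2(Q_T)} \le \norm{e}_{h,0}\cdot Ch\norm{y}_{\Hs^s(Q_1\cup Q_2)}$ via Lemma \ref{lem: interpolation error}; the advection term $\int_0^T\int_\Omega (\vb\cdot\nabla e)(y - I_h y)$ by $C\norm{\vb}_{\LLs^\infty}\norm{\nabla e}_{\LLs^2(Q_T)}\norm{y - I_h y}_{\Ls^2(Q_T)} \le C h^2\norm{e}_{h,0}\norm{y}_{\Hs^s(Q_1\cup Q_2)}$; and the time-derivative term $\langle\partial_t e, y - I_h y\rangle$ — this is the delicate one, since $\partial_t e$ only lies in $\Ys'$. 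For that term I would integrate by parts to move $\partial_t$ onto $y - I_h y$ (legitimate since $e\in\Xs_0$ and one can arrange $y - I_h y$ to vanish appropriately, or handle the boundary contributions $\int_\Omega e(\cdot,T)(y-I_hy)(\cdot,T)\dx$ — but $(y - I_h y)(\cdot,T) = 0$ since $I_h$ matches nodal values at $t=T$, wait — $I_h y \in \Vs_{h,0}$ vanishes at $t=0$, not $t=T$, so I must be careful here) and then estimate $\norm{\partial_t(y - I_h y)}_{\Ls^2(Q_T)}$... Actually $\partial_t e$ paired against $y - I_h y$: I would instead use $\langle \partial_t e, y - I_h y\rangle = \langle \partial_t u, y - I_h y\rangle - \langle \partial_t u_h, y - I_h y\rangle$ and exploit that $\partial_t u \in \Ls^2(Q_T)$ under the $\Ws$-regularity, so this piece is $\le \norm{\partial_t u}_{\Ls^2}\norm{y - I_h y}_{\Ls^2} + $ (a term with $\partial_t u_h$ that is handled by an inverse inequality on the quasi-uniform mesh combined with $\norm{e}_{h,0}$-type control). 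Collecting everything gives $\norm{e_T}^2_{\Ls^2(\Omega)} \le C h^2 \norm{u}_{\Hs^s(Q_1\cup Q_2)}\,\norm{y}_{\Ws}$, since $\norm{e}_{h,0}\le Ch\norm{u}_{\Hs^s(Q_1\cup Q_2)}$ by Lemma \ref{lem: state error estimate discrete norm}. Using $\norm{y}_{\Ws}\le C\norm{e_T}_{\Ls^2(\Omega)}$ and dividing by $\norm{e_T}_{\Ls^2(\Omega)}$ yields the claim.

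The main obstacle I anticipate is the rigorous treatment of the time-derivative term $\langle \partial_t e, y - I_h y\rangle$ and the matching of the $S_h$-mismatch terms so they cancel exactly; in particular, ensuring that the Galerkin orthogonality $a(e, I_h y) = 0$ holds as stated requires carefully reconciling the two different bilinear forms $a$ and $a_h$ and the two adjoint problems \eqref{eq: weak adjoint equation} and \eqref{eq: Nitsche's trick equation}, and verifying that the boundary contributions at $t=0$ and $t=T$ from integration by parts are correctly accounted for (the interpolant $I_h y \in \Vs_{h,0}$ vanishes only at $t = 0$). A secondary technical point is the condition $h < h_\ast$, which is presumably needed so that the interface elements are resolved and the interpolation estimate of Lemma \ref{lem: interpolation error} together with the geometric bounds \eqref{eq: discrepancy region}--\eqref{eq: Th^ast area} apply uniformly.
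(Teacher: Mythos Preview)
Your overall strategy (Nitsche duality with the adjoint problem \eqref{eq: Nitsche's trick equation} taking $\eta=0$, $y_T=e_T$) matches the paper's, but two of your key steps are incorrect and together they obscure the real reason $h_\ast$ appears.

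First, the claimed cancellation $a(e,I_hy)=0$ is false. The consistency relation is $a(e,I_hy)=a_h(e,I_hy)+\int_{S_h}(\kappa-\kappa_h)\nabla e\cdot\nabla I_hy$, with $\nabla e$, not $\nabla u$; combining with Lemma~\ref{lem: discrepancy between a and ah} leaves the residual $\int_{S_h}(\kappa_h-\kappa)\nabla u_h\cdot\nabla I_hy$, which does not vanish. The paper never attempts this cancellation: it works with $a_h$ throughout, writes $\norm{e_T}_{\Ls^2(\Omega)}=a_h(e,y-I_hy)+I_1$ where $I_1$ collects all the $S_h$-integrals (three terms involving $\nabla u\cdot\nabla(y-I_hy)$, $\nabla u\cdot\nabla y$, $\nabla e\cdot\nabla y$ on $S_h$), and then estimates $I_1\le Ch^2\norm{u}_{\Hs^s(Q_1\cup Q_2)}$ using $\norm{\nabla u}_{\LLs^2(S_h)}\le Ch\norm{u}_{\Hs^s(Q_1\cup Q_2)}$ and the analogous bound for $y$, obtained from \eqref{eq: discrepancy region}--\eqref{eq: cardinality of Th-ast} and the embedding $\Hs^s\hookrightarrow \Ws^{1,\infty}$. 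These $S_h$-estimates are essential and cannot be avoided.

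Second, the time-derivative term: your attempt to split $\partial_t e=\partial_t u-\partial_t u_h$ and invoke an inverse inequality is not how the argument closes. The paper integrates $\langle\partial_t e,\,y-I_hy\rangle$ by parts (legitimate since $e\in\Xs_0$ and $y-I_hy\in\Hs^1(Q_T)$), obtaining $\int_\Omega e(\cdot,T)(y-I_hy)(\cdot,T)\dx-\int_{Q_T}e\,\partial_t(y-I_hy)$. The second piece is harmless (bounded by $\norm{\nabla e}_{\LLs^2}\cdot Ch\norm{y}_{\Hs^s}$ via \eqref{eq: compare L2 and the Y norm} and Lemma~\ref{lem: interpolation error}). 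The boundary piece is the crux: $(y-I_hy)(\cdot,T)$ does \emph{not} vanish, and one bounds $\norm{(y-I_hy)(\cdot,T)}_{\Ls^2(\Omega)}\le Ch\norm{y}_{\Hs^s(Q_1\cup Q_2)}\le Ch$ via \eqref{eq: time trace inequality} and Lemma~\ref{lem: interpolation error}. This produces a term $Ch\,\norm{e_T}_{\Ls^2(\Omega)}$ on the right-hand side, leading to $(1-Ch)\norm{e_T}_{\Ls^2(\Omega)}\le Ch^2\norm{u}_{\Hs^s(Q_1\cup Q_2)}$. The smallness condition $h<h_\ast$ is needed precisely to absorb this term (e.g.\ $1-Ch_\ast\ge\tfrac13$), not for geometric resolution of the interface.
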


\begin{proof}
    In \eqref{eq: Nitsche's trick equation}, let $\eta = 0$ and $y_T = \norm{\left(u-u_h\right)\left(\cdot, T\right)}^{-1}_{\Ls^2\left(\Omega\right)}\left(u-u_h\right)\left(\cdot,T\right)\in \Hs^1_0\left(\Omega\right)$. Choose $\varsigma = u-u_h \in \Xs_0$. Applying the integration by parts formula for $u-u_h\in \Xs_0$ and $y\in \Xs$, together with the technique in \eqref{eq: optimality conditions 2}, we get
    $$
    \begin{aligned}
        &\norm{\left(u-u_h\right)\left(\cdot, T\right)}_{\Ls^2\left(\Omega\right)} = \\
        &= \int\limits_\Omega \left(u-u_h\right)\left(\xb, 0\right) y  \left(\xb, 0\right)\dx + \inprod{\partial_t \left(u-u_h\right), y  } + \int\limits_0^T\int\limits_\Omega \left[\vb\cdot \nabla \left(u-u_h\right) \right] y   +  \kappa \nabla \left(u-u_h\right) \cdot \nabla  y  \dx \dt\\
        &= a_h\left(u-u_h,  y  \right) + \int\limits_{S_h}\left(\kappa -\kappa_h\right)\nabla \left(u-u_h\right) \cdot \nabla  y  \dx \dt,
    \end{aligned}
    $$
    using the fact that $\kappa -\kappa_h=0$ outside of $S_h$. Since $y \in \Ws$, we are able to invoke the interpolant $I_hy$. We denote $e := y - I_h y$ for convenience. We choose $\vphi_h = I_h y   \in \Vs_{h,0}$ in Lemma \ref{lem: discrepancy between a and ah} to obtain
    \begin{equation}
        \label{eq: state error estimate L2 T norm 1}
        \begin{aligned}
            \norm{\left(u-u_h\right)\left(\cdot, T\right)}_{\Ls^2\left(\Omega\right)} &= a_h\left(u-u_h,  y  \right) + \int\limits_{S_h}\left(\kappa -\kappa_h\right)\nabla \left(u-u_h\right) \cdot \nabla  y  \dx \dt\\
            &= a_h\left(u-u_h,  e\right) + a_h\left(u-u_h, I_h y  \right) +\int\limits_{S_h}\left(\kappa -\kappa_h\right)\nabla \left(u-u_h\right) \cdot \nabla  y  \dx \dt\\
            &= a_h\left(u-u_h,  e \right) + \int\limits_{S_h}\left(\kappa_h -\kappa \right)\nabla u \cdot \nabla \left(I_h y  \right) \dx \dt + \int\limits_{S_h}\left(\kappa -\kappa_h\right)\nabla \left(u-u_h\right) \cdot \nabla  y  \dx \dt\\
            &= a_h\left(u-u_h,  e \right) + \int\limits_{S_h}\left(\kappa -\kappa_h \right)\left[\nabla u \cdot\nabla  e  - \nabla u \cdot \nabla  y + \nabla\left(u-u_h\right)\cdot\nabla  y  \right]\dx\dt.
        \end{aligned}
    \end{equation}
    
    We first estimate the discrete bilinear term $a_h\left(u-u_h,  e \right)$. To do so, we integrate by parts again with $u-u_h\in \Xs_0$ and $e\in \Hs^{1}\left(Q_T\right)$. Subsequently, we apply \eqref{eq: compare L2 and the Y norm} and employ Lemmas \ref{lem: state error estimate discrete norm} and \ref{lem: interpolation error}. It follows that
    $$
    \begin{aligned}
        &a_h\left(u-u_h,  e \right) =\\
        &= \int\limits_\Omega \left(u-u_h\right)\left(\xb, T\right) e \left(\xb, T\right)\dx + \int\limits_0^T\int\limits_\Omega -\left(u-u_h\right)\left(\partial_t  e \right) + \left[\vb\cdot \nabla \left(u-u_h\right) \right] e  +  \kappa_h \nabla \left(u-u_h\right) \cdot \nabla  e  \dx \dt\\
        &\le \norm{\left(u-u_h\right)\left(\cdot, T\right)}_{\Ls^2\left(\Omega\right)}\norm{ e \left(\cdot, T\right)}_{\Ls^2\left(\Omega\right)} + \norm{u-u_h}_{\Ls^2\left(Q_T\right)}\norm{\Ds  e }_{\LLs^2\left(Q_T\right)} + C\norm{\nabla\left(u-u_h\right)}_{\LLs^2\left(Q_T\right)}\left(\norm{ e }_{\Ls^2\left(Q_T\right)} + \norm{\Ds e }_{\LLs^2\left(Q_T\right)}\right)\\
        &\le \norm{\left(u-u_h\right)\left(\cdot, T\right)}_{\Ls^2\left(\Omega\right)}\norm{ e \left(\cdot, T\right)}_{\Ls^2\left(\Omega\right)} + C\norm{\nabla\left(u-u_h\right)}_{\LLs^2\left(Q_T\right)}\left(\norm{ e }_{\Ls^2\left(Q_T\right)} + \norm{\Ds e }_{\LLs^2\left(Q_T\right)}\right)\\
        &\le \norm{\left(u-u_h\right)\left(\cdot, T\right)}_{\Ls^2\left(\Omega\right)}\norm{ e \left(\cdot, T\right)}_{\Ls^2\left(\Omega\right)} + C h \norm{u}_{\Hs^s\left(Q_1\cup Q_2\right)} h \norm{y}_{\Hs^s\left(Q_1\cup Q_2\right)}.
    \end{aligned}
    $$
    By using \eqref{eq: time trace inequality}, \eqref{eq: compare L2 and the Y norm}, and Lemma \ref{lem: interpolation error}, we end up with
    \begin{equation}
        \label{eq: state error estimate L2 T norm 2}
        \begin{aligned}
            \norm{e\left(\cdot,T\right)}^2_{\Ls^2\left(\Omega\right)}\le C \norm{e}^2_{\Xs} \le C\left(\norm{e}^2_{\Ls^2\left(Q_T\right)} + \norm{\Ds e}^2_{\LLs^2\left(Q_T\right)}\right)&\le C\left(\norm{\nabla e}^2_{\LLs^2\left(Q_T\right)} +\norm{\Ds e}^2_{\LLs^2\left(Q_T\right)}\right)\\
            &\le C\norm{\Ds e}^2_{\LLs^2\left(Q_T\right)} \le C \norm{e}^2_{\Hs^1\left(Q_T\right)}\le C h^2 \norm{y}^2_{\Hs^s\left(Q_1\cup Q_2\right)}.
        \end{aligned}
    \end{equation}
    It follows from Assumption \ref{assum: Nitsche's trick assumption} that when $\eta = 0$ and $y_T = \norm{\left(u-u_h\right)\left(\cdot, T\right)}^{-1}_{\Ls^2\left(\Omega\right)}\left(u-u_h\right)\left(\cdot,T\right)$, we have
    \begin{equation}
        \label{eq: state error estimate L2 T norm 5}
        \norm{y}_{\Hs^s\left(Q_1\cup Q_2\right)} \le C\norm{\norm{\left(u-u_h\right)\left(\cdot, T\right)}^{-1}_{\Ls^2\left(\Omega\right)}\left(u-u_h\right)\left(\cdot,T\right)}_{\Ls^2\left(\Omega\right)} = C.
    \end{equation}
    We see that the constant $C>0$ here is $h$-independent. Therefore, we get
    \begin{equation}
        \label{eq: state error estimate L2 T norm 3}
        a_h\left(u-u_h,  e \right) \le Ch \norm{\left(u-u_h\right)\left(\cdot, T\right)}_{\Ls^2\left(\Omega\right)} + C h^2 \norm{u}_{\Hs^s\left(Q_1\cup Q_2\right)}.
    \end{equation}
    
    Next, consider the second integral on the right-hand side of \eqref{eq: state error estimate L2 T norm 1}, denoted by $I_1$ for short. By using the Cauchy-Schwarz inequality, together with Lemmas \ref{lem: interpolation error} and \ref{lem: state error estimate discrete norm}, we bound $I_1$ by
    $$
    \begin{aligned}
        I_1 &:= \int\limits_{S_h}\left(\kappa -\kappa_h \right)\left[\nabla u \cdot\nabla  e  - \nabla u \cdot \nabla y + \nabla\left(u-u_h\right)\cdot\nabla y \right]\dx\dt\\
        &\le C\left(\norm{\nabla u}_{\LLs^2\left(S_h\right)}\norm{\Ds  e }_{\LLs^2\left(Q_T\right)} + \norm{\nabla u}_{\LLs^2\left(S_h\right)}\norm{\nabla y }_{\LLs^2\left(S_h\right)}+ \norm{\nabla \left(u-u_h\right)}_{\LLs^2\left(Q_T\right)}\norm{\nabla y }_{\LLs^2\left(S_h\right)}\right)\\
        &\le C \left(\norm{\nabla u}_{\LLs^2\left(S_h\right)}h \norm{y}_{\Hs^s\left(Q_1\cup Q_2\right)} +\norm{\nabla u}_{\LLs^2\left(S_h\right)}\norm{\nabla y }_{\LLs^2\left(S_h\right)} + h \norm{u}_{\Hs^s\left(Q_1\cup Q_2\right)}\norm{\nabla y}_{\LLs^2\left(S_h\right)}\right).
    \end{aligned}
    $$
    We follow the arguments of \eqref{eq: interpolation error 2} to estimate $\norm{\nabla u}_{\LLs^2\left(S_h\right)}$ and $\norm{\nabla y }_{\LLs^2\left(S_h\right)}$. Take $\norm{\nabla u}_{\LLs^2\left(S_h\right)}$ for instance. Note that $u \in \Ws$. There holds the following inequality
    $$
    \begin{aligned}
        \norm{\nabla u}^2_{\LLs^2\left(S_h\right)} = \sum_{K\in \mathcal{T}_h^\ast} \norm{\nabla u}^2_{\LLs^2\left(K\cap S_h\right)} &\le \sum_{K\in \mathcal{T}_h^\ast}\abs{K\cap S_h} \norm{\nabla u}^2_{\LLs^{\infty}\left(K\cap S_h\right)} \\
        &\le C h^{d+2} \max{\left\{\norm{\nabla\left(\Es_1 u\right)}_{\LLs^{\infty}\left(Q_T\right)}^2, \norm{\nabla\left(\Es_2 u\right)}_{\LLs^{\infty}\left(Q_T\right)}^2\right\}} \left(\sum_{K\in \mathcal{T}_h^\ast} 1\right)\\
        &\le C h^2 \max{\left\{\norm{\nabla\left(\Es_1 u\right)}_{\HHs^{s-1}\left(Q_T\right)}^2, \norm{\nabla\left(\Es_2 u\right)}_{\HHs^{s-1}\left(Q_T\right)}^2\right\}}\\
        &\le C h^2 \norm{u}^2_{\Hs^s\left(Q_1\cup Q_2\right)},
    \end{aligned}
    $$
    where $\Es_1$ and $\Es_2$ are defined in \eqref{eq: extension operator}. This inequality is equivalent to
    $$
    \norm{\nabla u}_{\LLs^2\left(S_h\right)} \le C h\norm{u}_{\Hs^s\left(Q_1\cup Q_2\right)}.
    $$
    Similarly, for $y \in \Ws$, we can prove that
    $$
    \norm{\nabla y }_{\LLs^2\left(S_h\right)} \le C h\norm{y }_{\Hs^s\left(Q_1\cup Q_2\right)}\le Ch,
    $$
    using \eqref{eq: state error estimate L2 T norm 5}. Consequently, we obtain
    \begin{equation}
        \label{eq: state error estimate L2 T norm 4}
        I_1 \le C h^2\norm{u}_{\Hs^s\left(Q_1\cup Q_2\right)}.
    \end{equation}
    
    We substitute \eqref{eq: state error estimate L2 T norm 3} and \eqref{eq: state error estimate L2 T norm 4} into \eqref{eq: state error estimate L2 T norm 1} to arrive at
    $$
    \left(1 - Ch\right)\norm{\left(u-u_h\right)\left(\cdot, T\right)}_{\Ls^2\left(\Omega\right)} \le C h^2 \norm{u}_{\Hs^s\left(Q_1\cup Q_2\right)}.
    $$
    Since $h\in \left(0,h_\ast \right)$ with $h_\ast >0$ be given, it follows that $1-Ch > 1-Ch_\ast$. Therefore, the proof is finished by choosing $h_\ast$ such that $1-Ch_\ast \ge \frac{1}{3}$.
\end{proof}

By using Lemma \ref{lem: state error estimate L2 T norm}, we are now able to estimate the error $\norm{u-u_h}_{\Ls^2\left(Q_T\right)}$, where $u\in \Xs_0$ and $u_h\in \Vs_{h,0}$ are the solutions to Problems \eqref{eq: weak state equation} and \eqref{eq: discrete state equation}, respectively. In the following lemma, we consider Problem \eqref{eq: Nitsche's trick equation} with $\eta = \norm{u-u_h}^{-1}_{\Ls^2\left(Q_T\right)}\left(u-u_h\right) \in \Ls^2\left(Q_T\right)$ and $y_T = 0$.

\begin{lemma}
    \label{lem: state error estimate L2 Q norm}
    Let $u\in \Xs_0$ and $u_h\in \Vs_{h,0}$ be the solutions to Problems \eqref{eq: weak state equation} and \eqref{eq: discrete state equation}, respectively. Assume that $u\in \Ws$ and Assumption \ref{assum: Nitsche's trick assumption} is satisfied. Then, there exists $h_\ast >0$ such that for all $h\in \left(0, h_\ast\right)$, there holds the following estimate
    $$
    \norm{u-u_h}_{\Ls^2\left(Q_T\right)} \le C h^2 \norm{u}_{\Hs^s\left(Q_1\cup Q_2\right)}.
    $$
\end{lemma}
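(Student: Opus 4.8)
The plan is to run a duality (Nitsche) argument parallel to the proof of Lemma \ref{lem: state error estimate L2 T norm}, but with the dual problem now driven by the source $\eta := \norm{u-u_h}^{-1}_{\Ls^2\left(Q_T\right)}\left(u-u_h\right)\in \Ls^2\left(Q_T\right)$ and the final datum $y_T := 0$ (we may assume $u\ne u_h$, otherwise there is nothing to prove). First I would take the solution $y$ of Problem \eqref{eq: Nitsche's trick equation} for this data; by Assumption \ref{assum: Nitsche's trick assumption} one has $y\in \Ws\cap\Xs_T$ with $\norm{y}_{\Ws}\le C$, the constant being $h$-independent since $\norm{\eta}_{\Ls^2\left(Q_T\right)}=1$ and $y_T=0$. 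Choosing $\varsigma = u-u_h\in \Xs_0$ in \eqref{eq: Nitsche's trick equation} gives $a^{\prime}\left(y, u-u_h\right) = \norm{u-u_h}_{\Ls^2\left(Q_T\right)}$. Integrating by parts in time (the boundary contributions at $t=0$ and $t=T$ vanish because $\left(u-u_h\right)\left(\cdot,0\right)=0$ and $y\left(\cdot,T\right)=0$) and treating the advection term as in \eqref{eq: optimality conditions 2}, one obtains $a^{\prime}\left(y,u-u_h\right)=a\left(u-u_h,y\right)$; since $\kappa_h=\kappa$ outside $S_h$, combining these gives
$$
\norm{u-u_h}_{\Ls^2\left(Q_T\right)} = a_h\left(u-u_h,y\right) + \int\limits_{S_h}\left(\kappa-\kappa_h\right)\nabla\left(u-u_h\right)\cdot\nabla y\dx\dt.
$$

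Next I would insert the interpolant $I_h y\in \Vs_{h,0}$, put $e:=y-I_h y\in \Hs^1\left(Q_T\right)$, split $a_h\left(u-u_h,y\right)=a_h\left(u-u_h,e\right)+a_h\left(u-u_h,I_h y\right)$, and apply Lemma \ref{lem: discrepancy between a and ah} to the second summand with $\vphi_h=I_h y$. Using $\nabla\left(I_h y\right)=\nabla y-\nabla e$, the three interface integrals then collapse into exactly the quantity
$$
I_1 := \int\limits_{S_h}\left(\kappa-\kappa_h\right)\left[\nabla u\cdot\nabla e - \nabla u\cdot\nabla y + \nabla\left(u-u_h\right)\cdot\nabla y\right]\dx\dt
$$
already estimated in the proof of Lemma \ref{lem: state error estimate L2 T norm}. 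Reusing that estimate, which rests on $\norm{\nabla u}_{\LLs^2\left(S_h\right)}\le Ch\norm{u}_{\Hs^s\left(Q_1\cup Q_2\right)}$, on $\norm{\nabla y}_{\LLs^2\left(S_h\right)}\le Ch\norm{y}_{\Hs^s\left(Q_1\cup Q_2\right)}\le Ch$, on $\norm{\Ds e}_{\LLs^2\left(Q_T\right)}\le Ch\norm{y}_{\Hs^s\left(Q_1\cup Q_2\right)}\le Ch$ (Lemma \ref{lem: interpolation error}), and on $\norm{\nabla\left(u-u_h\right)}_{\LLs^2\left(Q_T\right)}\le Ch\norm{u}_{\Hs^s\left(Q_1\cup Q_2\right)}$ (Lemma \ref{lem: state error estimate discrete norm}), yields $\abs{I_1}\le Ch^2\norm{u}_{\Hs^s\left(Q_1\cup Q_2\right)}$.

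It then remains to bound $a_h\left(u-u_h,e\right)$. Integrating by parts once more with $u-u_h\in \Xs_0$ and $e\in \Hs^1\left(Q_T\right)$ produces
$$
a_h\left(u-u_h,e\right) = \int\limits_\Omega \left(u-u_h\right)\left(\xb,T\right)e\left(\xb,T\right)\dx - \int\limits_0^T\int\limits_\Omega \left(u-u_h\right)\partial_t e\dx\dt + \int\limits_0^T\int\limits_\Omega \left[\vb\cdot\nabla\left(u-u_h\right)\right]e + \kappa_h\nabla\left(u-u_h\right)\cdot\nabla e\dx\dt.
$$
For the first term I would combine Lemma \ref{lem: state error estimate L2 T norm} with the bound $\norm{e\left(\cdot,T\right)}_{\Ls^2\left(\Omega\right)}\le Ch$, which follows as in \eqref{eq: state error estimate L2 T norm 2} from \eqref{eq: time trace inequality}, \eqref{eq: compare L2 and the Y norm} and Lemma \ref{lem: interpolation error}; for the remaining terms I would first replace $\norm{u-u_h}_{\Ls^2\left(Q_T\right)}$ by $C\norm{\nabla\left(u-u_h\right)}_{\LLs^2\left(Q_T\right)}$ via \eqref{eq: compare L2 and the Y norm}, then invoke Lemma \ref{lem: state error estimate discrete norm} for $\norm{\nabla\left(u-u_h\right)}_{\LLs^2\left(Q_T\right)}\le Ch\norm{u}_{\Hs^s\left(Q_1\cup Q_2\right)}$ and Lemma \ref{lem: interpolation error} for $\norm{e}_{\Ls^2\left(Q_T\right)}+\norm{\Ds e}_{\LLs^2\left(Q_T\right)}\le Ch$; this gives $a_h\left(u-u_h,e\right)\le Ch^2\norm{u}_{\Hs^s\left(Q_1\cup Q_2\right)}$. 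Adding this to the bound on $\abs{I_1}$ finishes the proof, the restriction $h\in\left(0,h_\ast\right)$ being inherited from Lemma \ref{lem: state error estimate L2 T norm}. There is no deep obstacle here, the argument being essentially a clone of Lemma \ref{lem: state error estimate L2 T norm}; the step demanding the most care is the bookkeeping, namely checking that the interface integrals genuinely recombine into the already-estimated $I_1$ and that every occurrence of $\norm{u-u_h}_{\Ls^2\left(Q_T\right)}$ or $\norm{\left(u-u_h\right)\left(\cdot,T\right)}_{\Ls^2\left(\Omega\right)}$ that resurfaces on the right-hand side carries a spare factor of $h$ (via the Poincar\'e--Steklov inequality and Lemmas \ref{lem: state error estimate discrete norm} and \ref{lem: state error estimate L2 T norm}), so that the quadratic rate survives.
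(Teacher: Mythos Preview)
Your proposal is correct and follows essentially the same duality argument as the paper. The one minor variation is in the treatment of the boundary term $\int_\Omega (u-u_h)(\xb,T)\,e(\xb,T)\,\dx$: you bound $\norm{e(\cdot,T)}_{\Ls^2(\Omega)}\le Ch$ via \eqref{eq: state error estimate L2 T norm 2} and pair it with Lemma \ref{lem: state error estimate L2 T norm}, whereas the paper observes that $e(\cdot,T)=-(I_h y)(\cdot,T)$ (since $y_T=0$) and bounds $\norm{(I_h y)(\cdot,T)}_{\Ls^2(\Omega)}\le C$ via the $\Hs^1$-seminorm stability of $I_h$ together with \eqref{eq: time trace inequality}; either route recovers the $\mathcal O(h^2)$ rate.
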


\begin{proof}
    We choose $\varsigma = u-u_h \in \Xs_0$ in \eqref{eq: Nitsche's trick equation}. Then, we apply the integration by parts formula with $y\in \Xs_T$ and $u-u_h\in \Xs_0$, followed by the arguments of \eqref{eq: optimality conditions 2} and Lemma \ref{lem: discrepancy between a and ah}, to arrive at
    \begin{equation}
        \label{eq: state error estimate L2 Q norm 1}
        \begin{aligned}
            \norm{u-u_h}_{\Ls^2\left(Q_T\right)} &=-\inprod{\partial_t y, u-u_h} + \int\limits_0^T\int\limits_\Omega -\left(\vb\cdot \nabla y\right)\left(u-u_h\right) +  \kappa \nabla  y  \cdot \nabla \left(u-u_h\right) \dx \dt\\
            &= a_h\left(u-u_h,  y \right) + \int\limits_{S_h}\left(\kappa-\kappa_h\right)\nabla\left(u-u_h\right)\cdot\nabla  y \dx\dt\\
            &= a_h\left(u-u_h, e\right) + a_h\left(u-u_h, I_h y \right) + \int\limits_{S_h}\left(\kappa-\kappa_h\right)\nabla\left(u-u_h\right)\cdot\nabla  y \dx\dt\\
            &= a_h\left(u-u_h, e\right) + \int\limits_{S_h}\left(\kappa_h-\kappa\right)\left[\nabla u \cdot \nabla \left(I_h  y \right)-\nabla\left(u-u_h\right)\cdot\nabla  y \right]\dx\dt,
        \end{aligned}
    \end{equation}
    where $e = y - I_h y$. For the term $a_h\left(u-u_h, e\right)$, we integrate by parts again with $u-u_h\in \Xs_0$ and $ e \in \Hs^{1}\left(Q_T\right)$. Then, we employ \eqref{eq: compare L2 and the Y norm} and use Lemmas \ref{lem: state error estimate discrete norm} and \ref{lem: interpolation error} to get
    \begin{equation}
        \label{eq: state error estimate L2 Q norm 2}
        \begin{aligned}
            &a_h\left(u-u_h, e\right) = \\
            &= \int\limits_\Omega \left(u-u_h\right)\left(\xb, T\right)e\left(\xb, T\right)\dx + \int\limits_0^T\int\limits_\Omega -\left(u-u_h\right)\left(\partial_t e\right) + \left[\vb\cdot \nabla \left(u-u_h\right) \right]e +  \kappa_h \nabla \left(u-u_h\right) \cdot \nabla e \dx \dt\\
            &\le \int\limits_\Omega \left(u-u_h\right)\left(\xb, T\right)e\left(\xb, T\right)\dx + \norm{u-u_h}_{\Ls^2\left(Q_T\right)}\norm{\Ds e}_{\LLs^2\left(Q_T\right)} + C\norm{\nabla\left(u-u_h\right)}_{\LLs^2\left(Q_T\right)}\left(\norm{e}_{\Ls^2\left(Q_T\right)} + \norm{\Ds e}_{\LLs^2\left(Q_T\right)}\right)\\
            &\le \int\limits_\Omega \left(u-u_h\right)\left(\xb, T\right)e\left(\xb, T\right)\dx + C\norm{\nabla\left(u-u_h\right)}_{\LLs^2\left(Q_T\right)}\left(\norm{e}_{\Ls^2\left(Q_T\right)} + \norm{\Ds e}_{\LLs^2\left(Q_T\right)}\right)\\
            &\le \int\limits_\Omega \left(u-u_h\right)\left(\xb, T\right)e\left(\xb, T\right)\dx + Ch\norm{u}_{\Hs^s\left(Q_1\cup Q_2\right)}h\norm{y}_{\Hs^s\left(Q_1\cup Q_2\right)}.
    \end{aligned}
    \end{equation}
    On the other hand, by following the technique for \eqref{eq: state error estimate L2 T norm 4}, we can prove that
    \begin{equation}
        \label{eq: state error estimate L2 Q norm 3}
        \begin{aligned}
            I_2 &:=\int\limits_{S_h}\left(\kappa_h-\kappa\right)\left[\nabla u \cdot \nabla \left(I_h  y \right)-\nabla\left(u-u_h\right)\cdot\nabla y\right]\dx\dt \\
            &=\int\limits_{S_h}\left(\kappa_h-\kappa\right)\left[\nabla u \cdot \nabla y -\nabla u \cdot \nabla e -\nabla\left(u-u_h\right)\cdot\nabla y\right]\dx\dt\\
            &\le C\left(\norm{\nabla u}_{\LLs^2\left(S_h\right)}\norm{\nabla y}_{\LLs^2\left(S_h\right)} + \norm{\nabla u}_{\LLs^2\left(S_h\right)}\norm{\nabla e}_{\LLs^2\left(Q_T\right)} + \norm{\nabla \left(u-u_h\right)}_{\LLs^2\left(Q_T\right)}\norm{\nabla y}_{\LLs^2\left(S_h\right)}\right)\\
            &\le Ch\norm{u}_{\Hs^s\left(Q_1\cup Q_2\right)}h\norm{y}_{\Hs^s\left(Q_1\cup Q_2\right)}.
        \end{aligned}
    \end{equation}
    By substituting \eqref{eq: state error estimate L2 Q norm 2} and \eqref{eq: state error estimate L2 Q norm 3} into \eqref{eq: state error estimate L2 Q norm 1}, we imply 
    $$
    \begin{aligned}
        \norm{u-u_h}_{\Ls^2\left(Q_T\right)} &\le Ch^2\norm{u}_{\Hs^s\left(Q_1\cup Q_2\right)}\norm{y}_{\Hs^s\left(Q_1\cup Q_2\right)} + \int\limits_\Omega \left(u-u_h\right)\left(\xb, T\right)e\left(\xb, T\right)\dx \\
        &= Ch^2\norm{u}_{\Hs^s\left(Q_1\cup Q_2\right)}\norm{y}_{\Hs^s\left(Q_1\cup Q_2\right)} - \int\limits_\Omega \left(u-u_h\right)\left(\xb,T\right)\left(I_h y \right)\left(\xb,T\right)\dx\\
        &\le Ch^2\norm{u}_{\Hs^s\left(Q_1\cup Q_2\right)}\norm{y}_{\Hs^s\left(Q_1\cup Q_2\right)} + \norm{\left(u-u_h\right)\left(\cdot,T\right)}_{\Ls^2\left(\Omega\right)}\norm{\left(I_h y \right)\left(\cdot,T\right)}_{\Ls^2\left(\Omega\right)}\\
        &\le Ch^2\norm{u}_{\Hs^s\left(Q_1\cup Q_2\right)}\norm{y}_{\Hs^s\left(Q_1\cup Q_2\right)} + Ch^2\norm{u}_{\Hs^s\left(Q_1\cup Q_2\right)}\norm{\left(I_h y \right)\left(\cdot,T\right)}_{\Ls^2\left(\Omega\right)},
    \end{aligned}
    $$
    for all $h\in \left(0,h_\ast\right)$, where $h_\ast>0$ is sufficiently small. Here, we invoked $y\in \Xs_T$ in the second line and Lemma \ref{lem: state error estimate L2 T norm} in the final line. 
    
    The next step is to estimate $\norm{\left(I_h y \right)\left(\cdot,T\right)}_{\Ls^2\left(\Omega\right)}$. By applying the technique in \eqref{eq: state error estimate L2 T norm 2} and the $\Hs^1$-seminorm stability of $I_h$ \cite[Proposition 22.21]{Ern2021}, we obtain 
    $$
    \norm{\left(I_h y \right)\left(\cdot,T\right)}_{\Ls^2\left(\Omega\right)}\le C \norm{I_h y }_{\Xs}\le C\norm{\Ds\left(I_h y \right)}_{\LLs^2\left(Q_T\right)} \le C\norm{\Ds y}_{\LLs^2\left(Q_T\right)}\le C \norm{y}_{\Hs^1\left(Q_T\right)}.
    $$
    When $\eta = \norm{u-u_h}^{-1}_{\Ls^2\left(Q_T\right)}\left(u-u_h\right)$ and $y_T = 0$, Assumption \ref{assum: Nitsche's trick assumption} says that
    $$
    \norm{y}^2_{\Hs^1\left(Q_T\right)} + \norm{y}^2_{\Hs^s\left(Q_1\cup Q_2\right)}\le C\norm{\norm{u-u_h}^{-1}_{\Ls^2\left(Q_T\right)}\left(u-u_h\right)}^2_{\Ls^2\left(Q_T\right)} = C,
    $$
    where $C>0$ is an $h$-independent constant. The proof is completed by combining the last three inequalities.
\end{proof}

\begin{remark}
    \label{rem: error estimate u*}
    Analogous error estimates to those in Lemmas \ref{lem: state error estimate discrete norm}, \ref{lem: state error estimate L2 T norm}, and \ref{lem: state error estimate L2 Q norm} can be proved for Problem \eqref{eq: weak state equation with g}. Specifically, if $u^\ast \in \Ws$ and Assumption \ref{assum: Nitsche's trick assumption} holds, then we obtain similar estimates for the error $u^\ast - u^\ast_h$ with respect to three different norms, where $u^\ast\in \Xs_0$ is defined in \eqref{eq: weak state equation with g}, and $u^\ast_h \in \Vs_{h,0}$ is its finite element approximation.
\end{remark}

Now, we study the stability of the discrete state $u_h\in \Vs_{h,0}$ in \eqref{eq: discrete state equation} with respect to the norms $\norm{\cdot}_{h,0}$ and $\norm{\Ds \cdot}_{\LLs^2\left(Q_T\right)}$, which are fundamental for the subsequent theoretical analyses.

\begin{lemma}
    \label{lem: stability result}
    For $f\in \Ls^2\left(Q_T\right)$, let $u\in \Xs_0$ and $u_h\in \Vs_{h,0}$ be the solutions to Problems \eqref{eq: weak state equation} and \eqref{eq: discrete state equation}, respectively. Let $C_P>0$ and $C_s>0$ be the constants defined in \eqref{eq: compare L2 and the Y norm} and \eqref{eq: stability}, respectively. Then, we have
    $$
    \norm{u_h}_{h,0} \le C_0 \norm{f}_{\Ls^2\left(Q_T\right)},
    $$
    where $C_0:= \left(\norm{\ell}_{\Ls^\infty\left(Q_T\right)}C_P\right)/ \left(C_s\min\left\{\kappa_1, \kappa_2\right\}\right)$. Moreover, if $u\in \Ws$ and Assumption \ref{assum: Nitsche's trick assumption} is satisfied, then there exists $h_\ast >0$ such that for all $h\in \left(0, h_\ast\right)$, the following estimate holds
    $$
    \norm{\Ds u_h}_{\LLs^2\left(Q_T\right)} \le C h\norm{u}_{\Hs^s\left(Q_1\cup Q_2\right)} + \norm{u}_{\Hs^1\left(Q_T\right)}.
    $$
\end{lemma}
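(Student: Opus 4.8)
The plan is to establish the two bounds separately. The first is a direct consequence of the discrete inf-sup condition \eqref{eq: stability} and needs only $f\in\Ls^2\left(Q_T\right)$; the second couples the second-order $\Ls^2$-estimate of Lemma \ref{lem: state error estimate L2 Q norm} with a standard inverse inequality on the quasi-uniform mesh.

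For the bound $\norm{u_h}_{h,0}\le C_0\norm{f}_{\Ls^2\left(Q_T\right)}$ I would apply the stability condition \eqref{eq: stability} to $u_h=u_h\left(f\right)$, giving $C_s\norm{u_h}_{h,0}\le\sup_{\vphi_h\in\Vs_{h,0}\setminus\{0\}}a_h\left(u_h,\vphi_h\right)/\norm{\vphi_h}_{h}$, and then use the discrete state equation \eqref{eq: discrete state equation} to replace $a_h\left(u_h,\vphi_h\right)$ by $\left(\ell f,\vphi_h\right)_{\Ls^2\left(Q_T\right)}$. Bounding this inner product by Cauchy--Schwarz, then $\norm{\ell f}_{\Ls^2\left(Q_T\right)}\le\norm{\ell}_{\Ls^\infty\left(Q_T\right)}\norm{f}_{\Ls^2\left(Q_T\right)}$ and the Poincaré--Steklov inequality \eqref{eq: compare L2 and the Y norm}, yields $\left(\ell f,\vphi_h\right)_{\Ls^2\left(Q_T\right)}\le\norm{\ell}_{\Ls^\infty\left(Q_T\right)}C_P\norm{f}_{\Ls^2\left(Q_T\right)}\norm{\nabla\vphi_h}_{\LLs^2\left(Q_T\right)}$. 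Finally, the definition of $\norm{\cdot}_{h}$ gives $\min\{\kappa_1,\kappa_2\}\norm{\nabla\vphi_h}_{\LLs^2\left(Q_T\right)}^2\le\norm{\vphi_h}_{h}^2$, so after dividing by $\norm{\vphi_h}_{h}$ and passing to the supremum one recovers the estimate with the constant $C_0$ assembled from $\norm{\ell}_{\Ls^\infty\left(Q_T\right)}$, $C_P$, $C_s$ and $\min\{\kappa_1,\kappa_2\}$.

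For the bound on $\norm{\Ds u_h}_{\LLs^2\left(Q_T\right)}$ I would insert the exact solution $u$ and its nodal interpolant $I_h u$, writing $\norm{\Ds u_h}_{\LLs^2\left(Q_T\right)}\le\norm{\Ds\left(u_h-I_h u\right)}_{\LLs^2\left(Q_T\right)}+\norm{\Ds\left(I_h u-u\right)}_{\LLs^2\left(Q_T\right)}+\norm{\Ds u}_{\LLs^2\left(Q_T\right)}$. The last term is at most $\norm{u}_{\Hs^1\left(Q_T\right)}$ because $u\in\Ws$; the middle term is $\le Ch\norm{u}_{\Hs^s\left(Q_1\cup Q_2\right)}$ by Lemma \ref{lem: interpolation error}. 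For the first term, $u_h-I_h u\in\Vs_{h,0}$ is piecewise affine on the quasi-uniform mesh $\mathcal{T}_h$, so the inverse inequality gives $\norm{\Ds\left(u_h-I_h u\right)}_{\LLs^2\left(Q_T\right)}\le Ch^{-1}\norm{u_h-I_h u}_{\Ls^2\left(Q_T\right)}$; splitting $\norm{u_h-I_h u}_{\Ls^2\left(Q_T\right)}\le\norm{u-u_h}_{\Ls^2\left(Q_T\right)}+\norm{u-I_h u}_{\Ls^2\left(Q_T\right)}$ and invoking Lemma \ref{lem: state error estimate L2 Q norm} (which is where Assumption \ref{assum: Nitsche's trick assumption} enters) together with Lemma \ref{lem: interpolation error} gives $\norm{u_h-I_h u}_{\Ls^2\left(Q_T\right)}\le Ch^2\norm{u}_{\Hs^s\left(Q_1\cup Q_2\right)}$, hence $\norm{\Ds\left(u_h-I_h u\right)}_{\LLs^2\left(Q_T\right)}\le Ch\norm{u}_{\Hs^s\left(Q_1\cup Q_2\right)}$. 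Collecting the three contributions, valid for $h\in\left(0,h_\ast\right)$ as in Lemma \ref{lem: state error estimate L2 Q norm}, yields the claim.

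The first estimate is routine. The delicate part is the second: since $u_h$ is only piecewise affine in the space-time variables, its time derivative $\partial_t u_h$ cannot be controlled directly, and the above argument crucially pairs the inverse inequality --- hence the quasi-uniformity of $\mathcal{T}_h$ --- with the \emph{sharp} second-order $\Ls^2$-estimate of Lemma \ref{lem: state error estimate L2 Q norm}; a merely first-order $\Ls^2$-bound would leave an $O\left(1\right)$ term after multiplication by $h^{-1}$. This is precisely why Assumption \ref{assum: Nitsche's trick assumption} must be imposed in the hypotheses of this lemma.
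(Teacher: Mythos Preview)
Your proposal is correct and follows essentially the same approach as the paper: the first bound is derived from the inf--sup condition \eqref{eq: stability}, the discrete state equation, Cauchy--Schwarz, and the Poincar\'e--Steklov inequality exactly as you outline, while for the second bound the paper likewise inserts $u$ and $I_h u$, applies the global inverse inequality to $u_h-I_h u\in\Vs_{h,0}$, and then combines Lemma \ref{lem: interpolation error} with Lemma \ref{lem: state error estimate L2 Q norm} to gain the factor $h^2$ that survives multiplication by $h^{-1}$. Your closing remark on why the sharp second-order $\Ls^2$-estimate (and hence Assumption \ref{assum: Nitsche's trick assumption}) is indispensable here is apt and matches the logic of the paper's argument.
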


\begin{proof}
    For the first inequality, we apply \eqref{eq: stability} and \eqref{eq: compare L2 and the Y norm}. Indeed, we have
    $$
    C_s\norm{u_h}_{h,0} \le \sup_{\vphi_h \in \Vs_{h,0}\setminus \{0\}} \dfrac{a_h\left(u_h,\vphi_h\right)}{\norm{\vphi_h}_{h}} = \sup_{\vphi_h \in \Vs_{h,0}\setminus \{0\}} \dfrac{\left(\ell f, \vphi_h\right)_{\Ls^2\left(Q_T\right)}}{\norm{\vphi_h}_{h}} \le \dfrac{\norm{\ell f}_{\Ls^2\left(Q_T\right)} \norm{\vphi_h}_{\Ls^2\left(Q_T\right)}}{\norm{\vphi_h}_{h}} \le \dfrac{\norm{\ell}_{\Ls^\infty\left(Q_T\right)}\norm{f}_{\Ls^2\left(Q_T\right)} C_P}{\min\left\{\kappa_1, \kappa_2\right\}}.
    $$
    Now, we prove the second inequality. By using the triangle inequality, we obtain
    \begin{equation}
        \label{eq: stability result 3}
        \norm{\Ds u_h}_{\LLs^2\left(Q_T\right)} \le \norm{\Ds\left(u_h-u\right)}_{\LLs^2\left(Q_T\right)} + \norm{\Ds u}_{\LLs^2\left(Q_T\right)} \le \norm{\Ds\left(u_h-u\right)}_{\LLs^2\left(Q_T\right)} + \norm{u}_{\Hs^1\left(Q_T\right)}.
    \end{equation}
    Since $u\in \Ws$, we can use the interpolant $I_hu$. We invoke a global inverse inequality \cite[Example 12.3]{Ern2021} with $u_h -I_h u\in \Vs_{h,0}$, the triangle inequality, and finally, Lemmas \ref{lem: interpolation error} and \ref{lem: state error estimate L2 Q norm}, to arrive at
    $$
    \begin{aligned}
        \norm{\Ds\left(u_h-u\right)}_{\LLs^2\left(Q_T\right)} &\le \norm{\Ds\left(u_h-I_h u\right)}_{\LLs^2\left(Q_T\right)} + \norm{\Ds\left(I_h u-u\right)}_{\LLs^2\left(Q_T\right)}\\
        &\le C h^{-1}\norm{u_h-I_h u}_{\Ls^2\left(Q_T\right)} + Ch \norm{u}_{\Hs^s\left(Q_1\cup Q_2\right)}\\
        &\le C h^{-1}\left(\norm{u_h- u}_{\Ls^2\left(Q_T\right)} + \norm{u -I_h u}_{\Ls^2\left(Q_T\right)}\right) + Ch\norm{u}_{\Hs^s\left(Q_1\cup Q_2\right)}\\
        &\le C h^{-1}\left(h^2\norm{u}_{\Hs^s\left(Q_1\cup Q_2\right)} + h^2\norm{u}_{\Hs^s\left(Q_1\cup Q_2\right)}\right) + Ch\norm{u}_{\Hs^s\left(Q_1\cup Q_2\right)} =  Ch\norm{u}_{\Hs^s\left(Q_1\cup Q_2\right)},
    \end{aligned}
    $$
    for all $h\in \left(0,h_\ast\right)$, where $h_\ast>0$ is sufficiently small. Substituting this into \eqref{eq: stability result 3} completes the proof.
\end{proof}

Under a smoothing property of the adjoint (cf. Assumption \ref{assum: Nitsche's trick assumption}), the convergence order in Lemma \ref{lem: state error estimate discrete norm} has been enhanced. Specifically, second-order error estimates for the state are provided in Lemmas \ref{lem: state error estimate L2 T norm} and \ref{lem: state error estimate L2 Q norm}. Furthermore, conducting a numerical analysis for Problem \eqref{eq: discrete state optimality conditions}--\eqref{eq: discrete variational inequality} necessitates analogous estimates for the adjoint. To derive these results, we first mention the following remark:

\begin{remark}
    \label{rem: regularity of u}
    It is noteworthy that Problem \eqref{eq: general weak problem} can be obtained from Problem \eqref{eq: Nitsche's trick equation} by reversing the directions of time and the velocity field. This enables the application of Assumption \ref{assum: Nitsche's trick assumption} to this problem. More precisely, under Assumption \ref{assum: Nitsche's trick assumption}, for any $F\in \Ls^2\left(Q_T\right)$ and any $U_0\in \Hs^1_0\left(\Omega\right)$, Problem \eqref{eq: general weak problem} admits a solution $U\in \Ws$. Furthermore, there exists a constant $C>0$, independent of $F$ and $U_0$, such that
    $$
    \norm{U}_{\Ws}\le C\left(\norm{F}_{\Ls^2\left(Q_T\right)} + \norm{U_0}_{\Ls^2\left(\Omega\right)}\right).
    $$
\end{remark}

This remark highlights that the conditions in Lemmas \ref{lem: state error estimate L2 T norm}, \ref{lem: state error estimate L2 Q norm}, and \ref{lem: stability result} can be refined to improve both clarity and efficiency. In particular, under Assumption \ref{assum: Nitsche's trick assumption}, the estimates provided in these lemmas remain valid. Therefore, moving forward, we will assume only that Assumption \ref{assum: Nitsche's trick assumption} holds when applying second-order error estimates for the state or the $\norm{\Ds \cdot}_{\LLs^2\left(Q_T\right)}$-norm stability of the discrete state.

Following the duality arguments, we now establish second-order error estimates for the adjoint. For brevity, we state only the key results.

\begin{lemma}
    \label{lem: adjoint error estimate L2 Q norm}
    Let $p\in \Xs_T$ and $p_h\in \Vs_{h, T}$ be the solutions to Problems \eqref{eq: weak adjoint equation} and \eqref{eq: discrete adjoint equation}, respectively. If $p\in \Ws$, then we have the following estimate
    $$
    \norm{p-p_h}_{h,T}\le C h \norm{p}_{\Hs^s\left(Q_1\cup Q_2\right)}.
    $$
    Under Assumption \ref{assum: Nitsche's trick assumption}, there exists $h_\ast >0$ such that for all $h\in \left(0, h_\ast\right)$, the following estimates hold
    $$
    \norm{\left(p-p_h\right)\left(\cdot, 0\right)}_{\Ls^2\left(\Omega\right)} \le C h^2 \norm{p}_{\Hs^s\left(Q_1\cup Q_2\right)},
    $$ 
    and 
    $$
    \norm{p-p_h}_{\Ls^2\left(Q_T\right)} \le C h^2 \norm{p}_{\Hs^s\left(Q_1\cup Q_2\right)}.
    $$
\end{lemma}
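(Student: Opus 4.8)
The plan is to establish the three estimates for the adjoint by a direct analogy with the state-equation results in Lemmas \ref{lem: state error estimate discrete norm}, \ref{lem: state error estimate L2 T norm}, and \ref{lem: state error estimate L2 Q norm}, exploiting the symmetry between $a(\cdot,\cdot)$ and $a^\prime(\cdot,\cdot)$ under the simultaneous reversal of the time direction $t \mapsto T-t$ and the velocity field $\vb \mapsto -\vb$. Under this change of variables, $a^\prime$ takes the form of $a$, the space $\Xs_T$ maps to $\Xs_0$, the final-time trace at $t=T$ becomes the initial-time trace at $t=0$, and the discrete spaces $\Vs_{h,T}$ and $\Vs_{h,0}$ are interchanged. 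Moreover, the right-hand side $\chi_{\omega_T}\left(u(f) - z_d^\varepsilon\right)$ of Problem \eqref{eq: weak adjoint equation} is simply a fixed element of $\Ls^2\left(Q_T\right)$, so it plays exactly the role that $\ell f$ plays in Problem \eqref{eq: weak state equation}. Since Assumption \ref{assum: Nitsche's trick assumption} was already stated symmetrically with respect to the adjoint bilinear form $a^\prime$ (Problem \eqref{eq: Nitsche's trick equation} is posed with $a^\prime$), all the auxiliary ingredients transfer verbatim.

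Concretely, for the first estimate $\norm{p - p_h}_{h,T} \le C h \norm{p}_{\Hs^s\left(Q_1 \cup Q_2\right)}$, I would invoke the stability condition \eqref{eq: stability adjoint} in place of \eqref{eq: stability}, observe that the consistency-error analogue of Lemma \ref{lem: discrepancy between a and ah} holds (namely $a_h^\prime\left(p - p_h, \phi_h\right) = \int_{S_h} \left(\kappa_h - \kappa\right) \nabla p \cdot \nabla \phi_h \dx\dt$ for all $\phi_h \in \Vs_{h,T}$, proved by the identical subtraction of \eqref{eq: weak adjoint equation} from \eqref{eq: discrete adjoint equation}), and then repeat the argument behind Lemma \ref{lem: state error estimate discrete norm} using the interpolation estimate of Lemma \ref{lem: interpolation error} applied to $p \in \Ws$; this is recorded in \cite[Corollary 3.1]{NLPT2024} after the time-reversal. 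For the second estimate, the final-time trace $(\cdot, T)$ in Lemma \ref{lem: state error estimate L2 T norm} becomes the initial-time trace $(\cdot, 0)$, so I would apply Nitsche's trick with the auxiliary problem \eqref{eq: general weak problem} (which, per Remark \ref{rem: regularity of u}, enjoys the $\Ws$-regularity guaranteed by Assumption \ref{assum: Nitsche's trick assumption}), choosing the source term of that auxiliary problem proportional to $(p - p_h)(\cdot, 0)$; the computation splits into a discrete bilinear term $a_h^\prime\left(p - p_h, e\right)$ handled by integration by parts exactly as in \eqref{eq: state error estimate L2 T norm 3}, and an $S_h$-remainder bounded using $\norm{\nabla p}_{\LLs^2(S_h)} \le C h \norm{p}_{\Hs^s\left(Q_1 \cup Q_2\right)}$ and the analogous bound for the auxiliary solution, as in \eqref{eq: state error estimate L2 T norm 4}. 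Finally, the third estimate $\norm{p - p_h}_{\Ls^2\left(Q_T\right)} \le C h^2 \norm{p}_{\Hs^s\left(Q_1 \cup Q_2\right)}$ follows by repeating the argument of Lemma \ref{lem: state error estimate L2 Q norm}, now using the time-reversed auxiliary problem with source $\norm{p - p_h}^{-1}_{\Ls^2\left(Q_T\right)}(p - p_h)$ and vanishing initial data, and feeding in the just-proven initial-trace estimate where Lemma \ref{lem: state error estimate L2 T norm} was used before.

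The main obstacle — or rather, the only point requiring genuine care — is bookkeeping the direction reversals consistently: one must check that reversing time and $\vb$ together preserves the incompressibility condition $\dive{\vb} = 0$ (it does, trivially), that the coercivity-type identity \eqref{eq: optimality conditions 2}–\eqref{eq: coercivity of ah} used to kill the advection boundary term still holds for $a_h^\prime$ on $\Xs_T$ (it does, by the same divergence-theorem computation, since the sign flips on both the time-derivative term and the advection term compensate), and that the interface geometry and all the $S_h$-estimates \eqref{eq: discrepancy region}–\eqref{eq: Th^ast area} are unaffected by the change of variables (they are, being purely geometric statements about $\Gamma^\ast$ and $\Gamma_h^\ast$). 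Given these checks, the proofs are mechanical transcriptions, which is why only the statement of the key results is recorded; I would write a single short paragraph indicating the reversal and citing the corresponding state-equation lemmas rather than reproducing the calculations.
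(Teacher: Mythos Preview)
Your proposal is correct and matches the paper's approach exactly: the paper does not write out a proof but simply states, ``Following the duality arguments, we now establish second-order error estimates for the adjoint. For brevity, we state only the key results,'' which is precisely the time-reversal/velocity-reversal transfer of Lemmas \ref{lem: state error estimate discrete norm}, \ref{lem: state error estimate L2 T norm}, and \ref{lem: state error estimate L2 Q norm} that you outline. Your additional bookkeeping checks (incompressibility, coercivity of $a_h^\prime$ on $\Xs_T$, geometry of $S_h$) are all sound and in fact more explicit than what the paper records.
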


\section{Error estimates and convergence rates for the regularized problem}
\label{sec: error estimates}

Let $f_+ \in F_{+}$ be the exact source, $f^{\varepsilon}_{\lambda, h}\in F^h_{+}$ be the solution to Problem \eqref{eq: discrete state optimality conditions}--\eqref{eq: discrete variational inequality}, and $f_h^\dagger\in F^h_{+}$ be the post-processing solution in \eqref{eq: post-processing solution}. In this section, we estimate the errors $f_+ -f^{\varepsilon}_{\lambda, h}$ and $f_+ -f_h^\dagger$ with respect to the $\Ls^2\left(Q_T\right)$-norm in terms of the parameter $\lambda$, the mesh size $h$, and the noise level $\varepsilon$. Moreover, we suggest a priori choices for $\lambda$, depending on $h$ and $\varepsilon$, such that $f^{\varepsilon}_{\lambda, h}$ and $f_h^\dagger$ strongly converge to $f_+ $ in $\Ls^2\left(Q_T\right)$ as both $h\to 0$ and $\varepsilon\to 0$. 

Let us begin with the following inequalities
\begin{equation}
    \label{eq: overall error 1}
    \norm{f_+ -f^{\varepsilon}_{\lambda, h}}_{\Ls^2\left(Q_T\right)}\le \norm{f_+ -f^{\varepsilon}_{\lambda}}_{\Ls^2\left(Q_T\right)} + \norm{f^{\varepsilon}_{\lambda}-f^{\varepsilon}_{\lambda, h}}_{\Ls^2\left(Q_T\right)},
\end{equation}
and 
\begin{equation}
    \label{eq: overall error 2}
    \norm{f_+ -f_h^\dagger}_{\Ls^2\left(Q_T\right)}\le \norm{f_+ -f^{\varepsilon}_{\lambda}}_{\Ls^2\left(Q_T\right)} + \norm{f^{\varepsilon}_{\lambda}-f_h^\dagger}_{\Ls^2\left(Q_T\right)}.
\end{equation}
The first term on the right-hand side of \eqref{eq: overall error 1} and \eqref{eq: overall error 2} is addressed in Lemma \ref{lem: convergence rate}. The following two subsections estimate the second term in \eqref{eq: overall error 1} concerning two different discretization methods: the variational approach and the element-wise constant discretization. Subsection \ref{subsec: error postprocessing} deals with the second term in \eqref{eq: overall error 2}, which is the error of the post-processing strategy. After that, we return to the overall errors and a priori choices for $\lambda$ in Subsection \ref{subsec: regularization parameter selection}. 

\subsection{Variational approach}
\label{subsec: error variational}

Consider the case where $F_{+}$ is not discretized. The main result of this subsection is stated in Theorem \ref{theo: error estimate variational}. For clarity, we divide it into three technical lemmas. 

To commence, let us denote by $u_h\left(f^{\varepsilon}_{\lambda}\right)\in \Vs_{h,0}$ and $p_h\left(f^{\varepsilon}_{\lambda}\right)\in \Vs_{h,T}$ the solutions to the following problems
\begin{equation}
    \label{eq: auxiliary state variational}
    a_h\left(u_h\left(f^{\varepsilon}_{\lambda}\right), \vphi_h\right) = \left(\ell f^{\varepsilon}_{\lambda}, \vphi_h\right)_{\Ls^2\left(Q_T\right)} \qqqq \forall\vphi_h \in \Vs_{h,0},    
\end{equation}
and
\begin{equation}
    \label{eq: auxiliary adjoint variational}
    a^{\prime}_h\left(p_h\left(f^{\varepsilon}_{\lambda}\right),\phi_h\right) = \left(\chi_{\omega_T}\left(u_h\left(f^{\varepsilon}_{\lambda}\right)-z^{\varepsilon}_{d,h}\right),\phi_h\right)_{\Ls^2\left(Q_T\right)} \qqqq \forall \phi_h\in \Vs_{h,T}.
\end{equation}

\begin{lemma}
    \label{lem: discrete-auxiliary estimate variational}
    Let $\left(u_h\left(f^{\varepsilon}_{\lambda}\right), p_h\left(f^{\varepsilon}_{\lambda}\right), f^{\varepsilon}_{\lambda}\right) \in \Vs_{h,0}\times \Vs_{h,T}\times F_{+}$ and $\left(u^{\varepsilon}_{\lambda, h}, p^{\varepsilon}_{\lambda, h}, f^{\varepsilon}_{\lambda, h}\right)\in \Vs_{h,0}\times \Vs_{h,T}\times F^h_{+}$ be the solutions to Problems \eqref{eq: auxiliary state variational}--\eqref{eq: auxiliary adjoint variational}, \eqref{eq: variational inequality} and \eqref{eq: discrete state optimality conditions}--\eqref{eq: discrete variational inequality} in case of variational discretization, respectively. Let $C_0>0$ be the constant defined in Lemma \ref{lem: stability result}. Then, we have the following inequalities
    $$
    \norm{u_h\left(f^{\varepsilon}_{\lambda}\right)-u^{\varepsilon}_{\lambda, h}}_{h,0} +\norm{p_h\left(f^{\varepsilon}_{\lambda}\right)-p^{\varepsilon}_{\lambda, h}}_{h,T} \le C_1\norm{f^{\varepsilon}_{\lambda}-f^{\varepsilon}_{\lambda, h}}_{\Ls^2\left(Q_T\right)},
    $$
    and 
    $$
    \norm{u_h\left(f^{\varepsilon}_{\lambda}\right)-u^{\varepsilon}_{\lambda, h}}_{\Ls^2\left(Q_T\right)} +\norm{p_h\left(f^{\varepsilon}_{\lambda}\right)-p^{\varepsilon}_{\lambda, h}}_{\Ls^2\left(Q_T\right)} \le C_2\norm{f^{\varepsilon}_{\lambda}-f^{\varepsilon}_{\lambda, h}}_{\Ls^2\left(Q_T\right)},
    $$
    where
    $$
    C_1:= \max{\left\{C_0,\dfrac{C^2_P C_0}{C_s\min^2\left\{\kappa_1, \kappa_2\right\}}\right\}}\qqq\text{and}\qqq C_2:=\dfrac{C_P C_1}{\min\left\{\kappa_1, \kappa_2\right\}}.
    $$
\end{lemma}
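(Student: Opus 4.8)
The plan is to exploit the linearity of the discrete state and adjoint problems, so that the two differences $u_h(f^{\varepsilon}_{\lambda})-u^{\varepsilon}_{\lambda, h}$ and $p_h(f^{\varepsilon}_{\lambda})-p^{\varepsilon}_{\lambda, h}$ are the solutions of the very same discrete variational problems but with right-hand sides generated by the perturbation $f^{\varepsilon}_{\lambda}-f^{\varepsilon}_{\lambda, h}$; the stated bounds will then follow from the stability estimate of Lemma~\ref{lem: stability result} and the discrete stability conditions \eqref{eq: stability} and \eqref{eq: stability adjoint}. Observe first that, in the variational discretization, $F^h_+=F_+$, so that $u^{\varepsilon}_{\lambda, h}$ and $p^{\varepsilon}_{\lambda, h}$ in \eqref{eq: discrete state optimality conditions}--\eqref{eq: discrete adjoint optimality conditions} coincide with the discrete state \eqref{eq: discrete state equation} driven by $f^{\varepsilon}_{\lambda, h}$ and the discrete adjoint \eqref{eq: discrete adjoint equation} driven by $u^{\varepsilon}_{\lambda, h}-z^{\varepsilon}_{d,h}$, while $u_h(f^{\varepsilon}_{\lambda})$ and $p_h(f^{\varepsilon}_{\lambda})$ are the analogous objects associated with $f^{\varepsilon}_{\lambda}$ and $u_h(f^{\varepsilon}_{\lambda})-z^{\varepsilon}_{d,h}$.

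Subtracting \eqref{eq: auxiliary state variational} from \eqref{eq: discrete state optimality conditions} yields $a_h\big(u_h(f^{\varepsilon}_{\lambda})-u^{\varepsilon}_{\lambda, h},\vphi_h\big)=\big(\ell(f^{\varepsilon}_{\lambda}-f^{\varepsilon}_{\lambda, h}),\vphi_h\big)_{\Ls^2(Q_T)}$ for all $\vphi_h\in\Vs_{h,0}$, i.e. $u_h(f^{\varepsilon}_{\lambda})-u^{\varepsilon}_{\lambda, h}$ solves the discrete state equation \eqref{eq: discrete state equation} with source $f^{\varepsilon}_{\lambda}-f^{\varepsilon}_{\lambda, h}\in\Ls^2(Q_T)$, so the first inequality of Lemma~\ref{lem: stability result} gives $\norm{u_h(f^{\varepsilon}_{\lambda})-u^{\varepsilon}_{\lambda, h}}_{h,0}\le C_0\norm{f^{\varepsilon}_{\lambda}-f^{\varepsilon}_{\lambda, h}}_{\Ls^2(Q_T)}$. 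The Poincar\'e--Steklov inequality \eqref{eq: compare L2 and the Y norm} together with the comparison $\norm{v}_{\Ls^2(Q_T)}\le\big(C_P/\min\{\kappa_1,\kappa_2\}\big)\norm{v}_h\le\big(C_P/\min\{\kappa_1,\kappa_2\}\big)\norm{v}_{h,0}$ for $v\in\Vs_{h,0}$ (the estimate used in the proof of Lemma~\ref{lem: stability result}) then converts this into the corresponding $\Ls^2(Q_T)$-bound for the state difference. Subtracting next \eqref{eq: auxiliary adjoint variational} from \eqref{eq: discrete adjoint optimality conditions}, the discrete datum $z^{\varepsilon}_{d,h}$ cancels and one obtains $a^{\prime}_h\big(p_h(f^{\varepsilon}_{\lambda})-p^{\varepsilon}_{\lambda, h},\phi_h\big)=\big(\chi_{\omega_T}(u_h(f^{\varepsilon}_{\lambda})-u^{\varepsilon}_{\lambda, h}),\phi_h\big)_{\Ls^2(Q_T)}$ for all $\phi_h\in\Vs_{h,T}$; applying the discrete stability condition \eqref{eq: stability adjoint} to the left side and Cauchy--Schwarz (with $\norm{\chi_{\omega_T}v}_{\Ls^2(Q_T)}\le\norm{v}_{\Ls^2(Q_T)}$) followed by $\norm{\phi_h}_{\Ls^2(Q_T)}\le\big(C_P/\min\{\kappa_1,\kappa_2\}\big)\norm{\phi_h}_h$ to the right side gives $\norm{p_h(f^{\varepsilon}_{\lambda})-p^{\varepsilon}_{\lambda, h}}_{h,T}\le\big(C_P/(C_s\min\{\kappa_1,\kappa_2\})\big)\norm{u_h(f^{\varepsilon}_{\lambda})-u^{\varepsilon}_{\lambda, h}}_{\Ls^2(Q_T)}$, and inserting the $\Ls^2$-bound for the state difference closes the chain; a last use of \eqref{eq: compare L2 and the Y norm} with $\norm{v}_{\Ls^2(Q_T)}\le\big(C_P/\min\{\kappa_1,\kappa_2\}\big)\norm{v}_{h,T}$ for $v\in\Vs_{h,T}$ turns the $\norm{\cdot}_{h,T}$-estimate into an $\Ls^2(Q_T)$ one. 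Adding the state and adjoint contributions (each of the left-hand terms being bounded separately by the right-hand side with the pertinent constant) produces the two claimed inequalities, and collecting the accumulated powers of $C_P$, $C_s$ and $\min\{\kappa_1,\kappa_2\}$ — one factor $1/\min\{\kappa_1,\kappa_2\}$ from passing from $\norm{\cdot}_{h,0}$ to $\Ls^2(Q_T)$ and another from the adjoint test-function estimate — yields exactly $C_1=\max\{C_0,\ C_P^2C_0/(C_s\min^2\{\kappa_1,\kappa_2\})\}$ and $C_2=C_PC_1/\min\{\kappa_1,\kappa_2\}$.

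There is no genuine obstacle in this argument; it is a routine perturbation estimate. The only point requiring a little care is the identification of $u_h(f^{\varepsilon}_{\lambda})-u^{\varepsilon}_{\lambda, h}\in\Vs_{h,0}$ and $p_h(f^{\varepsilon}_{\lambda})-p^{\varepsilon}_{\lambda, h}\in\Vs_{h,T}$ as solutions of the discrete state and adjoint problems with differenced data, which is what makes Lemma~\ref{lem: stability result} and the stability conditions \eqref{eq: stability}--\eqref{eq: stability adjoint} directly applicable; the rest is bookkeeping of constants, and this bookkeeping is precisely what pins down the explicit forms of $C_1$ and $C_2$.
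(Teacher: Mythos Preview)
Your proposal is correct and follows essentially the same route as the paper's proof: subtract the discrete state equations and apply Lemma~\ref{lem: stability result} to bound the state difference, then subtract the discrete adjoint equations and use the stability condition \eqref{eq: stability adjoint} together with the Poincar\'e--Steklov inequality \eqref{eq: compare L2 and the Y norm} to bound the adjoint difference, finally passing to $\Ls^2$ via \eqref{eq: compare L2 and the Y norm} again. The bookkeeping of constants you describe matches the paper's, and your remark that $z^{\varepsilon}_{d,h}$ cancels when forming the adjoint difference is exactly the point.
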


\begin{proof}
    The first result is a direct application of Lemma \ref{lem: stability result}. Indeed, we subtract \eqref{eq: discrete state optimality conditions} from \eqref{eq: auxiliary state variational} to obtain
    $$
    a_h\left(u_h\left(f^{\varepsilon}_{\lambda}\right) - u^{\varepsilon}_{\lambda, h}, \vphi_h\right) = \left(\ell f^{\varepsilon}_{\lambda}-\ell f^{\varepsilon}_{\lambda, h}, \vphi_h\right)_{\Ls^2\left(Q_T\right)} \qqqq \forall\vphi_h \in \Vs_{h,0}.
    $$
    Due to Lemma \ref{lem: stability result}, we have
    \begin{equation}
        \label{eq: discrete-auxiliary estimate variational 1}
        \norm{u_h\left(f^{\varepsilon}_{\lambda}\right) - u^{\varepsilon}_{\lambda, h}}_{h,0}\le C_0\norm{f^{\varepsilon}_{\lambda}-f^{\varepsilon}_{\lambda, h}}_{\Ls^2\left(Q_T\right)}.
    \end{equation}
    We continue by subtracting \eqref{eq: discrete adjoint optimality conditions} from \eqref{eq: auxiliary adjoint variational} to get
    $$
    a^{\prime}_h\left(p_h\left(f^{\varepsilon}_{\lambda}\right) - p^{\varepsilon}_{\lambda, h},\phi_h\right) = \left(\chi_{\omega_T}\left(u_h\left(f^{\varepsilon}_{\lambda}\right) - u^{\varepsilon}_{\lambda, h}\right),\phi_h\right)_{\Ls^2\left(Q_T\right)} \qqqq \forall \phi_h\in \Vs_{h,T}.
    $$
    We invoke \eqref{eq: stability adjoint}, the technique in Lemma \ref{lem: stability result}, \eqref{eq: compare L2 and the Y norm}, and \eqref{eq: discrete-auxiliary estimate variational 1} to obtain
    \begin{equation}
        \label{eq: discrete-auxiliary estimate variational 3}
        \begin{aligned}
            \norm{p_h\left(f^{\varepsilon}_{\lambda}\right) - p^{\varepsilon}_{\lambda, h}}_{h,T} \le \dfrac{C_P}{C_s\min\left\{\kappa_1, \kappa_2\right\}}\norm{u_h\left(f^{\varepsilon}_{\lambda}\right) - u^{\varepsilon}_{\lambda, h}}_{\Ls^2\left(\omega_T\right)} &\le \dfrac{C^2_P}{C_s\min^2\left\{\kappa_1, \kappa_2\right\}}\norm{u_h\left(f^{\varepsilon}_{\lambda}\right) - u^{\varepsilon}_{\lambda, h}}_{h,0} \\
            &\le \dfrac{C^2_P C_0}{C_s\min^2\left\{\kappa_1, \kappa_2\right\}}\norm{f^{\varepsilon}_{\lambda}-f^{\varepsilon}_{\lambda, h}}_{\Ls^2\left(Q_T\right)}.
        \end{aligned}
    \end{equation}
    The first result follows by combining \eqref{eq: discrete-auxiliary estimate variational 1} and \eqref{eq: discrete-auxiliary estimate variational 3}. The second one is a consequence of the first, due to \eqref{eq: compare L2 and the Y norm}.
\end{proof}

\begin{lemma}
    \label{lem: exact-discrete control estimate variational}
    Let $f^{\varepsilon}_{\lambda}\in F_{+}$ be the solution to Problem \eqref{eq: problem formulation} and $p^\varepsilon_\lambda\in \Xs_T$ be its corresponding adjoint. Let $p_h\left(f^{\varepsilon}_{\lambda}\right)\in \Vs_{h,T}$ and $f^{\varepsilon}_{\lambda, h}\in F^h_{+}$ be the solutions to Problems \eqref{eq: auxiliary adjoint variational} and \eqref{eq: discrete variational inequality} in case of variational discretization, respectively. Then, we have
    $$
    \norm{f^{\varepsilon}_{\lambda} - f^{\varepsilon}_{\lambda, h}}_{\Ls^2\left(Q_T\right)}\le \dfrac{C}{\lambda^\tau}\norm{p^{\varepsilon}_{\lambda}-p_h\left(f^{\varepsilon}_{\lambda}\right)}_{\Ls^2\left(Q_T\right)} \qqqq \text{with } \tau = 
    \begin{cases}
        \frac{3}{2} & \text{if } \lambda \le 8 \norm{\ell}_{\Ls^\infty\left(Q_T\right)}C_2, \\ 
        0 & \text{otherwise},
    \end{cases}
    $$
    where the constant $C_2>0$ is defined in Lemma \ref{lem: discrete-auxiliary estimate variational}.
\end{lemma}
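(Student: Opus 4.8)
The plan is to exploit that in the variational discretization $F^h_{+}=F_{+}$, so that both \eqref{eq: variational inequality} and \eqref{eq: discrete variational inequality} may be tested over the same convex set. First I would insert $f=f^{\varepsilon}_{\lambda,h}$ into \eqref{eq: variational inequality} and $f_h=f^{\varepsilon}_{\lambda}$ into \eqref{eq: discrete variational inequality}, add the two inequalities, and use the identity $\left(\lambda f^{\varepsilon}_{\lambda}-\lambda f^{\varepsilon}_{\lambda,h},\,f^{\varepsilon}_{\lambda,h}-f^{\varepsilon}_{\lambda}\right)_{\Ls^2\left(Q_T\right)}=-\lambda\norm{f^{\varepsilon}_{\lambda}-f^{\varepsilon}_{\lambda,h}}^2_{\Ls^2\left(Q_T\right)}$ to obtain
$$
\lambda\norm{f^{\varepsilon}_{\lambda}-f^{\varepsilon}_{\lambda,h}}^2_{\Ls^2\left(Q_T\right)}\le \left(\ell\left(p^{\varepsilon}_{\lambda}-p^{\varepsilon}_{\lambda,h}\right),\,f^{\varepsilon}_{\lambda,h}-f^{\varepsilon}_{\lambda}\right)_{\Ls^2\left(Q_T\right)}.
$$
Then I would split the adjoint difference through the intermediate adjoint of \eqref{eq: auxiliary adjoint variational}, writing $p^{\varepsilon}_{\lambda}-p^{\varepsilon}_{\lambda,h}=\bigl(p^{\varepsilon}_{\lambda}-p_h(f^{\varepsilon}_{\lambda})\bigr)+\bigl(p_h(f^{\varepsilon}_{\lambda})-p^{\varepsilon}_{\lambda,h}\bigr)$. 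The second piece is exactly the gap between \eqref{eq: auxiliary adjoint variational} and \eqref{eq: discrete adjoint optimality conditions}, so by the second inequality of Lemma \ref{lem: discrete-auxiliary estimate variational} it is bounded in $\Ls^2\left(Q_T\right)$ by $C_2\norm{f^{\varepsilon}_{\lambda}-f^{\varepsilon}_{\lambda,h}}_{\Ls^2\left(Q_T\right)}$. Setting $\delta:=\norm{f^{\varepsilon}_{\lambda}-f^{\varepsilon}_{\lambda,h}}_{\Ls^2\left(Q_T\right)}$ and $\rho:=\norm{p^{\varepsilon}_{\lambda}-p_h(f^{\varepsilon}_{\lambda})}_{\Ls^2\left(Q_T\right)}$, the Cauchy--Schwarz inequality together with $\norm{\ell}_{\Ls^\infty\left(Q_T\right)}$ gives the quadratic inequality
$$
\lambda\,\delta^2\le \norm{\ell}_{\Ls^\infty\left(Q_T\right)}\,\rho\,\delta+\norm{\ell}_{\Ls^\infty\left(Q_T\right)}\,C_2\,\delta^2 .
$$

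For $\lambda>8\norm{\ell}_{\Ls^\infty\left(Q_T\right)}C_2$ one has $\lambda-\norm{\ell}_{\Ls^\infty\left(Q_T\right)}C_2>\tfrac{7}{8}\lambda>7\norm{\ell}_{\Ls^\infty\left(Q_T\right)}C_2>0$; if $\delta=0$ the assertion is trivial, and otherwise dividing the quadratic inequality by $\delta$ and rearranging yields
$$
\delta\le \frac{\norm{\ell}_{\Ls^\infty\left(Q_T\right)}}{\lambda-\norm{\ell}_{\Ls^\infty\left(Q_T\right)}C_2}\,\rho\le \frac{1}{7C_2}\,\rho ,
$$
which is the claimed bound with $\tau=0$.

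The delicate case, and the place where I expect the real work to be, is $\lambda\le 8\norm{\ell}_{\Ls^\infty\left(Q_T\right)}C_2$, because then the coefficient $\lambda-\norm{\ell}_{\Ls^\infty\left(Q_T\right)}C_2$ need not be positive and the zeroth-order term $\norm{\ell}_{\Ls^\infty\left(Q_T\right)}C_2\,\delta^2$ can no longer be absorbed into the left-hand side of the quadratic inequality; dividing by $\delta$ only produces $\lambda\delta\le \norm{\ell}_{\Ls^\infty\left(Q_T\right)}\rho+\norm{\ell}_{\Ls^\infty\left(Q_T\right)}C_2\,\delta$, which on its own closes nothing. My plan for this regime is to feed into the residual $\delta$ on the right a coarse a priori bound of order $\lambda^{-1/2}$ — obtained by comparing $J^{\varepsilon}_{\lambda}(f^{\varepsilon}_{\lambda})$, and its fully discrete counterpart evaluated at $f^{\varepsilon}_{\lambda,h}$, with the value at the admissible choice $f=0$, which gives $\norm{f^{\varepsilon}_{\lambda}}_{\Ls^2\left(Q_T\right)}+\norm{f^{\varepsilon}_{\lambda,h}}_{\Ls^2\left(Q_T\right)}\le C\lambda^{-1/2}$ — so that the half-power of $\lambda$ supplied by this bound, combined with the $\lambda^{-1}$ already present in the rearranged inequality, produces $\tau=\tfrac{3}{2}$. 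The subtle point is to arrange this last estimate so that the right-hand side remains genuinely proportional to $\rho$ and the factor $8$ in the threshold is precisely the slack that makes the argument uniform in the two regimes; getting this bookkeeping clean (rather than picking up a spurious additive $\lambda^{-3/2}$ term) is the main obstacle of the proof.
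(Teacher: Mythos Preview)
Your setup and the case $\lambda>8\norm{\ell}_{\Ls^\infty(Q_T)}C_2$ follow the paper's argument essentially verbatim: test the two variational inequalities against each other, split $p^{\varepsilon}_{\lambda}-p^{\varepsilon}_{\lambda,h}$ through $p_h(f^{\varepsilon}_{\lambda})$, and invoke Lemma~\ref{lem: discrete-auxiliary estimate variational} on the second piece. The only cosmetic difference is that the paper applies Young's inequality with weight $\lambda$ to the first piece,
\[
\bigl(\ell(p^{\varepsilon}_{\lambda}-p_h(f^{\varepsilon}_{\lambda})),\,f^{\varepsilon}_{\lambda,h}-f^{\varepsilon}_{\lambda}\bigr)_{\Ls^2(Q_T)}\le \tfrac{1}{2\lambda}\norm{\ell}^2_{\Ls^\infty(Q_T)}\rho^2+\tfrac{\lambda}{2}\delta^2,
\]
arriving at $\bigl(\tfrac{\lambda}{2}-\norm{\ell}_{\Ls^\infty(Q_T)}C_2\bigr)\delta^2\le \tfrac{1}{2\lambda}\norm{\ell}^2_{\Ls^\infty(Q_T)}\rho^2$ instead of your linear inequality in $\delta$; for large $\lambda$ both routes give $\tau=0$ with comparable constants.

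For the regime $\lambda\le 8\norm{\ell}_{\Ls^\infty(Q_T)}C_2$ the paper does \emph{not} invoke any coarse a~priori bound on $\delta$. It simply asserts the algebraic lower bound
\[
\tfrac{\lambda}{2}-\norm{\ell}_{\Ls^\infty(Q_T)}C_2\ \ge\ \tfrac{3\lambda^2}{64\,\norm{\ell}_{\Ls^\infty(Q_T)}C_2},
\]
divides, and takes a square root to obtain $\delta\le C\lambda^{-3/2}\rho$ directly; so in the paper the exponent $\tau=\tfrac32$ is produced purely by this claimed quadratic lower bound on the coefficient, together with the extra $\lambda^{-1/2}$ coming from the Young weight. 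Your instinct that something is delicate here is well founded: that lower bound actually holds only on the window $\tfrac{8}{3}\norm{\ell}_{\Ls^\infty(Q_T)}C_2\le\lambda\le 8\norm{\ell}_{\Ls^\infty(Q_T)}C_2$ and fails (the left-hand side is negative) once $\lambda<2\norm{\ell}_{\Ls^\infty(Q_T)}C_2$. So the paper's proof has precisely the gap you anticipated, and your proposed bootstrap via $\norm{f^{\varepsilon}_{\lambda}}_{\Ls^2(Q_T)}+\norm{f^{\varepsilon}_{\lambda,h}}_{\Ls^2(Q_T)}\le C\lambda^{-1/2}$ would not repair it either, since (as you yourself note) it yields an additive $\lambda^{-3/2}$ term not controlled by $\rho$.
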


\begin{proof}
    We choose $f=f^{\varepsilon}_{\lambda, h}\in F_{+}$ in \eqref{eq: variational inequality} and $f_h=f^{\varepsilon}_{\lambda}\in F^h_{+}$ in \eqref{eq: discrete variational inequality}, then add the corresponding inequalities to get
    \begin{equation}
        \label{eq: exact-discrete control estimate variational 1}
        \begin{aligned}
            \lambda \norm{f^{\varepsilon}_{\lambda}-f^{\varepsilon}_{\lambda, h}}_{\Ls^2\left(Q_T\right)}^2 &\le \left(\ell p^{\varepsilon}_{\lambda}-\ell p^{\varepsilon}_{\lambda, h}, f^{\varepsilon}_{\lambda, h} - f^{\varepsilon}_{\lambda}\right)_{\Ls^2\left(Q_T\right)} \\
            &= \left(\ell p^{\varepsilon}_{\lambda}-\ell p_h\left(f^{\varepsilon}_{\lambda}\right), f^{\varepsilon}_{\lambda, h} - f^{\varepsilon}_{\lambda}\right)_{\Ls^2\left(Q_T\right)} + \left(\ell p_h\left(f^{\varepsilon}_{\lambda}\right)-\ell p^{\varepsilon}_{\lambda, h}, f^{\varepsilon}_{\lambda, h} - f^{\varepsilon}_{\lambda}\right)_{\Ls^2\left(Q_T\right)}.
        \end{aligned}
    \end{equation}
    By using the Cauchy inequality, we estimate the first term on the right-hand side of \eqref{eq: exact-discrete control estimate variational 1} as follows
    \begin{equation}
        \label{eq: exact-discrete control estimate variational 2}
        \begin{aligned}
            \left(\ell p^{\varepsilon}_{\lambda}-\ell p_h\left(f^{\varepsilon}_{\lambda}\right), f^{\varepsilon}_{\lambda, h} - f^{\varepsilon}_{\lambda}\right)_{\Ls^2\left(Q_T\right)}&\le \norm{\ell}_{\Ls^\infty\left(Q_T\right)}\norm{p^{\varepsilon}_{\lambda}-p_h\left(f^{\varepsilon}_{\lambda}\right)}_{\Ls^2\left(Q_T\right)}\norm{f^{\varepsilon}_{\lambda, h} - f^{\varepsilon}_{\lambda}}_{\Ls^2\left(Q_T\right)} \\
            &\le \dfrac{1}{2\lambda}\norm{\ell}^2_{\Ls^\infty\left(Q_T\right)}\norm{p^{\varepsilon}_{\lambda}-p_h\left(f^{\varepsilon}_{\lambda}\right)}_{\Ls^2\left(Q_T\right)}^2 + \dfrac{\lambda}{2}\norm{f^{\varepsilon}_{\lambda, h} - f^{\varepsilon}_{\lambda}}_{\Ls^2\left(Q_T\right)}^2.
        \end{aligned}
    \end{equation}
    For the second term on the right-hand side of \eqref{eq: exact-discrete control estimate variational 1}, we apply Lemma \ref{lem: discrete-auxiliary estimate variational} to have
    \begin{equation}
        \label{eq: exact-discrete control estimate variational 3}
        \begin{aligned}
            \left(\ell p_h\left(f^{\varepsilon}_{\lambda}\right)-\ell p^{\varepsilon}_{\lambda, h}, f^{\varepsilon}_{\lambda, h} - f^{\varepsilon}_{\lambda}\right)_{\Ls^2\left(Q_T\right)} &\le \norm{\ell}_{\Ls^\infty\left(Q_T\right)}\norm{p_h\left(f^{\varepsilon}_{\lambda}\right)- p^{\varepsilon}_{\lambda, h}}_{\Ls^2\left(Q_T\right)}\norm{f^{\varepsilon}_{\lambda, h} - f^{\varepsilon}_{\lambda}}_{\Ls^2\left(Q_T\right)}\\
            &\le \norm{\ell}_{\Ls^\infty\left(Q_T\right)}C_2\norm{f^{\varepsilon}_{\lambda, h} - f^{\varepsilon}_{\lambda}}^2_{\Ls^2\left(Q_T\right)}.
        \end{aligned}
    \end{equation}
    By combining \eqref{eq: exact-discrete control estimate variational 1}, \eqref{eq: exact-discrete control estimate variational 2}, and \eqref{eq: exact-discrete control estimate variational 3}, we obtain
    $$
    \left(\dfrac{\lambda}{2} - \norm{\ell}_{\Ls^\infty\left(Q_T\right)}C_2\right)\norm{f^{\varepsilon}_{\lambda} - f^{\varepsilon}_{\lambda, h}}^2_{\Ls^2\left(Q_T\right)} \le \dfrac{1}{2\lambda}\norm{\ell}^2_{\Ls^\infty\left(Q_T\right)}\norm{p^{\varepsilon}_{\lambda}-p_h\left(f^{\varepsilon}_{\lambda}\right)}_{\Ls^2\left(Q_T\right)}^2.
    $$
    If $\lambda \le 8 \norm{\ell}_{\Ls^\infty\left(Q_T\right)}C_2$, it follows that $\frac{\lambda}{2} - \norm{\ell}_{\Ls^\infty\left(Q_T\right)}C_2 \ge \frac{3\lambda^2}{64 \norm{\ell}_{\Ls^\infty\left(Q_T\right)}C_2}$, which implies
    $$
    \norm{f^{\varepsilon}_{\lambda} - f^{\varepsilon}_{\lambda, h}}_{\Ls^2\left(Q_T\right)} \le \sqrt{\dfrac{32 \norm{\ell}^3_{\Ls^\infty\left(Q_T\right)}C_2}{3\lambda^3}}\norm{p^{\varepsilon}_{\lambda}-p_h\left(f^{\varepsilon}_{\lambda}\right)}_{\Ls^2\left(Q_T\right)}.
    $$
    On the other hand, if $\lambda > 8 \norm{\ell}_{\Ls^\infty\left(Q_T\right)}C_2$, then $\frac{\lambda}{2} - \norm{\ell}_{\Ls^\infty\left(Q_T\right)}C_2 > 3 \norm{\ell}_{\Ls^\infty\left(Q_T\right)}C_2$. Therefore, we arrive at
    $$
    \norm{f^{\varepsilon}_{\lambda} - f^{\varepsilon}_{\lambda, h}}_{\Ls^2\left(Q_T\right)} < \sqrt{\dfrac{\norm{\ell}_{\Ls^\infty\left(Q_T\right)}}{6\lambda C_2}}\norm{p^{\varepsilon}_{\lambda}-p_h\left(f^{\varepsilon}_{\lambda}\right)}_{\Ls^2\left(Q_T\right)} < \dfrac{1}{4\sqrt{3}C_2} \norm{p^{\varepsilon}_{\lambda}-p_h\left(f^{\varepsilon}_{\lambda}\right)}_{\Ls^2\left(Q_T\right)}.
    $$
    The proof is complete.
\end{proof}

We next estimate the right-hand side of the inequality in Lemma \ref{lem: exact-discrete control estimate variational}. To do so, let us introduce $\widetilde{p_h}\left(f^{\varepsilon}_{\lambda}\right)\in \Vs_{h,T}$ as the solution to the problem
\begin{equation}
    \label{eq: auxiliary adjoint equation with continuous state variational}
    a^{\prime}_h\left(\widetilde{p_h}\left(f^{\varepsilon}_{\lambda}\right),\phi_h\right) = \left(\chi_{\omega_T}\left(u^{\varepsilon}_{\lambda}-z^{\varepsilon}_d\right),\phi_h\right)_{\Ls^2\left(Q_T\right)} \qqqq \forall \phi_h\in \Vs_{h,T}.
\end{equation}

We also need an $\Ls^2\left(\omega_T\right)$-norm estimate of the noisy data $z^\varepsilon_d \in \Ls^2\left(\omega_T\right)$. By using the triangle inequality, \eqref{eq: priori estimate}, and \eqref{eq: noise level}, we get
\begin{equation}
    \label{eq: stability of the noisy data}
    \begin{aligned}
        \norm{z^\varepsilon_d}_{\Ls^2\left(\omega_T\right)} \le \norm{z_d}_{\Ls^2\left(\omega_T\right)} + \norm{z^\varepsilon_d - z_d}_{\Ls^2\left(\omega_T\right)}  &\le \norm{U_d}_{\Ls^2\left(\omega_T\right)} + \norm{u^\ast}_{\Ls^2\left(\omega_T\right)} + \norm{U^\varepsilon_d - U_d}_{\Ls^2\left(\omega_T\right)} \\
        &\le \norm{U_d}_{\Ls^2\left(\omega_T\right)} +C\norm{g}_{\Ls^2\left(Q_T\right)} + \varepsilon \\
        &\le C\left(\norm{U_d}_{\Ls^2\left(\omega_T\right)} +\norm{g}_{\Ls^2\left(Q_T\right)}+\varepsilon\right).
    \end{aligned}
\end{equation}
Consequently, we conclude that the solution $f^\varepsilon_\lambda\in F_+$ to Problem \eqref{eq: problem formulation} satisfies the following inequality
\begin{equation}
    \label{eq: stability of the source}
    \begin{aligned}
        \norm{f^{\varepsilon}_{\lambda}}_{\Ls^2\left(Q_T\right)} \le \sqrt{\frac{2}{\lambda}J^{\varepsilon}_{\lambda}\left(f^{\varepsilon}_{\lambda}\right)}\le \sqrt{\frac{2}{\lambda}J^{\varepsilon}_{\lambda}\left(0\right)} &= \dfrac{1}{\sqrt{\lambda}}\norm{z^\varepsilon_d}_{\Ls^2\left(\omega_T\right)}\\
        &\le \dfrac{C}{\sqrt{\lambda}}\left(\norm{U_d}_{\Ls^2\left(\omega_T\right)} +\norm{g}_{\Ls^2\left(Q_T\right)}+\varepsilon\right).
    \end{aligned}
\end{equation}

\begin{lemma}
    \label{lem: auxiliary-discrete adjoint estimate variational}
    Let $p^{\varepsilon}_{\lambda}\in \Xs_T$ and $p_h\left(f^{\varepsilon}_{\lambda}\right)\in \Vs_{h,T}$ be the solutions to Problems \eqref{eq: weak adjoint equation optimality conditions} and \eqref{eq: auxiliary adjoint variational}, respectively. Under Assumption \ref{assum: Nitsche's trick assumption}, we have the following estimates
    $$
    \norm{p^{\varepsilon}_{\lambda}-p_h\left(f^{\varepsilon}_{\lambda}\right)}_{h,T} \le Ch\left(1+\dfrac{1}{\sqrt{\lambda}}\right)\left(\norm{U_d}_{\Ls^2\left(\omega_T\right)} +\norm{g}_{\Ls^2\left(Q_T\right)}+\varepsilon\right),
    $$
    and
    $$
    \norm{p^{\varepsilon}_{\lambda}-p_h\left(f^{\varepsilon}_{\lambda}\right)}_{\Ls^2\left(Q_T\right)} \le Ch^2\left(1+\dfrac{1}{\sqrt{\lambda}}\right)\left(\norm{U_d}_{\Ls^2\left(\omega_T\right)} +\norm{g}_{\Ls^2\left(Q_T\right)}+\varepsilon\right).
    $$
\end{lemma}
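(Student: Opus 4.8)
The plan is to introduce the auxiliary discrete adjoint $\widetilde{p_h}\left(f^{\varepsilon}_{\lambda}\right)\in \Vs_{h,T}$ from \eqref{eq: auxiliary adjoint equation with continuous state variational} as a bridge and to split
$$
p^{\varepsilon}_{\lambda}-p_h\left(f^{\varepsilon}_{\lambda}\right)=\left(p^{\varepsilon}_{\lambda}-\widetilde{p_h}\left(f^{\varepsilon}_{\lambda}\right)\right)+\left(\widetilde{p_h}\left(f^{\varepsilon}_{\lambda}\right)-p_h\left(f^{\varepsilon}_{\lambda}\right)\right),
$$
estimating each summand separately in the $\norm{\cdot}_{h,T}$-norm and the $\Ls^2\left(Q_T\right)$-norm. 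A preliminary step is to record a bound on the adjoint itself. Since Problem \eqref{eq: weak adjoint equation optimality conditions} is of the form \eqref{eq: Nitsche's trick equation} with $\eta=\chi_{\omega_T}\left(u^{\varepsilon}_{\lambda}-z^{\varepsilon}_d\right)$ and $y_T=0$, Assumption \ref{assum: Nitsche's trick assumption} gives $p^{\varepsilon}_{\lambda}\in \Ws$ with $\norm{p^{\varepsilon}_{\lambda}}_{\Ws}\le C\left(\norm{u^{\varepsilon}_{\lambda}}_{\Ls^2\left(\omega_T\right)}+\norm{z^{\varepsilon}_d}_{\Ls^2\left(\omega_T\right)}\right)$; bounding $\norm{u^{\varepsilon}_{\lambda}}_{\Ls^2\left(\omega_T\right)}\le C\norm{f^{\varepsilon}_{\lambda}}_{\Ls^2\left(Q_T\right)}$ via \eqref{eq: priori estimate} and \eqref{eq: compare L2 and the Y norm}, then invoking \eqref{eq: stability of the source} and \eqref{eq: stability of the noisy data}, yields
$$
\norm{p^{\varepsilon}_{\lambda}}_{\Hs^s\left(Q_1\cup Q_2\right)}\le \norm{p^{\varepsilon}_{\lambda}}_{\Ws}\le C\left(1+\dfrac{1}{\sqrt{\lambda}}\right)\left(\norm{U_d}_{\Ls^2\left(\omega_T\right)}+\norm{g}_{\Ls^2\left(Q_T\right)}+\varepsilon\right).
$$

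For the first summand, observe that $\widetilde{p_h}\left(f^{\varepsilon}_{\lambda}\right)$ is precisely the space-time interface-fitted finite element approximation of $p^{\varepsilon}_{\lambda}$, because \eqref{eq: weak adjoint equation optimality conditions} and \eqref{eq: auxiliary adjoint equation with continuous state variational} share the same right-hand side. Hence Lemma \ref{lem: adjoint error estimate L2 Q norm} applies directly, giving $\norm{p^{\varepsilon}_{\lambda}-\widetilde{p_h}\left(f^{\varepsilon}_{\lambda}\right)}_{h,T}\le Ch\norm{p^{\varepsilon}_{\lambda}}_{\Hs^s\left(Q_1\cup Q_2\right)}$ and $\norm{p^{\varepsilon}_{\lambda}-\widetilde{p_h}\left(f^{\varepsilon}_{\lambda}\right)}_{\Ls^2\left(Q_T\right)}\le Ch^2\norm{p^{\varepsilon}_{\lambda}}_{\Hs^s\left(Q_1\cup Q_2\right)}$; combined with the bound on $\norm{p^{\varepsilon}_{\lambda}}_{\Hs^s\left(Q_1\cup Q_2\right)}$ above, these are already of the claimed orders.

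For the second summand, subtracting \eqref{eq: auxiliary adjoint variational} from \eqref{eq: auxiliary adjoint equation with continuous state variational} gives
$$
a^{\prime}_h\left(\widetilde{p_h}\left(f^{\varepsilon}_{\lambda}\right)-p_h\left(f^{\varepsilon}_{\lambda}\right),\phi_h\right)=\left(\chi_{\omega_T}\left[\left(u^{\varepsilon}_{\lambda}-u_h\left(f^{\varepsilon}_{\lambda}\right)\right)+\left(u^{\ast}-u^{\ast}_h\right)\right],\phi_h\right)_{\Ls^2\left(Q_T\right)}\qqqq \forall \phi_h\in \Vs_{h,T},
$$
where I used $z^{\varepsilon}_d-z^{\varepsilon}_{d,h}=-\left(u^{\ast}-u^{\ast}_h\right)_{\, \mid\, \omega_T}$, so the data discretization enters only through the finite element error of $u^{\ast}$. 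The discrete stability \eqref{eq: stability adjoint}, followed by \eqref{eq: compare L2 and the Y norm} applied to $\widetilde{p_h}\left(f^{\varepsilon}_{\lambda}\right)-p_h\left(f^{\varepsilon}_{\lambda}\right)\in \Vs_{h,T}\subset \Xs_T$, then bounds $\norm{\widetilde{p_h}\left(f^{\varepsilon}_{\lambda}\right)-p_h\left(f^{\varepsilon}_{\lambda}\right)}_{h,T}$ by $C\left(\norm{u^{\varepsilon}_{\lambda}-u_h\left(f^{\varepsilon}_{\lambda}\right)}_{\Ls^2\left(\omega_T\right)}+\norm{u^{\ast}-u^{\ast}_h}_{\Ls^2\left(\omega_T\right)}\right)$. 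By Remark \ref{rem: regularity of u}, $u^{\varepsilon}_{\lambda},u^{\ast}\in \Ws$ with $\norm{u^{\varepsilon}_{\lambda}}_{\Ws}\le C\norm{f^{\varepsilon}_{\lambda}}_{\Ls^2\left(Q_T\right)}$ and $\norm{u^{\ast}}_{\Ws}\le C\norm{g}_{\Ls^2\left(Q_T\right)}$, so Lemma \ref{lem: state error estimate L2 Q norm} and Remark \ref{rem: error estimate u*} make both state errors $O\left(h^2\right)$, while \eqref{eq: stability of the source} converts the factor $\norm{f^{\varepsilon}_{\lambda}}_{\Ls^2\left(Q_T\right)}$ into $C\lambda^{-1/2}\left(\norm{U_d}_{\Ls^2\left(\omega_T\right)}+\norm{g}_{\Ls^2\left(Q_T\right)}+\varepsilon\right)$. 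Thus $\norm{\widetilde{p_h}\left(f^{\varepsilon}_{\lambda}\right)-p_h\left(f^{\varepsilon}_{\lambda}\right)}_{h,T}\le Ch^2\left(1+1/\sqrt{\lambda}\right)\left(\norm{U_d}_{\Ls^2\left(\omega_T\right)}+\norm{g}_{\Ls^2\left(Q_T\right)}+\varepsilon\right)$, and since this difference lies in $\Vs_{h,T}$, inequality \eqref{eq: compare L2 and the Y norm} transfers the same $O\left(h^2\right)$ bound to the $\Ls^2\left(Q_T\right)$-norm.

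Adding the two summands, and in the $\norm{\cdot}_{h,T}$-estimate absorbing the $O\left(h^2\right)$ contribution of the second summand into $Ch$ via $h\le h_\ast$, yields the two asserted inequalities. The only genuinely delicate point is the bookkeeping of the $\lambda$-dependence: the factor $1+1/\sqrt{\lambda}$ is produced by the a priori bound \eqref{eq: stability of the source} on $\norm{f^{\varepsilon}_{\lambda}}_{\Ls^2\left(Q_T\right)}$, which propagates first to $\norm{u^{\varepsilon}_{\lambda}}$ (in $\Ls^2\left(\omega_T\right)$ and in $\Ws$) and then to $\norm{p^{\varepsilon}_{\lambda}}_{\Ws}$; one must also keep in mind that the data error $z^{\varepsilon}_d-z^{\varepsilon}_{d,h}$ enters only through $u^{\ast}-u^{\ast}_h$, whose $\Ls^2$-error is $O\left(h^2\right)$ and $\lambda$-independent, so it does not spoil the stated rates.
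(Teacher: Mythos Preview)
Your proposal is correct and follows essentially the same route as the paper: split via the auxiliary $\widetilde{p_h}\left(f^{\varepsilon}_{\lambda}\right)$, apply Lemma \ref{lem: adjoint error estimate L2 Q norm} to the first summand together with the bound \eqref{eq: auxiliary-discrete adjoint estimate variational 4} on $\norm{p^{\varepsilon}_{\lambda}}_{\Hs^s\left(Q_1\cup Q_2\right)}$, and control the second summand by discrete stability and the $O\left(h^2\right)$ state errors from Lemma \ref{lem: state error estimate L2 Q norm} and Remark \ref{rem: error estimate u*}. The only cosmetic difference is that the paper keeps the second summand in the slightly sharper form \eqref{eq: auxiliary-discrete adjoint estimate variational 3}, separating the $g$-dependent part (which is $\lambda$-independent) from the rest, whereas you absorb everything into the common factor $\left(1+1/\sqrt{\lambda}\right)\left(\norm{U_d}_{\Ls^2\left(\omega_T\right)}+\norm{g}_{\Ls^2\left(Q_T\right)}+\varepsilon\right)$; both lead to the stated lemma.
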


\begin{proof}
    To prove the first estimate, we begin with the triangle inequality
    \begin{equation}
        \label{eq: auxiliary-discrete adjoint estimate variational 1}
        \norm{p^{\varepsilon}_{\lambda}-p_h\left(f^{\varepsilon}_{\lambda}\right)}_{h,T}\le \norm{p^{\varepsilon}_{\lambda}-\widetilde{p_h}\left(f^{\varepsilon}_{\lambda}\right)}_{h,T} + \norm{\widetilde{p_h}\left(f^{\varepsilon}_{\lambda}\right)-p_h\left(f^{\varepsilon}_{\lambda}\right)}_{h,T}.
    \end{equation}
    Thanks to Lemma \ref{lem: adjoint error estimate L2 Q norm}, there holds the following estimate
    $$
    \norm{p^{\varepsilon}_{\lambda}-\widetilde{p_h}\left(f^{\varepsilon}_{\lambda}\right)}_{h,T}\le Ch \norm{p^{\varepsilon}_{\lambda}}_{\Hs^s\left(Q_1\cup Q_2\right)}.
    $$
    By using Assumption \ref{assum: Nitsche's trick assumption}, \eqref{eq: priori estimate}, \eqref{eq: stability of the noisy data}, and \eqref{eq: stability of the source}, we obtain
    \begin{equation}
        \label{eq: auxiliary-discrete adjoint estimate variational 4}
        \begin{aligned}
            \norm{p^{\varepsilon}_{\lambda}}_{\Hs^s\left(Q_1\cup Q_2\right)}\le C\left(\norm{u^{\varepsilon}_{\lambda}}_{\Ls^2\left(\omega_T\right)} + \norm{z^\varepsilon_d}_{\Ls^2\left(\omega_T\right)}\right)&\le C\left(\norm{f^{\varepsilon}_{\lambda}}_{\Ls^2\left(Q_T\right)} + \norm{z^\varepsilon_d}_{\Ls^2\left(\omega_T\right)}\right)\\
            &\le C\left(1+\dfrac{1}{\sqrt{\lambda}}\right)\left(\norm{U_d}_{\Ls^2\left(\omega_T\right)} +\norm{g}_{\Ls^2\left(Q_T\right)}+\varepsilon\right).
        \end{aligned}
    \end{equation}
    Thus, we imply
    \begin{equation}
        \label{eq: auxiliary-discrete adjoint estimate variational 2}
        \norm{p^{\varepsilon}_{\lambda}-\widetilde{p_h}\left(f^{\varepsilon}_{\lambda}\right)}_{h,T}\le Ch\left(1+\dfrac{1}{\sqrt{\lambda}}\right)\left(\norm{U_d}_{\Ls^2\left(\omega_T\right)} +\norm{g}_{\Ls^2\left(Q_T\right)}+\varepsilon\right).
    \end{equation}
    For the second term on the right-hand side of \eqref{eq: auxiliary-discrete adjoint estimate variational 1}, we subtract \eqref{eq: auxiliary adjoint variational} from \eqref{eq: auxiliary adjoint equation with continuous state variational} to obtain
    $$
    a^{\prime}_h\left(\widetilde{p_h}\left(f^{\varepsilon}_{\lambda}\right)-p_h\left(f^{\varepsilon}_{\lambda}\right),\phi_h\right) = \left(\chi_{\omega_T}\left(u^{\varepsilon}_{\lambda}-u_h\left(f^{\varepsilon}_{\lambda}\right)  - z^\varepsilon_d + z^\varepsilon_{d,h}\right),\phi_h\right)_{\Ls^2\left(Q_T\right)} \qqqq \forall \phi_h\in \Vs_{h,T}.
    $$
    By invoking the technique in \eqref{eq: discrete-auxiliary estimate variational 3}, Lemma \ref{lem: state error estimate L2 Q norm}, and subsequently Remarks \ref{rem: error estimate u*} and \ref{rem: regularity of u}, we have
    $$
    \begin{aligned}
        \norm{\widetilde{p_h}\left(f^{\varepsilon}_{\lambda}\right)-p_h\left(f^{\varepsilon}_{\lambda}\right)}_{h,T}\le C \left(\norm{u^{\varepsilon}_{\lambda} - u_h\left(f^{\varepsilon}_{\lambda}\right)}_{\Ls^2\left(\omega_T\right)} + \norm{u^\ast - u^\ast_h}_{\Ls^2\left(\omega_T\right)}\right) &\le Ch^2\left(\norm{u^{\varepsilon}_{\lambda}}_{\Hs^s\left(Q_1\cup Q_2\right)} + \norm{u^\ast}_{\Hs^s\left(Q_1\cup Q_2\right)}\right)\\
        &\le Ch^2\left(\norm{f^{\varepsilon}_{\lambda}}_{\Ls^2\left(Q_T\right)} + \norm{g}_{\Ls^2\left(Q_T\right)}\right).
    \end{aligned}
    $$
    Together with \eqref{eq: stability of the source}, we arrive at
    \begin{equation}
        \label{eq: auxiliary-discrete adjoint estimate variational 3}
        \norm{\widetilde{p_h}\left(f^{\varepsilon}_{\lambda}\right)-p_h\left(f^{\varepsilon}_{\lambda}\right)}_{h,T}\le \dfrac{Ch^2}{\sqrt{\lambda}}\left(\norm{U_d}_{\Ls^2\left(\omega_T\right)}+\varepsilon\right)+ Ch^2\left(1+\dfrac{1}{\sqrt{\lambda}}\right)\norm{g}_{\Ls^2\left(Q_T\right)}.
    \end{equation}
    The first result follows by combining \eqref{eq: auxiliary-discrete adjoint estimate variational 1}, \eqref{eq: auxiliary-discrete adjoint estimate variational 2}, and \eqref{eq: auxiliary-discrete adjoint estimate variational 3}. 
    
    Next, we prove the second result. We also start with the triangle inequality
    $$
    \norm{p^{\varepsilon}_{\lambda}-p_h\left(f^{\varepsilon}_{\lambda}\right)}_{\Ls^2\left(Q_T\right)}\le \norm{p^{\varepsilon}_{\lambda}-\widetilde{p_h}\left(f^{\varepsilon}_{\lambda}\right)}_{\Ls^2\left(Q_T\right)} + \norm{\widetilde{p_h}\left(f^{\varepsilon}_{\lambda}\right)-p_h\left(f^{\varepsilon}_{\lambda}\right)}_{\Ls^2\left(Q_T\right)}.
    $$
    On the first hand, Lemma \ref{lem: adjoint error estimate L2 Q norm} and \eqref{eq: auxiliary-discrete adjoint estimate variational 4} give us
    $$
    \norm{p^{\varepsilon}_{\lambda}-\widetilde{p_h}\left(f^{\varepsilon}_{\lambda}\right)}_{\Ls^2\left(Q_T\right)} \le Ch^2\norm{p^{\varepsilon}_{\lambda}}_{\Hs^s\left(Q_1\cup Q_2\right)}\le Ch^2\left(1+\dfrac{1}{\sqrt{\lambda}}\right)\left(\norm{U_d}_{\Ls^2\left(\omega_T\right)} +\norm{g}_{\Ls^2\left(Q_T\right)}+\varepsilon\right).
    $$
    On the other hand, by applying \eqref{eq: compare L2 and the Y norm} and \eqref{eq: auxiliary-discrete adjoint estimate variational 3}, we obtain
    $$
    \norm{\widetilde{p_h}\left(f^{\varepsilon}_{\lambda}\right)-p_h\left(f^{\varepsilon}_{\lambda}\right)}_{\Ls^2\left(Q_T\right)} \le C\norm{\widetilde{p_h}\left(f^{\varepsilon}_{\lambda}\right)-p_h\left(f^{\varepsilon}_{\lambda}\right)}_{h,T} \le \dfrac{Ch^2}{\sqrt{\lambda}}\left(\norm{U_d}_{\Ls^2\left(\omega_T\right)}+\varepsilon\right)+ Ch^2\left(1+\dfrac{1}{\sqrt{\lambda}}\right)\norm{g}_{\Ls^2\left(Q_T\right)}.
    $$
    The proof is complete.
\end{proof}

We now arrive at the main result of this subsection by combining Lemmas \ref{lem: discrete-auxiliary estimate variational}, \ref{lem: exact-discrete control estimate variational}, and \ref{lem: auxiliary-discrete adjoint estimate variational} with the triangle inequality.

\begin{theorem}
    \label{theo: error estimate variational}
    Let $\left(u^{\varepsilon}_{\lambda},p^{\varepsilon}_{\lambda},f^{\varepsilon}_{\lambda}\right)\in \Xs_0\times \Xs_T\times F_{+}$ and $\left(u^{\varepsilon}_{\lambda, h},p^{\varepsilon}_{\lambda, h},f^{\varepsilon}_{\lambda, h}\right)\in \Vs_{h,0}\times \Vs_{h,T}\times F^h_{+}$ be the solutions to Problems \eqref{eq: weak state equation optimality conditions}--\eqref{eq: variational inequality} and \eqref{eq: discrete state optimality conditions}--\eqref{eq: discrete variational inequality} in case of variational discretization, respectively. Let $C_2>0$ be the constant defined in Lemma \ref{lem: discrete-auxiliary estimate variational}. Assume that Assumption \ref{assum: Nitsche's trick assumption} is satisfied. Then, the following estimates hold
    $$
    \norm{u^{\varepsilon}_{\lambda} - u^{\varepsilon}_{\lambda, h}}_{h,0} + \norm{p^{\varepsilon}_{\lambda} - p^{\varepsilon}_{\lambda, h}}_{h,T} + \norm{f^{\varepsilon}_{\lambda}-f^{\varepsilon}_{\lambda, h}}_{\Ls^2\left(Q_T\right)} \le Ch\left(1+ \dfrac{1}{\lambda^\tau}\right)\left(\norm{U_d}_{\Ls^2\left(\omega_T\right)} +\norm{g}_{\Ls^2\left(Q_T\right)}+\varepsilon\right) = \mathcal{O}\left(h\right),
    $$
    and
    $$
    \norm{u^{\varepsilon}_{\lambda} - u^{\varepsilon}_{\lambda, h}}_{\Ls^2\left(Q_T\right)} + \norm{p^{\varepsilon}_{\lambda} - p^{\varepsilon}_{\lambda, h}}_{\Ls^2\left(Q_T\right)} + \norm{f^{\varepsilon}_{\lambda}-f^{\varepsilon}_{\lambda, h}}_{\Ls^2\left(Q_T\right)} \le Ch^2\left(1+ \dfrac{1}{\lambda^\tau}\right)\left(\norm{U_d}_{\Ls^2\left(\omega_T\right)} +\norm{g}_{\Ls^2\left(Q_T\right)}+\varepsilon\right) = \mathcal{O}\left(h^2\right),
    $$
    where
    $$
    \tau = 
    \begin{cases}
        2 & \text{if } \lambda \le 8 \norm{\ell}_{\Ls^\infty\left(Q_T\right)}C_2, \\ 
        \frac{1}{2} & \text{otherwise}.
    \end{cases}
    $$
\end{theorem}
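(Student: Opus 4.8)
The plan is to express each of the three errors $\norm{u^{\varepsilon}_{\lambda}-u^{\varepsilon}_{\lambda,h}}$, $\norm{p^{\varepsilon}_{\lambda}-p^{\varepsilon}_{\lambda,h}}$, $\norm{f^{\varepsilon}_{\lambda}-f^{\varepsilon}_{\lambda,h}}$ through the auxiliary pair $u_h(f^{\varepsilon}_{\lambda})\in\Vs_{h,0}$, $p_h(f^{\varepsilon}_{\lambda})\in\Vs_{h,T}$ from \eqref{eq: auxiliary state variational}--\eqref{eq: auxiliary adjoint variational} and to split by the triangle inequality into a \emph{consistency} part (the genuine discretization error of the state/adjoint equation with the \emph{exact} optimal source frozen) and a \emph{propagation} part (the difference between that direct discretization and the fully discrete optimal state/adjoint, which is driven entirely by $\norm{f^{\varepsilon}_{\lambda}-f^{\varepsilon}_{\lambda,h}}_{\Ls^2(Q_T)}$). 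So the first task is to bound the source error, and everything else follows from it.

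For the source error I would chain Lemma \ref{lem: exact-discrete control estimate variational}, which reduces $\norm{f^{\varepsilon}_{\lambda}-f^{\varepsilon}_{\lambda,h}}_{\Ls^2(Q_T)}$ to $C\lambda^{-\tau_1}\norm{p^{\varepsilon}_{\lambda}-p_h(f^{\varepsilon}_{\lambda})}_{\Ls^2(Q_T)}$ with $\tau_1=\tfrac32$ in the regime $\lambda\le 8\norm{\ell}_{\Ls^\infty(Q_T)}C_2$ and $\tau_1=0$ otherwise, with the $\Ls^2$-adjoint bound of Lemma \ref{lem: auxiliary-discrete adjoint estimate variational}, namely $\norm{p^{\varepsilon}_{\lambda}-p_h(f^{\varepsilon}_{\lambda})}_{\Ls^2(Q_T)}\le Ch^2(1+\lambda^{-1/2})(\norm{U_d}_{\Ls^2(\omega_T)}+\norm{g}_{\Ls^2(Q_T)}+\varepsilon)$. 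Multiplying the $\lambda$-powers, $\lambda^{-\tau_1}(1+\lambda^{-1/2})$ is bounded by $C(1+\lambda^{-\tau})$ with $\tau=2$ in the small regime (where $\lambda$ is bounded above, so the constant $1$ is harmless) and $\tau=\tfrac12$ otherwise; this yields the claimed $\mathcal{O}(h^2)$ bound for $\norm{f^{\varepsilon}_{\lambda}-f^{\varepsilon}_{\lambda,h}}_{\Ls^2(Q_T)}$, hence the last summand of both displays.

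For the state I would write $\norm{u^{\varepsilon}_{\lambda}-u^{\varepsilon}_{\lambda,h}}_{h,0}\le\norm{u^{\varepsilon}_{\lambda}-u_h(f^{\varepsilon}_{\lambda})}_{h,0}+\norm{u_h(f^{\varepsilon}_{\lambda})-u^{\varepsilon}_{\lambda,h}}_{h,0}$. By Remark \ref{rem: regularity of u}, $u^{\varepsilon}_{\lambda}\in\Ws$ with $\norm{u^{\varepsilon}_{\lambda}}_{\Hs^s(Q_1\cup Q_2)}\le C\norm{f^{\varepsilon}_{\lambda}}_{\Ls^2(Q_T)}\le C\lambda^{-1/2}(\norm{U_d}_{\Ls^2(\omega_T)}+\norm{g}_{\Ls^2(Q_T)}+\varepsilon)$ by \eqref{eq: stability of the source}, so the consistency part is $\mathcal{O}(h\lambda^{-1/2})$ by Lemma \ref{lem: state error estimate discrete norm} (and $\mathcal{O}(h^2\lambda^{-1/2})$ in $\Ls^2$ by Lemma \ref{lem: state error estimate L2 Q norm}); the propagation part is $\le C_1\norm{f^{\varepsilon}_{\lambda}-f^{\varepsilon}_{\lambda,h}}_{\Ls^2(Q_T)}$ (first inequality of Lemma \ref{lem: discrete-auxiliary estimate variational} for the $h,0$-norm, second for $\Ls^2$), hence $\mathcal{O}(h^2(1+\lambda^{-\tau}))$ by the previous paragraph. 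Since $h^2\le h$ and $\lambda^{-1/2}\le 1+\lambda^{-\tau}$ in each regime, the sum is $\mathcal{O}(h(1+\lambda^{-\tau}))$, resp. $\mathcal{O}(h^2(1+\lambda^{-\tau}))$ in $\Ls^2$. The adjoint is handled identically, using the $h,T$- and $\Ls^2$-bounds of Lemma \ref{lem: auxiliary-discrete adjoint estimate variational} for $\norm{p^{\varepsilon}_{\lambda}-p_h(f^{\varepsilon}_{\lambda})}$ (which already absorbs the $u^\ast-u^\ast_h$ term via Remark \ref{rem: error estimate u*}) and Lemma \ref{lem: discrete-auxiliary estimate variational} for the propagation part. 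Adding the three contributions and recalling $h<h_\ast$ finishes both estimates. The only real difficulty is bookkeeping: tracking the $\lambda$-exponents through the chain Lemma \ref{lem: exact-discrete control estimate variational}$\to$Lemma \ref{lem: auxiliary-discrete adjoint estimate variational} so that $\tau_1$ and the $1+\lambda^{-1/2}$ combine to exactly the stated $\tau$, and verifying that the extra $\lambda^{-1/2}$ carried by the consistency terms is absorbed into $1+\lambda^{-\tau}$ rather than degrading the exponent; no new analytic idea beyond the cited lemmas, Remarks \ref{rem: error estimate u*}--\ref{rem: regularity of u}, and \eqref{eq: stability of the source} is needed.
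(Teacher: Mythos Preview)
Your proposal is correct and follows essentially the same approach as the paper: split each error through the auxiliary pair $u_h(f^{\varepsilon}_{\lambda}),p_h(f^{\varepsilon}_{\lambda})$, bound the consistency parts via Lemmas \ref{lem: state error estimate discrete norm}, \ref{lem: state error estimate L2 Q norm}, \ref{lem: auxiliary-discrete adjoint estimate variational} together with Remark \ref{rem: regularity of u} and \eqref{eq: stability of the source}, bound the propagation parts via Lemma \ref{lem: discrete-auxiliary estimate variational}, and feed Lemma \ref{lem: auxiliary-discrete adjoint estimate variational} into Lemma \ref{lem: exact-discrete control estimate variational} to get the source error, with the same $\lambda$-bookkeeping that turns $\tau_1\in\{\tfrac32,0\}$ plus the factor $1+\lambda^{-1/2}$ into the stated $\tau\in\{2,\tfrac12\}$. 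The only cosmetic difference is that for the first ($\mathcal{O}(h)$) display the paper routes the source error through the $\norm{\cdot}_{h,T}$-bound of Lemma \ref{lem: auxiliary-discrete adjoint estimate variational} rather than the $\Ls^2$-bound you use, but since your route yields an even smaller $\mathcal{O}(h^2)$ contribution that is absorbed into $\mathcal{O}(h)$, this changes nothing.
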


\begin{proof}
    For the first estimate, we use the triangle inequality and apply Lemmas \ref{lem: state error estimate discrete norm}, \ref{lem: discrete-auxiliary estimate variational}, and \ref{lem: exact-discrete control estimate variational}. It follows that
    $$
    \begin{aligned}
        &\norm{u^{\varepsilon}_{\lambda} - u^{\varepsilon}_{\lambda, h}}_{h,0} + \norm{p^{\varepsilon}_{\lambda} - p^{\varepsilon}_{\lambda, h}}_{h,T} + \norm{f^{\varepsilon}_{\lambda}-f^{\varepsilon}_{\lambda, h}}_{\Ls^2\left(Q_T\right)} \le\\
        &\le \norm{u^{\varepsilon}_{\lambda} - u_h\left(f^\varepsilon_\lambda\right)}_{h,0} + \norm{u_h\left(f^\varepsilon_\lambda\right) - u^{\varepsilon}_{\lambda, h}}_{h,0} + \norm{p^{\varepsilon}_{\lambda} - p_h\left(f^\varepsilon_\lambda\right)}_{h,T} + \norm{p_h\left(f^\varepsilon_\lambda\right) - p^{\varepsilon}_{\lambda, h}}_{h,T} + \norm{f^{\varepsilon}_{\lambda}-f^{\varepsilon}_{\lambda, h}}_{\Ls^2\left(Q_T\right)}\\
        &\le Ch\norm{u^\varepsilon_\lambda}_{\Hs^s\left(Q_1\cup Q_2\right)} + C\norm{f^\varepsilon_\lambda - f^\varepsilon_{\lambda, h}}_{\Ls^2\left(Q_T\right)} + \norm{p^{\varepsilon}_{\lambda} - p_h\left(f^\varepsilon_\lambda\right)}_{h,T}\\
        &\le Ch\norm{u^\varepsilon_\lambda}_{\Hs^s\left(Q_1\cup Q_2\right)} + C\left(1+\dfrac{1}{\lambda^\tau}\right)\norm{p^{\varepsilon}_{\lambda} - p_h\left(f^\varepsilon_\lambda\right)}_{h,T}\qqqq\q\text{with } \tau = 
        \begin{cases}
            \frac{3}{2} & \text{if } \lambda \le 8 \norm{\ell}_{\Ls^\infty\left(Q_T\right)}C_2, \\ 
            0 & \text{otherwise}.
        \end{cases}
    \end{aligned}
    $$
    By using Remark \ref{rem: regularity of u} and \eqref{eq: stability of the source}, we have
    $$
    \norm{u^\varepsilon_\lambda}_{\Hs^s\left(Q_1\cup Q_2\right)} \le C \norm{f^{\varepsilon}_{\lambda}}_{\Ls^2\left(Q_T\right)} \le \dfrac{C}{\sqrt{\lambda}}\left(\norm{U_d}_{\Ls^2\left(\omega_T\right)} +\norm{g}_{\Ls^2\left(Q_T\right)}+\varepsilon\right).
    $$
    Together with Lemma \ref{lem: auxiliary-discrete adjoint estimate variational}, we hence obtain
    $$
    \begin{aligned}
        &\norm{u^{\varepsilon}_{\lambda} - u^{\varepsilon}_{\lambda, h}}_{h,0} + \norm{p^{\varepsilon}_{\lambda} - p^{\varepsilon}_{\lambda, h}}_{h,T} + \norm{f^{\varepsilon}_{\lambda}-f^{\varepsilon}_{\lambda, h}}_{\Ls^2\left(Q_T\right)} \le\\
        &\le Ch\left(1+\dfrac{1}{\sqrt{\lambda}} + \dfrac{1}{\lambda^\tau} + \dfrac{1}{\lambda^{\frac{1}{2}+\tau}}\right)\left(\norm{U_d}_{\Ls^2\left(\omega_T\right)} +\norm{g}_{\Ls^2\left(Q_T\right)}+\varepsilon\right) \\
        &\le Ch\left(1+ \dfrac{1}{\lambda^{\frac{1}{2}+\tau}}\right)\left(\norm{U_d}_{\Ls^2\left(\omega_T\right)} +\norm{g}_{\Ls^2\left(Q_T\right)}+\varepsilon\right) \qqqq\q\text{with } \tau = 
        \begin{cases}
            \frac{3}{2} & \text{if } \lambda \le 8 \norm{\ell}_{\Ls^\infty\left(Q_T\right)}C_2, \\ 
            0 & \text{otherwise}.
        \end{cases}
    \end{aligned}
    $$
    The conclusion follows. The second inequality is proved similarly. 
\end{proof}

\subsection{Element-wise constant discretization}
\label{subsec: error constant}

We choose $F^d$ as the space of step functions, as described in \eqref{eq: step functions space}. First, we discretize the regularized source while leaving the advection-diffusion equations unchanged. Mathematically, we consider a pure state problem: For $f_h\in F^h_{+}= F^d\cap F_{+}$, determine $u\left(f_h\right)\in \Xs_0$ such that
\begin{equation}
    \label{eq: weak pure state equation constant}
    a\left(u\left(f_h\right), \vphi\right) = \left(\ell f_h, \vphi\right)_{\Ls^2\left(Q_T\right)}\qqqq \forall\vphi \in \Ys.
\end{equation}
The well-posedness of this problem is proved in Subsection \ref{subsec: the weak advection-diffusion problem}. We then introduce the following problem
\begin{equation}
    \label{eq: pure functional constant}
    \begin{aligned}
        & \min\limits_{f_h \in F^h_{+}} J^\varepsilon_\lambda\left(f_h\right)=\dfrac{1}{2} \norm{u\left(f_h\right)-z^{\varepsilon}_{d}}^2_{\Ls^2\left(\omega_T\right)} +\dfrac{\lambda}{2}\norm{f_h}^2_{\Ls^2\left(Q_T\right)}, \\
        & \text{ subject to \eqref{eq: weak pure state equation constant}}.
    \end{aligned}
\end{equation}
Clearly, Problem \eqref{eq: pure functional constant} admits a unique solution $\widehat{f_h} \in F^h_{+}$. Furthermore, the following result holds:

\begin{lemma}
    Let $\widehat{f}_h\in F^h_{+}$ be the solution to Problem \eqref{eq: pure functional constant}, and let $\widehat{u}\in \Xs_0$ and $\widehat{p}\in \Xs_T$ be its corresponding state and adjoint, respectively. Then, the following system is satisfied
    \begin{equation}
        \label{eq: weak pure state equation constant optimality conditions}
        a\left(\widehat{u}, \vphi\right) = \left(\ell \widehat{f}_h, \vphi\right)_{\Ls^2\left(Q_T\right)}\qqqq \forall\vphi \in \Ys,
    \end{equation}
    and
    \begin{equation}
        \label{eq: weak pure adjoint equation constant optimality conditions}
        a^\prime\left(\widehat{p},\phi\right) = \left(\chi_{\omega_T}\left(\widehat{u}-z^{\varepsilon}_{d}\right),\phi\right)_{\Ls^2\left(Q_T\right)} \qqqq \forall \phi\in \Ys,
    \end{equation}
    and the variational inequality
    \begin{equation}
        \label{eq: pure variational inequality constant optimality conditions}
        \left(\ell \widehat{p} + \lambda \widehat{f}_h, f_h- \widehat{f}_h\right)_{\Ls^2\left(Q_T\right)} \ge 0 \qqqq \forall f_h\in F^h_{+}.
    \end{equation}
\end{lemma}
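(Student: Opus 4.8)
The plan is to follow the proof of Theorem~\ref{theo: optimality conditions} almost verbatim, the only change being that the feasible set $F_{+}$ is replaced by $F^h_{+} = F^d \cap F_{+}$. The key observation is that $F^h_{+}$ is again nonempty, closed, and convex: it is the intersection of the convex set $F_{+}$ with the subspace $F^d$ (which is closed, being finite-dimensional), and it contains $0$. Consequently the existence/uniqueness argument of Theorem~\ref{theo: existence of the solution} applies on $F^h_{+}$ as well (this is the ``Clearly'' preceding the lemma), and moreover $\widehat{u} := u(\widehat{f}_h) \in \Xs_0$ and $\widehat{p} := p(\widehat{f}_h) \in \Xs_T$ are, by definition, the solutions of the weak state problem~\eqref{eq: weak state equation} and the weak adjoint problem~\eqref{eq: weak adjoint equation} associated with $\widehat{f}_h$, i.e.\ \eqref{eq: weak pure state equation constant optimality conditions} and \eqref{eq: weak pure adjoint equation constant optimality conditions} hold.

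It remains to establish the variational inequality~\eqref{eq: pure variational inequality constant optimality conditions}. Here I would invoke directly the Fr\'echet differentiability of $J^\varepsilon_\lambda : \Ls^2(Q_T) \to \R$ established in the proof of Theorem~\ref{theo: optimality conditions}, together with the gradient formula $\nabla J^\varepsilon_\lambda(f) = \ell\, p(f) + \lambda f$. Fixing $f_h \in F^h_{+}$ and using convexity, $\widehat{f}_h + t(f_h - \widehat{f}_h) \in F^h_{+}$ for all $t \in (0,1]$, so minimality of $\widehat{f}_h$ gives $J^\varepsilon_\lambda\bigl(\widehat{f}_h + t(f_h - \widehat{f}_h)\bigr) \ge J^\varepsilon_\lambda(\widehat{f}_h)$; dividing by $t$, letting $t \to 0^+$, and identifying the limit of the difference quotient with the directional derivative $\bigl(\nabla J^\varepsilon_\lambda(\widehat{f}_h), f_h - \widehat{f}_h\bigr)_{\Ls^2(Q_T)}$ yields
$$
\bigl(\ell\, \widehat{p} + \lambda \widehat{f}_h,\ f_h - \widehat{f}_h\bigr)_{\Ls^2(Q_T)} \ge 0 \qquad \forall f_h \in F^h_{+}.
$$

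Since both the differentiability of $J^\varepsilon_\lambda$ and the convex-optimization first-order condition are already available, there is essentially no obstacle; the argument is routine. The only point deserving a line of justification is that the expansion~\eqref{eq: difference of J}, which in Theorem~\ref{theo: optimality conditions} was written for variations $\delta f$ keeping $f$ inside $F_{+}$, is in fact valid for every $\delta f \in \Ls^2(Q_T)$ --- this is immediate, since its derivation used only the linearity of $f \mapsto u(f)$ and the a priori bound~\eqref{eq: priori estimate}, never the sign constraint --- so the directional derivative of $J^\varepsilon_\lambda$ at $\widehat{f}_h$ in the admissible direction $f_h - \widehat{f}_h$ is well defined and equals $\bigl(\ell\,\widehat{p} + \lambda \widehat{f}_h, f_h - \widehat{f}_h\bigr)_{\Ls^2(Q_T)}$. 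Alternatively, one may simply cite \cite[Lemma 2.21]{Troltzsch2010} for the first-order optimality condition of a Fr\'echet-differentiable functional minimized over a convex set.
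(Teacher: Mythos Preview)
Your proposal is correct and matches the paper's implicit reasoning: the paper states this lemma without proof, treating it as an immediate consequence of the arguments already given for Theorem~\ref{theo: optimality conditions}, with $F_{+}$ replaced by the closed convex subset $F^h_{+}$. Your explicit verification that $F^h_{+}$ is nonempty, closed, and convex, together with the directional-derivative argument (or the direct citation of \cite[Lemma~2.21]{Troltzsch2010}), is exactly what is needed to fill in the details.
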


Regarding an error analysis, we now discuss the regularity of the regularized source $f^{\varepsilon}_{\lambda}\in F_{+}$ in \eqref{eq: problem formulation}, along with its stability estimate. Specifically, we have the following lemma:

\begin{lemma}
    \label{lem: control regularity}
    Let $f^{\varepsilon}_{\lambda}\in F_{+}$ be the solution to Problem \eqref{eq: problem formulation} and $p^\varepsilon_\lambda\in \Xs_T$ be its corresponding adjoint. Assume that Assumption \ref{assum: Nitsche's trick assumption} holds and $\ell \in \Ws^{1,\infty}\left(Q_T\right)$. Then, $\ell p^{\varepsilon}_{\lambda} \in \Ws^{1,\infty}\left(Q_T\right)$, and consequently, $f^{\varepsilon}_{\lambda} \in \Ws^{1,\infty}\left(Q_T\right)$. Moreover, the following stability estimate holds
    $$
    \norm{\Ds f^\varepsilon_\lambda}_{\LLs^\infty\left(Q_T\right)}\le \dfrac{1}{\lambda} \norm{\Ds \left(\ell p^\varepsilon_\lambda\right)}_{\LLs^\infty\left(Q_T\right)}.
    $$
\end{lemma}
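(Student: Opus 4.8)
The plan is to bootstrap the regularity of the adjoint $p^\varepsilon_\lambda$ up to $\Ws^{1,\infty}\left(Q_T\right)$ and then read off the conclusion from the projection formula \eqref{eq: projection formula of f}. First, by Theorem \ref{theo: optimality conditions}, $p^\varepsilon_\lambda\in\Xs_T$ solves \eqref{eq: weak adjoint equation optimality conditions}, which is precisely Problem \eqref{eq: Nitsche's trick equation} with $\eta=\chi_{\omega_T}\left(u^\varepsilon_\lambda-z^\varepsilon_d\right)\in\Ls^2\left(Q_T\right)$ and final condition $p^\varepsilon_\lambda\left(\cdot,T\right)=0\in\Hs^1_0\left(\Omega\right)$ (recall $p^\varepsilon_\lambda\in\Xs_T$). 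Hence Assumption \ref{assum: Nitsche's trick assumption} applies directly and yields $p^\varepsilon_\lambda\in\Ws=\Hs^1\left(Q_T\right)\cap\Hs^s\left(Q_1\cup Q_2\right)$.

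Second, I would upgrade $\Ws$ to $\Ws^{1,\infty}\left(Q_T\right)$. Since $s>\tfrac{d+3}{2}$, the Sobolev embedding $\Hs^s\left(Q_i\right)\hookrightarrow\Ws^{1,\infty}\left(Q_i\right)$ holds \cite[Theorem 2.31]{Ern2021}, so $\Ds p^\varepsilon_\lambda\in\LLs^\infty\left(Q_i\right)$ for $i=1,2$, where $\Ds=\left(\nabla,\partial_t\right)^\top$. Because $p^\varepsilon_\lambda\in\Hs^1\left(Q_T\right)$, its one-sided traces on $\Gamma^\ast$ agree, $\gamma_1 p^\varepsilon_\lambda=\gamma_2 p^\varepsilon_\lambda$, so the distributional space-time gradient of $p^\varepsilon_\lambda$ on $Q_T$ carries no singular part on $\Gamma^\ast$ and coincides almost everywhere with the piecewise classical gradient. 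Therefore $p^\varepsilon_\lambda\in\Ws^{1,\infty}\left(Q_T\right)$ with $\norm{\Ds p^\varepsilon_\lambda}_{\LLs^\infty\left(Q_T\right)}=\max_{i=1,2}\norm{\Ds p^\varepsilon_\lambda}_{\LLs^\infty\left(Q_i\right)}$, and combining this with $\ell\in\Ws^{1,\infty}\left(Q_T\right)$ and the Leibniz rule gives $\ell p^\varepsilon_\lambda\in\Ws^{1,\infty}\left(Q_T\right)$.

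Finally, the projection formula \eqref{eq: projection formula of f} reads $f^\varepsilon_\lambda=\operatorname{Proj}_{F_{+}}\!\left(-\tfrac{1}{\lambda}\ell p^\varepsilon_\lambda\right)=\max\!\left\{0,-\tfrac{1}{\lambda}\ell p^\varepsilon_\lambda\right\}$ almost everywhere in $Q_T$. The positive-part map $t\mapsto\max\{0,t\}$ is globally Lipschitz with constant one, so its composition with the $\Ws^{1,\infty}\left(Q_T\right)$ function $-\tfrac{1}{\lambda}\ell p^\varepsilon_\lambda$ again lies in $\Ws^{1,\infty}\left(Q_T\right)$, and by the chain rule for Lipschitz compositions $\Ds f^\varepsilon_\lambda=\chi_{\{\ell p^\varepsilon_\lambda<0\}}\,\Ds\!\left(-\tfrac{1}{\lambda}\ell p^\varepsilon_\lambda\right)$ almost everywhere; taking $\LLs^\infty\left(Q_T\right)$-norms gives $\norm{\Ds f^\varepsilon_\lambda}_{\LLs^\infty\left(Q_T\right)}\le\tfrac{1}{\lambda}\norm{\Ds\left(\ell p^\varepsilon_\lambda\right)}_{\LLs^\infty\left(Q_T\right)}$, which is the claimed estimate.

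The only delicate step is the passage from $\Ws$ to $\Ws^{1,\infty}\left(Q_T\right)$: one must justify carefully that a function which is globally $\Hs^1\left(Q_T\right)$ and, by the subdomain regularity, $\Cs^1$ up to $\Gamma^\ast$ from each side is in fact an element of $\Ws^{1,\infty}\left(Q_T\right)$ — equivalently, that the continuity of $p^\varepsilon_\lambda$ across the $\Cs^2$-hypersurface $\Gamma^\ast$ (a consequence of $p^\varepsilon_\lambda\in\Hs^1\left(Q_T\right)$) cancels the would-be interface term in the distributional derivative. The remaining steps are routine consequences of Sobolev embedding, the Leibniz rule, and the chain rule for Lipschitz functions.
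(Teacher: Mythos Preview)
Your proposal is correct and follows essentially the same route as the paper: Sobolev embedding on each subdomain combined with trace-matching across $\Gamma^\ast$ to obtain $p^\varepsilon_\lambda\in\Ws^{1,\infty}\left(Q_T\right)$, the Leibniz rule for the product $\ell p^\varepsilon_\lambda$, and then the chain rule for the positive-part map applied to the projection formula. The paper phrases the last step as Stampacchia's lemma together with Ziemer's representation of the weak gradient, which is exactly the Lipschitz-composition chain rule you invoke.
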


\begin{proof}
    For $p^{\varepsilon}_{\lambda}\in \Hs^s\left(Q_1\cup Q_2\right)$ with $s>\frac{d+3}{2}$, the Sobolev embeddings $\Hs^{s}\left(Q_i\right)\hookrightarrow \Ws^{1,\infty}\left(Q_i\right)\ (i=1,2)$ \cite[Theorem 2.31]{Ern2021} give us $p^{\varepsilon}_{\lambda}\in \Ws^{1,\infty}\left(Q_1\cup Q_2\right)$. On the other hand, since $p^\varepsilon_\lambda\in \Hs^{1}\left(Q_T\right)$, we imply $\gamma_1p^{\varepsilon}_{\lambda}-\gamma_2 p^{\varepsilon}_{\lambda}=0$ by \cite[Theorem 18.8]{Ern2021}. Here, $\gamma_1$ and $\gamma_2$ are the trace operators introduced in Subsection \ref{subsec: functional setting}. By invoking \cite[Theorem 18.8]{Ern2021} once more, we obtain $p^{\varepsilon}_{\lambda}\in \Ws^{1,\infty}\left(Q_T\right)$. Combine this with $\ell \in \Ws^{1,\infty}\left(Q_T\right)$, we conclude that $\ell p^{\varepsilon}_{\lambda}$ is an element of $\Ws^{1,\infty}\left(Q_T\right)$. 
    
    The regularity of $f^\varepsilon_\lambda$ follows from \eqref{eq: projection formula of f} and Stampacchia's lemma \cite[Theorem A.1]{KS1980}. Together with Ziemer's representation \cite[Corollary 2.1.8]{Ziemer1989}, we get the desired stability estimate.
\end{proof}

Denote by $\Pi_d :\Ls^2\left(Q_T\right)\rightarrow F^d$ the $\Ls^2$-orthogonal projection operator, defined element-wise as
\begin{equation}
    \label{eq: L2 orthogonal projection}
    \left(\Pi_d v \right)_{\mid \, K} := \dfrac{1}{\abs{K}}\int\limits_{K}v \dx\dt\qqqq \forall K\in \mathcal{T}_h \in \Ls^2\left(Q_T\right),
\end{equation}
for all $v  \in \Ls^2\left(Q_T\right)$. Recall from \cite[Section 18.4]{Ern2021} that for all $v  \in \Hs^1\left(\Lambda\right)$, we have
\begin{equation}
    \label{eq: error estimates for projection operators ver 2}
    \norm{v  -\Pi_d v }_{\Ls^2\left(\Lambda\right)} \le C h\norm{\Ds v }_{\LLs^2\left(\Lambda\right)},
\end{equation}
and for all $v  \in \Ws^{1,\infty}\left(\Lambda\right)$, the following inequality holds
\begin{equation}
    \label{eq: error estimates for projection operators}
    \norm{v  -\Pi_d v }_{\Ls^\infty\left(\Lambda\right)} \le C h\norm{\Ds v }_{\LLs^\infty\left(\Lambda\right)}.
\end{equation}
Here, $\Lambda$ denotes an arbitrary element $K\in \mathcal{T}_h$ or the entire space-time domain $Q_T$, and the constant $C>0$ does not depend on $v $ and $h$.

\begin{lemma}
    \label{lem: pure control estimate constant}
    Let $f^{\varepsilon}_{\lambda}\in F_{+}$ be the solution to Problem \eqref{eq: problem formulation} and $p^\varepsilon_\lambda\in \Xs_T$ be its corresponding adjoint. Let $\widehat{f_h}\in F^h_{+}$ be the solution to Problem \eqref{eq: pure functional constant}. Assume that Assumption \ref{assum: Nitsche's trick assumption} is satisfied and $\ell \in \Ws^{1,\infty}\left(Q_T\right)$. Then, the following estimate holds
    $$
    \norm{f^{\varepsilon}_{\lambda}-\widehat{f_h}}_{\Ls^2\left(Q_T\right)} \le \dfrac{Ch}{\lambda} \sqrt{\left(1+\dfrac{1}{\sqrt{\lambda}}\right)\left(\norm{U_d}_{\Ls^2\left(\omega_T\right)} +\norm{g}_{\Ls^2\left(Q_T\right)}+\varepsilon\right)}\norm{\Ds \left(\ell p^{\varepsilon}_{\lambda}\right)}^{\frac{1}{2}}_{\LLs^\infty\left(Q_T\right)}. 
    $$
\end{lemma}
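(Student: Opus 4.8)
\noindent The plan is to run the standard duality argument for element-wise constant control discretization, keeping the state/adjoint equations continuous throughout. Write $\delta f:=f^{\varepsilon}_{\lambda}-\widehat{f_h}$, and let $u^{\varepsilon}_{\lambda},p^{\varepsilon}_{\lambda}$ (resp.\ $\widehat{u},\widehat{p}$) be the state and adjoint attached to $f^{\varepsilon}_{\lambda}$ (resp.\ $\widehat{f_h}$), so that $\nabla J^\varepsilon_\lambda\left(f^{\varepsilon}_{\lambda}\right)=\ell p^{\varepsilon}_{\lambda}+\lambda f^{\varepsilon}_{\lambda}$ and $\nabla J^\varepsilon_\lambda\big(\widehat{f_h}\big)=\ell \widehat{p}+\lambda \widehat{f_h}$. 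Since $\widehat{f_h}\in F_{+}$ and, because element-wise averaging preserves nonnegativity, $\Pi_d f^{\varepsilon}_{\lambda}\in F^h_{+}$, I would test \eqref{eq: variational inequality} with $f=\widehat{f_h}$ and \eqref{eq: pure variational inequality constant optimality conditions} with $f_h=\Pi_d f^{\varepsilon}_{\lambda}$, add the two inequalities, and split $\Pi_d f^{\varepsilon}_{\lambda}-\widehat{f_h}=\left(\Pi_d f^{\varepsilon}_{\lambda}-f^{\varepsilon}_{\lambda}\right)+\delta f$. Two facts then collapse the result: first, the reduced gradient $f\mapsto \ell p(f)+\lambda f$ is $\lambda$-strongly monotone, because $f\mapsto u(f)$ is linear and $\big(\ell\left(p(f^{\varepsilon}_{\lambda})-p(\widehat{f_h})\right),\delta f\big)_{\Ls^2(Q_T)}=\norm{u(\delta f)}^2_{\Ls^2(\omega_T)}\ge 0$ by the identity \eqref{eq: optimality conditions 3}; second, $\lambda\big(\widehat{f_h},\Pi_d f^{\varepsilon}_{\lambda}-f^{\varepsilon}_{\lambda}\big)_{\Ls^2(Q_T)}=0$ by $\Ls^2(Q_T)$-orthogonality of $\Pi_d$ against $F^d\ni \widehat{f_h}$. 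What survives is
$$
\lambda\norm{f^{\varepsilon}_{\lambda}-\widehat{f_h}}^2_{\Ls^2(Q_T)}\le \big(\ell\widehat{p},\,\Pi_d f^{\varepsilon}_{\lambda}-f^{\varepsilon}_{\lambda}\big)_{\Ls^2(Q_T)}.
$$

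\noindent Next I would insert $\Pi_d(\ell\widehat{p})$ and again use that $\Pi_d(\ell\widehat{p})\in F^d$ is orthogonal to $\Pi_d f^{\varepsilon}_{\lambda}-f^{\varepsilon}_{\lambda}$, so the right-hand side equals $\big(\ell\widehat{p}-\Pi_d(\ell\widehat{p}),\Pi_d f^{\varepsilon}_{\lambda}-f^{\varepsilon}_{\lambda}\big)_{\Ls^2(Q_T)}$, and by Cauchy--Schwarz
$$
\lambda\norm{f^{\varepsilon}_{\lambda}-\widehat{f_h}}^2_{\Ls^2(Q_T)}\le \norm{\ell\widehat{p}-\Pi_d(\ell\widehat{p})}_{\Ls^2(Q_T)}\,\norm{\Pi_d f^{\varepsilon}_{\lambda}-f^{\varepsilon}_{\lambda}}_{\Ls^2(Q_T)}.
$$
For the first factor I would use \eqref{eq: error estimates for projection operators ver 2} together with $\ell\in\Ws^{1,\infty}(Q_T)$ and $\widehat{p}\in\Ws$ (valid under Assumption \ref{assum: Nitsche's trick assumption} via the time-reversed form of Remark \ref{rem: regularity of u} applied to \eqref{eq: weak pure adjoint equation constant optimality conditions}) to get $\norm{\ell\widehat{p}-\Pi_d(\ell\widehat{p})}_{\Ls^2(Q_T)}\le Ch\norm{\Ds(\ell\widehat{p})}_{\LLs^2(Q_T)}\le Ch\norm{\widehat{p}}_{\Ws}$; combining the minimality bound $\tfrac{\lambda}{2}\norm{\widehat{f_h}}^2_{\Ls^2(Q_T)}\le J^\varepsilon_\lambda(\widehat{f_h})\le J^\varepsilon_\lambda(0)=\tfrac12\norm{z^\varepsilon_d}^2_{\Ls^2(\omega_T)}$ with \eqref{eq: priori estimate} and \eqref{eq: stability of the noisy data} yields $\norm{\widehat{p}}_{\Ws}\le C\big(1+\lambda^{-1/2}\big)\big(\norm{U_d}_{\Ls^2(\omega_T)}+\norm{g}_{\Ls^2(Q_T)}+\varepsilon\big)$. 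For the second factor I would use the $\Ws^{1,\infty}$-projection estimate \eqref{eq: error estimates for projection operators} and Lemma \ref{lem: control regularity}: $\norm{\Pi_d f^{\varepsilon}_{\lambda}-f^{\varepsilon}_{\lambda}}_{\Ls^2(Q_T)}\le C\norm{\Pi_d f^{\varepsilon}_{\lambda}-f^{\varepsilon}_{\lambda}}_{\Ls^\infty(Q_T)}\le Ch\norm{\Ds f^{\varepsilon}_{\lambda}}_{\LLs^\infty(Q_T)}\le \tfrac{Ch}{\lambda}\norm{\Ds(\ell p^{\varepsilon}_{\lambda})}_{\LLs^\infty(Q_T)}$.

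\noindent Multiplying the two bounds gives $\lambda\norm{f^{\varepsilon}_{\lambda}-\widehat{f_h}}^2_{\Ls^2(Q_T)}\le \tfrac{Ch^2}{\lambda}\big(1+\lambda^{-1/2}\big)\big(\norm{U_d}_{\Ls^2(\omega_T)}+\norm{g}_{\Ls^2(Q_T)}+\varepsilon\big)\norm{\Ds(\ell p^{\varepsilon}_{\lambda})}_{\LLs^\infty(Q_T)}$; dividing by $\lambda$ and taking the square root is exactly the claimed inequality. The delicate point is the regularity-and-stability of the semi-discrete adjoint $\widehat{p}$ (that the adjoint attached to the piecewise-constant control still lies in $\Ws$ with a $\lambda^{-1/2}$-controlled norm), which is why Assumption \ref{assum: Nitsche's trick assumption} and Remark \ref{rem: regularity of u} are invoked; everything else is a routine chain of $L^2$-orthogonality, the two projection estimates, and the already-established a priori bounds. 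It is worth noting that one should \emph{not} split $\widehat{p}=p^{\varepsilon}_{\lambda}+\left(\widehat{p}-p^{\varepsilon}_{\lambda}\right)$ inside the scalar product: doing so would force a Young-type absorption and produce $\norm{\Ds(\ell p^{\varepsilon}_{\lambda})}^2_{\LLs^\infty(Q_T)}$ instead of the half power appearing in the statement, so $\widehat{p}$ must be kept throughout and only $f^{\varepsilon}_{\lambda}$ estimated via its $\Ws^{1,\infty}$-regularity.
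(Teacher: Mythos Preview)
Your proof is correct and follows essentially the same route as the paper: test the two variational inequalities with $\widehat{f_h}$ and $\Pi_d f^{\varepsilon}_{\lambda}$, exploit the monotonicity identity $(\ell(p^{\varepsilon}_{\lambda}-\widehat{p}),\delta f)_{\Ls^2(Q_T)}=\norm{u(\delta f)}^2_{\Ls^2(\omega_T)}\ge 0$ and the orthogonality $(\widehat{f_h},\Pi_d f^{\varepsilon}_{\lambda}-f^{\varepsilon}_{\lambda})_{\Ls^2(Q_T)}=0$, then insert $\Pi_d(\ell\widehat{p})$ and apply the two projection estimates \eqref{eq: error estimates for projection operators ver 2}--\eqref{eq: error estimates for projection operators} together with Lemma~\ref{lem: control regularity} and the $\lambda^{-1/2}$-bound on $\widehat{f_h}$. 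The only cosmetic difference is that the paper invokes Assumption~\ref{assum: Nitsche's trick assumption} directly for $\widehat{p}$ (Problem~\eqref{eq: weak pure adjoint equation constant optimality conditions} is an instance of \eqref{eq: Nitsche's trick equation}), rather than going through the time-reversal of Remark~\ref{rem: regularity of u}, and bounds $\norm{\Ds(\ell\widehat{p})}_{\LLs^2(Q_T)}$ by $C\norm{\widehat{p}}_{\Hs^1(Q_T)}$ rather than the full $\Ws$-norm; your closing remark on why not to split $\widehat{p}$ is a nice addition.
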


\begin{proof}
    By inserting $f=\widehat{f_h}\in F_{+}$ into \eqref{eq: variational inequality}, we find
    $$
    \left(\ell p^{\varepsilon}_{\lambda}+\lambda f^{\varepsilon}_{\lambda},\widehat{f_h}-f^{\varepsilon}_{\lambda}\right)_{\Ls^2\left(Q_T\right)} \geq 0.
    $$
    On the other hand, note that $\Pi_d  f^{\varepsilon}_{\lambda}\in F^h_{+}$. We choose $f_h=\Pi_d  f^{\varepsilon}_{\lambda}\in F^h_{+}$ in \eqref{eq: pure variational inequality constant optimality conditions} to get
    $$
    \left(\ell \widehat{p}+\lambda \widehat{f_h},\Pi_d  f^{\varepsilon}_{\lambda}-\widehat{f_h}\right)_{\Ls^2\left(Q_T\right)} \geq 0,
    $$
    which is equivalent to
    $$
    \left(\ell \widehat{p}+\lambda \widehat{f_h}, f^{\varepsilon}_{\lambda}-\widehat{f_h}\right)_{\Ls^2\left(Q_T\right)}+\left(\ell \widehat{p}+\lambda \widehat{f_h},\Pi_d  f^{\varepsilon}_{\lambda}-f^{\varepsilon}_{\lambda}\right)_{\Ls^2\left(Q_T\right)} \geq 0.
    $$
    We add this inequality to the first one to obtain
    \begin{equation}
        \label{eq: pure control estimate constant 1}
        \lambda\norm{f^{\varepsilon}_{\lambda}-\widehat{f_h}}_{\Ls^2\left(Q_T\right)}^2\le \left(\ell p^{\varepsilon}_{\lambda}-\ell \widehat{p},\widehat{f_h} - f^{\varepsilon}_{\lambda}\right)_{\Ls^2\left(Q_T\right)} +\left(\ell \widehat{p}+\lambda \widehat{f_h},\Pi_d  f^{\varepsilon}_{\lambda} - f^{\varepsilon}_{\lambda}\right)_{\Ls^2\left(Q_T\right)}.
    \end{equation}
    To deal with the first term on the right-hand side of \eqref{eq: pure control estimate constant 1}, we invoke \eqref{eq: optimality conditions 3} twice. It follows that
    $$
    \begin{aligned}
        \left(\ell p^{\varepsilon}_{\lambda}-\ell \widehat{p}, \widehat{f_h} - f^{\varepsilon}_{\lambda}\right)_{\Ls^2\left(Q_T\right)} &= \left(\ell p^{\varepsilon}_{\lambda}, \widehat{f_h} - f^{\varepsilon}_{\lambda}\right)_{\Ls^2\left(Q_T\right)} - \left(\ell \widehat{p}, \widehat{f_h} - f^{\varepsilon}_{\lambda}\right)_{\Ls^2\left(Q_T\right)} \\
        &= \left(\widehat{u}-u^{\varepsilon}_{\lambda},u^{\varepsilon}_{\lambda}-z^{\varepsilon}_{d}\right)_{\Ls^2\left(\omega_T\right)} - \left(\widehat{u}-u^{\varepsilon}_{\lambda},\widehat{u}-z^{\varepsilon}_{d}\right)_{\Ls^2\left(\omega_T\right)} \\
        &= - \norm{\widehat{u}-u^{\varepsilon}_{\lambda}}^2_{\Ls^2\left(\omega_T\right)} \le 0.
    \end{aligned}   
    $$
    In the second term on the right-hand side of \eqref{eq: pure control estimate constant 1}, since $\widehat{f_h}$ and $\Pi_d  f^{\varepsilon}_{\lambda} - f^{\varepsilon}_{\lambda}$ belong to two orthogonal subspaces of $\Ls^2\left(Q_T\right)$, we have
    $$
    \left(\widehat{f_h}, \Pi_d  f^{\varepsilon}_{\lambda} - f^{\varepsilon}_{\lambda}\right)_{\Ls^2\left(Q_T\right)} = 0.
    $$
    Therefore, \eqref{eq: pure control estimate constant 1} reduces to 
    $$
    \lambda\norm{f^{\varepsilon}_{\lambda}-\widehat{f_h}}_{\Ls^2\left(Q_T\right)}^2\le \left(\ell \widehat{p},\Pi_d  f^{\varepsilon}_{\lambda} - f^{\varepsilon}_{\lambda}\right)_{\Ls^2\left(Q_T\right)}.
    $$
    By using the orthogonal arguments once more, \eqref{eq: error estimates for projection operators ver 2}, \eqref{eq: error estimates for projection operators}, and Lemma \ref{lem: control regularity}, we have
    $$
    \begin{aligned}
        \left(\ell \widehat{p},\Pi_d  f^{\varepsilon}_{\lambda} - f^{\varepsilon}_{\lambda}\right)_{\Ls^2\left(Q_T\right)} = \left(\ell\widehat{p}-\Pi_d \left(\ell\widehat{p}\right), \Pi_d  f^{\varepsilon}_{\lambda} - f^{\varepsilon}_{\lambda}\right)_{\Ls^2\left(Q_T\right)} &\le \norm{\ell \widehat{p}-\Pi_d  \left(\ell\widehat{p}\right)}_{\Ls^2\left(Q_T\right)} \norm{\Pi_d  f^{\varepsilon}_{\lambda} - f^{\varepsilon}_{\lambda}}_{\Ls^2\left(Q_T\right)} \\
        &\le \norm{\ell\widehat{p}-\Pi_d  \left(\ell\widehat{p}\right)}_{\Ls^2\left(Q_T\right)} \abs{Q_T}^{\frac{1}{2}}\norm{\Pi_d  f^{\varepsilon}_{\lambda} - f^{\varepsilon}_{\lambda}}_{\Ls^\infty\left(Q_T\right)}\\
        &\le Ch \norm{\Ds \left(\ell\widehat{p}\right)}_{\LLs^2\left(Q_T\right)} h\norm{\Ds f^{\varepsilon}_{\lambda}}_{\LLs^\infty\left(Q_T\right)}\\
        &\le Ch \norm{\Ds \left(\ell\widehat{p}\right)}_{\LLs^2\left(Q_T\right)} \dfrac{h}{\lambda}\norm{\Ds \left(\ell p^{\varepsilon}_{\lambda}\right)}_{\LLs^\infty\left(Q_T\right)}.
    \end{aligned}
    $$
    Therefore, we imply
    \begin{equation}
        \label{eq: pure control estimate constant 3}
        \norm{f^{\varepsilon}_{\lambda}-\widehat{f_h}}_{\Ls^2\left(Q_T\right)}\le \dfrac{Ch}{\lambda} \norm{\Ds \left(\ell\widehat{p}\right)}^{\frac{1}{2}}_{\LLs^2\left(Q_T\right)} \norm{\Ds \left(\ell p^{\varepsilon}_{\lambda}\right)}^{\frac{1}{2}}_{\LLs^\infty\left(Q_T\right)}.
    \end{equation}
    Finally, let us proceed with the first norm on the right-hand side of \eqref{eq: pure control estimate constant 3}. We invoke $\ell\in \Ws^{1,\infty}\left(Q_T\right)$, Assumption \ref{assum: Nitsche's trick assumption}, \eqref{eq: priori estimate}, and the arguments of \eqref{eq: auxiliary-discrete adjoint estimate variational 4} to get
    $$
    \begin{aligned}
        \norm{\Ds \left(\ell\widehat{p}\right)}_{\LLs^2\left(Q_T\right)} \le C\norm{\widehat{p}}_{\Hs^1\left(Q_T\right)}\le C\left(\norm{\widehat{u}}_{\Ls^2\left(\omega_T\right)} + \norm{z^\varepsilon_d}_{\Ls^2\left(\omega_T\right)}\right)&\le C\left(\norm{\widehat{f_h}}_{\Ls^2\left(Q_T\right)}+ \norm{z^\varepsilon_d}_{\Ls^2\left(\omega_T\right)}\right)\\
        &\le C\left(1+\dfrac{1}{\sqrt{\lambda}}\right)\left(\norm{U_d}_{\Ls^2\left(\omega_T\right)} +\norm{g}_{\Ls^2\left(Q_T\right)}+\varepsilon\right).
    \end{aligned}
    $$
   We obtain the desired estimate by substituting this into \eqref{eq: pure control estimate constant 3}. 
\end{proof}

Next, we employ the space-time interface-fitted method \cite{NLPT2024} to discretize both Problems \eqref{eq: weak pure state equation constant optimality conditions} and \eqref{eq: weak pure adjoint equation constant optimality conditions}. More precisely, let us introduce $u_h\left(\widehat{f_h}\right) \in \Vs_{h,0}$ and $ p_h\left(\widehat{f_h}\right) \in \Vs_{h,T}$ as the solutions to the following problems
\begin{equation}
    \label{eq: auxiliary discrete for pure state constant}
    a_h\left(u_h\left(\widehat{f_h}\right), \vphi_h\right) = \left(\ell\widehat{f_h}, \vphi_h\right)_{\Ls^2\left(Q_T\right)} \qqqq \forall\vphi_h \in \Vs_{h,0},
\end{equation}
and
\begin{equation}
    \label{eq: auxiliary discrete for pure adjoint constant 2}
    a^{\prime}_h\left( p_h\left(\widehat{f_h}\right),\phi_h\right) = \left(\chi_{\omega_T}\left(\widehat{u}-z^{\varepsilon}_d\right),\phi_h\right)_{\Ls^2\left(Q_T\right)} \qqqq \forall \phi_h\in \Vs_{h,T}.
\end{equation}
Furthermore, denote by $\widehat{p_h}\left(\widehat{f_h}\right)\in \Vs_{h,T}$ the solution to the problem
\begin{equation}
    \label{eq: auxiliary discrete for pure adjoint constant 1}
    a^{\prime}_h\left(\widehat{p_h}\left(\widehat{f_h}\right),\phi_h\right) = \left(\chi_{\omega_T}\left(u_h\left(\widehat{f_h}\right)-z^{\varepsilon}_{d,h}\right),\phi_h\right)_{\Ls^2\left(Q_T\right)} \qqqq \forall \phi_h\in \Vs_{h,T}.
\end{equation}

\begin{lemma}
    \label{lem: discrete the pure control constant}
    Let $\widehat{f_h}\in F^h_{+}$ and $f^{\varepsilon}_{\lambda, h}\in F^h_{+}$ be the solutions to Problems \eqref{eq: pure functional constant} and \eqref{eq: discrete variational inequality} in case of element-wise constant discretization, respectively. Let $C_2>0$ be the constant defined in Lemma \ref{lem: discrete-auxiliary estimate variational}. If Assumption \ref{assum: Nitsche's trick assumption} holds, then we have the following estimate
    $$
    \norm{f^{\varepsilon}_{\lambda, h}-\widehat{f_h}}_{\Ls^2\left(Q_T\right)} \le \dfrac{Ch^2}{\lambda^\tau}\left(1+\dfrac{1}{\sqrt{\lambda}}\right)\left(\norm{U_d}_{\Ls^2\left(\omega_T\right)} +\norm{g}_{\Ls^2\left(Q_T\right)}+\varepsilon\right) \qqqq \text{with } \tau = 
    \begin{cases}
        2 & \text{if } \lambda \le 4 \norm{\ell}_{\Ls^\infty\left(Q_T\right)}C_2, \\ 
        0 & \text{otherwise}.
    \end{cases}
    $$
\end{lemma}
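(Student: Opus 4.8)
The strategy is to mirror the duality-plus-absorption argument from the proofs of Lemmas~\ref{lem: exact-discrete control estimate variational} and~\ref{lem: auxiliary-discrete adjoint estimate variational}, now comparing the semi-discrete source $\widehat{f_h}$ of Problem~\eqref{eq: pure functional constant} with the fully discrete source $f^{\varepsilon}_{\lambda, h}$ of Problem~\eqref{eq: discrete state optimality conditions}--\eqref{eq: discrete variational inequality}; both lie in $F^h_{+}$. First I would test the variational inequality~\eqref{eq: pure variational inequality constant optimality conditions} with $f_h = f^{\varepsilon}_{\lambda, h}$ and~\eqref{eq: discrete variational inequality} with $f_h = \widehat{f_h}$, add the two inequalities and rearrange, which gives
\[
\lambda \norm{f^{\varepsilon}_{\lambda, h} - \widehat{f_h}}^2_{\Ls^2(Q_T)} \le \left(\ell\bigl(\widehat{p} - p^{\varepsilon}_{\lambda, h}\bigr),\, f^{\varepsilon}_{\lambda, h} - \widehat{f_h}\right)_{\Ls^2(Q_T)}.
\]

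Then I would split the adjoint difference through the two auxiliary discrete adjoints from~\eqref{eq: auxiliary discrete for pure adjoint constant 2} and~\eqref{eq: auxiliary discrete for pure adjoint constant 1},
\[
\widehat{p} - p^{\varepsilon}_{\lambda, h} = \bigl(\widehat{p} - p_h(\widehat{f_h})\bigr) + \bigl(p_h(\widehat{f_h}) - \widehat{p_h}(\widehat{f_h})\bigr) + \bigl(\widehat{p_h}(\widehat{f_h}) - p^{\varepsilon}_{\lambda, h}\bigr),
\]
and estimate each term. The first term is the finite element error of the adjoint equation~\eqref{eq: weak pure adjoint equation constant optimality conditions} with fixed right-hand side; Remark~\ref{rem: regularity of u} (for the time-reversed problem) gives $\widehat{p} \in \Ws$ together with $\norm{\widehat{p}}_{\Hs^s(Q_1 \cup Q_2)} \le C(\norm{\widehat{u}}_{\Ls^2(\omega_T)} + \norm{z^{\varepsilon}_d}_{\Ls^2(\omega_T)})$, so Lemma~\ref{lem: adjoint error estimate L2 Q norm}, the noisy-data bound~\eqref{eq: stability of the noisy data}, estimate~\eqref{eq: priori estimate}, and the stability bound $\norm{\widehat{f_h}}_{\Ls^2(Q_T)} \le \lambda^{-1/2}\norm{z^{\varepsilon}_d}_{\Ls^2(\omega_T)}$ (which holds as in~\eqref{eq: stability of the source}, since $0 \in F^h_{+}$) yield a bound of order $Ch^2(1 + \lambda^{-1/2})(\norm{U_d}_{\Ls^2(\omega_T)} + \norm{g}_{\Ls^2(Q_T)} + \varepsilon)$ in the $\Ls^2(Q_T)$-norm. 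Subtracting~\eqref{eq: auxiliary discrete for pure adjoint constant 1} from~\eqref{eq: auxiliary discrete for pure adjoint constant 2}, the second term solves a discrete adjoint equation with data $\chi_{\omega_T}(\widehat{u} - u_h(\widehat{f_h}) - z^{\varepsilon}_d + z^{\varepsilon}_{d,h})$; here the discrete stability~\eqref{eq: stability adjoint}, the second-order state estimate of Lemma~\ref{lem: state error estimate L2 Q norm} applied to the pair $(\widehat{u}, u_h(\widehat{f_h}))$ coming from~\eqref{eq: weak pure state equation constant optimality conditions}--\eqref{eq: auxiliary discrete for pure state constant}, and the identity $z^{\varepsilon}_d - z^{\varepsilon}_{d,h} = (u^\ast_h - u^\ast)_{\mid \omega_T}$ controlled by Remarks~\ref{rem: error estimate u*} and~\ref{rem: regularity of u} again produce an $\mathcal{O}(h^2)(1 + \lambda^{-1/2})(\cdots)$ bound. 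Finally, subtracting~\eqref{eq: discrete adjoint optimality conditions} from~\eqref{eq: auxiliary discrete for pure adjoint constant 1}, the third term solves a discrete adjoint equation whose data is $\chi_{\omega_T}(u_h(\widehat{f_h}) - u^{\varepsilon}_{\lambda, h})$, driven by the source perturbation $\ell(\widehat{f_h} - f^{\varepsilon}_{\lambda, h})$, so the argument of Lemma~\ref{lem: discrete-auxiliary estimate variational} bounds it by $C_2 \norm{\widehat{f_h} - f^{\varepsilon}_{\lambda, h}}_{\Ls^2(Q_T)}$.

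Inserting these three estimates, using the Cauchy--Schwarz inequality and dividing by $\norm{f^{\varepsilon}_{\lambda, h} - \widehat{f_h}}_{\Ls^2(Q_T)}$ (the claim being trivial otherwise), I arrive at
\[
\bigl(\lambda - \norm{\ell}_{\Ls^\infty(Q_T)} C_2\bigr)\norm{f^{\varepsilon}_{\lambda, h} - \widehat{f_h}}_{\Ls^2(Q_T)} \le C h^2 \Bigl(1 + \tfrac{1}{\sqrt{\lambda}}\Bigr)\bigl(\norm{U_d}_{\Ls^2(\omega_T)} + \norm{g}_{\Ls^2(Q_T)} + \varepsilon\bigr),
\]
and the conclusion follows from a case distinction on whether $\lambda \le 4\norm{\ell}_{\Ls^\infty(Q_T)} C_2$: in the second regime $\lambda - \norm{\ell}_{\Ls^\infty(Q_T)}C_2 > \tfrac{3}{4}\lambda$ is bounded below by a fixed constant (and $1/\lambda$ is bounded as well), giving $\tau = 0$; in the first regime one uses the elementary lower bound $\lambda - \norm{\ell}_{\Ls^\infty(Q_T)}C_2 \ge \tfrac{3\lambda^2}{16\,\norm{\ell}_{\Ls^\infty(Q_T)}C_2}$ exactly as in the proof of Lemma~\ref{lem: exact-discrete control estimate variational}, which produces the factor $\lambda^{-2}$, i.e.\ $\tau = 2$.

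The main obstacle is the circular term $C_2 \norm{\widehat{f_h} - f^{\varepsilon}_{\lambda, h}}_{\Ls^2(Q_T)}$ coming from the third adjoint difference: it is the reason one must absorb it into the left-hand side and split according to the size of $\lambda$ relative to $\norm{\ell}_{\Ls^\infty(Q_T)}C_2$, and it is what degrades the $\lambda$-power in the small-regularization regime. A second technical point to get right is that the data mismatch $z^{\varepsilon}_d - z^{\varepsilon}_{d,h}$ genuinely enters at order $h^2$, which hinges on the second-order convergence $u^\ast_h \to u^\ast$ of Remark~\ref{rem: error estimate u*} and hence on the regularity $u^\ast \in \Ws$ supplied by Remark~\ref{rem: regularity of u}.
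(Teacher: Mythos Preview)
Your proposal is correct and follows essentially the same route as the paper: test the two variational inequalities against each other, split $\widehat{p}-p^{\varepsilon}_{\lambda,h}$ through the auxiliary discrete adjoints $p_h(\widehat{f_h})$ and $\widehat{p_h}(\widehat{f_h})$, bound the three pieces by Lemma~\ref{lem: adjoint error estimate L2 Q norm}, the state error in Lemma~\ref{lem: state error estimate L2 Q norm} together with Remark~\ref{rem: error estimate u*}, and the absorption bound from Lemma~\ref{lem: discrete-auxiliary estimate variational}, then finish with the same case distinction on $\lambda$ versus $4\norm{\ell}_{\Ls^\infty(Q_T)}C_2$. The only cosmetic difference is that you divide through by $\norm{f^{\varepsilon}_{\lambda,h}-\widehat{f_h}}_{\Ls^2(Q_T)}$ before the case split, whereas the paper keeps the quadratic inequality; the substance is identical.
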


\begin{proof}
    We begin with inserting $f_h=\widehat{f_h}\in F^h_{+}$ into \eqref{eq: discrete variational inequality} and $f_h= f^{\varepsilon}_{\lambda, h}\in F^h_{+}$ into \eqref{eq: pure variational inequality constant optimality conditions}, respectively. It follows that
    $$
    \left(\ell p^{\varepsilon}_{\lambda, h}+\lambda f^{\varepsilon}_{\lambda, h}, \widehat{f_h}-f^{\varepsilon}_{\lambda, h}\right)_{\Ls^2\left(Q_T\right)} \geq 0\qqqq\text{and}\qqqq\left(\ell\widehat{p}+\lambda \widehat{f_h}, f^{\varepsilon}_{\lambda, h}-\widehat{f_h}\right)_{\Ls^2\left(Q_T\right)} \geq 0.
    $$
    We add these two inequalities to obtain 
    \begin{equation}
        \label{eq: discrete the pure control constant 1}
        \lambda\norm{f^{\varepsilon}_{\lambda, h}-\widehat{f_h}}_{\Ls^2\left(Q_T\right)}^2\le \left(\ell\widehat{p}-\ell p^{\varepsilon}_{\lambda, h},\ f^{\varepsilon}_{\lambda, h} - \widehat{f_h}\right)_{\Ls^2\left(Q_T\right)} = J_1 + J_2 + J_3,
    \end{equation}
    where $J_1, J_2$, and $J_3$ are defined as
    $$
    J_1 := \left(\ell\widehat{p}-\ell  p_h\left(\widehat{f_h}\right),\ f^{\varepsilon}_{\lambda, h} - \widehat{f_h}\right)_{\Ls^2\left(Q_T\right)}\qqq \text{and}\qqq J_2 := \left(\ell  p_h\left(\widehat{f_h}\right)-\ell \widehat{p_h}\left(\widehat{f_h}\right),\ f^{\varepsilon}_{\lambda, h} - \widehat{f_h}\right)_{\Ls^2\left(Q_T\right)},
    $$
    and 
    $$
    J_3 := \left(\ell \widehat{p_h}\left(\widehat{f_h}\right)-\ell p^{\varepsilon}_{\lambda, h},\ f^{\varepsilon}_{\lambda, h} - \widehat{f_h}\right)_{\Ls^2\left(Q_T\right)}.
    $$
    We use Lemma \ref{lem: adjoint error estimate L2 Q norm} and the technique in \eqref{eq: auxiliary-discrete adjoint estimate variational 4} to bound $J_1$ as follows
    \begin{equation}
        \label{eq: discrete the pure control constant 2}
        \begin{aligned}
            J_1 \le C\norm{\widehat{p}- p_h\left(\widehat{f_h}\right)}_{\Ls^2\left(Q_T\right)}\norm{f^{\varepsilon}_{\lambda, h} - \widehat{f_h}}_{\Ls^2\left(Q_T\right)}&\le Ch^2\norm{\widehat{p}}_{\Hs^s\left(Q_1\cup Q_2\right)}\norm{f^{\varepsilon}_{\lambda, h} - \widehat{f_h}}_{\Ls^2\left(Q_T\right)}\\
            &\le Ch^2\left(1+\dfrac{1}{\sqrt{\lambda}}\right)\left(\norm{U_d}_{\Ls^2\left(\omega_T\right)} +\norm{g}_{\Ls^2\left(Q_T\right)}+\varepsilon\right)\norm{f^{\varepsilon}_{\lambda, h} - \widehat{f_h}}_{\Ls^2\left(Q_T\right)}.
        \end{aligned}
    \end{equation}
    For handling $J_2$, we subtract \eqref{eq: auxiliary discrete for pure adjoint constant 1} from \eqref{eq: auxiliary discrete for pure adjoint constant 2} to arrive at
    $$
    a^{\prime}_h\left( p_h\left(\widehat{f_h}\right)-\widehat{p_h}\left(\widehat{f_h}\right),\phi_h\right) = \left(\chi_{\omega_T}\left(\widehat{u}- u_h\left(\widehat{f_h}\right) - z^\varepsilon_d + z^\varepsilon_{d,h}\right),\phi_h\right)_{\Ls^2\left(Q_T\right)} \qqqq \forall \phi_h\in \Vs_{h,T}.
    $$
    We invoke \eqref{eq: compare L2 and the Y norm} and the arguments of \eqref{eq: auxiliary-discrete adjoint estimate variational 3} to get
    $$
    \begin{aligned}
        \norm{p_h\left(\widehat{f_h}\right)-\widehat{p_h}\left(\widehat{f_h}\right)}_{\Ls^2\left(Q_T\right)}\le C\norm{p_h\left(\widehat{f_h}\right)-\widehat{p_h}\left(\widehat{f_h}\right)}_{h,T} &\le C\left(\norm{\widehat{u}- u_h\left(\widehat{f_h}\right)}_{\Ls^2\left(\omega_T\right)} + \norm{u^\ast - u^\ast_h}_{\Ls^2\left(\omega_T\right)}\right)\\
        &\le Ch^2\left(\norm{\widehat{u}}_{\Hs^s\left(Q_1\cup Q_2\right)} + \norm{u^\ast}_{\Hs^s\left(Q_1\cup Q_2\right)}\right)\\
        &\le \dfrac{Ch^2}{\sqrt{\lambda}}\left(\norm{U_d}_{\Ls^2\left(\omega_T\right)}+\varepsilon\right)+ Ch^2\left(1+\dfrac{1}{\sqrt{\lambda}}\right)\norm{g}_{\Ls^2\left(Q_T\right)},
    \end{aligned}
    $$
    which leads to
    \begin{equation}
        \label{eq: discrete the pure control constant 3}
        \begin{aligned}
            J_2&\le C\norm{ p_h\left(\widehat{f_h}\right)-\widehat{p_h}\left(\widehat{f_h}\right)}_{\Ls^2\left(Q_T\right)}\norm{f^{\varepsilon}_{\lambda, h} - \widehat{f_h}}_{\Ls^2\left(Q_T\right)}\\
            &\le C\left[\dfrac{h^2}{\sqrt{\lambda}}\left(\norm{U_d}_{\Ls^2\left(\omega_T\right)}+\varepsilon\right)+ h^2\left(1+\dfrac{1}{\sqrt{\lambda}}\right)\norm{g}_{\Ls^2\left(Q_T\right)}\right]\norm{f^{\varepsilon}_{\lambda, h} - \widehat{f_h}}_{\Ls^2\left(Q_T\right)}.
        \end{aligned}
    \end{equation}
    Finally, we estimate $J_3$ by employing the technique in \eqref{eq: exact-discrete control estimate variational 3}. We have
    $$
    \begin{aligned}
        J_3=\left(\ell \widehat{p_h}\left(\widehat{f_h}\right)-\ell p^{\varepsilon}_{\lambda, h},f^{\varepsilon}_{\lambda, h} - \widehat{f_h}\right)_{\Ls^2\left(Q_T\right)} &\le \norm{\ell}_{\Ls^\infty\left(Q_T\right)} \norm{\widehat{p_h}\left(\widehat{f_h}\right)- p^{\varepsilon}_{\lambda, h}}_{\Ls^2\left(Q_T\right)}\norm{f^{\varepsilon}_{\lambda, h} - \widehat{f_h}}_{\Ls^2\left(Q_T\right)}\\
        &\le \norm{\ell}_{\Ls^\infty\left(Q_T\right)}C_2 \norm{f^{\varepsilon}_{\lambda, h}-\widehat{f_h}}_{\Ls^2\left(Q_T\right)}^2.
    \end{aligned}
    $$
    We insert \eqref{eq: discrete the pure control constant 2}, \eqref{eq: discrete the pure control constant 3}, and this into \eqref{eq: discrete the pure control constant 1} to deduce that
    $$
    \left(\lambda - \norm{\ell}_{\Ls^\infty\left(Q_T\right)}C_2\right)\norm{f^{\varepsilon}_{\lambda, h}-\widehat{f_h}}_{\Ls^2\left(Q_T\right)}^2 \le Ch^2\left(1+\dfrac{1}{\sqrt{\lambda}}\right)\left(\norm{U_d}_{\Ls^2\left(\omega_T\right)} +\norm{g}_{\Ls^2\left(Q_T\right)}+\varepsilon\right)\norm{f^{\varepsilon}_{\lambda, h} - \widehat{f_h}}_{\Ls^2\left(Q_T\right)}.
    $$
    Together with the inequality
    $$
    \lambda - \norm{\ell}_{\Ls^\infty\left(Q_T\right)}C_2 \ge
    \begin{cases}
        \dfrac{3\lambda^2}{16 \norm{\ell}_{\Ls^\infty\left(Q_T\right)}C_2} &\text{if }\lambda \le 4 \norm{\ell}_{\Ls^\infty\left(Q_T\right)}C_2,\\
        3 \norm{\ell}_{\Ls^\infty\left(Q_T\right)}C_2 & \text{if } \lambda \ge 4 \norm{\ell}_{\Ls^\infty\left(Q_T\right)}C_2,
    \end{cases}
    $$
    we hence derive the desired result.
\end{proof}

We are now able to present the main result of this subsection. By combining Lemmas \ref{lem: pure control estimate constant} and \ref{lem: discrete the pure control constant} with the triangle inequality, we obtain the following result:

\begin{theorem}
    \label{theo: error estimate constant}
    Let $\left(u^{\varepsilon}_{\lambda},p^{\varepsilon}_{\lambda},f^{\varepsilon}_{\lambda}\right)\in \Xs_0\times \Xs_T\times F_{+}$ and $\left(u^{\varepsilon}_{\lambda, h},p^{\varepsilon}_{\lambda, h},f^{\varepsilon}_{\lambda, h}\right)\in \Vs_{h,0}\times \Vs_{h,T}\times F^h_{+}$ be the solutions to Problems \eqref{eq: weak state equation optimality conditions}--\eqref{eq: variational inequality} and \eqref{eq: discrete state optimality conditions}--\eqref{eq: discrete variational inequality} in case of element-wise constant discretization, respectively. Let $C_2>0$ be the constant defined in Lemma \ref{lem: discrete-auxiliary estimate variational}. Assume that Assumption \ref{assum: Nitsche's trick assumption} holds and $\ell \in \Ws^{1,\infty}\left(Q_T\right)$. Then, we have
    $$
    \begin{aligned}
        &\norm{u^{\varepsilon}_{\lambda} - u^{\varepsilon}_{\lambda, h}}_{h,0} + \norm{p^{\varepsilon}_{\lambda} - p^{\varepsilon}_{\lambda, h}}_{h,T} + \norm{f^{\varepsilon}_{\lambda}-f^{\varepsilon}_{\lambda, h}}_{\Ls^2\left(Q_T\right)} \le\\
        &\le \dfrac{Ch}{\lambda} \sqrt{\left(1+\dfrac{1}{\sqrt{\lambda}}\right)\left(\norm{U_d}_{\Ls^2\left(\omega_T\right)} +\norm{g}_{\Ls^2\left(Q_T\right)}+\varepsilon\right)}\norm{\Ds \left(\ell p^{\varepsilon}_{\lambda}\right)}^{\frac{1}{2}}_{\LLs^\infty\left(Q_T\right)} + Ch\left(1+\dfrac{1}{\lambda^\tau}\right)\left(\norm{U_d}_{\Ls^2\left(\omega_T\right)} +\norm{g}_{\Ls^2\left(Q_T\right)}+\varepsilon\right),
    \end{aligned}
    $$
    and
    $$
    \begin{aligned}
        &\norm{u^{\varepsilon}_{\lambda} - u^{\varepsilon}_{\lambda, h}}_{\Ls^2\left(Q_T\right)} + \norm{p^{\varepsilon}_{\lambda} - p^{\varepsilon}_{\lambda, h}}_{\Ls^2\left(Q_T\right)} + \norm{f^{\varepsilon}_{\lambda}-f^{\varepsilon}_{\lambda, h}}_{\Ls^2\left(Q_T\right)} \le \\
        &\le \dfrac{Ch}{\lambda} \sqrt{\left(1+\dfrac{1}{\sqrt{\lambda}}\right)\left(\norm{U_d}_{\Ls^2\left(\omega_T\right)} +\norm{g}_{\Ls^2\left(Q_T\right)}+\varepsilon\right)}\norm{\Ds \left(\ell p^{\varepsilon}_{\lambda}\right)}^{\frac{1}{2}}_{\LLs^\infty\left(Q_T\right)} + Ch^2\left(1+\dfrac{1}{\lambda^\tau}\right)\left(\norm{U_d}_{\Ls^2\left(\omega_T\right)} +\norm{g}_{\Ls^2\left(Q_T\right)}+\varepsilon\right).
    \end{aligned}
    $$
    In both estimates, the convergence order is linear, i.e., of $\mathcal{O}\left(h\right)$, and the exponent $\tau$ is given by
    $$
    \tau = 
    \begin{cases}
        \frac{5}{2} & \text{if } \lambda \le 4 \norm{\ell}_{\Ls^\infty\left(Q_T\right)}C_2, \\ 
        \frac{1}{2} & \text{otherwise}.
    \end{cases}
    $$
\end{theorem}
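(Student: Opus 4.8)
The plan is to split each of the three errors by the triangle inequality, inserting the semidiscrete source $\widehat{f_h}\in F^h_+$ from Problem \eqref{eq: pure functional constant} for the control, and auxiliary discrete states/adjoints driven by the continuous control for the state and adjoint; every resulting piece is then covered by a result established earlier. Throughout I work under Assumption \ref{assum: Nitsche's trick assumption} and $\ell\in\Ws^{1,\infty}(Q_T)$, so that Remark \ref{rem: regularity of u} yields $u^\varepsilon_\lambda,p^\varepsilon_\lambda,u^\ast\in\Ws$ with the corresponding smoothing bounds, and Lemma \ref{lem: control regularity} applies to $f^\varepsilon_\lambda$.

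First I would bound the control error. From $\norm{f^\varepsilon_\lambda-f^\varepsilon_{\lambda,h}}_{\Ls^2(Q_T)}\le\norm{f^\varepsilon_\lambda-\widehat{f_h}}_{\Ls^2(Q_T)}+\norm{\widehat{f_h}-f^\varepsilon_{\lambda,h}}_{\Ls^2(Q_T)}$, Lemma \ref{lem: pure control estimate constant} bounds the first summand---this is exactly the term $\tfrac{Ch}{\lambda}\sqrt{(1+\lambda^{-1/2})(\norm{U_d}_{\Ls^2(\omega_T)}+\norm{g}_{\Ls^2(Q_T)}+\varepsilon)}\,\norm{\Ds(\ell p^\varepsilon_\lambda)}^{1/2}_{\LLs^\infty(Q_T)}$ that we keep explicit---and Lemma \ref{lem: discrete the pure control constant} bounds the second by $\tfrac{Ch^2}{\lambda^\tau}(1+\lambda^{-1/2})(\norm{U_d}_{\Ls^2(\omega_T)}+\norm{g}_{\Ls^2(Q_T)}+\varepsilon)$ with $\tau\in\{2,0\}$ according to whether $\lambda\le 4\norm{\ell}_{\Ls^\infty(Q_T)}C_2$ or not. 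Since $h\in(0,h_\ast)$, the factor $h^2$ may be traded for $h$ and $\lambda^{-2}+\lambda^{-5/2}$ absorbed into $1+\lambda^{-5/2}$ below the threshold, and into $1+\lambda^{-1/2}$ above it; the control error is thus $\mathcal{O}(h)$, dominated by the explicit Lemma \ref{lem: pure control estimate constant} term.

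Next I would propagate the control error to the state and adjoint. I write $u^\varepsilon_\lambda-u^\varepsilon_{\lambda,h}=\bigl(u^\varepsilon_\lambda-u_h(f^\varepsilon_\lambda)\bigr)+\bigl(u_h(f^\varepsilon_\lambda)-u^\varepsilon_{\lambda,h}\bigr)$, where $u_h(f^\varepsilon_\lambda)\in\Vs_{h,0}$ solves \eqref{eq: discrete state equation} with $f=f^\varepsilon_\lambda$. The first term is a pure discretization error, bounded by Lemma \ref{lem: state error estimate discrete norm} in the $\norm{\cdot}_{h,0}$-norm and by Lemma \ref{lem: state error estimate L2 Q norm} in the $\Ls^2(Q_T)$-norm, and then by Remark \ref{rem: regularity of u} with \eqref{eq: stability of the source} it has size $\tfrac{Ch}{\sqrt\lambda}(\norm{U_d}_{\Ls^2(\omega_T)}+\norm{g}_{\Ls^2(Q_T)}+\varepsilon)$, respectively $\tfrac{Ch^2}{\sqrt\lambda}(\cdots)$. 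The second term solves the discrete state equation with right-hand side $\ell(f^\varepsilon_\lambda-f^\varepsilon_{\lambda,h})$, so the first part of Lemma \ref{lem: stability result}, together with \eqref{eq: compare L2 and the Y norm} for the $\Ls^2$-version, bounds it by $C\norm{f^\varepsilon_\lambda-f^\varepsilon_{\lambda,h}}_{\Ls^2(Q_T)}$, already estimated. The adjoint is handled in the same way with the intermediate $p_h(f^\varepsilon_\lambda)\in\Vs_{h,T}$ from \eqref{eq: discrete adjoint equation}: $p^\varepsilon_\lambda-p_h(f^\varepsilon_\lambda)$ is covered by Lemma \ref{lem: adjoint error estimate L2 Q norm} and Remark \ref{rem: regularity of u}, while $p_h(f^\varepsilon_\lambda)-p^\varepsilon_{\lambda,h}$ solves $a^{\prime}_h(\cdot,\phi_h)=\bigl(\chi_{\omega_T}(u^\varepsilon_\lambda-u^\varepsilon_{\lambda,h}-z^\varepsilon_d+z^\varepsilon_{d,h}),\phi_h\bigr)$ and is estimated via the discrete stability \eqref{eq: stability adjoint}, the state error $\norm{u^\varepsilon_\lambda-u^\varepsilon_{\lambda,h}}_{\Ls^2(\omega_T)}$ bounded above, and the data mismatch $\norm{z^\varepsilon_d-z^\varepsilon_{d,h}}_{\Ls^2(\omega_T)}=\norm{u^\ast-u^\ast_h}_{\Ls^2(\omega_T)}\le Ch^2\norm{g}_{\Ls^2(Q_T)}$, which follows from Remarks \ref{rem: error estimate u*} and \ref{rem: regularity of u}.

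Finally I would add the three bounds, keep the Lemma \ref{lem: pure control estimate constant} term explicit, and merge the remaining contributions---all carrying a prefactor $h$ for the $\norm{\cdot}_{h,0},\norm{\cdot}_{h,T}$ estimate and $h^2$ for the pure $\Ls^2$ estimate---into $Ch(1+\lambda^{-\tau})(\cdots)$, respectively $Ch^2(1+\lambda^{-\tau})(\cdots)$, reading off $\tau$ from the worst $\lambda$-exponent. Since the explicit term is $\mathcal{O}(h)$ and dominates the $\mathcal{O}(h^2)$ parts, both estimates are linear in $h$. The only delicate point is this exponent bookkeeping: one must check that combining the $\lambda^{-1/2}$ coming from the smoothing bound on $u^\varepsilon_\lambda$ and $p^\varepsilon_\lambda$ through \eqref{eq: stability of the source} with the $\lambda^{-2}(1+\lambda^{-1/2})$ produced by Lemma \ref{lem: discrete the pure control constant} below the threshold $4\norm{\ell}_{\Ls^\infty(Q_T)}C_2$ gives exactly $\tau=\tfrac{5}{2}$ there and $\tau=\tfrac{1}{2}$ above it, consistently with the threshold in Lemma \ref{lem: discrete the pure control constant}.
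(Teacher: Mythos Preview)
Your proof is correct; the route for the control error via $\widehat{f_h}$ and Lemmas \ref{lem: pure control estimate constant}--\ref{lem: discrete the pure control constant} is exactly the paper's. For the state and adjoint, however, the paper makes a different choice of intermediate points: it writes
\[
\norm{u^{\varepsilon}_{\lambda} - u^{\varepsilon}_{\lambda, h}}_{h,0}\le \norm{u^{\varepsilon}_{\lambda} - \widehat{u}}_{h,0} + \norm{\widehat{u} - u_h(\widehat{f_h})}_{h, 0} + \norm{u_h(\widehat{f_h}) - u^{\varepsilon}_{\lambda, h}}_{h,0},
\]
inserting the \emph{continuous} state $\widehat{u}=u(\widehat{f_h})$ at the semidiscrete source and its discretization $u_h(\widehat{f_h})$ from \eqref{eq: auxiliary discrete for pure state constant}; the adjoint is split through $\widehat{p}$, $p_h(\widehat{f_h})$, and $\widehat{p_h}(\widehat{f_h})$. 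Each piece is then bounded either by Lemma~\ref{lem: pure control estimate constant} (for terms controlled by $\norm{f^\varepsilon_\lambda-\widehat{f_h}}_{\Ls^2}$) or by Lemma~\ref{lem: discrete the pure control constant} (for terms controlled by $\norm{\widehat{f_h}-f^\varepsilon_{\lambda,h}}_{\Ls^2}$), together with the discretization bounds of Section~\ref{subsec: auxiliary results} applied at $\widehat{f_h}$. Your decomposition through $u_h(f^\varepsilon_\lambda)$ and $p_h(f^\varepsilon_\lambda)$ instead recycles the machinery of the variational case (Section~\ref{subsec: error variational}) and is arguably more economical, needing one fewer intermediate point for the state; the paper's choice keeps the contributions of Lemmas~\ref{lem: pure control estimate constant} and~\ref{lem: discrete the pure control constant} visible separately in the state/adjoint bounds as well. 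Either way the $\lambda$-exponent bookkeeping gives $\tau=\tfrac52$ below the threshold and $\tau=\tfrac12$ above it, exactly as you argue.
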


\begin{proof}
    We begin with the first estimate. For the state error, we apply the triangle inequality to obtain 
    \begin{equation}
        \label{eq: error estimate constant 1}
        \norm{u^{\varepsilon}_{\lambda} - u^{\varepsilon}_{\lambda, h}}_{h,0}\le \norm{u^{\varepsilon}_{\lambda} - \widehat{u}}_{h,0} + \norm{\widehat{u} - u_h\left(\widehat{f_h}\right)}_{h, 0} + \norm{u_h\left(\widehat{f_h}\right) - u^{\varepsilon}_{\lambda, h}}_{h,0}.
    \end{equation}
    The first term on the right-hand side of \eqref{eq: error estimate constant 1} can be bounded by using \eqref{eq: compare L2 and the X norm} and Remark \ref{rem: regularity of u}, noting that $u^\varepsilon_\lambda - \widehat{u}\in \Hs^1\left(Q_T\right)$. We have
    $$
    \begin{aligned}
        \norm{u^{\varepsilon}_{\lambda} - \widehat{u}}^2_{h,0} = \norm{u^{\varepsilon}_{\lambda} - \widehat{u}}_{h}^2 + \norm{\xi_h\left(u^{\varepsilon}_{\lambda} - \widehat{u}\right)}_{h}^2 &\le \norm{u^{\varepsilon}_{\lambda} - \widehat{u}}_{h}^2 + C \norm{\partial_t\left(u^{\varepsilon}_{\lambda} - \widehat{u}\right)}^2_{\Ls^2\left(Q_T\right)} \\
        &\le C \norm{u^{\varepsilon}_{\lambda} - \widehat{u}}^2_{\Hs^1\left(Q_T\right)}\le C \norm{f^{\varepsilon}_{\lambda} - \widehat{f_h}}^2_{\Ls^2\left(Q_T\right)}.
    \end{aligned}
    $$
    Next, we estimate the second term on the right-hand side of \eqref{eq: error estimate constant 1}. It follows from Lemma \ref{lem: state error estimate discrete norm}, Remark \ref{rem: regularity of u}, and the technique in \eqref{eq: stability of the source} that
    $$
    \norm{\widehat{u} - u_h\left(\widehat{f_h}\right)}_{h,0}\le Ch\norm{\widehat{u}}_{\Hs^s\left(Q_1\cup Q_2\right)}\le Ch\norm{\widehat{f_h}}_{\Ls^2\left(Q_T\right)} \le \dfrac{Ch}{\sqrt{\lambda}}\left(\norm{U_d}_{\Ls^2\left(\omega_T\right)} +\norm{g}_{\Ls^2\left(Q_T\right)}+\varepsilon\right).
    $$
    Finally, we invoke the arguments of \eqref{eq: discrete-auxiliary estimate variational 1} to get 
    $$
    \norm{u_h\left(\widehat{f_h}\right) - u^{\varepsilon}_{\lambda, h}}_{h,0} \le C\norm{\widehat{f_h} - f^{\varepsilon}_{\lambda, h}}_{\Ls^2\left(Q_T\right)}.
    $$
    By substituting the last three inequalities into \eqref{eq: error estimate constant 1}, we arrive at
    $$
    \norm{u^{\varepsilon}_{\lambda} - u^{\varepsilon}_{\lambda, h}}_{h,0} \le C\left[\norm{f^{\varepsilon}_{\lambda} - \widehat{f_h}}_{\Ls^2\left(Q_T\right)} + \dfrac{h}{\sqrt{\lambda}}\left(\norm{U_d}_{\Ls^2\left(\omega_T\right)} +\norm{g}_{\Ls^2\left(Q_T\right)}+\varepsilon\right) + \norm{\widehat{f_h} - f^{\varepsilon}_{\lambda, h}}_{\Ls^2\left(Q_T\right)} \right].
    $$
    We now turn to estimate the adjoint error. Following a similar approach as for the state, we proceed as follows
    $$
    \begin{aligned}
        &\norm{p^\varepsilon_\lambda - p^\varepsilon_{\lambda, h}}_{h,T} \le\\
        &\le \norm{p^\varepsilon_\lambda - \widehat{p}}_{h,T} + \norm{\widehat{p} - p_h\left(\widehat{f_h}\right)}_{h,T} + \norm{p_h\left(\widehat{f_h}\right) - \widehat{p_h}\left(\widehat{f_h}\right)}_{h,T} + \norm{\widehat{p_h}\left(\widehat{f_h}\right) - p^\varepsilon_{\lambda, h}}_{h,T}\\
        &\le C\left(\norm{u^{\varepsilon}_{\lambda} - \widehat{u}}_{\Ls^2\left(\omega_T\right)} + h\norm{\widehat{p}}_{\Hs^s\left(Q_1\cup Q_2\right)} + \norm{\widehat{u} - u_h\left(\widehat{f_h}\right)}_{\Ls^2\left(\omega_T\right)} + \norm{u^\ast - u^\ast_h}_{\Ls^2\left(\omega_T\right)} + \norm{u_h\left(\widehat{f_h}\right) - u^{\varepsilon}_{\lambda, h}}_{\Ls^2\left(\omega_T\right)}\right)\\
        &\le C \left(\norm{f^{\varepsilon}_{\lambda} - \widehat{f_h}}_{\Ls^2\left(Q_T\right)} + h\norm{\widehat{p}}_{\Hs^s\left(Q_1\cup Q_2\right)} + h^2\norm{\widehat{u}}_{\Hs^s\left(Q_1\cup Q_2\right)} + h^2\norm{u^\ast}_{\Hs^s\left(Q_1\cup Q_2\right)} + \norm{\widehat{f_h} - f^{\varepsilon}_{\lambda, h}}_{\Ls^2\left(Q_T\right)}\right)\\
        &\le C \left[\norm{f^{\varepsilon}_{\lambda} - \widehat{f_h}}_{\Ls^2\left(Q_T\right)} + h\left(1+\dfrac{1}{\sqrt{\lambda}}\right)\left(\norm{U_d}_{\Ls^2\left(\omega_T\right)} +\norm{g}_{\Ls^2\left(Q_T\right)}+\varepsilon\right) + \norm{\widehat{f_h} - f^{\varepsilon}_{\lambda, h}}_{\Ls^2\left(Q_T\right)}\right].
    \end{aligned}
    $$
    Together with Lemmas \ref{lem: pure control estimate constant} and \ref{lem: discrete the pure control constant}, noting that $h^2< Ch$ for all $h\in \left(0,h_\ast\right)$ with a given $h_\ast>0$, we arrive at the first result. The proof of the second estimate follows simlilarly.
\end{proof}

\subsection{Post-processing strategy}
\label{subsec: error postprocessing}

We first discretize the regularized source $f^\varepsilon_\lambda\in F_{+}$ in \eqref{eq: problem formulation} by using element-wise constant functions associated with the mesh $\mathcal{T}_h$. Subsequently, we construct the post-processing solution $f_h^\dagger \in F^h_{+}$, as defined in \eqref{eq: post-processing solution}. Recall that $F^h_{+} = F^d \cap F_{+}$ denotes the discrete admissible set, where $F^d$ is defined in \eqref{eq: step functions space}. This subsection aims to estimate the error $f^\varepsilon_\lambda - f_h^\dagger$. 

Let us begin by decomposing the mesh $\mathcal{T}_h$ into three subsets $\mathcal{T}_h = \mathcal{M}_h^1 \cup \mathcal{M}_h^2 \cup \mathcal{M}_h^3$, where
$$
\mathcal{M}_h^1 := \left\{K\in \mathcal{T}_h\mid f^\varepsilon_\lambda = 0 \text{ at every point in } K \right\}\qqq \text{and}\qqq \mathcal{M}_h^2 := \left\{K\in \mathcal{T}_h \mid f^\varepsilon_\lambda > 0 \text{ at every point in } K \right\},
$$
and $\mathcal{M}_h^3 = \mathcal{T}_h \setminus \left(\mathcal{M}_h^1 \cup \mathcal{M}_h^2\right)$. The sets $\mathcal{M}_h^1$ and $\mathcal{M}_h^2$ are referred to as the active and inactive sets, respectively, while $\mathcal{M}_h^3$ contains elements intersecting the free boundary between $\mathcal{M}_h^1$ and $\mathcal{M}_h^2$. Notably, the sets are pairwise disjoint, i.e., $\mathcal{M}_h^i \cap \mathcal{M}_h^j = \varnothing$ for all distinct $i,j \in \left\{1,2,3\right\}$. 

Next, we deduce from \eqref{eq: projection formula of f} the regularity of $f^\varepsilon_\lambda$. Clearly, it depends on the regularity of its corresponding adjoint $p^\varepsilon_\lambda\in \Xs_T$ and the function $\ell$. Assume that $p^\varepsilon_\lambda\in \Hs^{1}\left(Q_T\right)\cap\Hs^s\left(Q_1\cup Q_2\right)$, where $s>\frac{d+3}{2}$ is given (cf. Assumption \ref{assum: Nitsche's trick assumption}). Additionally, if $\ell\in \Ws^{1,\infty}\left(Q_T\right)$, Lemma \ref{lem: control regularity} asserts that $f^\varepsilon_\lambda\in \Ws^{1,\infty}\left(Q_T\right)$. Furthermore, if $\ell \in \Hs^2\left(Q_1\cup Q_2\right)$, then on certain elements $K\in \mathcal{T}_h$, we also have $f^\varepsilon_\lambda\in \Hs^2\left(K\right)$. Indeed, the following three scenarios arise:

\begin{enumerate}
    \item $K\in \mathcal{M}_h^1$: Since $f^\varepsilon_\lambda\equiv 0$ on $K$, it follows directly that $f^\varepsilon_\lambda\in \Hs^2\left(K\right)$.
    \item $K\in \mathcal{M}_h^2$: Since $f^\varepsilon_\lambda = -\frac{1}{\lambda}\ell p^\varepsilon_\lambda$ on $K$, the regularity of $f^\varepsilon_\lambda$ matches that of $\ell p^\varepsilon_\lambda$. If $K\notin \mathcal{T}_h^\ast$, then $\ell p^\varepsilon_\lambda\in \Hs^2\left(K\right)$, and hence $f^\varepsilon_\lambda \in \Hs^2\left(K\right)$. Conversely, if $K\in \mathcal{T}_h^\ast$, then $\ell p^\varepsilon_\lambda\in \Ws^{1,\infty}\left(K\right)$, but not in $\Hs^2\left(K\right)$. The same applies to $f^\varepsilon_\lambda$. 
    \item $K\in \mathcal{M}_h^3$: Due to the kink in \eqref{eq: projection formula of f}, we only have $f^\varepsilon_\lambda\in \Ws^{1,\infty}\left(K\right)$.
\end{enumerate}

In general, $f^\varepsilon_\lambda$ exhibits lower regularity than $p^\varepsilon_\lambda$, which consequently complicates the derivation of a high-order estimate for the error $f^\varepsilon_\lambda - f_h^\dagger$. To overcome this difficulty and achieve the desired high-order convergence, an additional condition on the mesh structure is required. In \cite[Assumption 2]{BV2007} and \cite[Assumption 1]{MV2008}, the following assumption was introduced
\begin{equation}
    \label{eq: old mesh structure assumption}
    \sum_{K\in \mathcal{M}_h^3}\abs{K}\le Ch,
\end{equation}
where $C>0$ is an $h$-independent constant. This assumption holds if the boundary of the set $\left\{\left(\xb, t\right)\in Q_T\mid f^\varepsilon_\lambda\left(\xb, t\right) = 0\right\}$ consists of a finite number of curves (or surfaces) having a finite length (or area) \cite[Section 6]{RV2006}. 

In this paper, we weaken \eqref{eq: old mesh structure assumption} by exploiting \eqref{eq: Th^ast area}. Based on $\mathcal{M}_h^1, \mathcal{M}_h^2$, and $\mathcal{M}_h^3$, we define 
$$
\mathcal{T}_h^1 := \left(\mathcal{M}_h^1 \cup \mathcal{M}_h^2\right)\setminus \mathcal{T}_h^\ast\qqqq \text{and}\qqqq \mathcal{T}_h^2 := \mathcal{M}_h^3\setminus \mathcal{T}_h^\ast,
$$
and form a new partition
$$
\mathcal{T}_h = \mathcal{T}_h^1 \cup \mathcal{T}_h^2 \cup \mathcal{T}_h^\ast.
$$
It is evident that $\mathcal{T}_h^1\cap \mathcal{T}_h^2 = \mathcal{T}_h^2\cap \mathcal{T}_h^\ast = \mathcal{T}_h^\ast\cap \mathcal{T}_h^1 =\varnothing$. To address the mesh structure, the following alternative assumption is sufficient:

\begin{assumption}
    \label{assum: mesh superconvergence}
    Assume that there exists an $h$-independent constant $C>0$ such that
    $$\sum_{K\in \mathcal{T}_h^2}\abs{K}\le Ch.$$
\end{assumption}

Due to \eqref{eq: Th^ast area}, we observe that if Assumption \ref{assum: mesh superconvergence} is satisfied, then \eqref{eq: old mesh structure assumption} also holds. Denote by $S_K$ the barycenter of an element $K\in \mathcal{T}_h$. We define the barycentric interpolant $\pi_d: \Cs\left(\overline{Q_T}\right)\to F^d$ element-wise as follows
\begin{equation}
    \label{eq: barycentric interpolation}
    \left(\pi_d v \right)_{\mid \, K} := v  \left(S_K\right)\qqqq \forall K\in \mathcal{T}_h,
\end{equation}
for all $v  \in \Cs\left(\overline{Q_T}\right)$. From Lemma \ref{lem: control regularity} and the compact embedding $\Ws^{1,\infty}\left(Q_T\right)\hookrightarrow \Cs\left(\overline{Q_T}\right)$ \cite[Theorem 2.35]{Ern2021}, it follows that $f^\varepsilon_\lambda \in \Cs\left(\overline{Q_T}\right)$. Consequently, we have $\pi_d f^\varepsilon_\lambda \in F^h_{+}$. 

Regarding the approximability of $\pi_d$, we recall the following lemma:

\begin{lemma}
    \label{lem: numerical integration}
    Let $K$ be an arbitrary element of $\mathcal{T}_h$ and $\pi_d$ be the function defined in \eqref{eq: barycentric interpolation}. For all $v \in \Hs^2\left(K\right)$, the following inequality holds
    $$
    \abs{\int\limits_{K} \left(v  - \pi_d v \right)\dx\dt} \le Ch^2\abs{K}^{\frac{1}{2}}\norm{\Ds^2 v }_{\mathbb{L}^2\left(K\right)}.
    $$
    For all $v  \in \Ws^{1,\infty}\left(K\right)$, we have
    $$
    \norm{v  - \pi_d v }_{\Ls^\infty\left(K\right)}\le Ch\norm{\Ds v }_{\LLs^\infty\left(K\right)}.
    $$
    The constant $C>0$ is independent of $v $ and $h$ in these estimates.
\end{lemma}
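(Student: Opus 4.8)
The plan is to transfer both estimates to a fixed reference simplex, on which the barycentric interpolant becomes the midpoint quadrature rule, and then to track the affine scaling. The decisive algebraic fact behind the $h^2$ rate in the first inequality is that this quadrature rule is \emph{exact on affine functions}: since the barycenter $S_K$ coincides with the centroid of the simplex $K$, one has $\int_K q\dx\dt=\abs{K}\,q\left(S_K\right)$ for every $q\in\mathbb{P}_1\left(K\right)$. Concretely, fix $K\in\mathcal{T}_h$ and let $F_K\left(\wh{\xb}\right)=B_K\wh{\xb}+\bb_K$ be the affine bijection from the reference simplex $\wh{K}\subset\mathbb{R}^{d+1}$ onto $K$, with $\wh{S}:=F_K^{-1}\left(S_K\right)$ its barycenter. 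For $v\in\Hs^2\left(K\right)$ put $\wh{v}:=v\circ F_K\in\Hs^2\left(\wh{K}\right)$; this composition is meaningful because $\Hs^2\left(\wh{K}\right)\hookrightarrow\Cs\left(\overline{\wh{K}}\right)$ in dimension $d+1\le 3$ \cite[Theorem 2.35]{Ern2021}. Consider the linear functional $\wh{E}\left(\wh{v}\right):=\int_{\wh{K}}\wh{v}\di\wh{\xb}-\abs{\wh{K}}\wh{v}\left(\wh{S}\right)$, which is bounded on $\Hs^2\left(\wh{K}\right)$ and, by the exactness just recalled, vanishes on $\mathbb{P}_1\left(\wh{K}\right)$. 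The Bramble--Hilbert lemma (see, e.g., \cite[Chapter 11]{Ern2021}) then yields $\abs{\wh{E}\left(\wh{v}\right)}\le C\norm{\Ds^2\wh{v}}_{\mathbb{L}^2\left(\wh{K}\right)}$ with $C$ depending only on $\wh{K}$.

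It remains to scale back. A change of variables gives $\int_K\left(v-\pi_d v\right)\dx\dt=\abs{\det B_K}\,\wh{E}\left(\wh{v}\right)$, while the chain rule applied to the second derivatives of $\wh{v}=v\circ F_K$ gives $\norm{\Ds^2\wh{v}}_{\mathbb{L}^2\left(\wh{K}\right)}\le C\norm{B_K}^2\abs{\det B_K}^{-1/2}\norm{\Ds^2 v}_{\mathbb{L}^2\left(K\right)}$. Since the mesh is shape-regular, $\norm{B_K}\le C h_K\le C h$, and $\abs{\det B_K}=\abs{K}/\abs{\wh{K}}$, so combining these three facts produces
$$
\abs{\int\limits_K\left(v-\pi_d v\right)\dx\dt}\le C\abs{\det B_K}^{1/2}\norm{B_K}^2\norm{\Ds^2 v}_{\mathbb{L}^2\left(K\right)}\le Ch^2\abs{K}^{1/2}\norm{\Ds^2 v}_{\mathbb{L}^2\left(K\right)},
$$
which is the first claimed estimate. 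The clean cancellation $\abs{\det B_K}\cdot\abs{\det B_K}^{-1/2}=\abs{\det B_K}^{1/2}\approx\abs{K}^{1/2}$ shows that only shape-regularity and $h_K\le h$ are needed, not quasi-uniformity.

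For the second inequality, recall that a simplex $K$ is convex, so any $v\in\Ws^{1,\infty}\left(K\right)$ admits a Lipschitz representative on $\overline{K}$ with Lipschitz constant bounded by $\norm{\Ds v}_{\LLs^\infty\left(K\right)}$ (Ziemer's representation, already invoked in Lemma \ref{lem: control regularity}). Since $S_K\in\overline{K}$ and $\abs{\xb-S_K}\le\operatorname{diam}\left(K\right)\le h$ for every $\xb\in K$, it follows at once that
$$
\norm{v-\pi_d v}_{\Ls^\infty\left(K\right)}=\essup_{\xb\in K}\abs{v\left(\xb\right)-v\left(S_K\right)}\le\norm{\Ds v}_{\LLs^\infty\left(K\right)}\operatorname{diam}\left(K\right)\le Ch\norm{\Ds v}_{\LLs^\infty\left(K\right)}.
$$

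The argument is essentially routine: the only points that deserve attention are the exactness of the barycentric rule on $\mathbb{P}_1$ — which is precisely what upgrades the rate from linear to quadratic — and the bookkeeping of the powers of $h_K$ and $\abs{K}$ in the affine-scaling step. There is no genuine obstacle here; indeed both bounds are classical consequences of interpolation and quadrature theory and could alternatively be quoted from \cite[Chapters 11 and 19]{Ern2021}.
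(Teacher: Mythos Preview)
Your proof is correct and is precisely the standard argument: Bramble--Hilbert on the reference element using exactness of the barycentric quadrature on $\mathbb{P}_1$, followed by affine scaling, plus a direct Lipschitz bound for the $\Ls^\infty$ estimate. The paper itself does not prove the lemma but only cites \cite[Lemma 3.2]{MR2004} and \cite[Assumption A.6]{RV2006}, whose arguments are exactly the ones you have written out; so your proposal is in full agreement with the intended proof, just made explicit.
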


\begin{proof}
    When $K\subset \mathbb{R}^2$, the proof of the first inequality is presented in \cite[Lemma 3.2]{MR2004}. The technique from that work can be extended to the case $K\subset \mathbb{R}^3$ without significant modifications and can also be used to prove the second inequality; see \cite[Assumption A.6]{RV2006}. 
\end{proof}

Now, we are in a position to present the first technical result. Let us introduce $u_h^\pi\left(f^\varepsilon_\lambda\right) \in \Vs_{h,0}$ and $p_h^\pi\left(f^\varepsilon_\lambda\right) \in \Vs_{h,T}$ as the solutions to the following problems
\begin{equation}
    \label{eq: auxiliary state postprocessing}
    a_h\left(u^\pi_h\left(f^\varepsilon_\lambda\right), \vphi_h\right) = \left(\ell \pi_d f^\varepsilon_\lambda, \vphi_h\right)_{\Ls^2\left(Q_T\right)}\qqqq\forall \vphi_h \in \Vs_{h,0},
\end{equation}
and 
\begin{equation}
    \label{eq: auxiliary adjoint postprocessing}
    a_h^\prime\left(p_h^\pi\left(f^\varepsilon_\lambda\right), \phi_h\right) = \left(\chi_{\omega_T}\left(u_h^\pi\left(f^\varepsilon_\lambda\right) - z^\varepsilon_{d,h}\right), \phi_h\right)_{\Ls^2\left(Q_T\right)}\qqqq\forall \phi_h \in \Vs_{h,T}.
\end{equation}

\begin{lemma}
    \label{lem: exact-postprocess states and adjoint estimate}
    Let $\left(u_h\left(f^\varepsilon_\lambda\right), p_h\left(f^\varepsilon_\lambda\right)\right) \in \Vs_{h,0}\times \Vs_{h,T}$ and $\left(u_h^\pi\left(f^\varepsilon_\lambda\right), p_h^\pi\left(f^\varepsilon_\lambda\right)\right)\in \Vs_{h,0}\times \Vs_{h,T}$ be the solutions to Problems \eqref{eq: auxiliary state variational}--\eqref{eq: auxiliary adjoint variational} and \eqref{eq: auxiliary state postprocessing}--\eqref{eq: auxiliary adjoint postprocessing}, respectively. Let $p^\varepsilon_\lambda\in \Xs_T$ be the solution to Problem \eqref{eq: weak adjoint equation optimality conditions}. Assume that Assumptions \ref{assum: Nitsche's trick assumption} and \ref{assum: mesh superconvergence} hold, and $\ell \in \Ws^{1,\infty}\left(Q_T\right)\cap \Hs^2\left(Q_1\cup Q_2\right)$. Then, we have the following inequality
    $$
    \begin{aligned}
        &\norm{u_h\left(f^\varepsilon_\lambda\right) - u_h^\pi\left(f^\varepsilon_\lambda\right)}_{\Ls^2\left(Q_T\right)} + \norm{p_h\left(f^\varepsilon_\lambda\right) - p_h^\pi\left(f^\varepsilon_\lambda\right)}_{\Ls^2\left(Q_T\right)} \le\\
        &\le \dfrac{C}{\lambda}\left[h^{\frac{3}{2}} \norm{\Ds \left(\ell p^\varepsilon_\lambda\right)}_{\LLs^\infty\left(Q_T\right)} + h^2\left(1+\dfrac{1}{\sqrt{\lambda}}\right)\left(\norm{U_d}_{\Ls^2\left(\omega_T\right)} +\norm{g}_{\Ls^2\left(Q_T\right)}+\varepsilon\right)\right].
    \end{aligned}
    $$
\end{lemma}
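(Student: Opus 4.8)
The plan is to reduce the whole estimate to a single bound on $w_h:=u_h\left(f^\varepsilon_\lambda\right)-u_h^\pi\left(f^\varepsilon_\lambda\right)\in\Vs_{h,0}$, and then to split $w_h$ according to the local regularity of $f^\varepsilon_\lambda$. Subtracting \eqref{eq: auxiliary state postprocessing} from \eqref{eq: auxiliary state variational}, and \eqref{eq: auxiliary adjoint postprocessing} from \eqref{eq: auxiliary adjoint variational}, gives
\[
a_h\left(w_h,\vphi_h\right)=\left(\ell\left(f^\varepsilon_\lambda-\pi_d f^\varepsilon_\lambda\right),\vphi_h\right)_{\Ls^2\left(Q_T\right)}\ \left(\vphi_h\in\Vs_{h,0}\right),\qquad a_h^\prime\left(q_h,\phi_h\right)=\left(\chi_{\omega_T}w_h,\phi_h\right)_{\Ls^2\left(Q_T\right)}\ \left(\phi_h\in\Vs_{h,T}\right),
\]
with $q_h:=p_h\left(f^\varepsilon_\lambda\right)-p_h^\pi\left(f^\varepsilon_\lambda\right)$. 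By \eqref{eq: stability adjoint}, \eqref{eq: compare L2 and the Y norm} and the argument of \eqref{eq: discrete-auxiliary estimate variational 3}, $\norm{q_h}_{\Ls^2\left(Q_T\right)}\le C\norm{q_h}_{h,T}\le C\norm{w_h}_{\Ls^2\left(\omega_T\right)}\le C\norm{w_h}_{\Ls^2\left(Q_T\right)}$, so it suffices to bound $\norm{w_h}_{\Ls^2\left(Q_T\right)}$. I would split $f^\varepsilon_\lambda-\pi_d f^\varepsilon_\lambda=r_1+r_2$, where $r_1$ is the restriction to the elements of $\mathcal{T}_h^1$ and $r_2$ the restriction to those of $\mathcal{T}_h^2\cup\mathcal{T}_h^\ast$, and correspondingly $w_h=w_h^1+w_h^2$ with $a_h\left(w_h^j,\vphi_h\right)=\left(\ell r_j,\vphi_h\right)_{\Ls^2\left(Q_T\right)}$. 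On $\mathcal{T}_h^1$ one has $f^\varepsilon_\lambda\in\Hs^2\left(K\right)$ (the case analysis preceding Assumption \ref{assum: mesh superconvergence}), while on $\mathcal{T}_h^2\cup\mathcal{T}_h^\ast$ only $f^\varepsilon_\lambda\in\Ws^{1,\infty}\left(K\right)$ by Lemma \ref{lem: control regularity}. For $w_h^2$ no duality is needed: the second estimate of Lemma \ref{lem: numerical integration}, Assumption \ref{assum: mesh superconvergence} and \eqref{eq: Th^ast area} give $\norm{r_2}_{\Ls^2\left(Q_T\right)}^2\le\sum_{K\in\mathcal{T}_h^2\cup\mathcal{T}_h^\ast}\abs{K}\,Ch^2\norm{\Ds f^\varepsilon_\lambda}_{\LLs^\infty\left(Q_T\right)}^2\le Ch^3\norm{\Ds f^\varepsilon_\lambda}_{\LLs^\infty\left(Q_T\right)}^2$, whence Lemma \ref{lem: stability result}, \eqref{eq: compare L2 and the Y norm} and Lemma \ref{lem: control regularity} yield $\norm{w_h^2}_{\Ls^2\left(Q_T\right)}\le C\norm{w_h^2}_{h,0}\le C\norm{r_2}_{\Ls^2\left(Q_T\right)}\le\left(Ch^{3/2}/\lambda\right)\norm{\Ds\left(\ell p^\varepsilon_\lambda\right)}_{\LLs^\infty\left(Q_T\right)}$, which is already the first term of the claim.

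For $w_h^1$ I would run the Aubin--Nitsche argument of Lemmas \ref{lem: state error estimate L2 T norm}--\ref{lem: state error estimate L2 Q norm}. Let $y\in\Xs_T$ solve \eqref{eq: Nitsche's trick equation} with $\eta=\norm{w_h^1}_{\Ls^2\left(Q_T\right)}^{-1}w_h^1$ and $y_T=0$; by Assumption \ref{assum: Nitsche's trick assumption}, $y\in\Ws$ with $\norm{y}_\Ws\le C$. Choosing $\varsigma=w_h^1$, integrating by parts (using $w_h^1\in\Xs_0$, $y\left(\cdot,T\right)=0$ and the incompressibility trick of \eqref{eq: optimality conditions 2}), then adding and subtracting $\kappa_h$ on $S_h$, inserting $e:=y-I_h y$, and testing the equation for $w_h^1$ with $I_h y\in\Vs_{h,0}$, one arrives at
\[
\norm{w_h^1}_{\Ls^2\left(Q_T\right)}=a_h\left(w_h^1,e\right)+\left(\ell r_1,I_h y\right)_{\Ls^2\left(Q_T\right)}+\int\limits_{S_h}\left(\kappa-\kappa_h\right)\nabla w_h^1\cdot\nabla y\dx\dt.
\]
The first and third terms are routine. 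Integrating $a_h\left(w_h^1,e\right)$ by parts and using Lemma \ref{lem: interpolation error}, \eqref{eq: compare L2 and the Y norm}, \eqref{eq: time trace inequality}, \eqref{eq: state error estimate L2 T norm 2}, the bound $\norm{w_h^1}_{h,0}\le C\norm{r_1}_{\Ls^2\left(Q_T\right)}\le Ch\norm{\Ds f^\varepsilon_\lambda}_{\LLs^\infty\left(Q_T\right)}$ from Lemma \ref{lem: stability result}, and the identity preceding \eqref{eq: coercivity of ah} (which gives $\norm{w_h^1\left(\cdot,T\right)}_{\Ls^2\left(\Om\right)}\le C\norm{r_1}_{\Ls^2\left(Q_T\right)}$), one obtains $a_h\left(w_h^1,e\right)\le Ch\norm{w_h^1}_{\Ls^2\left(Q_T\right)}+\left(Ch^2/\lambda\right)\norm{\Ds\left(\ell p^\varepsilon_\lambda\right)}_{\LLs^\infty\left(Q_T\right)}$; and since $\abs{S_h}\le Ch^2$ by \eqref{eq: discrepancy region}--\eqref{eq: cardinality of Th-ast}, $\norm{\nabla y}_{\LLs^\infty\left(Q_T\right)}\le C\norm{y}_\Ws\le C$, and $\norm{\nabla w_h^1}_{\LLs^2\left(Q_T\right)}\le C\norm{w_h^1}_{h,0}$, the $S_h$-term is $\le\left(Ch^2/\lambda\right)\norm{\Ds\left(\ell p^\varepsilon_\lambda\right)}_{\LLs^\infty\left(Q_T\right)}$.

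The decisive term is $\left(\ell r_1,I_h y\right)_{\Ls^2\left(Q_T\right)}=\sum_{K\in\mathcal{T}_h^1}\int_K\left(\ell I_h y\right)\left(f^\varepsilon_\lambda-\pi_d f^\varepsilon_\lambda\right)\dx\dt$, where $\ell\in\Hs^2\left(K\right)\cap\Ws^{1,\infty}\left(K\right)$ and $I_h y\in\mathbb{P}_1\left(K\right)$ on each $K\in\mathcal{T}_h^1$ (these $K$ do not meet $\Gamma^\ast$). I would split $\ell I_h y=\left(\ell I_h y\right)\left(S_K\right)+\left(\ell I_h y-\pi_d\left(\ell I_h y\right)\right)$ on $K$. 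Against the constant piece, the first (superconvergent) estimate of Lemma \ref{lem: numerical integration} gives $\abs{\left(\ell I_h y\right)\left(S_K\right)}\,\abs{\int_K\left(f^\varepsilon_\lambda-\pi_d f^\varepsilon_\lambda\right)\dx\dt}\le Ch^2\norm{\ell I_h y}_{\Ls^\infty\left(K\right)}\abs{K}^{1/2}\norm{\Ds^2 f^\varepsilon_\lambda}_{\LLs^2\left(K\right)}$; summing with Cauchy--Schwarz, using $\sum_{K\in\mathcal{T}_h^1}\abs{K}\le\abs{Q_T}$, $\norm{\ell I_h y}_{\Ls^\infty\left(Q_T\right)}\le C\norm{y}_\Ws\le C$ ($\Ls^\infty$-stability of $I_h$ and Sobolev embedding), $\norm{\Ds^2 f^\varepsilon_\lambda}_{\LLs^2\left(\bigcup_{K\in\mathcal{T}_h^1}K\right)}\le\tfrac1\lambda\norm{\Ds^2\left(\ell p^\varepsilon_\lambda\right)}_{\LLs^2\left(Q_1\cup Q_2\right)}\le\tfrac{C}{\lambda}\norm{p^\varepsilon_\lambda}_{\Hs^s\left(Q_1\cup Q_2\right)}$ (since $f^\varepsilon_\lambda$ equals $0$ or $-\tfrac1\lambda\ell p^\varepsilon_\lambda$ there and $\ell\in\Ws^{1,\infty}\cap\Hs^2$), and finally \eqref{eq: auxiliary-discrete adjoint estimate variational 4}, produces exactly $\left(Ch^2/\lambda\right)\left(1+\lambda^{-1/2}\right)\left(\norm{U_d}_{\Ls^2\left(\omega_T\right)}+\norm{g}_{\Ls^2\left(Q_T\right)}+\varepsilon\right)$, the second term of the claim. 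Against the remaining piece, the second estimate of Lemma \ref{lem: numerical integration} gives $\norm{\ell I_h y-\pi_d\left(\ell I_h y\right)}_{\Ls^2\left(Q_T\right)}\le Ch$ and $\norm{f^\varepsilon_\lambda-\pi_d f^\varepsilon_\lambda}_{\Ls^2\left(Q_T\right)}\le Ch\norm{\Ds f^\varepsilon_\lambda}_{\LLs^\infty\left(Q_T\right)}$, so Cauchy--Schwarz and Lemma \ref{lem: control regularity} give $\left(Ch^2/\lambda\right)\norm{\Ds\left(\ell p^\varepsilon_\lambda\right)}_{\LLs^\infty\left(Q_T\right)}$. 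Collecting the three contributions, absorbing $Ch\norm{w_h^1}_{\Ls^2\left(Q_T\right)}$ into the left-hand side for $h<h_\ast$ with $h_\ast$ small, adding the $w_h^2$ bound together with $\norm{q_h}_{\Ls^2\left(Q_T\right)}\le C\norm{w_h}_{\Ls^2\left(Q_T\right)}$, and using $h^2\le h^{3/2}$, finishes the proof. The hard part is precisely this decisive term: $f^\varepsilon_\lambda$ is globally only $\Ws^{1,\infty}$ (a kink at the free boundary, a loss of $\Hs^2$-regularity across the space-time interface), so the extra power of $h$ cannot come from the interpolation error of $f^\varepsilon_\lambda$ itself; it must be extracted from the barycentric-quadrature symmetry (the first estimate of Lemma \ref{lem: numerical integration}) on the "good" elements $\mathcal{T}_h^1$, while the interface and free-boundary elements are controlled only through their total measure, which is exactly why they contribute $\mathcal{O}\left(h^{3/2}\right)$ rather than $\mathcal{O}\left(h^2\right)$.
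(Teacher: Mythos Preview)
Your proof is correct and reaches the same bound, but the route is genuinely different from the paper's. The paper does not split the source or invoke a dual problem. Instead it tests the difference equation with $w_h$ itself: using the coercivity identity preceding \eqref{eq: coercivity of ah} one gets
\[
C\norm{w_h}_{\Ls^2\left(Q_T\right)}^2\le \norm{w_h}_h^2\le a_h\left(w_h,w_h\right)=\left(\sigma_h,\,f^\varepsilon_\lambda-\pi_d f^\varepsilon_\lambda\right)_{\Ls^2\left(Q_T\right)},\qquad \sigma_h:=\ell w_h,
\]
and then decomposes the right-hand side over $\mathcal{T}_h^1,\mathcal{T}_h^2,\mathcal{T}_h^\ast$. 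The $\mathcal{T}_h^2$- and $\mathcal{T}_h^\ast$-parts are handled exactly as you do. On $\mathcal{T}_h^1$ the paper inserts the $\Ls^2$-projection $\Pi_d$, exploits $\int_K\left(f^\varepsilon_\lambda-\Pi_d f^\varepsilon_\lambda\right)=0$, and breaks each elementwise integral into three pieces; the nontrivial one requires a bound on $\norm{\Ds\sigma_h}_{\LLs^2\left(Q_T\right)}$, which is obtained from the second statement of Lemma \ref{lem: stability result} (the $\norm{\Ds u_h}$-stability, itself relying on Assumption \ref{assum: Nitsche's trick assumption} through an inverse-inequality argument). This produces a quadratic inequality in $\norm{w_h}_{\Ls^2\left(Q_T\right)}$ with a free $h^3/\lambda^2$ term, which is then resolved via the discriminant.

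Your approach trades that machinery for an explicit Aubin--Nitsche argument on $w_h^1$. You avoid both the $\norm{\Ds\sigma_h}$-estimate and the quadratic-inequality trick; in exchange you need $\norm{I_h y}_{\Ls^\infty}$ and (elementwise on $\mathcal{T}_h^1$) $\norm{\Ds I_h y}_{\LLs^\infty}$ to be bounded by $\norm{y}_\Ws$, which you correctly extract from nodal $\Ls^\infty$-stability and the embedding $\Hs^s\left(Q_i\right)\hookrightarrow\Ws^{1,\infty}\left(Q_i\right)$ (valid since $\mathcal{T}_h^1\cap\mathcal{T}_h^\ast=\varnothing$). Both proofs invoke Assumption \ref{assum: Nitsche's trick assumption} in an essential way, but at different places: the paper hides it inside Lemma \ref{lem: stability result}, while you use it to furnish the dual solution $y$ directly. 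Your version is more modular (it reuses the duality template of Lemmas \ref{lem: state error estimate L2 T norm}--\ref{lem: state error estimate L2 Q norm}) and makes the origin of each power of $h$ more transparent; the paper's energy argument is more self-contained and does not require tracking $\Ls^\infty$-bounds of an auxiliary interpolant.
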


\begin{proof}
    We subtract \eqref{eq: auxiliary state postprocessing} from \eqref{eq: auxiliary state variational} to obtain
    $$
    a_h\left(u_h\left(f^\varepsilon_\lambda\right) - u_h^\pi\left(f^\varepsilon_\lambda\right), \vphi_h\right) = \left(\ell f^\varepsilon_\lambda - \ell \pi_d f^\varepsilon_\lambda , \vphi_h\right)_{\Ls^2\left(Q_T\right)}\qqqq\forall \vphi_h \in \Vs_{h,0}.
    $$
    By applying \eqref{eq: compare L2 and the Y norm} and \eqref{eq: coercivity of ah}, and then choosing $\vphi_h = u_h\left(f^\varepsilon_\lambda\right) - u_h^\pi\left(f^\varepsilon_\lambda\right)\in \Vs_{h,0}$ in the above equation, we arrive at
    $$
    \begin{aligned}
        C\norm{u_h\left(f^\varepsilon_\lambda\right) - u_h^\pi\left(f^\varepsilon_\lambda\right)}^2_{\Ls^2\left(Q_T\right)}\le \norm{u_h\left(f^\varepsilon_\lambda\right) - u_h^\pi\left(f^\varepsilon_\lambda\right)}^2_{h} &\le a_h\left(u_h\left(f^\varepsilon_\lambda\right) - u_h^\pi\left(f^\varepsilon_\lambda\right), u_h\left(f^\varepsilon_\lambda\right) - u_h^\pi\left(f^\varepsilon_\lambda\right)\right) \\
        &= \left(\ell u_h\left(f^\varepsilon_\lambda\right) - \ell u_h^\pi\left(f^\varepsilon_\lambda\right), f^\varepsilon_\lambda - \pi_d f^\varepsilon_\lambda \right)_{\Ls^2\left(Q_T\right)}.
    \end{aligned}
    $$
    Let us denote $\sigma_h:= \ell u_h\left(f^\varepsilon_\lambda\right) - \ell u_h^\pi\left(f^\varepsilon_\lambda\right)$ for convenience. Since $\ell\in \Ws^{1,\infty}\left(Q_T\right)$ and $u_h\left(f^\varepsilon_\lambda\right) - u_h^\pi\left(f^\varepsilon_\lambda\right)\in \Vs_{h,0}$, it follows that $\sigma_h\in \Hs^1\left(Q_T\right)$. We split the right-hand side of the last inequality into three integrals to get
    \begin{align}
        C\norm{u_h\left(f^\varepsilon_\lambda\right) - u_h^\pi\left(f^\varepsilon_\lambda\right)}^2_{\Ls^2\left(Q_T\right)} &\le \int\limits_{\mathcal{T}_h^1}\sigma_h\left(f^\varepsilon_\lambda - \pi_d f^\varepsilon_\lambda \right) \dx\dt + \int\limits_{\mathcal{T}_h^2}\sigma_h\left(f^\varepsilon_\lambda - \pi_d f^\varepsilon_\lambda \right) \dx\dt + \int\limits_{\mathcal{T}_h^\ast}\sigma_h\left(f^\varepsilon_\lambda - \pi_d f^\varepsilon_\lambda \right) \dx\dt \notag\\
        &=: J_4 + J_5 + J_6. \label{eq: exact-postprocess states and adjoint estimate 1}
    \end{align}
    
    First, we estimate $J_4$. Let $K$ be an arbitrary element of $\mathcal{T}_h^1$. If $K\in \mathcal{M}_h^1$, then $f^\varepsilon_\lambda\equiv 0$ on $K$, and hence the integral over $K$ is zero. Consider the nontrivial case $K\in \mathcal{T}_h^1\setminus\mathcal{M}_h^1$. By applying \eqref{eq: L2 orthogonal projection} with $v  = \Pi_d f^\varepsilon_\lambda \in F^h_+$, we deduce the following
    $$
    \int\limits_K \Pi_d f^\varepsilon_\lambda \dx\dt = \int\limits_K \left(\dfrac{1}{\abs{K}}\int\limits_{K}f^\varepsilon_\lambda\dx\dt\right)\dx\dt = \left(\dfrac{1}{\abs{K}}\int\limits_{K}f^\varepsilon_\lambda\dx\dt\right)\left(\int\limits_K 1 \dx\dt\right) = \int\limits_K f^\varepsilon_\lambda \dx\dt,
    $$
    which means that
    $$
    \int\limits_K \left(f^\varepsilon_\lambda - \Pi_d f^\varepsilon_\lambda \right)\dx\dt = 0.
    $$
    Consequently, we have
    $$
    \left(\int\limits_K \Pi_d\sigma_h \dx\dt\right)\left[\int\limits_K \left(f^\varepsilon_\lambda - \Pi_d f^\varepsilon_\lambda \right)\dx\dt\right] = \int\limits_K \left(\Pi_d \sigma_h\right)\left(f^\varepsilon_\lambda - \Pi_d f^\varepsilon_\lambda \right)\dx\dt=0,
    $$
    since $\Pi_d\sigma_h$ is a constant on $K$. Together with the fact that $\Pi_d f^\varepsilon_\lambda -\pi_d f^\varepsilon_\lambda $ is also a constant on $K$, we thus imply
    $$
    \begin{aligned}
        \int\limits_{K}\sigma_h\left(f^\varepsilon_\lambda - \pi_d f^\varepsilon_\lambda \right)\dx\dt&= \int\limits_{K}\sigma_h\left(f^\varepsilon_\lambda - \Pi_d f^\varepsilon_\lambda \right)\dx\dt + \int\limits_{K}\sigma_h\left(\Pi_d f^\varepsilon_\lambda  - \pi_d f^\varepsilon_\lambda \right)\dx\dt\\
        &=\int\limits_{K}\sigma_h\left(f^\varepsilon_\lambda - \Pi_d f^\varepsilon_\lambda \right)\dx\dt + \dfrac{1}{\abs{K}}\left(\int\limits_{K} \sigma_h \dx\dt\right)\left[ \int\limits_{K}\left(\Pi_d f^\varepsilon_\lambda  - \pi_d f^\varepsilon_\lambda \right)\dx\dt\right]\\
        &= J_4^a+J_4^b+J_4^c,
    \end{aligned}
    $$
    where $J_4^a, I_b^b$, and $J_4^c$ are defined as
    $$
    J_4^a :=\int\limits_{K}\left(\sigma_h - \Pi_d \sigma_h\right)\left(f^\varepsilon_\lambda - \Pi_d f^\varepsilon_\lambda \right)\dx\dt\qqq \text{and}\qqq J_4^b := \dfrac{1}{\abs{K}}\left[\int\limits_K \left(\sigma_h - \Pi_d\sigma_h\right)\dx\dt\right]\left[\int\limits_K \left(\Pi_d f^\varepsilon_\lambda  - f^\varepsilon_\lambda\right)\dx\dt\right],
    $$
    and
    $$
    J_4^c := \dfrac{1}{\abs{K}}\left(\int\limits_K \sigma_h\dx\dt\right)\left[\int\limits_K \left(f^\varepsilon_\lambda - \pi_d f^\varepsilon_\lambda \right)\dx\dt\right].
    $$
    On any element $K\in \mathcal{T}_h^1\setminus\mathcal{M}_h^1$, we have $f^\varepsilon_\lambda = -\frac{1}{\lambda}\ell p^\varepsilon_\lambda$. Moreover, by Assumption \ref{assum: Nitsche's trick assumption} and $\ell \in \Ws^{1,\infty}\left(Q_T\right)\cap \Hs^2\left(Q_1\cup Q_2\right)$, it follows that $f^\varepsilon_\lambda\in \Ws^{1,\infty}\left(K\right)\cap \Hs^2\left(K\right)$. \\
    We first estimate $J_4^a$. By applying \eqref{eq: error estimates for projection operators ver 2} and \eqref{eq: error estimates for projection operators}, we obtain
    $$
    \begin{aligned}
        \abs{J_4^a} \le \norm{\sigma_h - \Pi_d \sigma_h}_{\Ls^2\left(K\right)}\norm{f^\varepsilon_\lambda - \Pi_d f^\varepsilon_\lambda }_{\Ls^2\left(K\right)} &\le \norm{\sigma_h - \Pi_d \sigma_h}_{\Ls^2\left(K\right)}\abs{K}^{\frac{1}{2}}\norm{f^\varepsilon_\lambda - \Pi_d f^\varepsilon_\lambda }_{\Ls^\infty\left(K\right)} \\
        &\le Ch \norm{\Ds \sigma_h}_{\LLs^2\left(K\right)}\abs{K}^{\frac{1}{2}}h\norm{\Ds f^\varepsilon_\lambda}_{\LLs^\infty\left(K\right)}\\
        &=C h\norm{\Ds \sigma_h}_{\LLs^2\left(K\right)}\abs{K}^{\frac{1}{2}}\dfrac{h}{\lambda}\norm{\Ds \left(\ell p^\varepsilon_\lambda\right)}_{\LLs^\infty\left(K\right)}.
    \end{aligned}
    $$
    Next, consider $J_4^b$. By construction, we observe that $J_4^b=0$. \\
    Finally, we estimate $J_4^c$. Since $K\in \mathcal{T}_h^1\setminus\mathcal{M}_h^1$, we have either $K\subset\overline{Q_1}$ or $K\subset\overline{Q_2}$ (please refer to Section \ref{sec: discretization}). Without loss of generality, assume that $K\subset\overline{Q_1}$. We use Lemma \ref{lem: numerical integration} to get
    $$
    \abs{J_4^c} \le \dfrac{C}{\abs{K}} \left(\abs{K}^{\frac{1}{2}}\norm{\sigma_h}_{\Ls^2\left(K\right)}\right)\left(h^2\abs{K}^{\frac{1}{2}}\norm{\Ds^2 f^\varepsilon_\lambda}_{\mathbb{L}^2\left(K\right)}\right) = C\norm{\sigma_h}_{\Ls^2\left(K\right)}\dfrac{h^2}{\lambda}\norm{\Ds^2 \left(\ell p^\varepsilon_\lambda\right)}_{\mathbb{L}^2\left(K\right)}.
    $$
    Owing to the triangle inequality, the H{\" o}lder inequality, and \eqref{eq: extension operator}, we have
    \begin{equation}
        \label{eq: exact-postprocess states and adjoint estimate 6}
        \begin{aligned}
            \norm{\Ds^2 \left(\ell p^\varepsilon_\lambda\right)}_{\mathbb{L}^2\left(K\right)} &\le C\left(\norm{\left(\Ds^2\ell\right)p^\varepsilon_\lambda}^2_{\mathbb{L}^2\left(K\right)} + \norm{\Ds\ell \Ds p^\varepsilon_\lambda}^2_{\mathbb{L}^2\left(K\right)} + \norm{\Ds p^\varepsilon_\lambda \Ds\ell}^2_{\mathbb{L}^2\left(K\right)} + \norm{\ell \left(\Ds^2 p^\varepsilon_\lambda\right)}^2_{\mathbb{L}^2\left(K\right)}\right)^{\frac{1}{2}}\\
            &\le C \norm{\ell}_{\Hs^2\left(K\right)}\norm{p^\varepsilon_\lambda}_{\Hs^2\left(K\right)} \\
            &\le C\norm{\ell}_{\Hs^2\left(Q_1\cup Q_2\right)}\norm{\Es_1 p^\varepsilon_\lambda}_{\Hs^2\left(K\right)}.
        \end{aligned}
    \end{equation}
    Therefore, we imply
    $$
    \abs{J^c_4} \le C\norm{\sigma_h}_{\Ls^2\left(K\right)}\dfrac{h^2}{\lambda}\norm{\Es_1 p^\varepsilon_\lambda}_{\Hs^2\left(K\right)},
    $$
    and thus
    $$
    \begin{aligned}
        \abs{\int\limits_{K}\sigma_h\left(f^\varepsilon_\lambda - \pi_d f^\varepsilon_\lambda \right)\dx\dt} &\le \abs{J^a_4} + \abs{J_b^4}+\abs{J_c^4} \\
        &\le \dfrac{Ch^2}{\lambda}\left(\norm{\Ds \sigma_h}_{\LLs^2\left(K\right)}\abs{K}^{\frac{1}{2}}\norm{\Ds\left(\ell p^\varepsilon_\lambda\right)}_{\LLs^\infty\left(K\right)} + \norm{\sigma_h}_{\Ls^2\left(K\right)}\norm{\Es_1 p^\varepsilon_\lambda}_{\Hs^2\left(K\right)}\right).
    \end{aligned}
    $$
    Additionally, applying the Cauchy-Schwarz inequality yields
    $$
    \sum_{K\in \mathcal{T}_h^1\setminus \mathcal{M}_h^1}\norm{\Ds \sigma_h}_{\LLs^2\left(K\right)}\abs{K}^{\frac{1}{2}}\norm{\Ds\left(\ell p^\varepsilon_\lambda\right)}_{\LLs^\infty\left(K\right)} \le \left(\sum_{K\in \mathcal{T}_h^1\setminus \mathcal{M}_h^1} \norm{\Ds \sigma_h}^2_{\LLs^2\left(K\right)}\right)^{\frac{1}{2}}\left(\sum_{K\in \mathcal{T}_h^1\setminus \mathcal{M}_h^1} \abs{K}\right)^{\frac{1}{2}}\norm{\Ds\left(\ell p^\varepsilon_\lambda\right)}_{\LLs^\infty\left(Q_T\right)},
    $$
    and
    $$
    \sum_{K\in \mathcal{T}_h^1\setminus \mathcal{M}_h^1}\norm{\sigma_h}_{\Ls^2\left(K\right)}\norm{\Es_i p^\varepsilon_\lambda}_{\Hs^2\left(K\right)} \le \left(\sum_{K\in \mathcal{T}_h^1\setminus \mathcal{M}_h^1} \norm{\sigma_h}^2_{\Ls^2\left(K\right)}\right)^{\frac{1}{2}}\left(\sum_{i=1}^2\norm{\Es_i p^\varepsilon_\lambda}^2_{\Hs^2\left(Q_T\right)}\right)^{\frac{1}{2}},
    $$
    where $i\in \left\{1,2\right\}$. We hence sum over all elements $K\in \mathcal{T}_h^1$ and employ \eqref{eq: extension operator} once more to arrive at
    \begin{equation}
        \label{eq: exact-postprocess states and adjoint estimate 3}
        J_4 \le \sum_{K\in \mathcal{T}_h^1}\abs{\int\limits_{K}\sigma_h\left(f^\varepsilon_\lambda - \pi_d f^\varepsilon_\lambda \right)\dx\dt}\le \dfrac{Ch^2}{\lambda}\left(\norm{\Ds \sigma_h}_{\LLs^2\left(Q_T\right)}\norm{\Ds\left(\ell p^\varepsilon_\lambda\right)}_{\LLs^\infty\left(Q_T\right)} + \norm{\sigma_h}_{\Ls^2\left(Q_T\right)}\norm{ p^\varepsilon_\lambda}_{\Hs^2\left(Q_1\cup Q_2\right)}\right).
    \end{equation}
    We next estimate $\norm{\Ds \sigma_h}_{\LLs^2\left(Q_T\right)}$ and $\norm{\sigma_h}_{\Ls^2\left(Q_T\right)}$. On the one hand, let $\overline{u}\left(f^\varepsilon_\lambda\right)\in \Xs_0$ be the solution to the problem 
    $$
    a\left(\overline{u}\left(f^\varepsilon_\lambda\right),\vphi\right) = \left(\ell f^\varepsilon_\lambda - \ell \pi_d f^\varepsilon_\lambda ,\vphi\right)_{\Ls^2\left(Q_T\right)} \qqqq \forall \vphi\in \Ys.
    $$
    Then, by invoking $\ell \in \Ws^{1,\infty}\left(Q_T\right)$, Lemma \ref{lem: stability result}, and Remark \ref{rem: regularity of u}, we obtain
    $$
    \begin{aligned}
        \norm{\Ds \sigma_h}_{\LLs^2\left(Q_T\right)}\le C\norm{\Ds \left(u_h\left(f^\varepsilon_\lambda\right) - u_h^\pi\left(f^\varepsilon_\lambda\right)\right)}_{\LLs^2\left(Q_T\right)}&\le Ch\norm{\overline{u}\left(f^\varepsilon_\lambda\right)}_{\Hs^s\left(Q_1\cup Q_2\right)} + \norm{\overline{u}\left(f^\varepsilon_\lambda\right)}_{\Hs^1\left(Q_T\right)}\\
        &\le Ch\norm{\ell f^\varepsilon_\lambda - \ell \pi_d f^\varepsilon_\lambda }_{\Ls^2\left(Q_T\right)}+C\norm{\ell f^\varepsilon_\lambda - \ell \pi_d f^\varepsilon_\lambda }_{\Ls^2\left(Q_T\right)}\\
        &\le C\norm{ f^\varepsilon_\lambda - \pi_d f^\varepsilon_\lambda }_{\Ls^2\left(Q_T\right)},
    \end{aligned}
    $$
    for all $h\in \left(0,h_\ast\right)$, where $h_\ast>0$ is given. Moreover, from Lemmas \ref{lem: numerical integration} and \ref{lem: control regularity}, it follows that
    $$
    \begin{aligned}
        \norm{f^\varepsilon_\lambda - \pi_d f^\varepsilon_\lambda }^2_{\Ls^2\left(Q_T\right)} \le \sum_{K\in \mathcal{T}_h} \abs{K} \norm{f^\varepsilon_\lambda - \pi_d f^\varepsilon_\lambda }^2_{\Ls^\infty\left(K\right)} \le C \sum_{K\in \mathcal{T}_h} \left(\abs{K} h^2\norm{\Ds f^\varepsilon_\lambda}^2_{\LLs^\infty\left(K\right)}\right)&\le C\left(\sum_{K\in \mathcal{T}_h} \abs{K}\right)h^2\norm{\Ds f^\varepsilon_\lambda}^2_{\LLs^\infty\left(Q_T\right)}\\
        &\le \dfrac{Ch^2}{\lambda^2} \norm{\Ds \left(\ell p^\varepsilon_\lambda\right)}^2_{\LLs^\infty\left(Q_T\right)},
    \end{aligned}
    $$
    which implies
    $$
    \norm{\Ds \sigma_h}_{\LLs^2\left(Q_T\right)}\le C\norm{f^\varepsilon_\lambda - \pi_d f^\varepsilon_\lambda }_{\Ls^2\left(Q_T\right)} \le \dfrac{Ch}{\lambda} \norm{\Ds \left(\ell p^\varepsilon_\lambda\right)}_{\LLs^\infty\left(Q_T\right)}.
    $$
    On the other hand, we clearly have
    $$
    \norm{\sigma_h}_{\Ls^2\left(Q_T\right)} \le  C\norm{u_h\left(f^\varepsilon_\lambda\right) - u_h^\pi\left(f^\varepsilon_\lambda\right)}_{\Ls^2\left(Q_T\right)}.
    $$
    By substituting the last two inequalities into \eqref{eq: exact-postprocess states and adjoint estimate 3}, we obtain 
    \begin{equation}
        \label{eq: exact-postprocess states and adjoint estimate 2}
        J_4\le \dfrac{Ch^3}{\lambda^2}\norm{\Ds\left(\ell p^\varepsilon_\lambda\right)}^2_{\LLs^\infty\left(Q_T\right)} + \dfrac{Ch^2}{\lambda}\norm{u_h\left(f^\varepsilon_\lambda\right) - u_h^\pi\left(f^\varepsilon_\lambda\right)}_{\Ls^2\left(Q_T\right)}\norm{p^\varepsilon_\lambda}_{\Hs^2\left(Q_1\cup Q_2\right)}.
    \end{equation}
    
    Next, we estimate $J_5$. By Assumption \ref{assum: Nitsche's trick assumption} and $\ell \in \Ws^{1,\infty}\left(Q_T\right)$, it follows that $f^\varepsilon_\lambda\in \Ws^{1,\infty}\left(K\right)$ for any $K\in \mathcal{T}_h^2$. Thanks to Lemma \ref{lem: numerical integration}, the following estimate holds
    $$
    \begin{aligned}
        \abs{\int\limits_{K}\sigma_h\left(f^\varepsilon_\lambda - \pi_d f^\varepsilon_\lambda \right)\dx\dt} \le \norm{\sigma_h}_{\Ls^2\left(K\right)}\norm{f^\varepsilon_\lambda - \pi_d f^\varepsilon_\lambda }_{\Ls^2\left(K\right)}&\le \norm{\sigma_h}_{\Ls^2\left(K\right)}\abs{K}^{\frac{1}{2}}\norm{f^\varepsilon_\lambda - \pi_d f^\varepsilon_\lambda }_{\Ls^\infty\left(K\right)}\\
        &\le C\norm{\sigma_h}_{\Ls^2\left(K\right)}\abs{K}^{\frac{1}{2}}h\norm{\Ds f^\varepsilon_\lambda}_{\LLs^\infty\left(K\right)}.
    \end{aligned}
    $$
    We then sum over all elements $K\in \mathcal{T}_h^2$ and subsequently apply the Cauchy-Schwarz inequality, Assumption \ref{assum: mesh superconvergence}, and Lemma \ref{lem: control regularity} to bound $J_5$. It follows that
    \begin{equation}
        \label{eq: exact-postprocess states and adjoint estimate 4}
        \begin{aligned}
            J_5 \le \sum_{K\in \mathcal{T}_h^2} \abs{\int\limits_{K}\sigma_h\left(f^\varepsilon_\lambda - \pi_d f^\varepsilon_\lambda \right)\dx\dt}&\le C \left(\sum_{K\in \mathcal{T}_h^2}\norm{\sigma_h}^2_{\Ls^2\left(K\right)}\right)^{\frac{1}{2}}\left(\sum_{K\in \mathcal{T}_h^2}\abs{K}\right)^{\frac{1}{2}}h\norm{\Ds f^\varepsilon_\lambda}_{\LLs^\infty\left(Q_T\right)}\\
            &\le C\norm{\sigma_h}_{\Ls^2\left(Q_T\right)}h^{\frac{3}{2}}\norm{\Ds f^\varepsilon_\lambda}_{\LLs^\infty\left(Q_T\right)}\\
            &\le C\norm{u_h\left(f^\varepsilon_\lambda\right) - u_h^\pi\left(f^\varepsilon_\lambda\right)}_{\Ls^2\left(Q_T\right)}\dfrac{h^{\frac{3}{2}}}{\lambda}\norm{\Ds \left(\ell p^\varepsilon_\lambda\right)}_{\LLs^\infty\left(Q_T\right)}.
        \end{aligned}
    \end{equation}

    By following the same steps as in \eqref{eq: exact-postprocess states and adjoint estimate 4}, except that the inequality in Assumption \ref{assum: mesh superconvergence} is replaced by \eqref{eq: Th^ast area}, one achieves the estimate for $J_6$. We end up with
    \begin{equation}
        \label{eq: exact-postprocess states and adjoint estimate 5}
        J_6\le C\norm{u_h\left(f^\varepsilon_\lambda\right) - u_h^\pi\left(f^\varepsilon_\lambda\right)}_{\Ls^2\left(Q_T\right)}\dfrac{h^{\frac{3}{2}}}{\lambda}\norm{\Ds \left(\ell p^\varepsilon_\lambda\right)}_{\LLs^\infty\left(Q_T\right)}.
    \end{equation}
    
    We now substitute \eqref{eq: exact-postprocess states and adjoint estimate 2}, \eqref{eq: exact-postprocess states and adjoint estimate 4}, and \eqref{eq: exact-postprocess states and adjoint estimate 5} into \eqref{eq: exact-postprocess states and adjoint estimate 1} to obtain
    $$
    \begin{aligned}
        &\norm{u_h\left(f^\varepsilon_\lambda\right) - u_h^\pi\left(f^\varepsilon_\lambda\right)}^2_{\Ls^2\left(Q_T\right)}\le\\
        &\le \dfrac{Ch^3}{\lambda^2}\norm{\Ds\left(\ell p^\varepsilon_\lambda\right)}^2_{\LLs^\infty\left(Q_T\right)} + \dfrac{C}{\lambda}\norm{u_h\left(f^\varepsilon_\lambda\right) - u_h^\pi\left(f^\varepsilon_\lambda\right)}_{\Ls^2\left(Q_T\right)}\left(h^{\frac{3}{2}}\norm{\Ds \left(\ell p^\varepsilon_\lambda\right)}_{\LLs^\infty\left(Q_T\right)} + h^2 \norm{p^\varepsilon_\lambda}_{\Hs^2\left(Q_1\cup Q_2\right)}\right),
    \end{aligned}
    $$
    which is a second-order inequation with respect to $\norm{u_h\left(f^\varepsilon_\lambda\right) - u_h^\pi\left(f^\varepsilon_\lambda\right)}_{\Ls^2\left(Q_T\right)}$. By using its discriminant, we find
    $$
    \begin{aligned}
        &\norm{u_h\left(f^\varepsilon_\lambda\right) - u_h^\pi\left(f^\varepsilon_\lambda\right)}_{\Ls^2\left(Q_T\right)}\\
        &\le \dfrac{C}{\lambda}\left[h^{\frac{3}{2}}\norm{\Ds \left(\ell p^\varepsilon_\lambda\right)}_{\LLs^\infty\left(Q_T\right)} + h^2 \norm{p^\varepsilon_\lambda}_{\Hs^2\left(Q_1\cup Q_2\right)} + \sqrt{\left(h^{\frac{3}{2}}\norm{\Ds \left(\ell p^\varepsilon_\lambda\right)}_{\LLs^\infty\left(Q_T\right)} + h^2 \norm{p^\varepsilon_\lambda}_{\Hs^2\left(Q_1\cup Q_2\right)}\right)^2 + h^3\norm{\Ds\left(\ell p^\varepsilon_\lambda\right)}^2_{\LLs^\infty\left(Q_T\right)}}\right]\\
        &\le \dfrac{C}{\lambda}\left[h^{\frac{3}{2}}\norm{\Ds \left(\ell p^\varepsilon_\lambda\right)}_{\LLs^\infty\left(Q_T\right)} + h^2 \norm{p^\varepsilon_\lambda}_{\Hs^2\left(Q_1\cup Q_2\right)} + \sqrt{\left(h^{\frac{3}{2}}\norm{\Ds \left(\ell p^\varepsilon_\lambda\right)}_{\LLs^\infty\left(Q_T\right)} + h^2 \norm{p^\varepsilon_\lambda}_{\Hs^2\left(Q_1\cup Q_2\right)} + h^{\frac{3}{2}}\norm{\Ds\left(\ell p^\varepsilon_\lambda\right)}_{\LLs^\infty\left(Q_T\right)}\right)^2}\right]\\
        &\le \dfrac{C}{\lambda}\left(h^{\frac{3}{2}}\norm{\Ds \left(\ell p^\varepsilon_\lambda\right)}_{\LLs^\infty\left(Q_T\right)} + h^2 \norm{p^\varepsilon_\lambda}_{\Hs^2\left(Q_1\cup Q_2\right)}\right).
    \end{aligned}
    $$
    Hence, we arrive at
    $$
    \begin{aligned}
        \norm{u_h\left(f^\varepsilon_\lambda\right) - u_h^\pi\left(f^\varepsilon_\lambda\right)}_{\Ls^2\left(Q_T\right)}&\le \dfrac{C}{\lambda}\left(h^{\frac{3}{2}}\norm{\Ds \left(\ell p^\varepsilon_\lambda\right)}_{\LLs^\infty\left(Q_T\right)} + h^2 \norm{p^\varepsilon_\lambda}_{\Hs^2\left(Q_1\cup Q_2\right)}\right) \\
        &\le \dfrac{C}{\lambda}\left[h^{\frac{3}{2}} \norm{\Ds \left(\ell p^\varepsilon_\lambda\right)}_{\LLs^\infty\left(Q_T\right)} + h^2\left(1+\dfrac{1}{\sqrt{\lambda}}\right)\left(\norm{U_d}_{\Ls^2\left(\omega_T\right)} +\norm{g}_{\Ls^2\left(Q_T\right)}+\varepsilon\right)\right],
    \end{aligned}
    $$
    using \eqref{eq: auxiliary-discrete adjoint estimate variational 4}.
    Together with \eqref{eq: compare L2 and the Y norm} and the technique in \eqref{eq: discrete-auxiliary estimate variational 3}, we get
    $$
    \begin{aligned}
        \norm{p_h\left(f^\varepsilon_\lambda\right) - p_h^\pi\left(f^\varepsilon_\lambda\right)}_{\Ls^2\left(Q_T\right)}&\le C\norm{p_h\left(f^\varepsilon_\lambda\right) - p_h^\pi\left(f^\varepsilon_\lambda\right)}_{h,T}\\
        &\le C\norm{u_h\left(f^\varepsilon_\lambda\right) - u_h^\pi\left(f^\varepsilon_\lambda\right)}_{\Ls^2\left(\omega_T\right)}\\
        &\le \dfrac{C}{\lambda}\left[h^{\frac{3}{2}} \norm{\Ds \left(\ell p^\varepsilon_\lambda\right)}_{\LLs^\infty\left(Q_T\right)} + h^2\left(1+\dfrac{1}{\sqrt{\lambda}}\right)\left(\norm{U_d}_{\Ls^2\left(\omega_T\right)} +\norm{g}_{\Ls^2\left(Q_T\right)}+\varepsilon\right)\right].
    \end{aligned}
    $$
    We finish the proof.
\end{proof}

Next, we derive an inequality for the interpolant $\pi_d f^\varepsilon_\lambda \in F^h_{+}$. By inserting $f=f^\varepsilon_{\lambda,h}\in F^h_{+}$ into \eqref{eq: variational inequality}, we end up with the following point-wise inequality, which holds almost everywhere
$$
\left(\ell \left(\xb,t\right)p^\varepsilon_\lambda\left(\xb,t\right) + \lambda f^\varepsilon_\lambda\left(\xb,t\right)\right)\left(f^\varepsilon_{\lambda, h}\left(\xb,t\right) - f^\varepsilon_\lambda\left(\xb,t\right)\right)\ge 0 \qqqq \text{for almost every } \left(\xb, t\right)\in Q_T.
$$
Recall that, due to Lemma \ref{lem: control regularity} and the compact embedding $\Ws^{1,\infty}\left(Q_T\right)\hookrightarrow \Cs\left(\overline{Q_T}\right)$ \cite[Theorem 2.35]{Ern2021}, we have $\ell p^\varepsilon_\lambda\in \Cs\left(\overline{Q_T}\right)$ and $f^\varepsilon_\lambda\in \Cs\left(\overline{Q_T}\right)$. Therefore, we can choose $\left(\xb,t\right)=S_K$ for any $K\in \mathcal{T}_h$ in the last inequality. This yields
$$
\left[\ell \left(S_K\right)p^\varepsilon_\lambda\left(S_K\right) + \lambda f^\varepsilon_\lambda\left(S_K\right)\right]\left[f^\varepsilon_{\lambda, h}\left(S_K\right) - f^\varepsilon_\lambda\left(S_K\right)\right]\ge 0 \qqqq \forall K\in \mathcal{T}_h,
$$
which is equivalent to
$$
\left[\left(\pi_d\left(\ell p^\varepsilon_\lambda\right)\right)_{\mid \, K} + \lambda \left(\pi_d f^\varepsilon_\lambda \right)_{\mid \, K}\right]\left[f^\varepsilon_{\lambda, h}\left(S_K\right) - \left(\pi_d f^\varepsilon_\lambda \right)_{\mid \, K}\right]\ge 0 \qqqq \forall K\in \mathcal{T}_h.
$$
We now integrate the left-hand side over an arbitrary element $K\in \mathcal{T}_h$, and then sum over all elements to get
\begin{equation}
    \label{eq: barycentric variational inequality}
    \left(\pi_d\left(\ell p^\varepsilon_\lambda\right) + \lambda \pi_d f^\varepsilon_\lambda , f^\varepsilon_{\lambda, h} - \pi_d f^\varepsilon_\lambda \right)_{\Ls^2\left(Q_T\right)}\ge 0.
\end{equation}

\begin{lemma}
    \label{lem: discrete control - barycenter control estimate}
    Let $f^\varepsilon_{\lambda, h}\in F^h_{+}$ be the solution to Problem \eqref{eq: discrete variational inequality} in case of element-wise constant discretization and $\pi_d f^\varepsilon_\lambda \in F^h_{+}$ the function defined in \eqref{eq: barycentric interpolation}. Let $p^\varepsilon_\lambda\in \Xs_T$ be the solution to Problem \eqref{eq: weak adjoint equation optimality conditions}. Assume that Assumptions \ref{assum: Nitsche's trick assumption} and \ref{assum: mesh superconvergence} hold, and $\ell \in \Ws^{1,\infty}\left(Q_T\right)\cap \Hs^2\left(Q_1\cup Q_2\right)$. Then, we have the following estimate
    $$
    \begin{aligned}
        &\norm{f^\varepsilon_{\lambda,h} - \pi_d\left(f^\varepsilon_{\lambda}\right)}_{\Ls^2\left(Q_T\right)}\le \\
        &\le \dfrac{C}{\lambda^\tau}\left[h^{\frac{3}{2}}\left(1+\dfrac{1}{\lambda}\right)\norm{\Ds \left(\ell p^\varepsilon_\lambda\right)}_{\LLs^\infty\left(Q_T\right)} + h^2\left(1+\dfrac{1}{\sqrt{\lambda^3}}\right)\left(\norm{U_d}_{\Ls^2\left(\omega_T\right)} +\norm{g}_{\Ls^2\left(Q_T\right)}+\varepsilon\right)\right]\text{ with } \tau = 
        \begin{cases}
            2 & \text{if } \lambda \le 4 \norm{\ell}_{\Ls^\infty\left(Q_T\right)}C_2, \\ 
            0 & \text{otherwise},
        \end{cases}
    \end{aligned}
    $$
    where the constant $C_2>0$ is defined in Lemma \ref{lem: discrete-auxiliary estimate variational}.
\end{lemma}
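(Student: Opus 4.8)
The plan is to mimic the proof of Lemma~\ref{lem: discrete the pure control constant}, but to compare $f^\varepsilon_{\lambda,h}$ with the barycentric interpolant $\pi_d f^\varepsilon_\lambda\in F^h_{+}$ rather than with $\widehat{f_h}$. First I would insert $f_h=\pi_d f^\varepsilon_\lambda\in F^h_{+}$ into the discrete variational inequality \eqref{eq: discrete variational inequality} (in the element-wise constant case) and the admissible choice $f^\varepsilon_{\lambda,h}\in F^h_{+}$ into the barycentric inequality \eqref{eq: barycentric variational inequality}, then add the two; after the quadratic term cancels this gives
$$
\lambda\norm{f^\varepsilon_{\lambda,h}-\pi_d f^\varepsilon_\lambda}^2_{\Ls^2\left(Q_T\right)}\le \left(\ell p^\varepsilon_{\lambda,h}-\pi_d\left(\ell p^\varepsilon_\lambda\right),\,\pi_d f^\varepsilon_\lambda-f^\varepsilon_{\lambda,h}\right)_{\Ls^2\left(Q_T\right)}.
$$

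Next I would telescope the adjoint mismatch through the auxiliary adjoints $p_h^\pi\left(f^\varepsilon_\lambda\right)$ of \eqref{eq: auxiliary adjoint postprocessing}, $p_h\left(f^\varepsilon_\lambda\right)$ of \eqref{eq: auxiliary adjoint variational}, and the continuous adjoint $p^\varepsilon_\lambda$, writing
$$
\ell p^\varepsilon_{\lambda,h}-\pi_d\left(\ell p^\varepsilon_\lambda\right)=\ell\left(p^\varepsilon_{\lambda,h}-p_h^\pi\left(f^\varepsilon_\lambda\right)\right)+\ell\left(p_h^\pi\left(f^\varepsilon_\lambda\right)-p_h\left(f^\varepsilon_\lambda\right)\right)+\ell\left(p_h\left(f^\varepsilon_\lambda\right)-p^\varepsilon_\lambda\right)+\left(\ell p^\varepsilon_\lambda-\pi_d\left(\ell p^\varepsilon_\lambda\right)\right),
$$
and pairing each summand with $\pi_d f^\varepsilon_\lambda-f^\varepsilon_{\lambda,h}$. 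For the first summand, subtracting \eqref{eq: auxiliary state postprocessing} from \eqref{eq: discrete state optimality conditions} and \eqref{eq: auxiliary adjoint postprocessing} from \eqref{eq: discrete adjoint optimality conditions}, then applying Lemma~\ref{lem: stability result}, the stability bound \eqref{eq: stability adjoint}, and \eqref{eq: compare L2 and the Y norm} exactly as in \eqref{eq: discrete-auxiliary estimate variational 3}, gives $\norm{p^\varepsilon_{\lambda,h}-p_h^\pi\left(f^\varepsilon_\lambda\right)}_{\Ls^2\left(Q_T\right)}\le C_2\norm{f^\varepsilon_{\lambda,h}-\pi_d f^\varepsilon_\lambda}_{\Ls^2\left(Q_T\right)}$, so this contribution is at most $\norm{\ell}_{\Ls^\infty\left(Q_T\right)}C_2\norm{f^\varepsilon_{\lambda,h}-\pi_d f^\varepsilon_\lambda}^2_{\Ls^2\left(Q_T\right)}$ and will be absorbed into the left-hand side. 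The second summand is controlled by Lemma~\ref{lem: exact-postprocess states and adjoint estimate} and the third by Lemma~\ref{lem: auxiliary-discrete adjoint estimate variational}; together they supply the $h^{\frac{3}{2}}$- and $h^{2}$-terms of the claimed bound, carrying the data factor $\norm{U_d}_{\Ls^2\left(\omega_T\right)}+\norm{g}_{\Ls^2\left(Q_T\right)}+\varepsilon$.

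The heart of the argument is the last summand $\left(\ell p^\varepsilon_\lambda-\pi_d\left(\ell p^\varepsilon_\lambda\right),\pi_d f^\varepsilon_\lambda-f^\varepsilon_{\lambda,h}\right)_{\Ls^2\left(Q_T\right)}$. Since $\pi_d f^\varepsilon_\lambda-f^\varepsilon_{\lambda,h}\in F^d$, it equals $\left(\Pi_d\left(\ell p^\varepsilon_\lambda\right)-\pi_d\left(\ell p^\varepsilon_\lambda\right),\pi_d f^\varepsilon_\lambda-f^\varepsilon_{\lambda,h}\right)_{\Ls^2\left(Q_T\right)}$, so it suffices to estimate $\norm{\Pi_d\left(\ell p^\varepsilon_\lambda\right)-\pi_d\left(\ell p^\varepsilon_\lambda\right)}_{\Ls^2\left(Q_T\right)}$, a superconvergence estimate for the barycentric rule. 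On each $K\in\mathcal{T}_h$ one has $\int_K\left(\ell p^\varepsilon_\lambda-\Pi_d\left(\ell p^\varepsilon_\lambda\right)\right)\dx\dt=0$, whence $\norm{\Pi_d\left(\ell p^\varepsilon_\lambda\right)-\pi_d\left(\ell p^\varepsilon_\lambda\right)}_{\Ls^2\left(K\right)}=\abs{K}^{-\frac{1}{2}}\left|\int_K\left(\ell p^\varepsilon_\lambda-\pi_d\left(\ell p^\varepsilon_\lambda\right)\right)\dx\dt\right|$, and I would split $\mathcal{T}_h=\mathcal{T}_h^1\cup\mathcal{T}_h^2\cup\mathcal{T}_h^\ast$ as in the proof of Lemma~\ref{lem: exact-postprocess states and adjoint estimate}. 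On $K\in\mathcal{T}_h^1$ the element lies entirely on one side of $\Gamma^\ast$, so under $\ell\in\Ws^{1,\infty}\left(Q_T\right)\cap\Hs^2\left(Q_1\cup Q_2\right)$ and $p^\varepsilon_\lambda\in\Hs^s\left(Q_1\cup Q_2\right)$ we have $\ell p^\varepsilon_\lambda\in\Hs^2\left(K\right)$; the first estimate of Lemma~\ref{lem: numerical integration}, the bound $\norm{\Ds^2\left(\ell p^\varepsilon_\lambda\right)}_{\LLs^2\left(K\right)}\le C\norm{\Es_i p^\varepsilon_\lambda}_{\Hs^2\left(K\right)}$ obtained as in \eqref{eq: exact-postprocess states and adjoint estimate 6}, and \eqref{eq: extension operator} then give an $\mathcal{O}\left(h^{2}\norm{p^\varepsilon_\lambda}_{\Hs^s\left(Q_1\cup Q_2\right)}\right)$ contribution after summation. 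On $K\in\mathcal{T}_h^2\cup\mathcal{T}_h^\ast$ only $\ell p^\varepsilon_\lambda\in\Ws^{1,\infty}\left(K\right)$ is available, so the second estimate of Lemma~\ref{lem: numerical integration} yields $\norm{\Pi_d\left(\ell p^\varepsilon_\lambda\right)-\pi_d\left(\ell p^\varepsilon_\lambda\right)}_{\Ls^2\left(K\right)}\le Ch\,\abs{K}^{\frac{1}{2}}\norm{\Ds\left(\ell p^\varepsilon_\lambda\right)}_{\LLs^\infty\left(K\right)}$; summing with the Cauchy--Schwarz inequality together with the measure bounds $\sum_{K\in\mathcal{T}_h^2}\abs{K}\le Ch$ (Assumption~\ref{assum: mesh superconvergence}) and $\sum_{K\in\mathcal{T}_h^\ast}\abs{K}\le Ch$ (from \eqref{eq: Th^ast area}) produces the $\mathcal{O}\left(h^{\frac{3}{2}}\norm{\Ds\left(\ell p^\varepsilon_\lambda\right)}_{\LLs^\infty\left(Q_T\right)}\right)$ contribution.

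Finally I would collect the four bounds, divide through by $\norm{f^\varepsilon_{\lambda,h}-\pi_d f^\varepsilon_\lambda}_{\Ls^2\left(Q_T\right)}$, move the absorbed quadratic term to the left, and use $\lambda-\norm{\ell}_{\Ls^\infty\left(Q_T\right)}C_2\ge\frac{3\lambda^2}{16\norm{\ell}_{\Ls^\infty\left(Q_T\right)}C_2}$ when $\lambda\le4\norm{\ell}_{\Ls^\infty\left(Q_T\right)}C_2$ and $\ge3\norm{\ell}_{\Ls^\infty\left(Q_T\right)}C_2$ otherwise, exactly as in Lemma~\ref{lem: discrete the pure control constant}, which produces the prefactor $\lambda^{-\tau}$ with the stated $\tau$. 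Bounding $\norm{p^\varepsilon_\lambda}_{\Hs^s\left(Q_1\cup Q_2\right)}$ by \eqref{eq: auxiliary-discrete adjoint estimate variational 4}, tidying the powers of $\lambda$ so that $1+\lambda^{-1}+\lambda^{-1/2}+\lambda^{-3/2}\le C\left(1+\lambda^{-3/2}\right)$, and absorbing $h^{2}\le Ch^{\frac{3}{2}}$ where needed, yields the asserted inequality. The main obstacle is precisely the last summand: one must recognise the $\Pi_d-\pi_d$ superconvergence of the barycentric quadrature, which holds at order $h^{2}$ only on elements where $\ell p^\varepsilon_\lambda$ enjoys $\Hs^2$-regularity, and then recover the global order $h^{\frac{3}{2}}$ by absorbing the merely first-order error on the free-boundary elements $\mathcal{T}_h^2$ and the interface elements $\mathcal{T}_h^\ast$ into the $\mathcal{O}(h)$ smallness of their total measure.
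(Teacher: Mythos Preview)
Your proposal is correct and follows essentially the same route as the paper. The paper also adds \eqref{eq: barycentric variational inequality} to the discrete variational inequality with $f_h=\pi_d f^\varepsilon_\lambda$, telescopes through $p_h^\pi\left(f^\varepsilon_\lambda\right)$ (its three terms $J_7,J_8,J_9$ correspond to your fourth, your second and third combined, and your first summand, respectively), absorbs the $J_9$-term via the $C_2$-bound, and handles the $\ell p^\varepsilon_\lambda-\pi_d\left(\ell p^\varepsilon_\lambda\right)$ contribution by the same barycentric superconvergence argument element-wise. One minor simplification the paper makes that you do not: for the last summand it splits only into $\mathcal{T}_h\setminus\mathcal{T}_h^\ast$ and $\mathcal{T}_h^\ast$, since $\ell p^\varepsilon_\lambda\in\Hs^2\left(K\right)$ on \emph{every} non-interface element (the free-boundary set $\mathcal{T}_h^2$ plays no role here, because the function being interpolated is $\ell p^\varepsilon_\lambda$, not $f^\varepsilon_\lambda$); your three-set split still works but invokes Assumption~\ref{assum: mesh superconvergence} once more than necessary.
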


\begin{proof}
    We first choose $f_h = \pi_d f^\varepsilon_\lambda \in F^h_{+}$ in \eqref{eq: discrete variational inequality}, then add the corresponding inequality to \eqref{eq: barycentric variational inequality}. The result is
    \begin{equation}
        \label{eq: discrete control - barycenter control estimate 1}
        \lambda \norm{f^\varepsilon_{\lambda,h} - \pi_d f^\varepsilon_{\lambda}}^2_{\Ls^2\left(Q_T\right)}\le \left(\pi_d\left(\ell p^\varepsilon_\lambda\right) - \ell p^\varepsilon_{\lambda, h}, f^\varepsilon_{\lambda, h} - \pi_d f^\varepsilon_{\lambda}\right)_{\Ls^2\left(Q_T\right)} = J_7 + J_8 + J_9,
    \end{equation}
    where $J_7, J_8$, and $J_9$ are given by
    $$
    J_7 :=\left(\pi_d\left(\ell p^\varepsilon_\lambda\right)-\ell p^\varepsilon_\lambda, f^\varepsilon_{\lambda, h} - \pi_d f^\varepsilon_\lambda \right)_{\Ls^2\left(Q_T\right)}\qqq \text{and}\qqq J_8 := \left(\ell p^\varepsilon_\lambda - \ell p_h^\pi\left(f^\varepsilon_\lambda\right), f^\varepsilon_{\lambda, h} - \pi_d f^\varepsilon_\lambda \right)_{\Ls^2\left(Q_T\right)},
    $$
    and 
    $$
    J_9 := \left(\ell p_h^\pi\left(f^\varepsilon_\lambda\right) - \ell p^\varepsilon_{\lambda, h}, f^\varepsilon_{\lambda, h} - \pi_d f^\varepsilon_\lambda \right)_{\Ls^2\left(Q_T\right)}.
    $$
    
    Let us begin with $J_7$. First, note that on any element $K\in \mathcal{T}_h$, the function $f^\varepsilon_{\lambda, h}-\pi_d f^\varepsilon_\lambda $ is constant. We now split $J_7$ into two terms as follows
    $$
    J_7 = \int\limits_{\mathcal{T}_h\setminus \mathcal{T}_h^\ast}\left(\pi_d\left(\ell p^\varepsilon_\lambda\right)-\ell p^\varepsilon_\lambda\right)\left(f^\varepsilon_{\lambda, h} - \pi_d f^\varepsilon_\lambda \right)\dx\dt + \int\limits_{\mathcal{T}_h^\ast}\left(\pi_d\left(\ell p^\varepsilon_\lambda\right)-\ell p^\varepsilon_\lambda\right)\left(f^\varepsilon_{\lambda, h} - \pi_d f^\varepsilon_\lambda \right)\dx\dt =: J_7^a + J_7^b.
    $$
    We first estimate $J_7^a$. Clearly, if $K$ be an arbitrary element of $\mathcal{T}_h\setminus\mathcal{T}_h^\ast$, then either $K\subset\overline{Q_1}$ or $K\subset\overline{Q_2}$ (see Section \ref{sec: discretization}). Without loss of generality, assume that $K\subset\overline{Q_1}$. Under Assumption \ref{assum: Nitsche's trick assumption} and $\ell \in \Hs^2\left(Q_1\cup Q_2\right)$, we have $\ell p^\varepsilon_\lambda\in \Hs^2\left(K\right)$. By using Lemma \ref{lem: numerical integration} and \eqref{eq: exact-postprocess states and adjoint estimate 6}, we find
    $$
    \begin{aligned}
        \abs{\int\limits_{K}\left(\pi_d\left(\ell p^\varepsilon_\lambda\right)-\ell p^\varepsilon_\lambda\right)\left(f^\varepsilon_{\lambda, h} - \pi_d f^\varepsilon_\lambda \right)\dx\dt}&= \abs{\int\limits_{K}\left(\pi_d\left(\ell p^\varepsilon_\lambda\right)-\ell p^\varepsilon_\lambda\right)\dx\dt}\abs{f^\varepsilon_{\lambda, h} - \pi_d f^\varepsilon_\lambda }\\
        &\le Ch^2 \abs{K}^{\frac{1}{2}}\norm{\Ds^2\left(\ell p^\varepsilon_\lambda\right)}_{\mathbb{L}^2\left(K\right)}\abs{f^\varepsilon_{\lambda, h} - \pi_d f^\varepsilon_\lambda }\\
        &\le Ch^2 \abs{K}^{\frac{1}{2}}\norm{\Es_1 p^\varepsilon_\lambda}_{\Hs^2\left(K\right)}\abs{f^\varepsilon_{\lambda, h} - \pi_d f^\varepsilon_\lambda }.
    \end{aligned}
    $$
    Hence, we imply
    $$
    \begin{aligned}
        \sum\limits_{K\in \mathcal{T}_h\setminus \mathcal{T}_h^\ast}\abs{\int\limits_{K}\left(\pi_d\left(\ell p^\varepsilon_\lambda\right)-\ell p^\varepsilon_\lambda\right)\left(f^\varepsilon_{\lambda, h} - \pi_d f^\varepsilon_\lambda \right)\dx\dt}&\le Ch^2 \left(\sum_{i=1}^2\norm{\Es_i p^\varepsilon_\lambda}^2_{\Hs^2\left(Q_T\right)}\right)^{\frac{1}{2}}\left(\sum\limits_{K\in \mathcal{T}_h\setminus \mathcal{T}_h^\ast} \abs{K}\abs{f^\varepsilon_{\lambda, h} - \pi_d f^\varepsilon_\lambda }^2\right)^{\frac{1}{2}}\\
        &= Ch^2 \left(\sum_{i=1}^2\norm{\Es_i p^\varepsilon_\lambda}^2_{\Hs^2\left(Q_T\right)}\right)^{\frac{1}{2}}\left[\sum\limits_{K\in \mathcal{T}_h\setminus \mathcal{T}_h^\ast} 
        \int\limits_{K} \left(f^\varepsilon_{\lambda, h} - \pi_d f^\varepsilon_\lambda\right)^2\dx\dt\right]^{\frac{1}{2}}.
    \end{aligned}
    $$
    using the Cauchy-Schwarz inequality and \eqref{eq: extension operator}. Together with \eqref{eq: auxiliary-discrete adjoint estimate variational 4}, we thus arrive at
    $$
    J_7^a  \le Ch^2 \norm{p^\varepsilon_\lambda}_{\Hs^2\left(Q_1\cup Q_2\right)}\norm{f^\varepsilon_{\lambda, h} - \pi_d f^\varepsilon_\lambda }_{\Ls^2\left(Q_T\right)}\le Ch^2\left(1+\dfrac{1}{\sqrt{\lambda}}\right)\left(\norm{U_d}_{\Ls^2\left(\omega_T\right)} +\norm{g}_{\Ls^2\left(Q_T\right)}+\varepsilon\right)\norm{f^\varepsilon_{\lambda, h} - \pi_d f^\varepsilon_\lambda }_{\Ls^2\left(Q_T\right)}.
    $$
    Next, we estimate $J_7^b$. Consider an arbitrary element $K\in \mathcal{T}_h^\ast$. Under Assumption \ref{assum: Nitsche's trick assumption} and $\ell \in \Ws^{1,\infty}\left(Q_T\right)$, we now have $\ell p^\varepsilon_\lambda\in \Ws^{1,\infty}\left(K\right)$. By invoking Lemma \ref{lem: numerical integration} once more, we obtain
    $$
    \begin{aligned}
        \abs{\int\limits_{K}\left(\pi_d\left(\ell p^\varepsilon_\lambda\right)-\ell p^\varepsilon_\lambda\right)\left(f^\varepsilon_{\lambda, h} - \pi_d f^\varepsilon_\lambda \right)\dx\dt}&= \abs{\int\limits_{K}\left(\pi_d\left(\ell p^\varepsilon_\lambda\right)-\ell p^\varepsilon_\lambda\right)\dx\dt}\abs{f^\varepsilon_{\lambda, h} - \pi_d f^\varepsilon_\lambda }\\
        &\le \abs{K}\norm{\pi_d\left(\ell p^\varepsilon_\lambda\right)-\ell p^\varepsilon_\lambda}_{\Ls^\infty\left(K\right)}\abs{f^\varepsilon_{\lambda, h} - \pi_d f^\varepsilon_\lambda }\\
        &\le C \abs{K}h\norm{\Ds \left(\ell p^\varepsilon_\lambda\right)}_{\LLs^\infty\left(K\right)}\abs{f^\varepsilon_{\lambda, h} - \pi_d f^\varepsilon_\lambda }.
    \end{aligned}
    $$
    We then sum over all elements $K\in \mathcal{T}_h^\ast$, apply the Cauchy Schwarz inequality, and \eqref{eq: Th^ast area} to get
    $$
    \begin{aligned}
        J_7^b \le \sum\limits_{K\in \mathcal{T}_h^\ast} \abs{\int\limits_{K}\left(\pi_d\left(\ell p^\varepsilon_\lambda\right)-\ell p^\varepsilon_\lambda\right)\left(f^\varepsilon_{\lambda, h} - \pi_d f^\varepsilon_\lambda \right)\dx\dt}&\le C\left(\sum\limits_{K\in \mathcal{T}_h^\ast}\abs{K}\right)^{\frac{1}{2}}h\norm{\Ds \left(\ell p^\varepsilon_\lambda\right)}_{\LLs^\infty\left(Q_T\right)}\left(\sum\limits_{K\in \mathcal{T}_h^\ast}\abs{K}\abs{f^\varepsilon_{\lambda, h} - \pi_d f^\varepsilon_\lambda }^2\right)^{\frac{1}{2}}\\
        &\le C h^{\frac{3}{2}} \norm{\Ds \left(\ell p^\varepsilon_\lambda\right)}_{\LLs^\infty\left(Q_T\right)}\left[\sum\limits_{K\in \mathcal{T}_h^\ast} 
        \int\limits_{K} \left(f^\varepsilon_{\lambda, h} - \pi_d f^\varepsilon_\lambda\right)^2\dx\dt\right]^{\frac{1}{2}}\\
        &\le Ch^{\frac{3}{2}} \norm{\Ds \left(\ell p^\varepsilon_\lambda\right)}_{\LLs^\infty\left(Q_T\right)}\norm{f^\varepsilon_{\lambda, h} - \pi_d f^\varepsilon_\lambda }_{\Ls^2\left(Q_T\right)}.
    \end{aligned}
    $$
    Therefore, we bound $J_7$ by
    \begin{equation}
        \label{eq: discrete control - barycenter control estimate 2}
        J_7 \le C\left[h^2\left(1+\dfrac{1}{\sqrt{\lambda}}\right)\left(\norm{U_d}_{\Ls^2\left(\omega_T\right)} +\norm{g}_{\Ls^2\left(Q_T\right)}+\varepsilon\right) + h^{\frac{3}{2}}\norm{\Ds \left(\ell p^\varepsilon_\lambda\right)}_{\LLs^\infty\left(Q_T\right)}\right]\norm{f^\varepsilon_{\lambda, h} - \pi_d f^\varepsilon_\lambda }_{\Ls^2\left(Q_T\right)}.
    \end{equation}
    
    Next, we estimate $J_8$. To do so, we first need to bound $\norm{p^\varepsilon_\lambda - p_h^\pi\left(f^\varepsilon_\lambda\right)}_{\Ls^2\left(Q_T\right)}$. By combining the triangle inequality with Lemmas \ref{lem: auxiliary-discrete adjoint estimate variational} and \ref{lem: exact-postprocess states and adjoint estimate}, we have
    $$
    \begin{aligned}
        \norm{p^\varepsilon_\lambda - p_h^\pi\left(f^\varepsilon_\lambda\right)}_{\Ls^2\left(Q_T\right)}&\le \norm{p^\varepsilon_\lambda - p_h\left(f^\varepsilon_\lambda\right)}_{\Ls^2\left(Q_T\right)} + \norm{p_h\left(f^\varepsilon_\lambda\right) - p_h^\pi\left(f^\varepsilon_\lambda\right)}_{\Ls^2\left(Q_T\right)}\\
        &\le Ch^2\left(1+\dfrac{1}{\lambda}\right)\left(1+\dfrac{1}{\sqrt{\lambda}}\right)\left(\norm{U_d}_{\Ls^2\left(\omega_T\right)} +\norm{g}_{\Ls^2\left(Q_T\right)}+\varepsilon\right) + \dfrac{Ch^{\frac{3}{2}}}{\lambda} \norm{\Ds \left(\ell p^\varepsilon_\lambda\right)}_{\LLs^\infty\left(Q_T\right)}.
    \end{aligned}
    $$
    We hence end up with
    \begin{equation}
        \label{eq: discrete control - barycenter control estimate 4}
        \begin{aligned}
            J_8 &\le C\norm{p^\varepsilon_\lambda - p_h^\pi\left(f^\varepsilon_\lambda\right)}_{\Ls^2\left(Q_T\right)}\norm{f^\varepsilon_{\lambda, h} - \pi_d f^\varepsilon_\lambda }_{\Ls^2\left(Q_T\right)}\\
            &\le C\left[h^2\left(1+\dfrac{1}{\lambda}\right)\left(1+\dfrac{1}{\sqrt{\lambda}}\right)\left(\norm{U_d}_{\Ls^2\left(\omega_T\right)} +\norm{g}_{\Ls^2\left(Q_T\right)}+\varepsilon\right) + \dfrac{h^{\frac{3}{2}}}{\lambda} \norm{\Ds \left(\ell p^\varepsilon_\lambda\right)}_{\LLs^\infty\left(Q_T\right)}\right]\norm{f^\varepsilon_{\lambda, h} - \pi_d f^\varepsilon_\lambda }_{\Ls^2\left(Q_T\right)}.
        \end{aligned}
    \end{equation}
    
    For $J_9$, we follow the arguments of \eqref{eq: exact-discrete control estimate variational 3} to get
    $$
    J_9\le \norm{\ell}_{\Ls^\infty\left(Q_T\right)}C_2\norm{f^\varepsilon_{\lambda, h} - \pi_d f^\varepsilon_\lambda }^2_{\Ls^2\left(Q_T\right)}.
    $$
    
    Finally, we substitute \eqref{eq: discrete control - barycenter control estimate 2}, \eqref{eq: discrete control - barycenter control estimate 4}, and the last inequality into \eqref{eq: discrete control - barycenter control estimate 1} to deduce that
    $$
    \begin{aligned}
        &\left(\lambda - \norm{\ell}_{\Ls^\infty\left(Q_T\right)}C_2\right)\norm{f^\varepsilon_{\lambda, h} - \pi_d f^\varepsilon_\lambda }^2_{\Ls^2\left(Q_T\right)} \le \\
        &\le C\left[h^2\left(1+\dfrac{1}{\lambda}\right)\left(1+\dfrac{1}{\sqrt{\lambda}}\right)\left(\norm{U_d}_{\Ls^2\left(\omega_T\right)} +\norm{g}_{\Ls^2\left(Q_T\right)}+\varepsilon\right) + h^{\frac{3}{2}}\left(1+\dfrac{1}{\lambda}\right)\norm{\Ds \left(\ell p^\varepsilon_\lambda\right)}_{\LLs^\infty\left(Q_T\right)}\right]\norm{f^\varepsilon_{\lambda, h} - \pi_d f^\varepsilon_\lambda }_{\Ls^2\left(Q_T\right)}.
    \end{aligned}
    $$
    By applying the inequality 
    $$
    \lambda - \norm{\ell}_{\Ls^\infty\left(Q_T\right)}C_2 \ge
    \begin{cases}
        \dfrac{3\lambda^2}{16 \norm{\ell}_{\Ls^\infty\left(Q_T\right)}C_2} &\text{if }\lambda \le 4 \norm{\ell}_{\Ls^\infty\left(Q_T\right)}C_2,\\
        3 \norm{\ell}_{\Ls^\infty\left(Q_T\right)}C_2 & \text{if } \lambda \ge 4 \norm{\ell}_{\Ls^\infty\left(Q_T\right)}C_2,
    \end{cases}
    $$
    we obtain
    $$
    \begin{aligned}
        \norm{f^\varepsilon_{\lambda, h} - \pi_d f^\varepsilon_\lambda }_{\Ls^2\left(Q_T\right)}&\le \dfrac{C}{\lambda^\tau}\left[h^2\left(1+\dfrac{1}{\lambda}\right)\left(1+\dfrac{1}{\sqrt{\lambda}}\right)\left(\norm{U_d}_{\Ls^2\left(\omega_T\right)} +\norm{g}_{\Ls^2\left(Q_T\right)}+\varepsilon\right) + h^{\frac{3}{2}}\left(1+\dfrac{1}{\lambda}\right)\norm{\Ds \left(\ell p^\varepsilon_\lambda\right)}_{\LLs^\infty\left(Q_T\right)}\right]\\
        &\le \dfrac{C}{\lambda^\tau}\left[h^2\left(1+\dfrac{1}{\sqrt{\lambda^3}}\right)\left(\norm{U_d}_{\Ls^2\left(\omega_T\right)} +\norm{g}_{\Ls^2\left(Q_T\right)}+\varepsilon\right) + h^{\frac{3}{2}}\left(1+\dfrac{1}{\lambda}\right)\norm{\Ds \left(\ell p^\varepsilon_\lambda\right)}_{\LLs^\infty\left(Q_T\right)}\right],
    \end{aligned}
    $$
    where 
    $$
    \tau = 
    \begin{cases}
        2 & \text{if } \lambda \le 4 \norm{\ell}_{\Ls^\infty\left(Q_T\right)}C_2, \\ 
        0 & \text{otherwise}.
    \end{cases}
    $$
    We finish the proof.
\end{proof}

From Theorem \ref{theo: error estimate constant} and Lemma \ref{lem: discrete control - barycenter control estimate}, we observe that the discrete source $f^\varepsilon_{\lambda, h}$ (in case of element-wise constant discretization) approximates the interpolant $\pi_d f^\varepsilon_\lambda $ better than $f^\varepsilon_\lambda$ itself. By combining Lemmas \ref{lem: exact-postprocess states and adjoint estimate} and \ref{lem: discrete control - barycenter control estimate}, we arrive at the following superlinear convergence:

\begin{theorem}
    \label{theo: error estimate postprocessing}
    Let $f^\varepsilon_\lambda\in F_{+}$ be the solution to Problem \eqref{eq: problem formulation} and $f_h^\dagger$ be the post-processing solution in \eqref{eq: post-processing solution}. Let $p^\varepsilon_\lambda\in \Xs_T$ be the solution to Problem \eqref{eq: weak adjoint equation optimality conditions} and $C_2>0$ be the constant defined in Lemma \ref{lem: discrete-auxiliary estimate variational}. Assume that Assumptions \ref{assum: Nitsche's trick assumption} and \ref{assum: mesh superconvergence} hold, and $\ell \in \Ws^{1,\infty}\left(Q_T\right)\cap \Hs^2\left(Q_1\cup Q_2\right)$. Then, we have
    $$
    \norm{f^\varepsilon_\lambda - f_h^\dagger}_{\Ls^2\left(Q_T\right)}\le \dfrac{C}{\lambda^{2+\tau}}\left[h^{\frac{3}{2}}\norm{\Ds \left(\ell p^\varepsilon_\lambda\right)}_{\LLs^\infty\left(Q_T\right)} + \dfrac{h^2}{\sqrt{\lambda}}\left(\norm{U_d}_{\Ls^2\left(\omega_T\right)} +\norm{g}_{\Ls^2\left(Q_T\right)}+\varepsilon\right)\right] = \mathcal{O}\left(h^{\frac{3}{2}}\right),
    $$
    where 
    $$
    \tau = 
    \begin{cases}
        2 & \text{if } \lambda \le 4 \norm{\ell}_{\Ls^\infty\left(Q_T\right)}C_2, \\ 
        0 & \text{otherwise}.
    \end{cases}
    $$
\end{theorem}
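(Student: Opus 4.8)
The plan is to use the two projection formulas together with the nonexpansivity of the metric projection onto the cone $F_{+}$. Since $f^{\varepsilon}_{\lambda}=\operatorname{Proj}_{F_{+}}\!\left(-\tfrac1\lambda\ell p^{\varepsilon}_{\lambda}\right)$ by \eqref{eq: projection formula of f} and $f_h^\dagger=\operatorname{Proj}_{F_{+}}\!\left(-\tfrac1\lambda\ell p^{\varepsilon}_{\lambda, h}\right)$ by \eqref{eq: post-processing solution}, and since $\operatorname{Proj}_{F_{+}}$ acts pointwise as $v\mapsto\max\{0,v\}$ and is therefore $1$-Lipschitz on $\Ls^2\left(Q_T\right)$, I would first write
$$
\norm{f^{\varepsilon}_{\lambda} - f_h^\dagger}_{\Ls^2\left(Q_T\right)} \le \dfrac1\lambda\norm{\ell\left(p^{\varepsilon}_{\lambda} - p^{\varepsilon}_{\lambda, h}\right)}_{\Ls^2\left(Q_T\right)} \le \dfrac{\norm{\ell}_{\Ls^\infty\left(Q_T\right)}}{\lambda}\,\norm{p^{\varepsilon}_{\lambda} - p^{\varepsilon}_{\lambda, h}}_{\Ls^2\left(Q_T\right)}.
$$
Thus the whole estimate reduces to an $\Ls^2\left(Q_T\right)$-bound on the adjoint difference $p^{\varepsilon}_{\lambda} - p^{\varepsilon}_{\lambda, h}$, which is exactly the type of quantity the preceding lemmas were built to control.

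To use them, I would split this difference through the two auxiliary adjoints already available: $p_h\left(f^{\varepsilon}_{\lambda}\right)\in\Vs_{h,T}$ from \eqref{eq: auxiliary adjoint variational} and $p_h^\pi\left(f^{\varepsilon}_{\lambda}\right)\in\Vs_{h,T}$ from \eqref{eq: auxiliary adjoint postprocessing},
$$
\norm{p^{\varepsilon}_{\lambda} - p^{\varepsilon}_{\lambda, h}}_{\Ls^2\left(Q_T\right)} \le \norm{p^{\varepsilon}_{\lambda} - p_h\left(f^{\varepsilon}_{\lambda}\right)}_{\Ls^2\left(Q_T\right)} + \norm{p_h\left(f^{\varepsilon}_{\lambda}\right) - p_h^\pi\left(f^{\varepsilon}_{\lambda}\right)}_{\Ls^2\left(Q_T\right)} + \norm{p_h^\pi\left(f^{\varepsilon}_{\lambda}\right) - p^{\varepsilon}_{\lambda, h}}_{\Ls^2\left(Q_T\right)}.
$$
The first term is the discretization error of the auxiliary adjoint and is bounded by the second estimate of Lemma \ref{lem: auxiliary-discrete adjoint estimate variational}, hence $\mathcal{O}\left(h^2\right)$. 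The second term is precisely what Lemma \ref{lem: exact-postprocess states and adjoint estimate} provides, and it is this term, carrying the factor $h^{3/2}\norm{\Ds\left(\ell p^{\varepsilon}_{\lambda}\right)}_{\LLs^\infty\left(Q_T\right)}$, that caps the rate at superlinear rather than quadratic.

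For the third term I would argue by Galerkin subtraction. Subtracting \eqref{eq: auxiliary adjoint postprocessing} from \eqref{eq: discrete adjoint optimality conditions} gives
$$
a_h^{\prime}\left(p_h^\pi\left(f^{\varepsilon}_{\lambda}\right) - p^{\varepsilon}_{\lambda, h}, \phi_h\right) = \left(\chi_{\omega_T}\left(u_h^\pi\left(f^{\varepsilon}_{\lambda}\right) - u^{\varepsilon}_{\lambda, h}\right), \phi_h\right)_{\Ls^2\left(Q_T\right)} \qqqq \forall\phi_h\in\Vs_{h,T},
$$
so that, by \eqref{eq: stability adjoint}, \eqref{eq: compare L2 and the Y norm} and the argument used for \eqref{eq: discrete-auxiliary estimate variational 3}, one gets $\norm{p_h^\pi\left(f^{\varepsilon}_{\lambda}\right) - p^{\varepsilon}_{\lambda, h}}_{\Ls^2\left(Q_T\right)} \le C\norm{u_h^\pi\left(f^{\varepsilon}_{\lambda}\right) - u^{\varepsilon}_{\lambda, h}}_{h,0}$. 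Next, subtracting \eqref{eq: auxiliary state postprocessing} from \eqref{eq: discrete state optimality conditions} and invoking the discrete stability of Lemma \ref{lem: stability result} yields $\norm{u_h^\pi\left(f^{\varepsilon}_{\lambda}\right) - u^{\varepsilon}_{\lambda, h}}_{h,0} \le C_0\norm{\pi_d f^{\varepsilon}_{\lambda} - f^{\varepsilon}_{\lambda, h}}_{\Ls^2\left(Q_T\right)}$, which is exactly the quantity estimated in Lemma \ref{lem: discrete control - barycenter control estimate}. Hence the third term inherits the bound of Lemma \ref{lem: discrete control - barycenter control estimate} up to a multiplicative constant, and it is this contribution, once multiplied by the $1/\lambda$ from the projection step, that produces the prefactor $\lambda^{-(2+\tau)}$ and the threshold $4\norm{\ell}_{\Ls^\infty\left(Q_T\right)}C_2$ in the statement, inherited in turn from Lemmas \ref{lem: discrete the pure control constant} and \ref{lem: discrete control - barycenter control estimate}.

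Finally I would collect the three estimates, multiply through by $\tfrac{\norm{\ell}_{\Ls^\infty\left(Q_T\right)}}{\lambda}$, and regroup the powers of $h$ and $\lambda$, using $h\in\left(0,h_\ast\right)$ so that $h^2\le h_\ast^{1/2}h^{3/2}$; the dominant contribution is the one routed through Lemma \ref{lem: discrete control - barycenter control estimate}, and the overall order is $\mathcal{O}\left(h^{3/2}\right)$. Since all the genuinely analytic work — the interface interpolation and projection estimates, the mesh-structure Assumption \ref{assum: mesh superconvergence}, and the adjoint regularity from Assumption \ref{assum: Nitsche's trick assumption} and Lemma \ref{lem: control regularity} — has already been carried out in the lemmas being cited, the only real difficulty here is bookkeeping: tracking the powers of $\lambda$ through the three-term decomposition, handling the two $\lambda$-regimes correctly, and verifying that the $h^{3/2}$-term genuinely governs the rate while every other contribution is $\mathcal{O}\left(h^2\right)$.
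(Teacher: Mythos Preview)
Your proposal is correct and follows essentially the same route as the paper: use nonexpansivity of $\operatorname{Proj}_{F_+}$ to reduce to $\norm{p^\varepsilon_\lambda-p^\varepsilon_{\lambda,h}}_{\Ls^2(Q_T)}$, split through $p_h(f^\varepsilon_\lambda)$ and $p_h^\pi(f^\varepsilon_\lambda)$, and then invoke Lemmas \ref{lem: auxiliary-discrete adjoint estimate variational}, \ref{lem: exact-postprocess states and adjoint estimate}, and \ref{lem: discrete control - barycenter control estimate} for the three pieces, with the third piece routed back to $\norm{\pi_d f^\varepsilon_\lambda - f^\varepsilon_{\lambda,h}}_{\Ls^2(Q_T)}$ via the discrete stability chain. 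The only cosmetic difference is that the paper passes through $\norm{u_h^\pi(f^\varepsilon_\lambda)-u^\varepsilon_{\lambda,h}}_{\Ls^2(\omega_T)}$ directly rather than the $\norm{\cdot}_{h,0}$-norm, but the two chains are equivalent.
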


\begin{proof}
    By using \eqref{eq: projection formula of f}, \eqref{eq: post-processing solution}, and the nonexpansivity of $\operatorname{Proj}_{F_{+}}$ in $\Ls^2\left(Q_T\right)$ \cite[Lemma 1.10]{HPU2009}, we have
    $$
    \lambda \norm{f^\varepsilon_\lambda - f_h^\dagger}_{\Ls^2\left(Q_T\right)} = \lambda \norm{\operatorname{Proj}_{F_{+}}\left(-\dfrac{1}{\lambda} \ell p^\varepsilon_\lambda\right) - \operatorname{Proj}_{F_{+}}\left(-\dfrac{1}{\lambda} \ell p^\varepsilon_{\lambda, h}\right)}_{\Ls^2\left(Q_T\right)}\le C\norm{p^\varepsilon_\lambda - p^\varepsilon_{\lambda, h}}_{\Ls^2\left(Q_T\right)}.
    $$
    Before estimating the right-hand side of this inequality, let us deal with $\norm{p_h^\pi\left(f^\varepsilon_\lambda\right)- p^\varepsilon_{\lambda, h}}_{\Ls^2\left(Q_T\right)}$. We subtract \eqref{eq: discrete adjoint optimality conditions} from \eqref{eq: auxiliary adjoint postprocessing} to get
    $$
    a_h^\prime\left(p_h^\pi\left(f^\varepsilon_\lambda\right) - p^\varepsilon_{\lambda, h}, \phi_h\right) = \left(\chi_{\omega_T}\left(u_h^\pi\left(f^\varepsilon_\lambda\right) - u^\varepsilon_{\lambda, h}\right), \phi_h\right)_{\Ls^2\left(Q_T\right)}\qqqq\forall \phi_h \in \Vs_{h,T}.
    $$
    We then invoke \eqref{eq: compare L2 and the Y norm} and the technique in \eqref{eq: discrete-auxiliary estimate variational 3} to arrive at 
    $$
    \norm{p_h^\pi\left(f^\varepsilon_\lambda\right)- p^\varepsilon_{\lambda, h}}_{\Ls^2\left(Q_T\right)} \le C\norm{p_h^\pi\left(f^\varepsilon_\lambda\right)- p^\varepsilon_{\lambda, h}}_{h,T} \le C \norm{u_h^\pi\left(f^\varepsilon_\lambda\right)- u^\varepsilon_{\lambda, h}}_{\Ls^2\left(\omega_T\right)}\le C\norm{\pi_d f^\varepsilon_\lambda - f^\varepsilon_{\lambda, h}}_{\Ls^2\left(Q_T\right)}.
    $$
    Therefore, thanks to the triangle inequality, we obtain
    $$
    \begin{aligned}
        \norm{p^\varepsilon_\lambda - p^\varepsilon_{\lambda, h}}_{\Ls^2\left(Q_T\right)} &\le \norm{p^\varepsilon_\lambda - p_h\left(f^\varepsilon_\lambda\right)}_{\Ls^2\left(Q_T\right)} + \norm{p_h\left(f^\varepsilon_\lambda\right) - p_h^\pi\left(f^\varepsilon_\lambda\right)}_{\Ls^2\left(Q_T\right)} + \norm{p_h^\pi\left(f^\varepsilon_\lambda\right)- p^\varepsilon_{\lambda, h}}_{\Ls^2\left(Q_T\right)}\\
        &\le \norm{p^\varepsilon_\lambda - p_h\left(f^\varepsilon_\lambda\right)}_{\Ls^2\left(Q_T\right)} + \norm{p_h\left(f^\varepsilon_\lambda\right) - p_h^\pi\left(f^\varepsilon_\lambda\right)}_{\Ls^2\left(Q_T\right)} + C\norm{\pi_d f^\varepsilon_\lambda - f^\varepsilon_{\lambda, h}}_{\Ls^2\left(Q_T\right)},
    \end{aligned}
    $$
    and thus
    $$
    \norm{f^\varepsilon_\lambda - f_h^\dagger}_{\Ls^2\left(Q_T\right)}\le \dfrac{C}{\lambda}\left(\norm{p^\varepsilon_\lambda - p_h\left(f^\varepsilon_\lambda\right)}_{\Ls^2\left(Q_T\right)} + \norm{p_h\left(f^\varepsilon_\lambda\right) - p_h^\pi\left(f^\varepsilon_\lambda\right)}_{\Ls^2\left(Q_T\right)} + \norm{\pi_d f^\varepsilon_\lambda - f^\varepsilon_{\lambda, h}}_{\Ls^2\left(Q_T\right)}\right).
    $$
    By combining this with Lemmas \ref{lem: auxiliary-discrete adjoint estimate variational}, \ref{lem: exact-postprocess states and adjoint estimate}, and \ref{lem: discrete control - barycenter control estimate}, we obtain the desired result.
\end{proof}

\begin{remark}
    Under appropriate conditions, we have shown that the post-processing solution $f_h^\dagger$, constructed by \eqref{eq: post-processing solution}, converges to $f^\varepsilon_\lambda$ more rapidly than the element-wise constant solution $f^\varepsilon_{\lambda, h}$ in Theorem \ref{theo: error estimate constant}. In particular, a superlinear convergence with respect to the $\Ls^2\left(Q_T\right)$-norm has been established. 
    
    Nevertheless, the order of $\mathcal{O}\left(h^{\frac{3}{2}}\right)$ in Theorem \ref{theo: error estimate postprocessing} is sub-optimal when compared to the results in \cite[Theorem 2.4]{MR2004}, \cite[Theorem 2.8]{RV2006}, and \cite[Corollary 5.17]{MV2008} for optimal control problems. The authors derived second-order error estimates for the control in these studies. However, they relied on the control exhibiting higher regularity when it is away from the free boundary between the active and inactive sets. 
    
    Our situation differs, as the regularity of $f^\varepsilon_\lambda$ depends not only on the transition between the sets $\mathcal{M}_h^1$ and $\mathcal{M}_h^2$, but also on the position of $\Gamma^\ast$ and the smoothness of $\ell$. To achieve a quadratic convergence rate, we require a similar inequality to the first one in Lemma \ref{lem: numerical integration} in case of $v  \in \Hs^1\left(K\right)\cap \Hs^2\left(K_1\cup K_2\right)$, where $K\in \mathcal{T}_h^\ast$ and $K_i:= K\cap Q_i\ (i=1,2)$. To the best of our knowledge, no studies in the literature have addressed this result. Based on the Bramble-Hilbert lemma \cite[Lemma 11.9]{Ern2021} and \cite[Lemma 3.2]{MR2004}, it is likely that this variant of the result in Lemma \ref{lem: numerical integration} for locally $\Hs^2$-smooth functions may not hold.
\end{remark}

\subsection{Regularization parameter selection}
\label{subsec: regularization parameter selection}

We are now positioned to estimate the error between the exact source $f_+ \in F_{+}$ and either the solution $f^\varepsilon_{\lambda,h}\in F^h_{+}$ to Problem \eqref{eq: discrete state optimality conditions}--\eqref{eq: discrete variational inequality}, or the post-processing solution $f_h^\dagger\in F^h_{+}$ in \eqref{eq: post-processing solution}. By incorporating the results from Lemma \ref{lem: convergence rate}, Theorems \ref{theo: error estimate variational}, \ref{theo: error estimate constant}, and \ref{theo: error estimate postprocessing} into \eqref{eq: overall error 1} and \eqref{eq: overall error 2}, we derive the following theorem:

\begin{theorem}
    \label{theo: overall convergence rate}
    Let $f_+ \in F_{+}$ denote the exact source and $f^\varepsilon_{\lambda,h}\in F^h_{+}$ the solution to Problem \eqref{eq: discrete state optimality conditions}--\eqref{eq: discrete variational inequality}. Let $p^\varepsilon_\lambda\in \Xs_T$ be the solution to Problem \eqref{eq: weak adjoint equation optimality conditions} and $C_2>0$ be the constant defined in Lemma \ref{lem: discrete-auxiliary estimate variational}. Assume that the conditions in Lemma \ref{lem: convergence rate} and Assumption \ref{assum: Nitsche's trick assumption} are satisfied. When $\lambda \le 4 \norm{\ell}_{\Ls^\infty\left(Q_T\right)}C_2$, the following results hold:
    \begin{enumerate}
        \item In case of variational discretization: We have the following estimate
        $$
        \norm{f_+ -f^{\varepsilon}_{\lambda, h}}_{\Ls^2\left(Q_T\right)} \le \sqrt{\lambda}\norm{\zeta}_{\Ls^2\left(\omega_T\right)} + \dfrac{\varepsilon}{\sqrt{\lambda}} + Ch^2\left(1+ \dfrac{1}{\lambda^2}\right)\left(\norm{U_d}_{\Ls^2\left(\omega_T\right)} +\norm{g}_{\Ls^2\left(Q_T\right)}+\varepsilon\right).
        $$
        If $\lambda = \Theta\left(h^{\frac{4}{5}} + \varepsilon^{\frac{4}{5}}\right)$, then $f^\varepsilon_{\lambda,h}\to f_+ $ in $\Ls^2\left(Q_T\right)$ as $h\to 0$ and $\varepsilon\to 0$, with a convergence rate of $\mathcal{O}\left(h^{\frac{2}{5}}+\varepsilon^{\frac{2}{5}}\right)$.
        \item In case of element-wise constant discretization: If $\ell \in \Ws^{1,\infty}\left(Q_T\right)$, then the following estimate holds
        $$
        \begin{aligned}
            &\norm{f_+ -f^{\varepsilon}_{\lambda, h}}_{\Ls^2\left(Q_T\right)} \le \sqrt{\lambda}\norm{\zeta}_{\Ls^2\left(\omega_T\right)} + \dfrac{\varepsilon}{\sqrt{\lambda}}\\
            &+ \dfrac{Ch}{\lambda} \sqrt{\left(1+\dfrac{1}{\sqrt{\lambda}}\right)\left(\norm{U_d}_{\Ls^2\left(\omega_T\right)} +\norm{g}_{\Ls^2\left(Q_T\right)}+\varepsilon\right)}\norm{\Ds \left(\ell p^{\varepsilon}_{\lambda}\right)}^{\frac{1}{2}}_{\LLs^\infty\left(Q_T\right)} + Ch^2\left(1+\dfrac{1}{\sqrt{\lambda^5}}\right)\left(\norm{U_d}_{\Ls^2\left(\omega_T\right)} +\norm{g}_{\Ls^2\left(Q_T\right)}+\varepsilon\right).
        \end{aligned}
        $$
        \item In case of post-processing discretization: Let $f_h^\dagger\in F^h_{+}$ be the post-processing solution in \eqref{eq: post-processing solution}. If Assumption \ref{assum: mesh superconvergence} is satisfied and $\ell \in \Ws^{1,\infty}\left(Q_T\right)\cap \Hs^2\left(Q_1\cup Q_2\right)$, then their holds the following estimate
        $$
        \norm{f_+ -f_h^\dagger}_{\Ls^2\left(Q_T\right)} \le \sqrt{\lambda}\norm{\zeta}_{\Ls^2\left(\omega_T\right)} + \dfrac{\varepsilon}{\sqrt{\lambda}} + \dfrac{C}{\lambda^{4}}\left[h^{\frac{3}{2}}\norm{\Ds \left(\ell p^\varepsilon_\lambda\right)}_{\LLs^\infty\left(Q_T\right)} + \dfrac{h^2}{\sqrt{\lambda}}\left(\norm{U_d}_{\Ls^2\left(\omega_T\right)} +\norm{g}_{\Ls^2\left(Q_T\right)}+\varepsilon\right)\right].
        $$
    \end{enumerate}
    The function $\zeta\in \Ls^2\left(\omega_T\right)$ in these estimates is defined in Lemma \ref{lem: convergence rate}.
\end{theorem}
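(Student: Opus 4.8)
The plan is to combine the already-established regularization error bound with the three discretization error bounds, then optimize the free parameter $\lambda$ in terms of $h$ and $\varepsilon$. The backbone is the triangle-inequality splitting recorded in \eqref{eq: overall error 1} and \eqref{eq: overall error 2}: the exact-source-to-regularized-solution term $\norm{f_+ -f^{\varepsilon}_{\lambda}}_{\Ls^2\left(Q_T\right)}$ is controlled by Lemma \ref{lem: convergence rate}, which gives $\sqrt{\lambda}\norm{\zeta}_{\Ls^2\left(\omega_T\right)} + \varepsilon/\sqrt{\lambda}$, while the regularized-to-discrete term is handled by Theorem \ref{theo: error estimate variational} (case 1), Theorem \ref{theo: error estimate constant} (case 2), or Theorem \ref{theo: error estimate postprocessing} (case 3). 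Each of these theorems is stated with a case distinction on whether $\lambda \le 8\norm{\ell}_{\Ls^\infty\left(Q_T\right)}C_2$ (resp. $4\norm{\ell}_{\Ls^\infty\left(Q_T\right)}C_2$); since the present theorem restricts to $\lambda \le 4\norm{\ell}_{\Ls^\infty\left(Q_T\right)}C_2$, I would invoke the branch with the larger exponent $\tau$ in each. Substituting the relevant $\tau$ values ($\tau=2$ in Theorem \ref{theo: error estimate variational}, giving the factor $1+1/\lambda^2$; $\tau=5/2$ in Theorem \ref{theo: error estimate constant}, giving $1+1/\sqrt{\lambda^5}$; $\tau=2$ prefactor $1/\lambda^{2+\tau}=1/\lambda^4$ in Theorem \ref{theo: error estimate postprocessing}) yields the three displayed estimates verbatim, so the first part of the proof is essentially a matter of assembling these pieces.

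For the convergence-rate claim in case 1, I would discard the constants and $\varepsilon$-independent data norms (which are fixed) and treat the dominant balance among the three terms $\sqrt{\lambda}$, $\varepsilon/\sqrt{\lambda}$, and $h^2/\lambda^2$ (the last coming from $h^2(1+1/\lambda^2)$ with the $1/\lambda^2$ term dominating for small $\lambda$). Setting $\sqrt{\lambda} \sim h^2/\lambda^2$ gives $\lambda^{5/2} \sim h^2$, i.e. $\lambda \sim h^{4/5}$; similarly balancing $\sqrt{\lambda}$ against $\varepsilon/\sqrt{\lambda}$ gives $\lambda \sim \varepsilon$, but balancing $\varepsilon/\sqrt{\lambda}$ against $h^2/\lambda^2$ reinforces the $h^{4/5}$ scaling with $\varepsilon^{4/5}$ playing the analogous role. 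Hence the choice $\lambda = \Theta\!\left(h^{4/5}+\varepsilon^{4/5}\right)$ makes every term $\mathcal{O}\!\left(h^{2/5}+\varepsilon^{2/5}\right)$: indeed $\sqrt{\lambda} = \mathcal{O}(h^{2/5}+\varepsilon^{2/5})$ directly, $\varepsilon/\sqrt{\lambda} = \mathcal{O}(\varepsilon/(h^{2/5}+\varepsilon^{2/5})) = \mathcal{O}(\varepsilon^{3/5}) = \mathcal{O}(\varepsilon^{2/5})$ for $\varepsilon\le 1$, and $h^2/\lambda^2 = \mathcal{O}(h^2/h^{8/5}) = \mathcal{O}(h^{2/5})$. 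Since $\lambda \to 0$ and $\varepsilon/\sqrt{\lambda} \to 0$ as $h,\varepsilon\to 0$, strong convergence $f^\varepsilon_{\lambda,h}\to f_+$ in $\Ls^2(Q_T)$ follows, completing case 1.

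The main obstacle, such as it is, is bookkeeping rather than conceptual: one must be careful that the hidden dependence of $\norm{\Ds(\ell p^\varepsilon_\lambda)}_{\LLs^\infty(Q_T)}$ on $\lambda$ (through Lemma \ref{lem: control regularity} and \eqref{eq: auxiliary-discrete adjoint estimate variational 4}) is not swallowed into the generic constant $C$ — but the theorem statement deliberately leaves that norm explicit in cases 2 and 3, so no further estimation is claimed there and no a priori $\lambda$-choice or rate is asserted for those cases. Thus cases 2 and 3 require only the substitution step. I would therefore organize the proof as: (i) invoke Lemma \ref{lem: convergence rate}; (ii) invoke Theorem \ref{theo: error estimate variational}, Theorem \ref{theo: error estimate constant}, Theorem \ref{theo: error estimate postprocessing} respectively with the small-$\lambda$ branch; (iii) add via \eqref{eq: overall error 1}/\eqref{eq: overall error 2}; (iv) for case 1 only, perform the elementary parameter optimization above and verify $\varepsilon\le 1$-type smallness is harmless since we take $\varepsilon\to 0$. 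No environments beyond standard inline math are needed, so there is nothing delicate in the write-up itself.
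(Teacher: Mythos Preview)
Your proposal is correct and follows essentially the same route as the paper: invoke Lemma~\ref{lem: convergence rate} for the regularization error, then Theorems~\ref{theo: error estimate variational}, \ref{theo: error estimate constant}, and \ref{theo: error estimate postprocessing} (small-$\lambda$ branch) for the discretization error, and add via \eqref{eq: overall error 1}--\eqref{eq: overall error 2}. The only cosmetic difference is in the rate verification for case~1: the paper separates the cross term $Ch^2(1+1/\lambda^2)\varepsilon$ via the Cauchy inequality $2h^2\varepsilon \le h^4+\varepsilon^2$ before substituting $\lambda=\Theta(h^{4/5}+\varepsilon^{4/5})$, whereas you bound each term directly; both are elementary and lead to the same $\mathcal{O}(h^{2/5}+\varepsilon^{2/5})$ conclusion.
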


\begin{proof}
    Each of the three inequalities directly follows from Lemma \ref{lem: convergence rate} and Theorems \ref{theo: error estimate variational}, \ref{theo: error estimate constant}, and \ref{theo: error estimate postprocessing}, respectively. In the case of variational discretization, we observe that
    $$
    \begin{aligned}
        \norm{f_+ -f^{\varepsilon}_{\lambda, h}}_{\Ls^2\left(Q_T\right)} &\le \sqrt{\lambda}\norm{\zeta}_{\Ls^2\left(\omega_T\right)} + \dfrac{\varepsilon}{\sqrt{\lambda}} + Ch^2\left(1+ \dfrac{1}{\lambda^2}\right)\left(\norm{U_d}_{\Ls^2\left(\omega_T\right)} +\norm{g}_{\Ls^2\left(Q_T\right)}+\varepsilon\right)\\
        &\le \sqrt{\lambda}\norm{\zeta}_{\Ls^2\left(\omega_T\right)} + \dfrac{\varepsilon}{\sqrt{\lambda}} + Ch^2\left(1+ \dfrac{1}{\lambda^2}\right)\left(\norm{U_d}_{\Ls^2\left(\omega_T\right)} +\norm{g}_{\Ls^2\left(Q_T\right)}\right) + C\left(1+ \dfrac{1}{\lambda^2}\right)\left(h^4+\varepsilon^2\right),
    \end{aligned}
    $$
    using the Cauchy inequality. Consequently, if $\lambda = \Theta\left(h^{\frac{4}{5}} + \varepsilon^{\frac{4}{5}}\right)$, then $\norm{f_+ -f^{\varepsilon}_{\lambda, h}}_{\Ls^2\left(Q_T\right)}\le C\left(h^{\frac{2}{5}} + \varepsilon^{\frac{2}{5}}\right)$, which guarantees the desired convergence of the discrete source along with the corresponding convergence rate. Here, the constant $C > 0$ is independent of $\lambda$, $h$, and $\varepsilon$.
\end{proof}

To address the convergence behavior, we specifically focused on the case where $\lambda>0$ is sufficiently small, as presented in Theorem \ref{theo: overall convergence rate}. In particular, we proposed an a priori choice for $\lambda$, depending on $h$ and $\varepsilon$, to ensure that $f^\varepsilon_{\lambda,h} \to f_+ $ in $\Ls^2\left(Q_T\right)$ under variational discretization. However, it is important to note that similar results have not yet been established for the other two scenarios: element-wise constant discretization and the post-processing strategy. This limitation arises because the norm $\norm{\Ds \left(\ell p^\varepsilon_\lambda\right)}_{\LLs^\infty\left(Q_T\right)}$ on the right-hand side of these estimates remains dependent on both $\lambda$ and $\varepsilon$. 

To obtain the desired strong convergence, an explicit dependence of this norm on $\lambda$ and $\varepsilon$ is required. Typically, this necessitates a stability estimate of the solution $p^\varepsilon_\lambda\in \Xs_T$ to Problem \eqref{eq: weak adjoint equation optimality conditions} in the norm $\norm{\Ds \cdot}_{\LLs^\infty\left(Q_T\right)}$. Under Assumption \ref{assum: Nitsche's trick assumption}, recall that $p^\varepsilon_\lambda\in \Ws^{1,\infty}\left(Q_T\right)$ (see the proof of Lemma \ref{lem: control regularity}). If there exists a constant $C>0$, independent of $\lambda$ and $\varepsilon$, such that
\begin{equation}
    \label{eq: extra regularity estimate}
    \norm{\Ds p^\varepsilon_\lambda}_{\LLs^\infty\left(Q_T\right)}\le C \norm{u^\varepsilon_\lambda - z^\varepsilon_d}_{\Ls^2\left(\omega_T\right)},
\end{equation}
and if $\ell$ is a positive constant on $Q_T$, then, together with \eqref{eq: auxiliary-discrete adjoint estimate variational 4}, we obtain
$$
\norm{\Ds \left(\ell p^\varepsilon_\lambda\right)}_{\LLs^\infty\left(Q_T\right)} = C\norm{\Ds p^\varepsilon_\lambda}_{\LLs^\infty\left(Q_T\right)} \le C\left(1+\dfrac{1}{\sqrt{\lambda}}\right)\left(\norm{U_d}_{\Ls^2\left(\omega_T\right)} +\norm{g}_{\Ls^2\left(Q_T\right)}+\varepsilon\right).
$$
Consequently, we arrive at the following result:

\begin{corollary}
    \label{coro: a priori choice for lambda}
    Let $f_+ \in F_{+}$ denote the exact source and $C_2>0$ be the constant defined in Lemma \ref{lem: discrete-auxiliary estimate variational}. Assume that the conditions in Lemma \ref{lem: convergence rate}, Assumption \ref{assum: Nitsche's trick assumption}, and \eqref{eq: extra regularity estimate} are satisfied. When $\lambda \le 4 \norm{\ell}_{\Ls^\infty\left(Q_T\right)}C_2$ and $\ell$ is a positive constant on $Q_T$, the following results hold:
    \begin{enumerate}
        \item Let $f^\varepsilon_{\lambda,h}\in F^h_{+}$ be the solution to Problem \eqref{eq: discrete state optimality conditions}--\eqref{eq: discrete variational inequality} under element-wise constant discretization. The following estimate holds
        $$
        \begin{aligned}
            &\norm{f_+ -f^{\varepsilon}_{\lambda, h}}_{\Ls^2\left(Q_T\right)} \le \sqrt{\lambda}\norm{\zeta}_{\Ls^2\left(\omega_T\right)} + \dfrac{\varepsilon}{\sqrt{\lambda}}\\
            &+ \dfrac{Ch}{\lambda} \left(1+\dfrac{1}{\sqrt{\lambda}}\right)\left(\norm{U_d}_{\Ls^2\left(\omega_T\right)} +\norm{g}_{\Ls^2\left(Q_T\right)}+\varepsilon\right) + Ch^2\left(1+\dfrac{1}{\sqrt{\lambda^5}}\right)\left(\norm{U_d}_{\Ls^2\left(\omega_T\right)} +\norm{g}_{\Ls^2\left(Q_T\right)}+\varepsilon\right).
        \end{aligned}
        $$
        If $\lambda = \Theta\left(h^{\frac{1}{2}} + \varepsilon^{\frac{2}{3}}\right)$, then $f^\varepsilon_{\lambda,h}\to f_+ $ in $\Ls^2\left(Q_T\right)$ as $h\to 0$ and $\varepsilon\to 0$, with a convergence rate of $\mathcal{O}\left(h^{\frac{1}{4}}+\varepsilon^{\frac{1}{3}}\right)$.
        \item Let $f_h^\dagger\in F^h_{+}$ be the post-processing solution in \eqref{eq: post-processing solution}. If Assumption \ref{assum: mesh superconvergence} holds, we have the following estimate
        $$
        \norm{f_+ -f_h^\dagger}_{\Ls^2\left(Q_T\right)} \le \sqrt{\lambda}\norm{\zeta}_{\Ls^2\left(\omega_T\right)} + \dfrac{\varepsilon}{\sqrt{\lambda}} + \dfrac{C}{\lambda^{4}}\left[h^{\frac{3}{2}}\left(1+\dfrac{1}{\sqrt{\lambda}}\right) + \dfrac{h^2}{\sqrt{\lambda}}\right]\left(\norm{U_d}_{\Ls^2\left(\omega_T\right)} +\norm{g}_{\Ls^2\left(Q_T\right)}+\varepsilon\right).
        $$
        If $\lambda = \Theta\left(h^{\frac{3}{10}} + \varepsilon^{\frac{2}{5}}\right)$, then $f_h^\dagger \to f_+ $ in $\Ls^2\left(Q_T\right)$ as $h\to 0$ and $\varepsilon\to 0$, with a convergence rate of $\mathcal{O}\left(h^{\frac{3}{20}}+\varepsilon^{\frac{1}{5}}\right)$.
    \end{enumerate}
    Here, $\zeta\in \Ls^2\left(\omega_T\right)$ is the function defined in Lemma \ref{lem: convergence rate}.
\end{corollary}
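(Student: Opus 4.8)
The plan is to feed the three error estimates of Theorem \ref{theo: overall convergence rate} into the split estimates \eqref{eq: overall error 1}--\eqref{eq: overall error 2} and then use the two extra hypotheses---that $\ell$ is a positive constant on $Q_T$ and that the stability estimate \eqref{eq: extra regularity estimate} holds---to remove the factor $\norm{\Ds\left(\ell p^\varepsilon_\lambda\right)}_{\LLs^\infty\left(Q_T\right)}$, replacing it by a quantity that depends on $\lambda$, $h$, $\varepsilon$, and the data only through explicit powers. First I would note that, since $\ell$ is constant, all the regularity requirements on $\ell$ in Theorem \ref{theo: overall convergence rate} (namely $\ell\in\Ws^{1,\infty}\left(Q_T\right)$, respectively $\ell\in\Ws^{1,\infty}\left(Q_T\right)\cap\Hs^2\left(Q_1\cup Q_2\right)$) are automatically satisfied, and that $\norm{\Ds\left(\ell p^\varepsilon_\lambda\right)}_{\LLs^\infty\left(Q_T\right)}=\ell\,\norm{\Ds p^\varepsilon_\lambda}_{\LLs^\infty\left(Q_T\right)}$, with no lower-order term arising from $\Ds\ell$. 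Combining \eqref{eq: extra regularity estimate} with the triangle inequality $\norm{u^\varepsilon_\lambda-z^\varepsilon_d}_{\Ls^2\left(\omega_T\right)}\le\norm{u^\varepsilon_\lambda}_{\Ls^2\left(\omega_T\right)}+\norm{z^\varepsilon_d}_{\Ls^2\left(\omega_T\right)}$, and estimating the first summand by \eqref{eq: priori estimate}, \eqref{eq: compare L2 and the Y norm}, \eqref{eq: stability of the source} and the second by \eqref{eq: stability of the noisy data}, reproduces the bound
$$
\norm{\Ds\left(\ell p^\varepsilon_\lambda\right)}_{\LLs^\infty\left(Q_T\right)}\le C\left(1+\dfrac{1}{\sqrt{\lambda}}\right)\left(\norm{U_d}_{\Ls^2\left(\omega_T\right)}+\norm{g}_{\Ls^2\left(Q_T\right)}+\varepsilon\right),
$$
which is the $\LLs^\infty$-gradient analogue of \eqref{eq: auxiliary-discrete adjoint estimate variational 4}.

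Substituting this into the element-wise constant estimate of Theorem \ref{theo: overall convergence rate}(2) gives the first displayed inequality of the corollary at once, because the product $\sqrt{\left(1+\lambda^{-1/2}\right)\left(\cdots\right)}\cdot\norm{\Ds\left(\ell p^\varepsilon_\lambda\right)}^{1/2}_{\LLs^\infty\left(Q_T\right)}$ collapses to $\left(1+\lambda^{-1/2}\right)\left(\cdots\right)$; inserting it into Theorem \ref{theo: overall convergence rate}(3) likewise yields the post-processing estimate. What remains is a balancing argument. With $M:=\norm{U_d}_{\Ls^2\left(\omega_T\right)}+\norm{g}_{\Ls^2\left(Q_T\right)}$ fixed and $\varepsilon$ small enough that $M+\varepsilon\lesssim 1$, the genuinely $\lambda$-dependent penalties are $\sqrt\lambda$ and $\varepsilon/\sqrt\lambda$, while the dominant discretization terms (as $\lambda\to 0$) are $h\lambda^{-3/2}$ and $h^2\lambda^{-5/2}$ in the constant case, and $h^{3/2}\lambda^{-9/2}$ in the post-processing case. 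For the constant case, the choice $\lambda=\Theta\left(h^{1/2}+\varepsilon^{2/3}\right)$ gives $\sqrt\lambda\lesssim h^{1/4}+\varepsilon^{1/3}$, $\varepsilon/\sqrt\lambda\lesssim\varepsilon^{2/3}$, $h\lambda^{-3/2}\lesssim h^{1/4}$ and $h^2\lambda^{-5/2}\lesssim h^{3/4}$, so the total is $\mathcal{O}\left(h^{1/4}+\varepsilon^{1/3}\right)$. For the post-processing case, $\lambda=\Theta\left(h^{3/10}+\varepsilon^{2/5}\right)$ gives $\sqrt\lambda\lesssim h^{3/20}+\varepsilon^{1/5}$, $\varepsilon/\sqrt\lambda\lesssim\varepsilon^{4/5}$ and $h^{3/2}\lambda^{-9/2}\lesssim h^{3/20}$, so the total is $\mathcal{O}\left(h^{3/20}+\varepsilon^{1/5}\right)$. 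In both cases $\lambda$ remains bounded below by a positive power of $h$, which prevents the negative powers of $\lambda$ multiplying the discretization terms from blowing up as $\varepsilon\to 0$; this is the only place where one must be a little careful.

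I do not expect a real obstacle: the corollary is a bookkeeping consequence of Theorem \ref{theo: overall convergence rate} and hypothesis \eqref{eq: extra regularity estimate}. The single delicate step is the replacement of $\norm{\Ds\left(\ell p^\varepsilon_\lambda\right)}_{\LLs^\infty\left(Q_T\right)}$ above, which is precisely where the assumption that $\ell$ is constant enters (so that the gradient of the product carries no contribution from $\Ds\ell$) and where \eqref{eq: extra regularity estimate} supplies the only $\LLs^\infty$-type stability estimate for $p^\varepsilon_\lambda$ not already available from Section \ref{subsec: auxiliary results}. After that, verifying that the proposed $\Theta$-choices of $\lambda$ drive every term of the two displayed estimates to zero at the claimed rates is a short computation of the kind carried out in the proof of Theorem \ref{theo: overall convergence rate}.
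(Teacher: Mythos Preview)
Your proposal is correct and follows exactly the route taken in the paper: the paragraph preceding the corollary derives the bound $\norm{\Ds\left(\ell p^\varepsilon_\lambda\right)}_{\LLs^\infty\left(Q_T\right)}\le C\left(1+\lambda^{-1/2}\right)\left(\norm{U_d}_{\Ls^2\left(\omega_T\right)}+\norm{g}_{\Ls^2\left(Q_T\right)}+\varepsilon\right)$ from \eqref{eq: extra regularity estimate} and the constancy of $\ell$, after which the corollary is obtained by substitution into Theorem \ref{theo: overall convergence rate} and a balancing computation identical to the one you outline. The paper leaves the latter step to the reader, so your explicit verification of the exponents is a faithful completion of the implicit argument.
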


Finally, we observe that in Theorem \ref{theo: overall convergence rate} and Corollary \ref{coro: a priori choice for lambda}, the convergence rate decreases progressively. This behavior is attributed to the increasing exponents of $\lambda$ in the denominators of the respective estimates. Furthermore, in all cases, $f^\varepsilon_{\lambda,h}$ and $f_h^\dagger$ strongly converge to $f_+$ in $\Ls^2\left(Q_T\right)$ at half the rate of $\lambda$ approaching zero, contrasting with the unconstrained inverse problem; see \cite[Theorem 2.5]{HLO2024}.

\section{Conclusion}
\label{sec:conclusion}

We present an error analysis for the numerical solution of an inverse source problem associated with the advection-diffusion equation featuring moving subdomains. The numerical scheme integrates a space-time method with multiple source discretization strategies. Specifically, the state and adjoint problems are discretized using the space-time interface-fitted method. Under appropriate conditions, we establish two second-order error estimates in the $\Ls^2$-norms. For the regularized source, discretization is performed sequentially using the variational approach, the element-wise constant discretization, and the post-processing strategy. We derive optimal error estimates for the variational approach and the element-wise constant discretization while achieving superlinear convergence for the post-processing strategy. Furthermore, we propose a priori choices for $\lambda$ to ensure that $f^\varepsilon_{\lambda,h}$ and $f_h^\dagger$ strongly converge to $f_+$ in $\Ls^2\left(Q_T\right)$ as $h$ and $\varepsilon$ approach zero. The convergence rate is established in each case. 

For future work, we aim to extend these results to three-dimensional spatial domains and enhance the convergence order of the post-processing strategy. Another direction of interest is to develop additional error estimates where $\lambda$ appears in the numerator of the terms on the right-hand side, as demonstrated in \cite{WZ2010, JZ2021}, to achieve higher-order convergence rates.

\section*{Funding}

This research was supported by Vietnam National Foundation for Science and Technology Development (NAFOSTED) under grant number  101.02-2024.12.



\bibliographystyle{elsarticle-num} 
\bibliography{abrv_ref}





\end{document}